\documentclass[11pt]{amsart}
\textheight 8.5in \textwidth 6.5in \evensidemargin .1in \oddsidemargin .1in \topmargin .25in \headsep .1in \headheight 0.2in \footskip .5in

\usepackage{color}
\usepackage{graphicx}
\usepackage[mathcal,mathscr]{euscript}
\usepackage{bbm}
\usepackage{float}
\usepackage{amsmath,amsthm,amssymb,amsfonts,amscd,epsfig,latexsym,graphicx,textcomp}
\usepackage{tikz-cd}
\usepackage{psfrag}
\usepackage[all]{xy}
\usepackage{stackengine}
\usepackage{romannum}
\usepackage{comment}
\usepackage{hhline}
\usepackage{diagbox}
\usepackage{float}
\usepackage{caption}
\usepackage{eufrak}
\usepackage{diagbox}
\usepackage{makecell}
\usepackage{slashed}
\usepackage{MnSymbol}

\restylefloat{figure}

\newtheorem*{theorem*}{Main Theorem}

\usepackage{thmtools}
\usepackage{thm-restate}

\usepackage{hyperref}

\usepackage{cleveref}

\newtheorem{theorem}{Theorem}[section]
\newtheorem{lemma}[theorem]{Lemma}
\newtheorem{corollary}[theorem]{Corollary}
\newtheorem{proposition}[theorem]{Proposition}

\newtheorem{scholium}[theorem]{Scholium}
\newtheorem{question}[theorem]{Question}

\theoremstyle{definition}
\newtheorem{definition}[theorem]{Definition} 
\newtheorem{example}[theorem]{Example} 

\theoremstyle{remark}
\newtheorem{remark}[theorem]{Remark}
\numberwithin{equation}{section}
\newcommand{\ot}{\otimes}
\newcommand{\ra}{\rightarrow}

\newcommand{\BC}{\mathbb{C}}
\newcommand{\BR}{\mathbb{R}}

\DeclareMathOperator{\Ima}{Im}

\def \sign{{\text{sign}}}

\newcommand{\NegPMEdgeDiag}{\raisebox{-0.33\height}{\includegraphics[scale=0.25]{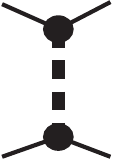}}}
\newcommand{\IIDiag}{\raisebox{-0.33\height}{\includegraphics[scale=0.25]{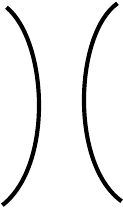}}}

\newcommand{\XDiag}{\raisebox{-0.33\height}{\includegraphics[scale=0.25]{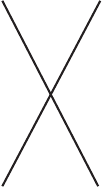}}}
\newcommand{\PMEdgeDiag}{\raisebox{-0.33\height}{\includegraphics[scale=0.25]{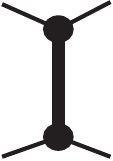}}}

\newcommand{\qdim}{q\!\dim}

\newcommand{\BZ}{\mathbb{Z}}





\newcommand{\ZZ}{{\mathbb Z}}

\newcommand{\CC}{{\mathbb C}}
\newcommand{\NN}{{\mathbb N}}

\newcommand{\QQ}{{\mathbb Q}}



\newcommand{\vertexbracketvertex}{\raisebox{-0.33\height}{\includegraphics[scale=1.2]{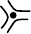}}}
\newcommand{\vertexbracketzero}{\raisebox{-0.33\height}{\includegraphics[scale=1.2]{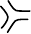}}}
\newcommand{\vertexbracketone}{\raisebox{-0.33\height}{\includegraphics[scale=1.2]{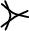}}}

\def\del{\partial}

\def\del{\partial}

\begin{document}
\pagenumbering{arabic}

\title{Quantum state systems that count perfect matchings}

\thanks{}

\author{Scott Baldridge}
\address{Department of Mathematics, Louisiana State University\\
Baton Rouge, LA}
\email{baldridge@math.lsu.edu}

\author{Ben McCarty}
\address{Department of Mathematical Sciences, University of Memphis\\
Memphis, TN}
\email{ben.mccarty@memphis.edu}

\subjclass{}
\date{}

\begin{abstract}
In this paper we show how to categorify the $n$-color vertex polynomial, which is based upon one of Roger Penrose's formulas for counting the number of $3$-edge colorings of a planar trivalent graph.  Using topological quantum field theory (TQFT), we introduce a quantum state system to build a new bigraded  theory called the bigraded $n$-color vertex homology. The graded Euler characteristic of this homology is  the $n$-color vertex  polynomial.  We then produce a spectral sequence whose $E_\infty$-page is a filtered  theory called filtered $n$-color vertex homology and show that it is generated by certain types of  face colorings of ribbon graphs.  For $n=2$, we show that the filtered $n$-color vertex homology is generated by face colorings that correspond to perfect matchings.  Finally, we introduce and give meaning to what the vertex polynomial counts when $n \geq 2$. This polynomial is a new abstract graph invariant that can be inferred from certain formulas of Penrose.
\end{abstract}

\maketitle

\section{introduction}

In his 1971 paper \cite{Penrose}, Roger Penrose gave several formulas for counting the number of $3$-edge colorings of trivalent graphs.  He described what has come to be known in the literature as the ``Penrose polynomial,'' whose evaluation at $n=3$ counts the number of $3$-edge colorings for planar graphs.  The Penrose polynomial has been studied extensively over the years (cf. \cite{Jaeger,Aigner,EMM,EMMKM,Moffat2013,ColorHomology}). Recently, it was categorified by the authors in \cite{ColorHomology} (see also \cite{BKM2}).  Taking inspiration from Penrose's related system of binors (cf. page 239 of \cite{Penrose}),  we define new polynomials for trivalent graphs: The first is a family of polynomials called the {\em $n$-color vertex polynomial}, $\llangle\Gamma\rrangle_{n}(q)$, which generalizes to $n>2$ what the first author  in \cite{BaldCohomology} called the vertex bracket polynomial for a ribbon graph $\Gamma$ (see the $n=2$ case below). The second, which can be inferred from \cite{Penrose}, is the {\em vertex polynomial}, $V(\Gamma,n)$. This is a polynomial in $n$ that equals the $n$-color vertex polynomial evaluated at one, i.e., $V(\Gamma,n) = \llangle\Gamma\rrangle_{n}(1)$.

For $n=2$, the $n$-color vertex polynomial is given by the following (see \Cref{def:vertexPoly} for the general definition):
\begin{eqnarray}
\bigg\llangle \vertexbracketvertex \bigg\rrangle_{\! 2} &=&  \bigg\llangle \vertexbracketzero \bigg\rrangle_{\! 2} \ - \ q^{3} \bigg\llangle\vertexbracketone \bigg\rrangle_{\! 2}, \mbox{\ and} \\ [.2cm]
\bigg\llangle \bigcirc  \bigg\rrangle_{\! 2} & = & q+1.
\end{eqnarray}
The vertex polynomial, like the Penrose polynomial, has the remarkable property that certain evaluations provide a way to count the number of $3$-edge colorings of the graph (cf. \cite{BaldCohomology,Penrose}).  For example,  $V(\Gamma,2)$ yields a multiple of the number of $3$-edge colorings.  Thus, proving that the $2$-color vertex polynomial is always non-trivial when evaluated at one for planar, bridgeless graphs is equivalent to proving the four-color theorem.  In this paper, we categorify  the $n$-color vertex polynomial for each $n$ to get a family of homology theories.

The categorification of the $n$-color vertex polynomial emerges specifically from tools developed in \cite{ColorHomology}, in which the authors introduced two categorifications of the evaluation of the Penrose polynomial at each positive integer $n\in \NN$, called bigraded $n$-color homology and filtered $n$-color homology. These homology theories, along with the ones defined in this paper, are defined for ribbon graphs, that is, a graph that is the $1$-skeleton of particular types of $2$-dimensional CW complexes of closed smooth surfaces.  They encode structural information about the colorings of the $2$-cells of the surface with $n$ colors (see \Cref{sec:nColor}).  

Briefly, to define the $n$-color vertex homology, start with a vertex ribbon diagram $\Gamma_\bullet$ (cf. \Cref{def:vertex-graph-diagram}) of a trivalent ribbon graph $\Gamma$ of some graph $G(V,E)$. Form a hypercube of states by replacing \vertexbracketvertex \ at each vertex of $\Gamma_\bullet$ with  a vertex $0$-smoothing \vertexbracketzero \  or a vertex $1$-smoothing \vertexbracketone.  The hypercube itself is a $|V|$-regular graph with $2^{|V|}$ vertices. To each vertex of the hypercube, associate a {\em state} that corresponds to an element $\nu=(\nu_1,\ldots,\nu_{|V|})\in \{0,1\}^{|V|}$. This element specifies a set of immersed circles in the plane by whether a vertex $0$- or $1$-smoothing ($\nu_i = 0$ or $\nu_i = 1$) was done at vertex $v_i\in V$.

The hypercube is arranged in columns from the ``all-zero smoothings'' state $(0,0,\ldots,0)\in\{0,1\}^{|V|}$ to the  ``all-one smoothings'' state $(1,1,\ldots,1)\in\{0,1\}^{|V|}$, where the columns consist of the states that have the same number of vertex $1$-smoothings; let $|\nu|$ be that number for each state. Next, form a chain complex by replacing circles in each state by a tensor product of copies of the algebra $V=\mathbbm{k}[x]/(x^n)$ and replace edges of the hypercube with maps between these vector spaces that depend upon what happens to the circles.  These maps gives rise to a differential $\delta$ between columns that turn the hypercube of states into a bigraded chain complex $(C^{i,j}(\Gamma;\mathbbm{k}),\delta)$. This differential preserves the quantum grading (the $j$-grading) and increases the homological grading $i$ by one.  The {\em bigraded $n$-color vertex homology of $\Gamma$}, $VCH^{i,j}_n(\Gamma;\mathbbm{k})$, is then the homology of this complex.  Filtered homology arises from the same hypercube of states in a similar manner but using a different algebra.

This homology categorifies the $n$-color vertex polynomial, which is our first theorem:

\begin{restatable}{Theorem}{gradedEuler}
\label{thm:gradedEuler}
Let $G(V,E)$ be a trivalent graph and $\Gamma$ be a ribbon graph of it.  Let $n\in \NN$ and $\mathbbm{k}$ be a ring in which $\sqrt{n}$ is defined.  The bigraded $n$-color vertex homology of $\Gamma$, $VCH_n^{*,*}(\Gamma;\mathbbm{k})$ is an invariant of the ribbon graph $\Gamma$.  Furthermore, the graded Euler characteristic of it is the $n$-color vertex polynomial:
$$\llangle\Gamma\rrangle_{n} =\chi_q(VCH_n^{*,*}(\Gamma;\mathbbm{k})).$$
\end{restatable}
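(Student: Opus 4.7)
The plan is to split the theorem into two independent pieces: the graded Euler characteristic identity, which is a state-sum bookkeeping argument at the level of chain groups, and invariance of the resulting homology under the moves that relate two vertex ribbon diagrams of the same ribbon graph $\Gamma$.

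For the Euler characteristic, I would first invoke the standard fact that for a bounded chain complex of finite-dimensional bigraded $\mathbbm{k}$-modules the graded Euler characteristic is preserved under taking homology, which gives
$$\chi_q(VCH_n^{*,*}(\Gamma;\mathbbm{k}))=\sum_{i,j}(-1)^i q^j \dim_{\mathbbm{k}} C^{i,j}(\Gamma;\mathbbm{k}).$$
Then I would reorganize this sum over the hypercube of states by writing $C^{i,j}=\bigoplus_{|\nu|=i}(V^{\otimes c(\nu)})_j$, where $c(\nu)$ is the number of immersed circles in the state $\nu$, $V=\mathbbm{k}[x]/(x^n)$, and the $j$-grading carries the overall quantum shift that is built into the definition of the complex (for $n=2$ this shift is precisely what produces the factor $-q^3$ on each vertex $1$-smoothing in the defining recursion, and for general $n$ it is dictated by the analogous recursive definition of the $n$-color vertex polynomial). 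After this regrouping the sum factors as a state sum
$$\sum_{\nu\in\{0,1\}^{|V|}}(-1)^{|\nu|}q^{s(\nu)}(\qdim V)^{c(\nu)},$$
which matches term-by-term with the polynomial obtained by iterating the defining recursion of $\llangle\Gamma\rrangle_n$ down to disjoint unions of circles and evaluating each circle as $\qdim V$.

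For invariance, the strategy is to identify a finite list of local moves that relate any two vertex ribbon diagrams of the same $\Gamma$, and then to exhibit, move-by-move, an explicit bigraded chain homotopy equivalence between the two local complexes, one that can be tensored with the identity on the rest of the diagram. I would build these equivalences by closely following the analogous invariance arguments for the bigraded $n$-color homology developed in \cite{ColorHomology}, using Gaussian elimination on the hypercube to cancel acyclic summands whenever a move changes the number of vertex generators. The hypothesis $\sqrt{n}\in\mathbbm{k}$ presumably enters here to normalize the edge maps so that $\delta^2=0$ and so that the homotopies pair up correctly across moves.

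The main obstacle is this invariance step. The Euler characteristic half is essentially combinatorial once the quantum grading shifts are pinned down, but verifying that $\delta^2=0$ on the hypercube with the correct sign assignment, and then checking that every local move induces a bigraded chain homotopy equivalence, is where the genuine work will lie. I expect to reuse the homotopy-constructing techniques of \cite{ColorHomology} as templates and adapt them to the vertex-smoothing conventions and to the additional quantum shift that distinguishes the vertex polynomial from the Penrose polynomial.
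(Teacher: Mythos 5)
Your Euler-characteristic half is the same argument as the paper's: since the chain groups are finite-dimensional and the differential preserves the $q$-grading, $\chi_q$ of the homology equals $\chi_q$ of the complex, and the state sum $\sum_\nu (-1)^{|\nu|}q^{3m|\nu|}(\qdim V)^{k_\nu}$ reproduces the defining recursion because $\qdim V$ is exactly the loop polynomial and the shift $\{3m|\nu|\}$ accounts for the $-q^{3m}$ attached to each vertex $1$-smoothing. Where you diverge is the invariance step, and there your plan is heavier than what is actually needed. The paper does not run a Khovanov-style move-by-move homotopy-equivalence argument: it observes that the whole hypercube is built from data intrinsic to the ribbon graph (the circles of each state are the faces of the ribbon graph obtained by the chosen smoothings, and the edge maps are determined by whether circles merge, split, or are preserved), so two ribbon diagrams related by the ribbon moves of \cite{BKR} yield canonically isomorphic complexes; one only adds that reordering the vertices of $G$ changes nothing. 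In particular, the scenario you anticipate—moves that change the number of generators, requiring Gaussian elimination to cancel acyclic pieces—never occurs, because the ribbon moves never create or destroy vertices of $G$ and hence never change the shape of the hypercube; your program would simply collapse to checking isomorphisms. Relatedly, the genuine work (well-definedness of each edge map as a composition along the six $3$-edge paths in the bubbled blowup, and $\delta^2=0$ with the sign convention) is done before this theorem via the TQFT of \cite{ColorHomology}, and the hypothesis $\sqrt{n}\in\mathbbm{k}$ enters in the definition of $\eta$ and the commutativity of those faces, not as a normalization to be discovered during an invariance check. So your route would get there, but at the cost of constructing homotopy equivalences where equalities of complexes suffice; the paper's route buys a two-line invariance proof once the differential is known to be well defined.
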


There is one main issue to proving this theorem:  How to define the maps between vertices in the hypercube in such a way that the diagrams associated to each face commute. One cannot simply use a Frobenius algebra to define the maps as in $n$-color homology. We show that these maps can be defined by embedding the hypercube of vertex states of $\Gamma_\bullet$ in a much larger hypercube (of the ``bubbled blowup'' of $\Gamma$) and taking a composition of maps corresponding to three edges in the larger hypercube to define the map. This allows the power of the TQFT in Section 9 of \cite{ColorHomology} to be used to show that the maps are well defined for all $n$. It also reduces many of the key proofs in this paper to similar lemmas found in Sections 5 and 6 of \cite{ColorHomology}.
In a way, the results of this paper are like a photomosaic: a new image and data emerges by forgetting much of the structure of the larger hypercube. 

To link the bigraded $n$-color vertex homology to face colorings, we define a second differential on the chain complex for the bigraded $n$-color vertex homology. This new differential leads to a spectral sequence with $VCH_n^{*,*}(\Gamma)$ as the $E_1$-page.  The $E_\infty$-page of the spectral sequence is called the filtered $n$-color vertex homology, denoted $\widehat{VCH}_n^{*}(\Gamma)$.  In addition, we prove that the filtered $n$-color vertex homology is generated by coloring the faces of ribbon graphs in which, at every vertex, there are at least two colors represented among the faces incident to the vertex (cf. \Cref{lem:harmonicsToColors}).  Such a coloring is called a partial $n$-face coloring in this paper (cf. \Cref{def:PartialFace}).  When specialized to $n=2$, a partial $2$-face coloring induces a perfect matching, linking quantum state systems to counting perfect matchings.

\begin{restatable}{Theorem}{perfectMatchings}
\label{thm:perfectMatchings}
Let $G(V,E)$ be a connected  planar trivalent graph, $\Gamma$ be a plane graph for $G$, and $\widehat{VCH}_2^{*}(\Gamma)$ its associated filtered $2$-color vertex homology.  Then the rank of the zeroth filtered $2$-color vertex homology group is twice the number of perfect matchings of $\Gamma$, i.e.,
$$\text{dim } \widehat{VCH}_2^{0}(\Gamma) = 2\cdot \#\{\text{perfect matchings of }G\}.$$
\end{restatable}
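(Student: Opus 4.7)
My plan is to apply \Cref{lem:harmonicsToColors}, which identifies a generating set for the filtered $2$-color vertex homology in terms of partial $2$-face colorings of $\Gamma$, and then set up a two-to-one correspondence between such colorings (in homological degree zero) and perfect matchings of $G$.

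First I would unpack what a partial $2$-face coloring imposes at each trivalent vertex. Since the vertex is incident to exactly three faces and both colors must be represented, the incident face colors split as two of one color and one of the other. This forces exactly one of the three edges at the vertex to separate two like-colored faces (the ``monochromatic'' edge), while the remaining two edges separate faces of opposite colors. Collecting the monochromatic edge at every vertex yields a $1$-regular spanning subgraph of $G$, i.e., a perfect matching $M$.

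For the converse, I would show that each perfect matching $M$ of $G$ arises from exactly two partial $2$-face colorings. Given $M$, the complement $E(G)\setminus M$ is $2$-regular, hence a disjoint union of cycles. Any disjoint collection of cycles in the $2$-sphere admits a $2$-coloring of its complementary regions---unique up to swapping the two colors---obtained by alternating colors across each cycle. Assigning each face of $\Gamma$ the color of the region containing it produces a $2$-face coloring whose monochromatic edges are precisely $M$; the partial-coloring condition at each vertex is then automatic, since the two non-matching edges at a vertex connect faces across a cycle. Thus partial $2$-face colorings biject two-to-one onto perfect matchings of $G$, supplying the factor of two.

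The main obstacle will be matching these generators to the degree-zero piece of $\widehat{VCH}_2^*(\Gamma)$ and ruling out relations among them. Concretely, I would trace through \Cref{lem:harmonicsToColors}: each partial $2$-face coloring determines a distinguished state $\nu(c)\in\{0,1\}^{|V|}$ (recording which of the two local ``$2{+}1$'' face-color patterns occurs at each vertex), and I would verify that, under the bigrading conventions inherited from the spectral sequence, the resulting class always sits in homological degree zero. Linear independence then follows because distinct partial colorings yield distinct underlying states and thus lie in distinct direct summands of the filtered complex. Combining the bijection with the degree and independence checks gives the desired equality $\dim \widehat{VCH}_2^0(\Gamma) = 2 \cdot \#\{\text{perfect matchings of } G\}$.
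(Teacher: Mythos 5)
Your core argument is essentially the paper's own: the degree-zero homology is identified with partial $2$-face colorings of the faces of $\Gamma$ (\Cref{prop:degree-zero-harmonic-colorings}), the monochromatic edge at each vertex produces the perfect matching (as in \Cref{lem:harmonicsToColors}), and your checkerboard coloring of the complement of the cycles of $G\setminus M$ is exactly the paper's construction of coloring regions by the parity of the number of cycles separating them from infinity, with the color swap accounting for the factor of two. Your uniqueness step (a coloring inducing $M$ is constant on the complementary regions and alternates across cycles, and such a region coloring is unique up to swap) is a clean repackaging of the ``at most two'' propagation argument in \Cref{thm:PMClasses}, and it is correct.

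The one step that needs repair is your final paragraph. A partial $2$-face coloring of $\Gamma$ does not determine a ``distinguished state'' $\nu(c)$ from the local $2{+}1$ patterns: the colorings relevant to the theorem are colorings of the circles of the all-zero state, which are by definition the faces of $\Gamma$, so they sit in homological degree zero automatically and there is nothing to verify; if you literally assigned a nonzero $\nu(c)$ by a local rule you would be producing classes in degree $|\nu(c)|$, not degree zero. Likewise, distinct partial colorings do not lie in distinct summands of the complex --- they are distinct color-basis vectors of the single summand $\widehat{V}_{\vec{0}}$ attached to the all-zero state. Their independence, and the fact that they span (not merely generate) $\widehat{VCH}_2^{0}(\Gamma)$, is what \Cref{prop:degree-zero-harmonic-colorings} and \Cref{lem:Colors2Colors} supply: in degree zero there are no boundaries, and any mixture in $\ker\widehat{\delta}$ must have vanishing coefficients on every coloring that is not itself in the kernel. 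With that substitution your argument matches the paper's proof.
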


Theorem~\ref{thm:perfectMatchings} shows that the filtered $2$-color vertex homology is remarkable in that it is the first known homology theory that counts perfect matchings for trivalent graphs.  If we use the entire homology, we can say more.  There is a close relationship between even perfect matchings (recall that a perfect matching $M$ on a trivalent graph $G$ is called even if every cycle of $G\setminus M$ is even length), and $3$-edge colorings.  Thus, by taking the entire filtered $2$-color vertex homology into consideration, we obtain a formula to count the number of 3-edge-colorings of $\Gamma$. 

\begin{restatable}{Theorem}{oddMatchingsCancel}
\label{thm:oddMatchingsCancel}
Let $G(V,E)$ be a connected planar trivalent graph and $\Gamma$ be a plane graph for $G$.  Then 
$$\chi(\widehat{VCH}_2^*(\Gamma)) = 2^{\frac12 |V|} \cdot \# \{ \text{3-edge-colorings of }\Gamma\}.$$
\end{restatable}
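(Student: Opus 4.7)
The plan is to reduce the statement to an evaluation of the $2$-color vertex polynomial and then verify that evaluation by a Penrose-style state-sum argument.

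First, since $\widehat{VCH}_2^*(\Gamma)$ arises as the $E_\infty$-page of a bounded spectral sequence whose $E_1$-page is $VCH_2^{*,*}(\Gamma;\mathbbm{k})$, and each page, viewed as a chain complex under its differential, has the same ungraded Euler characteristic as its homology (the next page), one has
\[
\chi\bigl(\widehat{VCH}_2^*(\Gamma)\bigr) \;=\; \chi\bigl(VCH_2^{*,*}(\Gamma;\mathbbm{k})\bigr).
\]
Specializing the graded Euler characteristic from \Cref{thm:gradedEuler} at $q=1$, this common value equals $V(\Gamma,2)$. The theorem is therefore equivalent to the polynomial identity
\[
V(\Gamma,2) \;=\; 2^{|V|/2}\cdot\#\{\text{3-edge colorings of }\Gamma\}.
\]

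Second, I would compute $V(\Gamma,2)$ as a signed state sum. Iterating the skein relation~(1.1) at every vertex of the vertex ribbon diagram $\Gamma_\bullet$ gives
\[
\llangle\Gamma\rrangle_2(q) \;=\; \sum_{\nu\in\{0,1\}^{|V|}} (-q^3)^{|\nu|}\,(q+1)^{c(\nu)},
\]
where $c(\nu)$ is the number of disjoint circles in the planar diagram obtained from $\Gamma_\bullet$ by performing the vertex $\nu_i$-smoothing at each vertex $v_i$. Setting $q=1$ yields
\[
V(\Gamma,2) \;=\; \sum_{\nu\in\{0,1\}^{|V|}} (-1)^{|\nu|}\, 2^{c(\nu)}.
\]

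Third, I would reorganize this state sum according to perfect matchings of $G$, using the dictionary of \Cref{lem:harmonicsToColors} that identifies partial $2$-face colorings with perfect matchings when $n=2$. For each perfect matching $M$, let $c(M)$ denote the number of cycles of the $2$-regular graph $G\setminus M$. Grouping the $2^{|V|}$ states by the matching they encode, the plan is to show that $M$'s contribution to the state sum vanishes unless every cycle of $G\setminus M$ has even length, and otherwise equals $2^{|V|/2}\cdot 2^{c(M)}$. Combining this with the classical fact that an even perfect matching $M$ extends to exactly $2^{c(M)}$ $3$-edge colorings (by choosing a proper $2$-coloring of each even cycle of $G\setminus M$ with the remaining two colors) and to none otherwise then gives
\[
V(\Gamma,2) \;=\; 2^{|V|/2}\sum_{M\text{ even}} 2^{c(M)} \;=\; 2^{|V|/2}\cdot\#\{\text{3-edge colorings of }\Gamma\}.
\]

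The main obstacle is the sign cancellation in the third step for matchings with odd complementary cycles. Concretely, one must pair each state $\nu$ contributing to such a matching with a partner state obtained by flipping a single smoothing along a designated odd cycle, and verify that the corresponding $(-1)^{|\nu|}\, 2^{c(\nu)}$ terms cancel exactly, while also correctly accounting for the $2^{|V|/2}$ factor produced by the ``bubbled'' loops accompanying $1$-smoothings at matched vertices. This amounts to the vertex-polynomial analogue of Penrose's binor identity, and once the pairing is pinned down, the reorganization into 3-edge colorings follows formally.
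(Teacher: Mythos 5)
Your first two steps are fine and agree with the paper (the Euler characteristic is constant across the spectral sequence, and by \Cref{thm:gradedEuler} it equals $V(\Gamma,2)=\sum_\nu(-1)^{|\nu|}2^{c(\nu)}$), but the third step, where all the content lives, has two genuine gaps. First, the regrouping of the raw state sum by perfect matchings is not well defined: $2^{c(\nu)}$ counts \emph{all} colorings of the circles of $\Gamma_\nu$, and most of these have a monochromatic vertex, are not partial $2$-face colorings, and correspond to no matching at all, so \Cref{lem:harmonicsToColors} does not apply to them. Before you can ``group the states by the matching they encode'' you must show that the signed contribution of all non-harmonic colorings cancels. The paper gets this for free by staying at the level of the filtered homology: by the Hodge decomposition and \Cref{prop:direct-sum-equals-all-harmonics} (equivalently \Cref{theorem:partial-coloring-ribbon-states}), $\chi(\widehat{VCH}_2^*(\Gamma))$ is the signed count of \emph{harmonic} colorings only, and these are exactly the perfect matching face colorings (\Cref{thm:PMClasses}). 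Your route back down to the raw polynomial state sum throws this structure away and would need a separate Penrose-style inclusion--exclusion to recover it.

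Second, and more seriously, you identify only the odd-matching cancellation as the obstacle, but the harder point is the sign coherence for \emph{even} matchings: the colorings attached to a fixed even matching live in many different states, and your claimed contribution $+\,2^{|V|/2}\cdot 2^{c(M)}$ requires that all of those states have even $|\nu|$. This is precisely \Cref{lem:EvenPMEvenDegree}, whose proof uses a $2$-factor through $M$ and the Jordan curve theorem, and it is the only place planarity enters. Your outline never uses planarity, yet the statement is false without it: for $K_{3,3}$ all perfect matchings are even and there are twelve $3$-edge colorings, but $V(K_{3,3},2)=0$ (\Cref{ex:K33Filtered}), so the even-matching contributions can cancel for nonplanar ribbon graphs and no planarity-free pairing argument can close this step. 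Two smaller points: the correct involution for an odd matching flips the smoothings at \emph{every} vertex of an odd cycle of $G\setminus M$ (\Cref{lem:cyclePairs}); flipping a single smoothing generally destroys the partial $2$-face coloring. And the factor $2^{\frac12|V|}$ comes from the matching-edge switches of \Cref{lem:MatchingSwitch} (flipping both endpoints of each matched edge), not from extra ``bubbled'' loops in the $1$-smoothed states.
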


Theorem~\ref{thm:oddMatchingsCancel} says that the filtered $n$-color homology may be viewed as a categorification of a famous formula due to Penrose (see page 240 of \cite{Penrose}) and to our knowledge represents the first independent verification of that formula. In fact, our proof explains his formula through the sums of face colorings instead of tensor evaluations (cf. \Cref{sec:VertexLee}).

Finally, we initialize the study of the vertex polynomial. The polynomial is derived by applying a vertex bracket, $V\left( \vertexbracketvertex \right) =  V\left( \vertexbracketzero \right) \ - \ V\left(\vertexbracketone \right)$, inductively to get a formal sum of states and then replacing each immersed circle in each state by a formal factor of $n\in\ZZ$, i.e.,  $V\left( \bigcirc  \right) = n$.  Although the definition of the vertex polynomial appears here for the first time in the literature, the polynomial is an archeological artifact of Roger Penrose's 1971 paper \cite{Penrose}. He must have known about it but did not define it explicitly  in that paper like he did the Penrose polynomial.  In  \Cref{sec:vertexPoly}, we briefly trace through the development of the computations by Penrose from which the polynomial can be inferred. One thing that becomes clear from that discussion is that, while Penrose could make sense of the evaluation of the polynomial at $n=2$ and $n=-2$ in terms of $3$-edge colorings  (Theorem~\ref{thm:oddMatchingsCancel}), the meaning of  the evaluation of the polynomial for $n>2$ was a complete mystery.  This may be one of the reasons why the vertex polynomial did not  ``take off'' in the literature like the Penrose polynomial did. Theorem~\ref{thm:evenOddRibbonGraphs} below provides this meaning by showing that the vertex polynomial is the signed count of  partial $n$-face colorings of oriented surfaces associated to the graph.  
 
Before stating Theorem~\ref{thm:evenOddRibbonGraphs}, we offer a couple of clarifying comments.  First, requiring the automorphism group of $G$ to be trivial is done to make the count specifically about the number of face colorings on ribbon graphs. There is a corresponding statement when it is not trivial (cf. \Cref{theorem:partial-coloring-ribbon-states}), but this requires some knowledge about how the filtered $n$-color vertex homology is defined to compute with it. In this sense, the theorem below can be read and understood without any specialized  knowledge of, say, TQFTs.  Second, the vertex polynomial is an abstract graph invariant.  To  show this, we prove that the vertex polynomial of all oriented ribbon  graphs of a connected trivalent graph are equal up to sign (cf. \Cref{thm:VInvt-of-oriented-ribbon-graph}).  The invariant can then be defined by choosing a ``nonnegative'' oriented ribbon graph (see \Cref{definition:abstract-vertex-poly}), which is an oriented ribbon graph $\Gamma$ such that $V(\Gamma,n)\geq 0$ for large $n\in\NN$. Note that all plane graphs are nonnegative.

\begin{restatable}{Theorem}{evenOddRibbonGraphs}
\label{thm:evenOddRibbonGraphs} 
Let $\Gamma$ be a nonnegative oriented ribbon graph of a connected trivalent graph $G$ with trivial automorphism group. The vertex polynomial of $G$, evaluated at $n\in\NN$, is
$$V(G,n) =  2\left(\# \left\{\mbox{\parbox{1.9in}{partial $n$-face colorings of all \\ distinct oriented {\bf\em even} ribbon graphs with respect to $\Gamma$}}\right\} \ - \ \# \left\{\mbox{\parbox{1.86in}{partial $n$-face colorings  of all distinct oriented {\bf\em odd} ribbon graphs with respect to $\Gamma$}}\right\}\right)$$
where a ribbon graph is even or odd with respect to $\Gamma$ based upon the parity of the number of half-twists that need to be inserted into its bands to make it equivalent to $\Gamma$.
\end{restatable}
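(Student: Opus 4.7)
The plan is to identify $V(G,n)$ with an Euler characteristic of the filtered $n$-color vertex homology and then use Lemma~\ref{lem:harmonicsToColors} to interpret its generators as partial $n$-face colorings of the various oriented ribbon graphs obtained from $\Gamma$ by inserting half-twists. First I would use the defining relation $V(G,n)=\llangle\Gamma\rrangle_{n}(1)$ together with Theorem~\ref{thm:gradedEuler} to rewrite $V(G,n)=\chi_q(VCH_n^{*,*}(\Gamma))\big|_{q=1}$, which is the ordinary (ungraded) Euler characteristic of the bigraded $n$-color vertex homology. Since the differential of the spectral sequence converging to $\widehat{VCH}_n^{*}(\Gamma)$ preserves homological grading, the Euler characteristic is preserved from the $E_1$-page to the $E_\infty$-page, so $V(G,n)=\chi(\widehat{VCH}_n^{*}(\Gamma))=\sum_i(-1)^i\dim\widehat{VCH}_n^{i}(\Gamma)$.

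Next I would invoke Lemma~\ref{lem:harmonicsToColors} to replace each dimension $\dim\widehat{VCH}_n^{i}(\Gamma)$ by a count of partial $n$-face colorings. The crucial step is to recognize that a vertex state $\nu\in\{0,1\}^{|V|}$ specifies local smoothings at the trivalent vertices of $\Gamma_\bullet$ that, via the ribbon band structure, are equivalent to inserting half-twists in the bands incident to those vertices where $\nu_i=1$. The smoothed diagram is then the ribbon diagram of a new oriented ribbon graph $\Gamma^{\nu}$ of $G$, and by construction the parity of $|\nu|$ matches the parity of the number of half-twists needed to transform $\Gamma$ into $\Gamma^{\nu}$. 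Thus the harmonic generators sitting in homological degree $i=|\nu|$ are in bijection with partial $n$-face colorings of $\Gamma^{\nu}$, which is even (respectively odd) with respect to $\Gamma$ exactly when $|\nu|$ is even (respectively odd).

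Third, I would collect multiplicities and reorganize the sum. Each partial $n$-face coloring contributes a two-dimensional subspace of harmonic generators; this matches the $n=2$ case of Theorem~\ref{thm:perfectMatchings}, where $\dim\widehat{VCH}_2^{0}(\Gamma)=2\cdot\#\{\text{perfect matchings}\}$, and originates in the evaluation $\llangle\bigcirc\rrangle_n(1)$ dictated by the Frobenius algebra that governs the filtered theory. The triviality of $\mathrm{Aut}(G)$ ensures that each state $\nu$ determines a distinct oriented ribbon graph $\Gamma^{\nu}$, so summing over $\nu$ and grouping by parity of $|\nu|$ produces the signed difference on the right-hand side of the theorem. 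The nonnegativity hypothesis together with Theorem~\ref{thm:VInvt-of-oriented-ribbon-graph} fixes the global sign so that the even class contributes positively and the odd class negatively, yielding the stated formula.

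The main obstacle will be the second step: establishing rigorously that changing a vertex smoothing from $0$ to $1$ corresponds precisely to inserting a half-twist in the incident ribbon bands, and that this local-to-global correspondence preserves the parity count. This requires a careful analysis of how the cyclic order of the three bands at each trivalent vertex is altered by a vertex smoothing and how the resulting ribbon diagram can be canonically identified with an oriented ribbon graph of $G$. Once that parity match is established, the remainder of the argument is largely bookkeeping, controlled by the invariance Theorem~\ref{thm:VInvt-of-oriented-ribbon-graph} and the precedent set in the $n=2$ proof of Theorem~\ref{thm:perfectMatchings}.
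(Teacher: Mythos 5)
Your first two reductions follow the paper's actual route: the paper first proves (\Cref{theorem:partial-coloring-ribbon-states}) that $V(\Gamma,n)$ equals the alternating sum, over all vertex states $\Gamma_\nu$, of the number of partial $n$-face colorings of $\Gamma_\nu$, exactly via the chain you describe (graded Euler characteristic at $q=1$, the spectral sequence of \Cref{theorem:spectral-sequence-converges}, and \Cref{lem:harmonicsToColors}); and the half-twist parity issue you flag as your ``main obstacle'' is resolved in the paper by citing part (1) of Theorem~6.17 of \cite{ColorHomology} together with the observation that a vertex $1$-smoothing inserts a half-twist into \emph{each of the three} bands at that vertex, so the number of half-twists is $3|\nu|$ and its parity agrees with that of $|\nu|$.

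The genuine gap is in your third step, where the factor of $2$ and the distinctness claim are both handled incorrectly. You assert that ``each state $\nu$ determines a distinct oriented ribbon graph $\Gamma^\nu$'' and then recover the factor of $2$ by claiming that each partial $n$-face coloring spans a two-dimensional space of harmonic generators. Neither holds. By \Cref{lem:harmonicsToColors} harmonic colorings of a state are in one-to-one (not two-to-one) correspondence with partial $n$-face colorings of that state, and the factor of $2$ in Theorem~\ref{thm:perfectMatchings} comes from the two-coloring swap at $n=2$ (two colorings per perfect matching), not from any coloring contributing a two-dimensional subspace; the loop value $\llangle\bigcirc\rrangle_n(1)$ is irrelevant here. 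Moreover, the map from vertex states of $\Gamma_\bullet$ to distinct oriented ribbon graphs is \emph{not} injective even when $\mathrm{Aut}(G)$ is trivial: the state $\nu$ and its complement (switch every vertex $0$-smoothing to a $1$-smoothing and vice versa) represent equivalent ribbon graphs --- the all-zero and all-one states of the theta graph are the standard example --- so the correspondence is exactly $2$-to-$1$. That $2$-to-$1$ map is the true source of the overall factor of $2$, and making the bookkeeping work also requires noting that $|\nu|$ and $|V|-|\nu|$ have the same parity because $|V|$ is even for a trivalent graph, a point your argument never needs to confront precisely because of the mistaken injectivity claim. As written, your accounting would either double-count colorings within a single state or count each ribbon graph once, and in either case the stated identity would not follow.
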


It is interesting to compare this theorem to Theorem~E of \cite{ColorHomology} on the Penrose polynomial. The face colorings  counted here are partial $n$-face colorings (see \Cref{def:PartialFace}) instead of proper $n$-face colorings counted by the Penrose polynomial. Thus, while faces with three distinct colors must be present at every vertex to be in the count of Theorem~E, this theorem allows for same-color faces to also be present at a vertex. Thus, the two polynomials are very different even though both provide information about $3$-edge colorings at $n=-2$  (cf. \Cref{eq:equalities-of-vertex-and-Penrose}).  For more details regarding the history of the vertex polynomial and some of its properties, see \Cref{sec:vertexPoly}.

\subsection{Organization of the paper}
The remainder of the paper is organized as follows.  \Cref{Sec:perfect-matching-graphs} outlines the necessary background on ribbon graphs.  \Cref{sec:polynomials} introduces the vertex polynomials under consideration.  \Cref{sec:nColor} reviews the construction of bigraded $n$-color homology.  \Cref{sec:vertexHomology} defines bigraded $n$-color vertex homology, and \Cref{sec:VertexLee} defines the filtered version, the spectral sequence, and derives various results using those tools.  \Cref{sec:TotalMatching} provides a discussion the vertex polynomial as well as a new polynomial invariant of a graph, which is the Poincar\'{e} polynomial of the filtered $n$-color vertex homology.  \Cref{sec:4valent} gives a brief discussion on how to define the homologies for $4$-regular graphs.  

Many of the tools used in this paper are derived directly from results in \cite{ColorHomology}.  To help the reader with the proofs in this paper, we often point to the relevant theorems and proofs there.  We recommend that the reader have that paper available for easy reference while reading this one.

\section{Perfect Matching Graphs}\label{Sec:perfect-matching-graphs}
In this section we recall the definition of a perfect matching graph, which is an equivalence class of decorated trivalent ribbon graphs. A {\em plane graph} $\Gamma$ is an embedding, $i:G\ra S^2$, of a connected, planar graph $G$ into the sphere. Ribbon graphs generalize plane graphs in that they allow for both non-planar embeddings of plane graphs into higher genus surfaces and for embeddings of nonplanar graphs into higher genus surfaces while retaining a key aspect of plane graphs: $S^2 \setminus i(G)$ is a set of disks.  Full details of the relationship between ribbon graphs and diagrams can be found in \cite{BKR}.

An abstract graph $G(V,E)$ is equivalent to a 1-dimensional CW complex where vertices of $V$ are identified with points. Each edge $\{v_i, v_j\}$ is identified with a unit interval $I=[0,1]$, and edges are glued together at coincident vertices.  For the sake of this paper, all graphs are {\em multigraphs}, which may have loops (edges with a single incident vertex) and multiple edges incident to the same two distinct vertices. Unless otherwise stated in a theorem, all graphs are assumed to be connected. Finally, ``vertex-less'' edges are allowed, i.e., edges that can be represented in the plane as (possibly immersed) circles and are also considered loops.

\subsection{Ribbon graphs}\label{Subsec:ribbongraphs}
A perfect matching graph is an equivalence class of trivalent graphs with extra structure. One of these structures is that of a ribbon graph. For a detailed introduction to ribbon graphs see \cite[Section 1.1.4]{Moffat2013}. 
\begin{definition}\label{Def:ribbongraph}
A {\em ribbon graph of a graph $G$} is an embedding $i:G\ra \Gamma$ where $G$ is thought of as a $1$-dimensional CW complex and $\Gamma$ is a surface with boundary where $\Gamma$ deformation retracts onto $i(G)$.  We say that \( G \) is the \emph{underlying graph} of \( \Gamma \), and that \( \Gamma \) is \emph{the surface associated to} the ribbon graph. 
\end{definition}

We will often refer to the ribbon graph simply by $\Gamma$ and think of $\Gamma$ as a surface with an embedded graph $G$. An orientation of a ribbon graph, if one exists, is an orientation of the surface. For defining the vertex homology we will generally begin with an oriented ribbon graph, but will also make use of nonorientable ribbon graphs in defining some of the complexes. Let \( \overline{\Gamma} \) denote the closed smooth surface obtained by attaching discs to the boundary of $\Gamma$. The embedding of $G$ into the surface \( \overline{\Gamma} \) is known as a {\em 2-cell embedding}.\footnote{Such an embedding is also known as a {\em cellular embedding} or {\em cellular map}.}

Let \( \Gamma_1 \) and \( \Gamma_2  \) be ribbon graphs.  We say that \( \Gamma_1\) and \( \Gamma_2 \) are \emph{equivalent} ribbon graphs if there is a homeomorphism \( f : \overline{\Gamma}_1 \rightarrow \overline{\Gamma}_2 \)  that induces an isomorphism from  $G_1$  to $G_2$. Thus, one can define the {\em genus of a ribbon graph $\Gamma$} to be the genus of the associated closed smooth surface $\overline{\Gamma}$. 

Ribbon graphs get their name from the topological construction of attaching bands (ribbons) to disks.  Given a graph \( G \), a ribbon graph $\Gamma$ is determined by a cyclic ordering of the edges at every vertex and a sign for each edge: The ribbon graph is obtained by taking a disk for every vertex of \( G \),  gluing bands as prescribed by the edges and their cyclic ordering and inserting a half twist for each negative edge. Thus, the vertices (edges) of \( G \) are in bijection with the discs (bands) of \( \Gamma \), and we shall not distinguish between them, referring to them as the \emph{vertices} and \emph{edges} of \( \Gamma \). The {\em faces} of $\Gamma$ are the disks given by the complement of the graph in $\overline{\Gamma}$.

\Cref{Fig:ribbons} shows that two distinct ribbon graphs may have the same underlying abstract graph. These ribbon graphs are distinguished by the number of boundary components of their associated surfaces.

\begin{figure}
\includegraphics[scale=0.75]{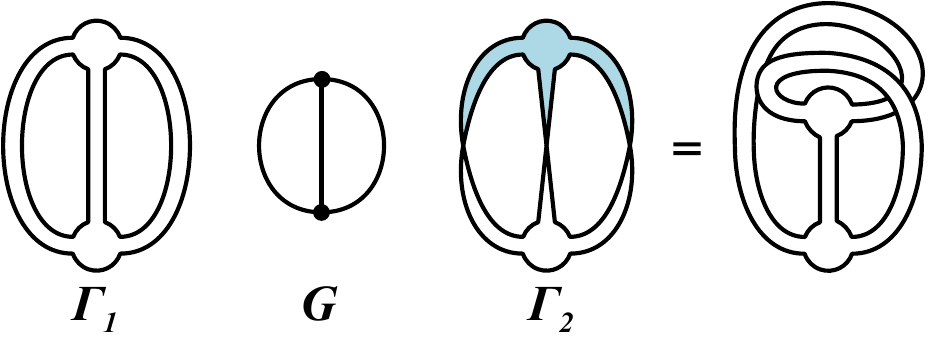}
\caption{Distinct ribbon graphs \( \Gamma_1 \) and \( \Gamma_2 \) with the same underlying graph \( G \).}
\label{Fig:ribbons}
\end{figure}

In this paper,  ribbon graphs are represented by the following diagrams.
\begin{definition}[Ribbon diagram, cf. \cite{BKR}]\label{Def:ribbondiagram}
A \emph{ribbon diagram} is a graph drawn in the plane (with possible intersections between its edges), with vertices decorated by circular regions, \raisebox{-6pt}{\includegraphics{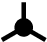}}, to distinguish between vertices and edge intersections. A cyclic ordering of the edges at a vertex is given implicitly by such a diagram, i.e.,\ it is given by their ordering in the plane. 
\end{definition} 

An example of a ribbon diagram is given in \Cref{Fig:ribbondiagram}.  A ribbon diagram can be used to construct an oriented ribbon graph using disks for vertices and bands for edges.  If \( \Gamma \) is obtained from a ribbon diagram, \( D \), in this manner we say that \( D \) \emph{represents} \( \Gamma \).  One could also define a ribbon graph as an equivalence class of ribbon diagrams, up to a set of ribbon moves (cf. \cite{BKR}).  In this sense, we may think of ribbon graphs and ribbon diagrams (modulo equivalence via the ribbon moves) as the same thing and use the terms interchangeably.

If desired, a {\em sign} of plus or minus one can be associated to each edge. For each negative edge, a half twist is introduced into the band that is glued to the disks of its incident vertices. A positive edge is denoted by a solid line and a negative edge denoted by a dotted line. Such graphs are called {\em signed ribbon diagrams} or {\em signed rotation systems} in the literature (cf. Section 6.7 of \cite{ColorHomology}, see also \cite{Moffat2013} or \cite{MT}). Since theorems in this paper involve applying local operations in going from $0$-smoothings to $1$-smoothings, which do not effect crossing data coming from the twisted bands, all theorems apply to signed ribbon diagrams. Thus, we only refer to ribbon graphs in the theorems, which may require ribbon diagrams with negative edges to represent them.

\begin{figure}
\includegraphics{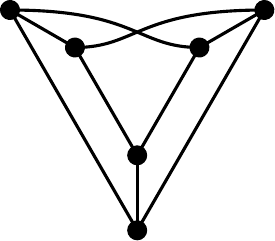}
\caption{A ribbon diagram of a $K_{3,3}$ ribbon graph. Note that the closed oriented surface associated to the ribbon graph is a torus.}
\label{Fig:ribbondiagram}
\end{figure}

\subsection{Perfect matching graphs}\label{Subsec:pm-graphs}
In \cite{BaldCohomology}, a plane graph with a perfect matching was called a perfect matching graph.  The notion of a perfect matching graph  can be generalized to any ribbon graph by decorating the ribbon diagram with a perfect matching (cf. ``matched diagram'' of \cite{BKR} for an example of a perfect matching graph with further decorations). A perfect matching is a set of edges that ``match'' every vertex to exactly one other vertex:

\begin{definition}\label{Def:pm}
A \emph{perfect matching} of an abstract graph $G(V,E)$ is a subset of non-loop edges of the graph, $M\subset E$, such that each vertex is incident to exactly one edge in the subset. 
\end{definition}
The term {\em matching} is used in graph theory for any subset of non-loop edges of the graph where each vertex is incident to either zero and one edge in the subset; the term \emph{perfect} here refers to the fact that every vertex is incident to exactly one matched edge. 

The objects of the main theorems of this paper are equivalence classes of ribbon graphs.  Perfect matching graphs are used to define invariants for them.  An ordered pair of a ribbon graph with a perfect matching, $(\Gamma, M)$, is called a perfect matching graph, and represented using a ribbon diagram.  Examples of perfect matching graphs are given in \Cref{Fig:matcheddiagrams}.   

\begin{definition}\label{Def:pm-graph}
A \emph{perfect matching graph}, denoted $\Gamma_M$, is a ribbon graph, $i:G\ra \Gamma$, together with a perfect matching $M$ of the graph $G$. We represent the perfect matching in a ribbon diagram of $\Gamma$ using thickened edges. 
\end{definition}

\begin{figure}[H]
\includegraphics[scale=0.75]{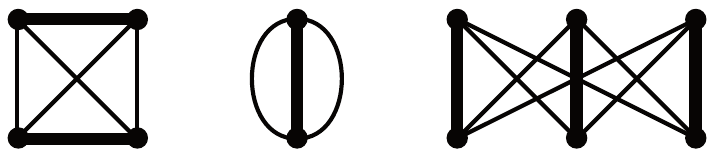}
\caption{Perfect matching graphs.}
\label{Fig:matcheddiagrams}
\end{figure}

\subsection{The blowup of a graph}\label{Subsec:blowup} While perfect matching graphs play a central role in many of the constructions in this paper, we will generally work with the {\em blowup of the graph}, which has a canonically defined perfect matching.   

\begin{definition}
Let $G(V,E)$ be a trivalent graph and $\Gamma$ be a ribbon graph of $G$ represented by a ribbon diagram.  Define the {\em blowup of $\Gamma$}, denoted $\Gamma^\flat$, to be the ribbon diagram given by replacing every vertex of $\Gamma$ with a cycle as in the following picture:
\begin{center}
\includegraphics[scale=.55]{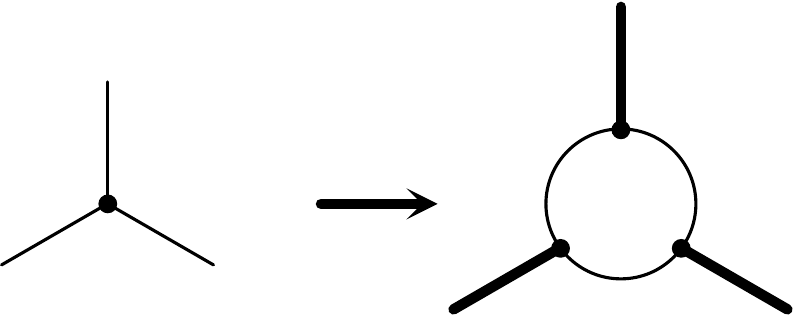}
\end{center}
The blowup has a canonical perfect matching given by the original edges of $\Gamma$. Thus, the resulting perfect matching graph is $\Gamma_E^\flat$.  \label{def:blowup-of-a-graph}
\end{definition}

In \Cref{sec:vertexHomology}, we will construct the vertex homology.  The following definition will be of use in that context.

\begin{figure}[H]
\includegraphics[scale=.85]{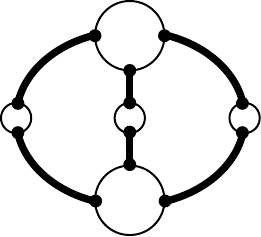}
\caption{The bubbled blowup of the theta graph.}
\label{fig:bubbledBlowUpTheta}
\end{figure}

\begin{definition}
Let $G(V,E)$ be a trivalent graph and $\Gamma$ be a ribbon graph of $G$ represented by a ribbon diagram.  Define the {\em bubbled blowup of $\Gamma$}, denoted $\Gamma^B$, to be the ribbon diagram given by replacing every perfect matching edge of $\Gamma_E^\flat$ with two perfect matching edges and a $2$-cycle as in the following picture:
\begin{center}
\includegraphics[scale=.5]{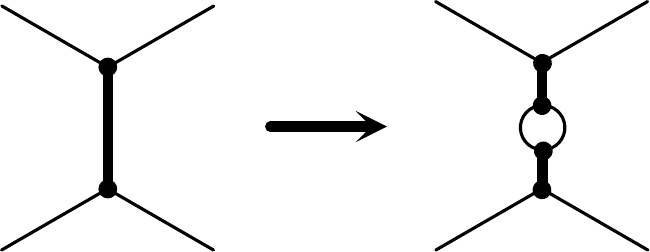}
\end{center}
If $E$ is the set of edges of $G$, then $\Gamma^B$ has a canonical perfect matching $E'$ which consists of two edges for every edge as shown above (the two ``half-edges'' of $e$).  We will refer to the perfect matching graph $\Gamma_{E'}^B$ as simply $\Gamma^B$.   \label{def:bubbled-blowup-of-a-graph}
\end{definition}

Most of the invariants in this paper are defined using a perfect matching graph. There are often many different perfect matchings for the same graph.  However, the canonical perfect matchings given by the blowup and bubbled blowup lead to invariants that depend only on the ribbon structure.  When the graph is also trivalent, Theorem 6.17 of \cite{ColorHomology}, combined with these canonical perfect matchings, allows the construction of abstract graph invariants (cf. \cite{BKM2}). \\

\section{The $n$-Color Vertex Polynomial of a Ribbon Graph}\label{sec:polynomials}
We now define the $n$-color vertex polynomial of a connected trivalent ribbon graph $\Gamma$ of an abstract graph $G(V,E)$. The construction involves resolutions that occur at the vertices of the ribbon graph. All definitions of invariants in this paper are based upon ribbon diagrams, which are then shown to be independent of the ribbon diagram chosen.  For example, the vertex polynomials and vertex homology theories are defined from the vertex ribbon diagram, which we define next.  

\begin{definition} \label{def:all-zero-state}
Let $G(V,E)$ be an abstract trivalent graph with a ribbon graph given by a ribbon diagram $\Gamma$.  Let $\Gamma^\flat_E$ be the blowup of $\Gamma$ with its canonical perfect matching $E$.  The {\em all-zero state} is the result of replacing each positive edge $\PMEdgeDiag$ in $\Gamma^\flat_E$ with a  $\IIDiag$ and replacing each negative edge   $\NegPMEdgeDiag$ in $\Gamma^\flat_E$  with a $\XDiag$. 
\end{definition}

Note that the circles in the all-zero state correspond to the faces of the ribbon graph $\Gamma$.

\begin{definition} \label{def:vertex-graph-diagram}
Let $G(V,E)$ be an abstract trivalent graph with a ribbon graph given by a ribbon diagram $\Gamma$. Let $\Gamma^\flat_E$ be the blowup of $\Gamma$ with its canonical perfect matching $E$.  The {\em vertex ribbon diagram}, denoted $\Gamma_\bullet$, is the all-zero state of $\Gamma^\flat_E$ together with dots  placed where each vertex of the graph was before taking the all-zero resolution, i.e., a $\bullet$ is placed in the region where the three face(s) are incident to the vertex (see the right-hand side  of \Cref{fig:blowup-and-vertex}). 
\end{definition}

\begin{definition}\label{def:vertexPoly}
Let $\Gamma$ be a ribbon diagram for a connected trivalent graph $G(V,E)$ and $n\in\NN$. Let $m= \frac{n}{2}$ if $n$ is even and $\frac{n-1}{2}$ if $n$ is odd.  The {\em $n$-color vertex polynomial}, denoted $\llangle \Gamma \rrangle_{n}$, is characterized by applying the following rules to the vertex ribbon diagram $\Gamma_\bullet$:

\begin{eqnarray}
\bigg\llangle \vertexbracketvertex \bigg\rrangle_{n} &=&  \bigg\llangle \vertexbracketzero \bigg\rrangle_{n} \ - \ q^{3m} \bigg\llangle\vertexbracketone \bigg\rrangle_{n} \label{eq:vertex-bracket}\\ [.2cm]
\label{eq:immersed_circle}
\bigg\llangle \bigcirc  \bigg\rrangle_{n} & = &\left\{ \begin{array}{ll} q^m+\cdots+q+1+q^{-1}+\cdots+q^{-m+1} \ \ \ \ & \mbox{ if $n$ even} \\[.3cm]
q^m+\cdots+q+1+q^{-1}+\cdots+q^{-m+1}+q^{-m} & \mbox{ if $n$ odd}\\
\end{array}\right. \\[.2cm]
\bigg\llangle \Gamma_1 \sqcup \Gamma_2 \bigg\rrangle_{n}&=& \bigg\llangle \Gamma_1 \bigg\rrangle_{n} \cdot \bigg\llangle \Gamma_2 \bigg\rrangle_{n}\label{eq:vertex-disjoint-union}
\end{eqnarray}

\end{definition}

In Equation~\eqref{eq:vertex-bracket}, the \vertexbracketzero \  will be referred to as the {\em vertex 0-smoothing} while the \vertexbracketone will be referred to as the {\em vertex 1-smoothing}.  When inductively applying \Cref{eq:vertex-bracket}, any immersed circle that appears will be referred to as a \emph{loop}.  The expression in Equation~\eqref{eq:immersed_circle} will be referred to as the \emph{loop polynomial}, and the evaluation of the loop polynomial at $q=1$ will be referred to as the \emph{loop value}. In fact, the loop polynomials in Equation~\ref{eq:immersed_circle} are just Chebyshev polynomials, $\Delta_{n-1}$, defined inductively by $\Delta_0=1$, $\Delta_1=d$, and $\Delta_{n+1}=d\Delta_n - \Delta_{n-1}$,  for the choice of $d=q^\frac12+q^{-\frac12}$, and then shifted by $q^{\frac12}$ when $n$ is even: 
\begin{equation}
\bigg\llangle \bigcirc  \bigg\rrangle_{n} = \left\{ \begin{array}{ll} q^{\frac12}\Delta_{n-1} \ \ \ \ & \mbox{ if $n$ even,} \\[.3cm]
\Delta_{n-1} & \mbox{ if $n$ odd.} \end{array}\right.
\end{equation}
Hence, the loop polynomials for $n$ odd are just the usual quantum integers as defined in \cite{MOY}, i.e., $\llangle \bigcirc\rrangle_{n}=[n]$.  However, when $n$ is even, where many of the interesting face coloring results occur for graphs on surfaces, the loop polynomials must be shifted, i.e., $\llangle \bigcirc\rrangle_{n}=q^{\frac12} [n]$, for the polynomials to be categorified into the homology theories of this paper. This hints at how the quantum invariants of surfaces differ from those in knot theory. 

If desired, one could use even gradings for our homology theories, like in \cite{KhoRoz1} and \cite{KhoRoz2}, by starting with $d=q+q^{-1}$ and setting $\llangle \bigcirc\rrangle_{n}=\Delta_{n-1}$ for $n$ odd and  $\llangle \bigcirc\rrangle_{n}= q \, \Delta_{n-1}$ for $n$ even. Everything in this paper will go through with only superficial modifications (see Remark 9.5 in \cite{ColorHomology}). Note that the loop polynomial will still need to be shifted by $q$ when $n$ is even to get valid quantum invariants of surfaces. 

It is instructive to calculate the $2$-color vertex polynomial (and hypercube generated by it) for the theta graph $\theta$. First, the blowup of $\theta$ and the  vertex ribbon diagram $\Gamma_\bullet$ are shown in \Cref{fig:blowup-and-vertex}.

\begin{figure}[H]
\psfragscanon
\psfrag{b}{$\flat$}\psfrag{=}{$=$}
\psfrag{G}{$\Gamma_\bullet  \ = $}
\includegraphics[scale=.7]{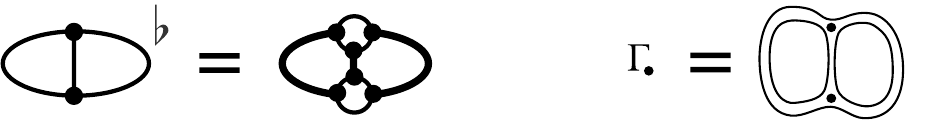}
\caption{The blowup and vertex ribbon diagram of the theta graph.} \label{fig:blowup-and-vertex}
\end{figure}

\noindent Using Equation~\eqref{eq:vertex-bracket} on $\Gamma_\bullet$ in \Cref{fig:blowup-and-vertex} gives four states, which can be arranged into a hypercube as shown in \Cref{fig:vertex-state-of-theta}. Later, it will be useful to identify each of these states with a corresponding state in the hypercube of states for the bubbled blowup (cf. \Cref{subsec:cube-of-vertex-res}).  
\begin{figure}[H]
\psfragscanon
\psfrag{b}{$\flat$}\psfrag{=}{$=$}
\psfrag{G}{$\Gamma_\bullet  \ = $}
\includegraphics[scale=.17]{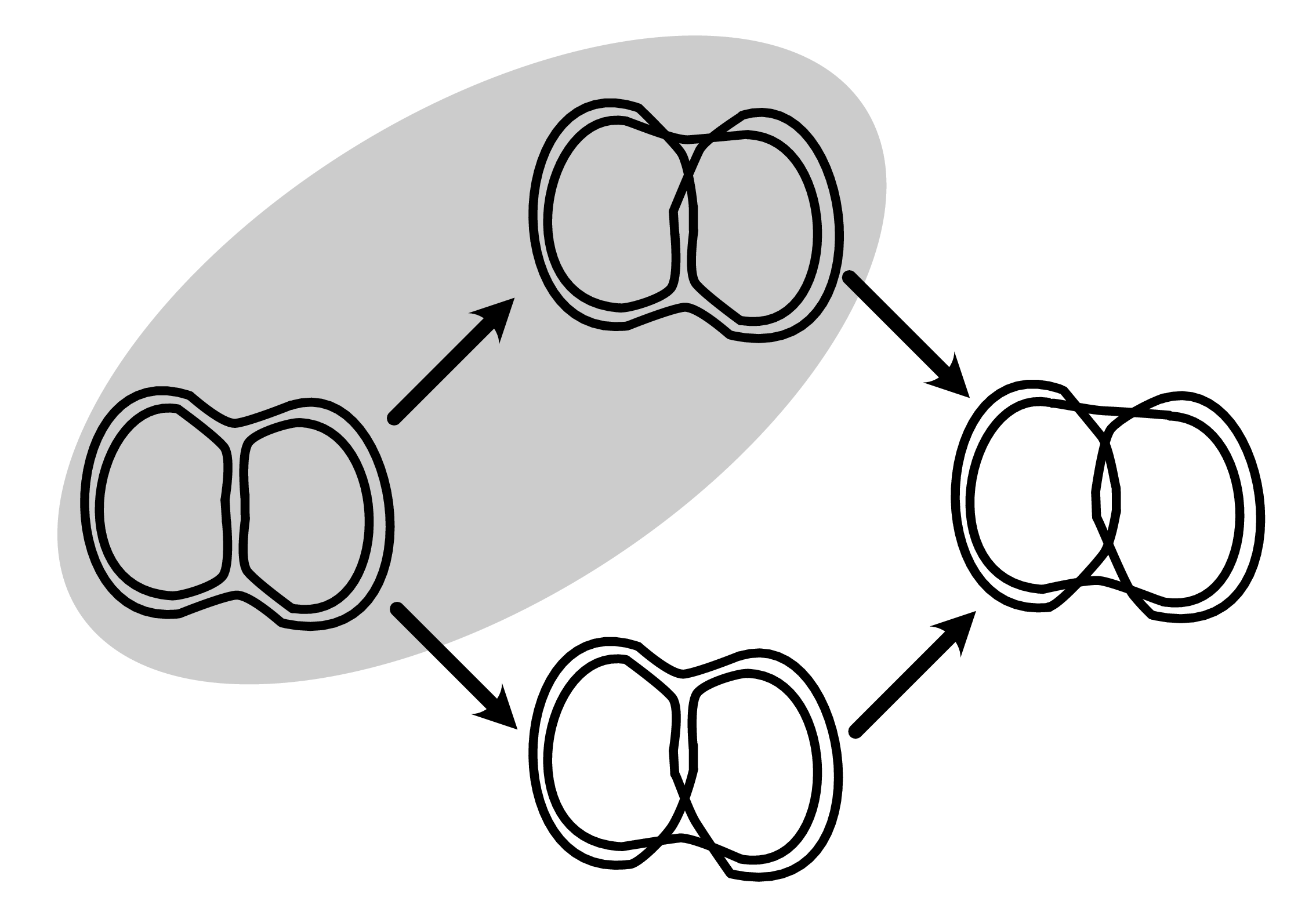}
\caption{The hypercube of vertex states for the  theta graph.}\label{fig:vertex-state-of-theta}
\end{figure}
The $2$-color vertex polynomial can be calculated from the hypercube to get, \begin{equation} \llangle \Gamma \rrangle_{2} (q)=(q+1)^3 - q^3(q+1) - q^3(q+1)+q^6(q+1)^3,\label{eq:vertex-poly-of-theta}
\end{equation}
or $\llangle \Gamma \rrangle_{2}(q) = 1+3q + 3q^2 - q^3 - 2q^4 + q^6 + 3q^7 + 3q^8+ q^9$.

As we will see,  Theorem~\ref{thm:gradedEuler}  and Theorem~\ref{thm:oddMatchingsCancel} together imply that $ \llangle \Gamma \rrangle_{2}(1) =  12$. Evaluating \Cref{eq:vertex-poly-of-theta} at $q=1$ confirms this.  \Cref{app:computations} includes  Mathematica code for computing the  $n$-color  vertex polynomials for $n=2,3,$ and $4$. In particular, \Cref{ex:theta} provides the VPD code for this example.

We conclude this section by observing that, by evaluating the $n$-color vertex polynomial at $q=1$, one obtains a related polynomial that we call, simply, the \emph{vertex polynomial}.\footnote{The term ``vertex polynomial'' shows up a couple of times in the literature, but it appears to be more of an accounting device for recording a tuple than a polynomial that has meaning when evaluated.}

\begin{definition}\label{defn:VertexPenrose}
Let $G(V,E)$ be an abstract trivalent graph with ribbon diagram $\Gamma$.  Let $\Gamma_\bullet$ be the vertex ribbon diagram of $\Gamma$.  The {\em vertex polynomial}, $V(\Gamma,n)$, is characterized by applying the following rules to the vertex ribbon diagram $\Gamma_\bullet$ for $n\in\ZZ$:
\begin{eqnarray}
V\left( \vertexbracketvertex \right) &=&  V\left( \vertexbracketzero \right) \ - \ V\left(\vertexbracketone \right) \label{eq:vertexP-bracket}\\ [.2cm]
\label{eq:VPimmersed_circle}
V\left( \bigcirc  \right)& = & n\\[.2cm]
V(\Gamma_1 \sqcup \Gamma_2)&=& V(\Gamma_1) \cdot V( \Gamma_2)\label{eq:vertexP-disjoint-union}
\end{eqnarray}
\end{definition}

Several properties of the vertex polynomial will be discussed in \Cref{sec:vertexPoly}. Also, we show how to upgrade this definition to an abstract graph invariant with vertices of any valence.

\begin{remark}\label{rem:definingMaps}
Note the edge in the shaded region of \Cref{fig:vertex-state-of-theta}.  Following the main ideas in \cite{BaldCohomology} and \cite{ColorHomology}, to define a homology theory (see also \cite{Kho}), one associates a vector space $V$ to each circle in a state and defines a map between tensor products, which in this case would be $V^{\otimes 3}\rightarrow V$.  Since this map involves all three circles, it is not clear how to define this map using a Frobenius algebra with multiplication and comultiplication maps. In this paper, we embed the hypercube of vertex states into a hypercube of states used in the bigraded $n$-color homology, which we describe next.
\end{remark}

\section{Bigraded $n$-Color Homology}
\label{sec:nColor}

We will need the bigraded $n$-color homology to define the $n$-color vertex homology.  We recall the basic construction of the bigraded $n$-color homology here and refer the reader to \cite{ColorHomology} for more details.  In addition to the homological grading, there is also a quantum grading.  The {\em graded (or quantum) dimension}, $\qdim$, of a graded vector space $V=\oplus_i V^i$ is the polynomial in $q$ defined by $$\qdim(V)=\sum_i q^i\dim(V^i).$$ 

\noindent For a graded vector space $V$, we can shift the grading by $\ell$ to get a new graded vector space, $V\{\ell\}$, defined by $$(V\{\ell\})^m=V^{m-\ell}.$$
Clearly, $\qdim(V\{\ell\}) = q^\ell\cdot\qdim(V)$.  We allow  $\qdim$ to be a polynomial in integer powers by shifting by integer amounts.\\

The complex is based on the $n$-dimensional vector space $V=\mathbbm{k}[x]/ (x^n) = \langle 1, \ldots, x^{n-1} \rangle$ where $\mathbbm{k}$ will usually be taken to be $\CC$.  With some exceptions, $\mathbbm{k}$ cannot be $\ZZ$ or $\QQ$ since the ring must contain $\sqrt{n}$.  The grading is given in \Cref{table:quantum-gradings} depending on the parity of $n$ ($m=n/2$ if $n$ is even, $m=(n-1)/2$ if $n$ is odd).
\medskip
\begin{table}[H]
\begin{tabular}{|l|l|}
\hline
{\bf $n$ even, $m=n/2$} & {\bf $n$ odd, $m=(n-1)/2$ }\\ \hline
$\deg 1 =m$ & $\deg 1 = m$ \\ 
$\deg x = m-1$ & $\deg x = m-1$\\
\ \ $\vdots$ & \ \ $\vdots$\\ 
 $\deg x^m = 0$ &  $\deg x^m = 0$\\
\ \ $\vdots$ & \ \ $\vdots$\\ 
$\deg x^{n-1} = 1-m$ & $\deg x^{n-2} = 1-m$\\
\mbox{} & $\deg x^{n-1} = -m$\\
\hline
\end{tabular}
\caption{Quantum gradings of $V$ when $n$ is even or odd.}
\label{table:quantum-gradings}
\end{table}

\noindent Note that the quantum grading $\qdim V^{\otimes k}$ is the $k^{th}$ power of the loop polynomial (see Equation~\eqref{eq:immersed_circle}).

\subsection{Smoothings, states, and hypercubes}
\label{section:smoothings-states-hypercubes}
Let $G(V,E)$ be a planar trivalent graph with perfect matching $M$. Let $\Gamma_M$ be any perfect matching graph of it.
Index the edges of $M$ from one to $\ell$: $M=\{e_1,\dots,e_\ell\}$.  Let $\Gamma_\alpha$ be a state indexed by $\alpha\in\{0,1\}^\ell$, where each $\alpha_i$ in $\alpha=(\alpha_1,\dots, \alpha_\ell)$ represents doing either a $0$-smoothing  $\IIDiag$ or $1$-smoothing  $\XDiag$ at $e_i$.\\

It is useful to conceptualize the set of states as a hypercube in which each state is a vertex of the cube with edges between states  determined as follows: A directed edge from $\Gamma_\alpha$ to $\Gamma_{\alpha'}$ in the hypercube occurs when $\alpha_i = \alpha_i'$ for all $i$ except one edge $e_k\in M$ where $\alpha_k =0$ and $\alpha_k'=1$.  Label this edge by a tuple of $0$'s and $1$'s given by the $\alpha_i$'s and $\alpha_i'$'s that are the same and a ``$\star$'' for the $k$th position where $\alpha_k = \alpha_k' -1$.  For example, \Cref{fig:P3L} shows the hypercube of states for the blowup of the $\theta$ graph. In general, we call this conceptualization the {\em hypercube of states of $\Gamma$}.

\begin{figure}[h]
\includegraphics[scale=.8]{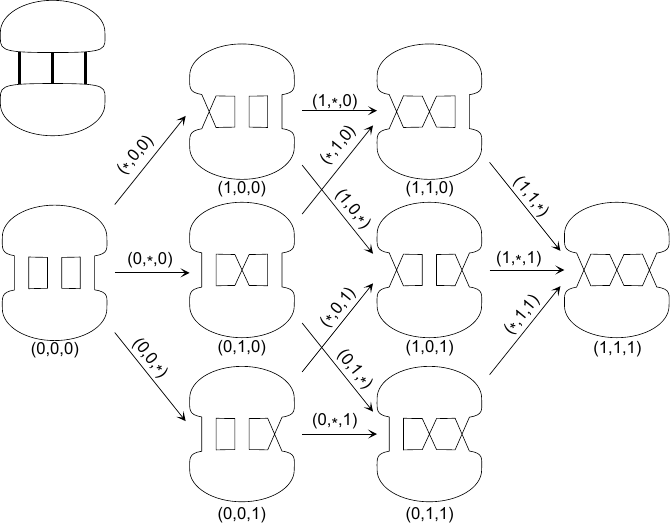}
\caption{Hypercube of states for the blowup of the $\theta$ graph.}
\label{fig:P3L}
\end{figure}

For $\alpha\in\{0,1\}^\ell$, $k_\alpha$ is the number of circles in the state $\Gamma_\alpha$, and $|\alpha|$ is the sum of the $\alpha_i$'s.  For example, in \Cref{fig:P3L}, for $\alpha  = (1,0,0)$, $k_\alpha =2$, and $|\alpha| = 1$.

\subsection{The differential chain complex for $\Gamma$}  
\label{subsection:differential}

To build the chain complex for the $n$-color homology,  associate a graded vector space to the states of a perfect matching graph $\Gamma_M$.  For each $\alpha\in\{0,1\}^\ell$, the associated graded vector space is $$\large V_\alpha = \large V^{\otimes k_\alpha}\!\!\left\{m |\alpha|\right\}.$$
Define the complex $C^{*,*}(\Gamma_M)$ by

$$C^{i,*}(\Gamma_M)=\bigoplus_{\substack{\alpha\in\{0,1\}^\ell \\ i=|\alpha|}}V_\alpha.$$
The homological and internal gradings ($q$-grading) are both integer valued, with the latter being defined by the grading of the elements in $V_\alpha$.  For a monomial element $v\in V_\alpha\subset C^{*,*}(\Gamma_M)$, the homological grading $i$ and the $q$-grading $j$ satisfy:
\begin{eqnarray*}
i(v) & =& |\alpha|,\\
j(v) &=& \deg(v)+ m |\alpha|,
\end{eqnarray*}
where $\deg(v)$ is the degree of $v$ as an element of  $V^{\otimes k_\alpha}$ of $V_\alpha$ prior to shifting the grading by $m |\alpha|$. The complex is trivial outside of $i=0, \dots, \ell$. 

To define the differential for the bigraded $n$-color homology, $\del: C^{i,*}(\Gamma_M) \rightarrow C^{i+1,*}(\Gamma_M)$, consider each edge $\Gamma_\alpha \rightarrow \Gamma_{\alpha'}$ in the hypercube and define a map, $\del_{\alpha\alpha'}:V_\alpha \ra V_{\alpha'}$ between $V_\alpha \subset C^{i,j}(\Gamma_M)$ and $V_\alpha' \subset C^{i+1,j}(\Gamma_M)$.  As in the case of  Khovanov homology (cf. \cite{Kho}), this map is determined by the change in the number of circles between $\Gamma_\alpha$ and $\Gamma_{\alpha'}$:  multiplication, $m$, if two circles in $\Gamma_\alpha$ are merged into one and comultiplication, $\Delta$, if one circle splits into two.  Unlike Khovanov homology, there is a third map, $\eta$, where the number of circles is unchanged (cf. \cite{BaldKauffMc}).  The differential can then be succinctly written using the local maps:
\begin{eqnarray} m(x^i \ot x^j) &=&  \left\{ \begin{array}{ll} \ \ x^{i+j}  \ \ \ \ & \text{if } i+j < n \\[.3cm]
0 & \text{otherwise,}
\end{array}\right.\label{eq:wide-hat-differential-m}\\
\Delta(x^k) &=&  \sum_{\substack{0 \leq i,j < n \\ i+j = k + 2m}} x^i \ot x^j, \mbox{\ and} \nonumber \label{eq:wide-hat-differential-delta}\\
\eta(x^k)&=&  \left\{ \begin{array}{ll} \ \ \sqrt n x^{k+m} \ \ \ \ & \text{if } k+m < n \\[.3cm]
0 & \text{otherwise,}
\end{array}\right.\nonumber \label{eq:wide-hat-differential-eta}
\end{eqnarray}
In the formula, $m$ is the integer defined above depending on the parity of $n$. 

One may check that the maps as described make diagrams corresponding to faces of the hypercube commute, hence signs may be chosen for each map to make them anticommute (see Section 4.2 of \cite{ColorHomology}).  The map $\del_{\alpha \alpha'}:V_\alpha\rightarrow V_{\alpha'}$ can now be defined as the identity on the vector spaces associated with circles that do not change and, up to sign, either $m, \Delta$ or $\eta$ on the vector space(s) associated with circles that are affected by the change from a $0$-smoothing in $\Gamma_\alpha$ to a $1$-smoothing $\Gamma_{\alpha'}$.  Thus, we obtain a chain complex.\\

\begin{theorem}[cf. Section 4.2 of \cite{ColorHomology}] The sequence $(C^{i,*}(\Gamma), \del^i)$ is a differential chain complex, that is, $\del^{i+1}\circ \del^i = 0$ with bigrading $(1,0)$. \label{thm:A-Chain-Complex}
\end{theorem}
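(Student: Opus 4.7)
The plan is to prove the theorem in two parts: first verify that each local map carries bidegree $(1,0)$, then show that the square of the total differential vanishes by checking commutativity of the $2$-faces of the hypercube (and absorbing the resulting signs into the definition of $\del$).

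For the bidegree, the homological grading is immediate: each edge $\Gamma_\alpha \to \Gamma_{\alpha'}$ in the hypercube increases $|\alpha|$ by exactly one, so $i(\del v) = i(v) + 1$. For the quantum grading, I would check each of $m,\Delta,\eta$ separately against the table in \Cref{table:quantum-gradings}, noting that $\deg x^k = m - k$ in the unshifted algebra. For multiplication, a simple tensor $x^i \otimes x^j$ has unshifted degree $2m - i - j$, while its image $x^{i+j}$ has degree $m - (i+j)$, a drop of $m$; this is compensated exactly by the $+m$ increase in the shift $\{m|\alpha|\}$ picked up when $|\alpha|$ goes up by one. The same cancellation happens for $\Delta$, where the constraint $i+j = k+2m$ forces $\deg(x^i \otimes x^j) = -k$, and for $\eta$, where $x^k \mapsto \sqrt{n}\, x^{k+m}$ drops the unshifted degree by $m$. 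Hence each local piece, and therefore $\del$, lies in bidegree $(1,0)$.

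For $\del^{i+1} \circ \del^i = 0$, I would decompose $\del = \sum_{\alpha \to \alpha'} \del_{\alpha\alpha'}$ and expand the square, grouping terms according to the $2$-faces of the hypercube: each pair of indices $(e_k,e_{k'}) \in M \times M$ with $k \neq k'$ contributes a square
\[
\del_{\alpha''\alpha'''} \circ \del_{\alpha\alpha''} \quad \text{versus} \quad \del_{\alpha'\alpha'''} \circ \del_{\alpha\alpha'},
\]
where $\alpha, \alpha', \alpha'', \alpha'''$ are the four vertices of the $2$-face obtained by toggling the $k$th and $k'$th coordinates. It suffices to prove that the two compositions around every such square agree as linear maps; a standard choice of signs (indexed by the number of $1$'s preceding each toggled coordinate in lexicographic order) then turns commutativity into anticommutativity, forcing $\del^2 = 0$.

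The heart of the matter is therefore a case analysis of these $2$-faces. I would split into cases according to the local interaction of the two edges $e_k, e_{k'}$ with the circles of the state: (i) the two smoothing changes touch disjoint collections of circles (in which case the maps act on different tensor factors and commute trivially by functoriality of $\otimes$), or (ii) they share one or more circles, producing intersecting local configurations. In case (ii), each pair of maps from $\{m,\Delta,\eta\}$ must be composed in both orders and compared using the explicit formulas; the Frobenius identities handle the $m/\Delta$ interactions as in Khovanov homology, while the novel interactions involving $\eta$ are precisely where the coefficient $\sqrt{n}$ earns its keep, so that $\eta^2$, $m \circ (\eta \otimes \mathrm{id})$, $\Delta \circ \eta$, etc.\ balance correctly modulo the relation $x^n = 0$. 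This $\eta$-analysis is the step I expect to be the main obstacle, since it is new compared to the Khovanov setting and requires verifying that the scaling by $\sqrt{n}$ makes every square of maps commute on every monomial basis element. Since exactly these verifications were carried out in Section 4.2 of \cite{ColorHomology} for the $n$-color homology of an arbitrary perfect matching graph, and since $(C^{i,*}(\Gamma_M),\del)$ is defined by the same local recipe here, I would invoke that reference to complete the case analysis, thereby concluding $\del^{i+1}\circ\del^i = 0$.
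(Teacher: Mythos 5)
Your proposal is correct and follows essentially the same route as the paper, which likewise reduces $\del^2=0$ to commutativity of the $2$-faces of the hypercube (with signs given by the number of $1$'s preceding the $\star$) and defers the local $m/\Delta/\eta$ verifications to Section 4.2 of \cite{ColorHomology}. Your explicit check that each of $m$, $\Delta$, $\eta$ drops the unshifted degree by exactly $m$, cancelling the shift $\{m|\alpha|\}$, is a detail the paper leaves implicit but is exactly the intended justification of the bidegree $(1,0)$.
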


The $n$-color homology is the homology of this chain complex:

\begin{definition}
Let $G(V,E)$ be a trivalent graph, $M\subset E$ a perfect matching, and $\Gamma_M$ be a perfect matching graph for $(G,M)$.  The {\em $n$-color homology of $\Gamma_M$}  is
$$CH_n^{i,j}(\Gamma_M;\mathbbm{k}) = \frac{\ker \del^i:C^{i,j}(\Gamma_M) \ra C^{i+1,j}(\Gamma_M)}{\Ima \del^{i-1}:C^{i-1,j}(\Gamma_M) \ra C^{i,j}(\Gamma_M)}.$$
\label{def:2ColorHomology}
\end{definition}

As proven in \cite{ColorHomology}, this homology is invariant under the ribbon moves on perfect matching diagrams, and hence is an invariant of the perfect matching graph itself. Thus, we obtain the following.

\begin{theorem}[cf. Theorem B in \cite{ColorHomology}]
The $n$-color homology is a perfect matching graph invariant.
\end{theorem}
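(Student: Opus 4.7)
My plan is to verify invariance by checking that the chain complex $(C^{*,*}(\Gamma_M),\partial)$ behaves well under each ribbon move on perfect matching diagrams from \cite{BKR}. Since the perfect matching graph $\Gamma_M$ is itself an equivalence class of ribbon diagrams modulo these moves, it suffices to exhibit a quasi-isomorphism (indeed, in each case I expect a chain homotopy equivalence) between the complex before and after each local move, and to verify that these local maps extend to the full hypercube by acting as the identity outside the region of the move.

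First I would enumerate the moves into three groups: (i) planar isotopy / cyclic-ordering moves that do not alter the smoothings, for which the complex is literally unchanged; (ii) moves involving only non-matched edges and crossings of the ribbon diagram (analogous to Reidemeister-type moves), for which the local hypercube of smoothings has a well-known acyclic subcomplex that can be cancelled by Gaussian elimination; and (iii) moves involving matched (thick) edges and negative (twisted) edges, where the interplay between the $0$- and $1$-smoothings $\IIDiag$ and $\XDiag$ must be verified. For each move I would write down the local hypercube, identify the pair of generators that cancel via Gaussian elimination (always a pair joined by a differential that is an isomorphism on a tensor factor of $V$), and read off the induced map on the surviving complexes.

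The main technical step, and the place I expect the real obstacle, is verifying invariance under the moves in group (iii). Here one must show that the local maps $m$, $\Delta$, and $\eta$ from Section~\ref{subsection:differential} interact correctly with half-twists and with adjacent matching edges; the new feature relative to Khovanov homology is the map $\eta$, and it is precisely $\eta$ that governs the transitions at twisted edges and bubbles. The correct check is that, for the local subcomplexes appearing in these moves, the square and hexagon faces built out of $m$, $\Delta$, and $\eta$ commute up to the previously chosen sign assignment (Section~4.2 of \cite{ColorHomology}), so that cancellation yields a genuine chain map. The factor $\sqrt{n}$ in the definition of $\eta$ is exactly what makes these identities hold, which is why $\mathbbm{k}$ must contain $\sqrt{n}$; I would isolate this calculation as the key lemma.

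Finally, I would assemble the invariance proof: for each move, the constructed local chain map extends by the identity to a chain map on the full complex (using that the hypercube of states factorizes as a tensor product over disjoint local regions), and the composition in either direction is chain homotopic to the identity via an explicit local homotopy. Invariance of the bigraded homology $CH_n^{*,*}(\Gamma_M;\mathbbm{k})$ follows immediately. As noted in the excerpt, this is exactly the content of Theorem~B of \cite{ColorHomology}, and the proof strategy mirrors the invariance proofs there; the value of reproducing the outline here is to fix the conventions used in the subsequent construction of the $n$-color vertex homology, where these same local equivalences will be imported via the embedding of the vertex hypercube into the bubbled blowup hypercube (cf.\ Remark~\ref{rem:definingMaps}).
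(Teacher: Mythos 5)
Your outline matches the paper's treatment: the paper gives no independent argument here, but simply invokes the fact (Theorem B of \cite{ColorHomology}) that the complex is invariant under the ribbon moves on perfect matching diagrams of \cite{BKR}, which is exactly the move-by-move local chain homotopy equivalence strategy you describe, with the $\eta$ map and the $\sqrt{n}$ normalization playing the role you identify. So the proposal is correct and takes essentially the same route, deferring the detailed verification to \cite{ColorHomology} just as the paper does.
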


In this paper, we will apply the complex of the $n$-color bigraded theory to the bubbled blowup $\Gamma^B$ of $\Gamma$ to define the local differentials in the hypercube of vertex states.

\section{Vertex Homology}
\label{sec:vertexHomology}

The vertex homology of a trivalent ribbon graph follows a similar procedure as the $n$-color homology but is defined using a state system based on the vertices of a graph instead of perfect matching edges.  

\subsection{The hypercube of vertex states}\label{subsec:cube-of-vertex-res}
Let $\Gamma$ be a ribbon diagram for a connected trivalent graph $G(V,E)$.  Order the vertices:  $V=\{v_1,\ldots, v_{|V|}\}$.   Let $\Gamma_\nu$ be a state indexed by $\nu \in \{0,1\}^{|V|}$,  where each $\nu_i$ in $\nu = (\nu_1,\ldots, \nu_{|V|})$ represents doing a vertex 0-smoothing \vertexbracketzero \ if $\nu_i=0$ or a vertex 1-smoothing \vertexbracketone \ if $\nu_i=1$ in $\Gamma_\bullet$ (cf. \Cref{def:vertex-graph-diagram}).  As in \Cref{section:smoothings-states-hypercubes}, we conceptualize the set of states together with directed edges (defined next) as the {\em hypercube of vertex states of $\Gamma_\bullet$} (cf. \Cref{fig:vertex-state-of-theta}).  

For each pair of states, $\Gamma_\nu$ and $\Gamma_{\nu'}$, that differ in a single smoothing (that is, $\nu_i = \nu_i'$ for all $i$ except one vertex $v_k \in V$, where $\nu_k = 0$ and $\nu_k' = 1$), form a directed edge $\rho_{\nu\nu'}$ from $\Gamma_\nu$ to $\Gamma_{\nu'}$.  Label this edge by a tuple of $0$'s and $1$'s given by the $\nu_i$'s and $\nu_i'$'s where they are the same and a ``$\star$'' for the $k$th position where $\nu_k=\nu_k'-1$.  The edge is directed by requiring the tail to be  $*=0$ and the head  $*=1$. (See \Cref{fig:P3L} for examples of this type of labeling.) 

Observe that every state of the hypercube of vertex states of $\Gamma_\bullet$ appears as a state in the hypercube of states of the bubbled blowup $\Gamma^B$.  Moreover, since each state $\Gamma_\nu$ contains only vertex smoothings, which introduce crossings three at a time at a vertex, $\Gamma_\nu$ corresponds to a state $\Gamma_\alpha$ of $\Gamma^B$ such that $|\alpha| = 3|\nu|$, where $|\nu |$ is the sum of the $\nu_i$'s.

\subsection{The differential chain complex for $\Gamma$}
\label{subsec:VertexDiff}

Using the same quantum gradings as in \Cref{sec:nColor},  associate a graded vector space to each vertex state of $\Gamma_\bullet$.  For $\nu \in \{0,1\}^{|V|}$, let $k_\nu$ be the number of circles in the state $\Gamma_\nu$.  Then, for each $\nu \in \{0,1\}^{|V|}$, associate the graded vector space
\begin{equation}V_\nu = V^{\otimes k_\nu} \{3m |\nu |\}\label{eq:grading-shift-on-V}\end{equation}
where $V=\mathbbm{k}[x]/(x^n)$. Define the complex $C_n^{*,*}(\Gamma)$ by 
\begin{equation} C_n^{i,*}(\Gamma) = \bigoplus_{\substack{\nu\in\{0,1\}^{|V|} \\ i=|\nu|}}V_\nu. \label{eq:chain-complex-vertex} \end{equation}
As before, the internal grading ($q$-grading) is defined by the grading of the elements in $V_\nu$ just as it is in the $n$-color homology.  As for the homological grading, we have $i = |\nu|$.  The complex is trivial outside of $i = 0,\ldots , |V|$.  

Note that $V_\nu$ has the same quantum grading as $V_\alpha$ where $|\alpha| = 3|\nu|$, which means $V_\nu$ can be thought of as $V_\alpha$ in the chain complex of $\Gamma^B$ (see \Cref{fig:bubbledBlowUpHyper}).   This correspondence then allows one to think of each state, $\Gamma_\nu$, as well as the graded vector space associated to it, either as a state in the hypercube of vertex states of $\Gamma_\bullet$ or as a state $\Gamma_\alpha$ in the hypercube of states for the bubbled blowup $\Gamma^B$. This latter perspective is particularly useful for defining the differential for the $n$-color vertex homology (see \Cref{rem:definingMaps}).

The map for each edge in the hypercube of vertex states can now be defined.  Suppose $\Gamma_\nu$ and $\Gamma_{\nu'}$ are joined by an edge.  Consider $\Gamma_\nu$ and $\Gamma_{\nu'}$ as states in the hypercube of states for $\Gamma^B$ and observe that they are joined by at least one path of three edges.  Since each edge of the hypercube of states for $\Gamma^B$ has an associated linear map (either an $m$, $\Delta$ or $\eta$ as defined in \Cref{subsection:differential}) define 
$$\delta_{\nu\nu'} : V_\nu \rightarrow V_{\nu'}$$
by the composition of the relevant maps joining $\Gamma_\nu$ to $\Gamma_{\nu'}$ in the hypercube of states of $\Gamma^B$.  However, this 3-edge path is not unique as there are in fact six such paths (cf. \Cref{fig:bubbledBlowUpHyper}).  Therefore, for this composition to lead to a well-defined map, all six of the relevant compositions must define the same map.  An immediate consequence of the TQFT defined in Section 9 of \cite{ColorHomology} is that they do for all $n\in \NN$.  In that paper a functor was constructed from geometric complexes to graded $\CC$-modules.  This functor allows one to interpret each of the statements about compositions of maps in terms of cobordisms, where properties like commutativity of diagrams are clear, and therefore show that all six paths define the same map.  Nevertheless, it is instructive to consider an example of how the local differentials are well-defined. This is done next.

\begin{figure}[H]
\includegraphics[scale=.27]{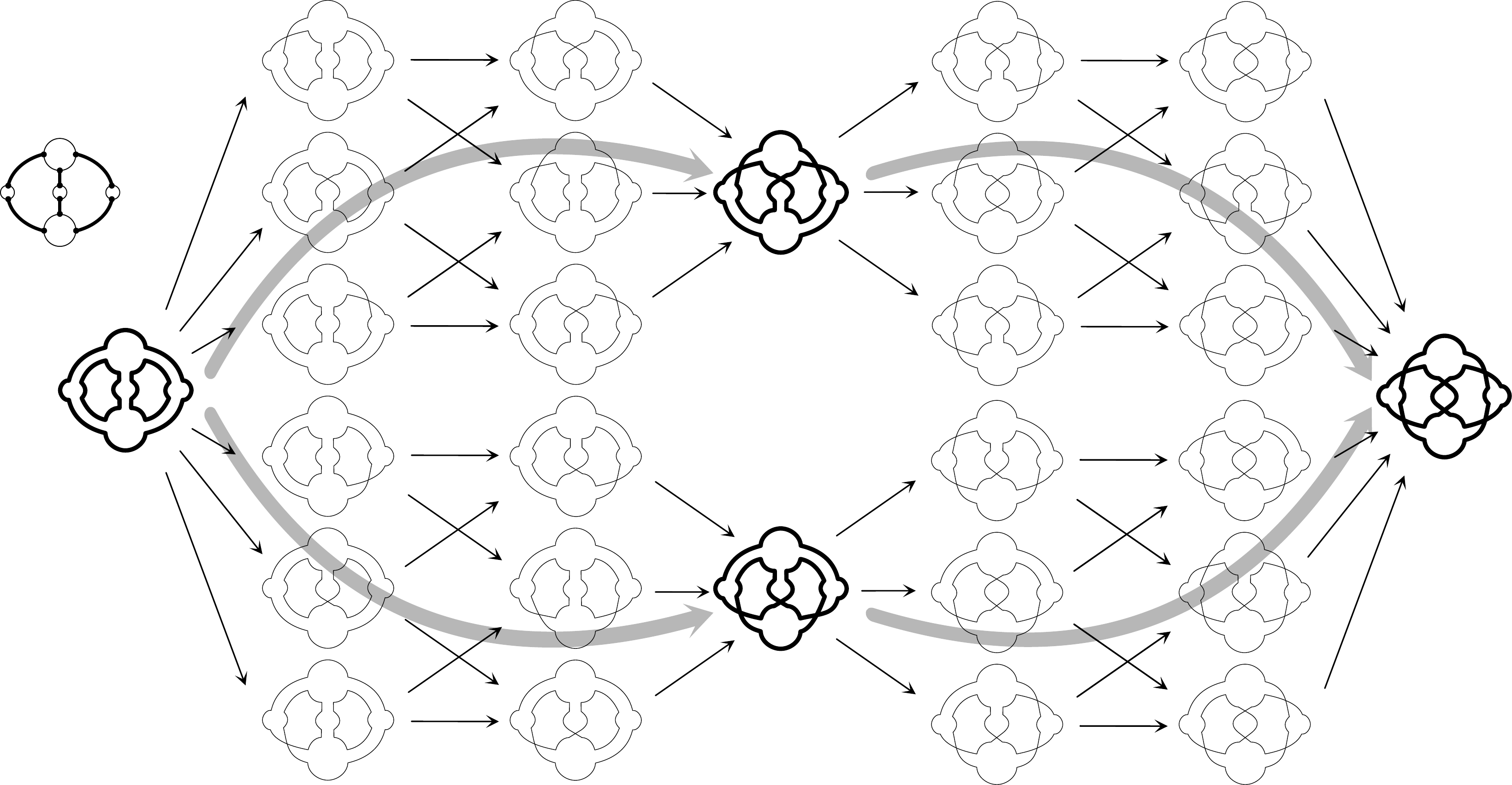}
\caption{This is a portion of the hypercube of states for the bubbled blowup of the $\theta$ graph. It shows the four states used in the construction of vertex homology (cf. \Cref{fig:vertex-state-of-theta}), as well as the intermediate states used to define maps between them.  The four grey arrows show the edges of the hypercube of vertex states.}\label{fig:bubbledBlowUpHyper}
\end{figure}

For an edge joining $\Gamma_\nu$ to $\Gamma_{\nu'}$ in the hypercube of states of $\Gamma_\bullet$, the required maps depend only on the local picture at a vertex smoothing, and how the arcs interact with it.  Therefore, up to symmetry it is sufficient to analyze the seven configurations in \Cref{fig:VertexCases} and show that the composition of three maps described above is well-defined for each. We analyze Configuration 3 for $n=3$.  The remaining cases are pictured in \Cref{app:differentials} and may be handled similarly. 

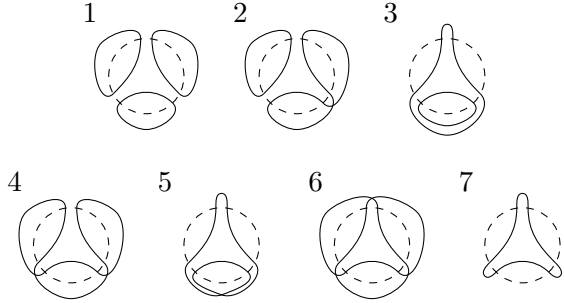
\begin{figure}[h]
$$\begin{tikzpicture}[scale = 0.5]

\begin{scope}[xshift = 2 cm]
\node at (-1.5,1.5) {1};
\draw[dashed] (0,-0.18) circle (1 cm);
\draw (0.15,0.866-0.05) to [out = -90, in = 135] (0.8 + 0.08,-0.8 + 0.15);
\draw (-0.15,0.866-0.05) to [out = -90, in = 45] (-0.8 - 0.08,-0.8 + 0.15);
\draw (-0.8 + 0.095,-0.8 - 0.1) to [out = 45, in = 135] (0.8 - 0.095,-0.8 - 0.1);
\draw (0.15,0.866-0.05) to [out = 90, in = 135] (1.2,0.7) to [out = -45, in = -45] (0.8 + 0.08,-0.8 + 0.15);
\draw (-0.15,0.866-0.05) to [out = 90, in = 45] (-1.20, 0.7) to [out = 225, in = 225] (-0.8 - 0.08,-0.8 + 0.15);
\draw (0.8 - 0.095,-0.8 - 0.1) to [out = -45, in = 0] (0, -1.6) to [out = 180, in = 225]  (-0.8 + 0.095,-0.8 - 0.1);
\end{scope}

\begin{scope}[xshift = 6 cm]
\node at (-1.5,1.5) {2};
\draw[dashed] (0,-0.18) circle (1 cm);
\draw (0.15,0.866-0.05) to [out = -90, in = 135] (0.8 + 0.08,-0.8 + 0.15);
\draw (-0.15,0.866-0.05) to [out = -90, in = 45] (-0.8 - 0.08,-0.8 + 0.15);
\draw (-0.8 + 0.095,-0.8 - 0.1) to [out = 45, in = 135] (0.8 - 0.095,-0.8 - 0.1);
\draw (0.15,0.866-0.05) to [out = 90, in = 135] (1.2,0.7) to [out = -45, in = -45] (0.8 - 0.095,-0.8 - 0.1);
\draw (-0.15,0.866-0.05) to [out = 90, in = 45] (-1.20, 0.7) to [out = 225, in = 225] (-0.8 - 0.08,-0.8 + 0.15);
\draw (0.8 + 0.08,-0.8 + 0.15) to [out = -45, in = 0] (0, -1.6) to [out = 180, in = 225]  (-0.8 + 0.095,-0.8 - 0.1);
\end{scope}

\begin{scope}[xshift = 10 cm]
\node at (-1.5,1.5) {3};
\draw[dashed] (0,-0.18) circle (1 cm);
\draw (0.15,0.866-0.05) to [out = -90, in = 135] (0.8 + 0.08,-0.8 + 0.15);
\draw (-0.15,0.866-0.05) to [out = -90, in = 45] (-0.8 - 0.08,-0.8 + 0.15);
\draw (-0.8 + 0.095,-0.8 - 0.1) to [out = 45, in = 135] (0.8 - 0.095,-0.8 - 0.1);
\draw (0.15,0.866-0.05) to [out = 90, in = 0] (0, 1.2) to [out = 180, in = 90] (-0.15,0.866-0.05);
\draw (0.8 + 0.08,-0.8 + 0.15) to [out = -45, in = 0] (0, -1.75) to [out = 180, in = 225] (-0.8 - 0.08,-0.8 + 0.15) ;
\draw (0.8 - 0.095,-0.8 - 0.1) to [out = -45, in = 0] (0, -1.5) to [out = 180, in = 225] (-0.8 + 0.095,-0.8 - 0.1);
\end{scope}

\begin{scope}[yshift = -4.5 cm]

\node at (-1.5,1.5) {4};
\draw[dashed] (0,-0.18) circle (1 cm);
\draw (0.15,0.866-0.05) to [out = -90, in = 135] (0.8 + 0.08,-0.8 + 0.15);
\draw (-0.15,0.866-0.05) to [out = -90, in = 45] (-0.8 - 0.08,-0.8 + 0.15);
\draw (-0.8 + 0.095,-0.8 - 0.1) to [out = 45, in = 135] (0.8 - 0.095,-0.8 - 0.1);
\draw (0.15,0.866-0.05) to [out = 90, in = 135] (1.2,0.7) to [out = -45, in = -45] (0.8 - 0.095,-0.8 - 0.1);
\draw (-0.15,0.866-0.05) to [out = 90, in = 45] (-1.20, 0.7) to [out = 225, in = 225]  (-0.8 + 0.095,-0.8 - 0.1);
\draw (0.8 + 0.08,-0.8 + 0.15) to [out = -45, in = 0] (0, -1.6) to [out = 180, in = 225] (-0.8 - 0.08,-0.8 + 0.15);

\begin{scope}[xshift = 4 cm]

\node at (-1.5,1.5) {5};
\draw[dashed] (0,-0.18) circle (1 cm);
\draw (0.15,0.866-0.05) to [out = -90, in = 135] (0.8 + 0.08,-0.8 + 0.15);
\draw (-0.15,0.866-0.05) to [out = -90, in = 45] (-0.8 - 0.08,-0.8 + 0.15);
\draw (-0.8 + 0.095,-0.8 - 0.1) to [out = 45, in = 135] (0.8 - 0.095,-0.8 - 0.1);
\draw (0.15,0.866-0.05) to [out = 90, in = 0] (0, 1.2) to [out = 180, in = 90] (-0.15,0.866-0.05);
\draw (0.8 + 0.08,-0.8 + 0.15) to [out = -45, in = -20] (0, -1.5) to [out = 160, in = 225] (-0.8 + 0.095,-0.8 - 0.1);
\draw (0.8 - 0.095,-0.8 - 0.1) to [out = -45, in = 20] (0, -1.5) to [out = 200, in = 225]  (-0.8 - 0.08,-0.8 + 0.15) ;
\end{scope}

\begin{scope}[xshift = 8 cm]
\node at (-1.5,1.5) {6};
\draw[dashed] (0,-0.18) circle (1 cm);
\draw (0.15,0.866-0.05) to [out = -90, in = 135] (0.8 + 0.08,-0.8 + 0.15);
\draw (-0.15,0.866-0.05) to [out = -90, in = 45] (-0.8 - 0.08,-0.8 + 0.15);
\draw (-0.8 + 0.095,-0.8 - 0.1) to [out = 45, in = 135] (0.8 - 0.095,-0.8 - 0.1);
\draw (-0.15,0.866-0.05) to [out = 90, in = 135] (1.2,0.7) to [out = -45, in = -45] (0.8 - 0.095,-0.8 - 0.1);
\draw (0.15,0.866-0.05) to [out = 90, in = 45] (-1.20, 0.7) to [out = 225, in = 225]  (-0.8 + 0.095,-0.8 - 0.1);
\draw (0.8 + 0.08,-0.8 + 0.15) to [out = -45, in = 0] (0, -1.6) to [out = 180, in = 225] (-0.8 - 0.08,-0.8 + 0.15);
\end{scope}

\begin{scope}[xshift =12 cm]
\node at (-1.5,1.5) {7};
\draw[dashed] (0,-0.18) circle (1 cm);
\draw (0.15,0.866-0.05) to [out = -90, in = 135] (0.8 + 0.08,-0.8 + 0.15);
\draw (-0.15,0.866-0.05) to [out = -90, in = 45] (-0.8 - 0.08,-0.8 + 0.15);
\draw (-0.8 + 0.095,-0.8 - 0.1) to [out = 45, in = 135] (0.8 - 0.095,-0.8 - 0.1);
\draw (0.15,0.866-0.05) to [out = 90, in = 0] (0, 1.2) to [out = 180, in = 90] (-0.15,0.866-0.05);
\draw (0.8 + 0.08,-0.8 + 0.15) to [out = -45, in = 45]  (0.8 + 0.2,-0.8 - 0.2) to [out = 225, in = -45]  (0.8 - 0.095,-0.8 - 0.1);
\draw (-0.8 - 0.08,-0.8 + 0.15) to [out = 225, in = 135] (-0.8 - 0.2,-0.8 - 0.2) to [out = -45, in = 225] (-0.8 + 0.095,-0.8 - 0.1);
\end{scope}

\end{scope}

\end{tikzpicture}$$
\caption{The possible configurations of circles at a $0$-smoothing vertex. Each configuration shows a local neighborhood of  vertex (enclosed by the dotted circle) and the possible ways that the three arcs at a vertex can be joined to form circles (up to symmetry).}
\label{fig:VertexCases}
\end{figure}

There are three compositions (up to symmetry) that need to be analyzed in \Cref{fig:Case3}:  $\Delta \circ m \circ \eta$, $\Delta \circ  \eta \circ m$, and $\eta \circ \Delta \circ m$.  
For $\Delta \circ m \circ \eta$,  follow the upper path of three edges in \Cref{fig:Case3}.  In the initial state, label the outer circle as Circle 1, and the inner circle as Circle 2.  For $\eta \circ \Delta \circ m$, follow the bottom path of three edges in \Cref{fig:Case3} and in the penultimate state, label the upper circle as Circle 1, and the lower circle as Circle 2.  Calculating, each give the same answer, albeit in different ways:
$$\Delta \circ m \circ (\eta \otimes Id)(1\otimes 1) = \Delta \circ m (\sqrt{3} x \otimes 1) = \Delta (\sqrt{3} x) = \sqrt{3}(x^2 \otimes x+x\otimes x^2)$$
$$\Delta \circ  \eta \circ m(1\otimes 1) = \Delta \circ \eta (1) = \Delta (\sqrt{3} x) =  \sqrt{3}(x^2 \otimes x+x\otimes x^2)$$
$$(\eta\otimes Id) \circ \Delta \circ m(1\otimes 1) = (\eta\otimes Id) \circ \Delta(1) = (\eta\otimes Id) (x^2\otimes 1+1 \otimes x^2 +x\otimes x) =  \sqrt{3}(x\otimes x^2+x^2 \otimes x)$$

\noindent In each case, the result does not depend on the 3-edge path chosen.  Moreover, this continues to be true for the other generators and the remaining cases shown in \Cref{app:differentials}.  

Thus, define the differential $\delta^i : C_n^{i,*} (\Gamma) \rightarrow C_n^{i+1,*}(\Gamma)$ to be the sum of appropriate $\delta_{\nu\nu'}$'s.  For $w\in V_\nu \subset C_{n}^{i,*}(\Gamma)$, 
\begin{equation}\delta^i(w) = \sum_{\substack{\rho \ \mbox{\tiny such that }\\ \mbox{\tiny Tail}(\rho) = \nu}} \sign(\rho)\delta_{\nu\nu'}(w).\label{definition:vertex-delta}\end{equation}
where the sign of each edge is equal to $-1$ to the number of $1$'s to the left of $\ast$. 

\begin{figure}[h]
$$\begin{tikzpicture}[scale = 0.5]

\begin{scope}
\draw[dashed] (0,-0.18) circle (1 cm);
\draw (0.15,0.866-0.05) to [out = -90, in = 135] (0.8 + 0.08,-0.8 + 0.15);
\draw (-0.15,0.866-0.05) to [out = -90, in = 45] (-0.8 - 0.08,-0.8 + 0.15);
\draw (-0.8 + 0.095,-0.8 - 0.1) to [out = 45, in = 135] (0.8 - 0.095,-0.8 - 0.1);

\draw (0.15,0.866-0.05) to [out = 90, in = 0] (0, 1.2) to [out = 180, in = 90] (-0.15,0.866-0.05);
\draw (0.8 + 0.08,-0.8 + 0.15) to [out = -45, in = 0] (0, -1.75) to [out = 180, in = 225] (-0.8 - 0.08,-0.8 + 0.15) ;
\draw (0.8 - 0.095,-0.8 - 0.1) to [out = -45, in = 0] (0, -1.5) to [out = 180, in = 225] (-0.8 + 0.095,-0.8 - 0.1);
\end{scope}

\begin{scope}[xshift = 1 cm, yshift = -0.2 cm]
\node at (1,1.35) {\tiny $\eta$};
\node at (1.25,0.25) {\tiny $m$};
\node at (1.25,-0.75) {\tiny $m$};
\draw[->] (0.5,0.5) --(2,2);
\draw[->] (0.5,0) --(2,0);
\draw[->] (0.5,-0.5) --(2,-2);
\end{scope}

\begin{scope}[xshift = 5.5 cm, yshift = -0.2 cm]
\node at (1.25,3.25) {\tiny $m$};
\node at (1.25,2.25) {\tiny $m$};
\node at (.75,1.25) {\tiny $\eta$};

\node at (1.25,-.75) {\tiny $\Delta$};
\node at (.75,-1.75) {\tiny $\eta$};
\node at (1.25,-2.75) {\tiny $\Delta$};

\draw[->] (0.5,0.5) --(2,2);
\draw[->] (0.5,-0.5) --(2,-2);
\draw[->] (0.5,3) --(2,3);
\draw[->] (0.5,-3) --(2,-3);
\draw[->] (0.5,-2.5) --(2,-1);
\draw[->] (0.5,2.5) --(2,1);
\end{scope}

\begin{scope}[xshift = 8.5 cm, yshift = -0.2 cm]
\node at (3,1.5) {\tiny $\Delta$};
\node at (2.5,0.25) {\tiny $\Delta$};
\node at (2.5,-1) {\tiny $\eta$};
\draw[->] (2,2) -- (3.5,0.5);
\draw[->] (2,0) -- (3.5,0);
\draw[->] (2,-2) -- (3.5,-0.5);
\end{scope}

\begin{scope}[xshift = 4.5 cm, yshift = 3 cm]
\draw[dashed] (0,-0.18) circle (1 cm);
\draw (-0.15,0.866-0.05) to [out = -90, in = 135] (0.8 + 0.08,-0.8 + 0.15);
\draw (0.15,0.866-0.05) to [out = -90, in = 45] (-0.8 - 0.08,-0.8 + 0.15);
\draw (-0.8 + 0.095,-0.8 - 0.1) to [out = 45, in = 135] (0.8 - 0.095,-0.8 - 0.1);

\draw (0.15,0.866-0.05) to [out = 90, in = 0] (0, 1.2) to [out = 180, in = 90] (-0.15,0.866-0.05);
\draw (0.8 + 0.08,-0.8 + 0.15) to [out = -45, in = 0] (0, -1.75) to [out = 180, in = 225] (-0.8 - 0.08,-0.8 + 0.15) ;
\draw (0.8 - 0.095,-0.8 - 0.1) to [out = -45, in = 0] (0, -1.5) to [out = 180, in = 225] (-0.8 + 0.095,-0.8 - 0.1);
\end{scope}

\begin{scope}[xshift = 4.5 cm, yshift = 0 cm]
\draw[dashed] (0,-0.18) circle (1 cm);
\draw (0.15,0.866-0.05) to [out = -90, in = 135] (0.8 + 0.08,-0.8 + 0.15);
\draw (-0.15,0.866-0.05) to [out = -90, in = 45] (-0.8 + 0.095,-0.8 - 0.1);
\draw (-0.8 - 0.08,-0.8 + 0.15) to [out = 45, in = 135] (0.8 - 0.095,-0.8 - 0.1);

\draw (0.15,0.866-0.05) to [out = 90, in = 0] (0, 1.2) to [out = 180, in = 90] (-0.15,0.866-0.05);
\draw (0.8 + 0.08,-0.8 + 0.15) to [out = -45, in = 0] (0, -1.75) to [out = 180, in = 225] (-0.8 - 0.08,-0.8 + 0.15) ;
\draw (0.8 - 0.095,-0.8 - 0.1) to [out = -45, in = 0] (0, -1.5) to [out = 180, in = 225] (-0.8 + 0.095,-0.8 - 0.1);
\end{scope}

\begin{scope}[xshift = 4.5 cm, yshift = -3 cm]
\draw[dashed] (0,-0.18) circle (1 cm);
\draw (0.15,0.866-0.05) to [out = -90, in = 135] (0.8 - 0.095,-0.8 - 0.1);
\draw (-0.15,0.866-0.05) to [out = -90, in = 45] (-0.8 - 0.08,-0.8 + 0.15);
\draw (-0.8 + 0.095,-0.8 - 0.1) to [out = 45, in = 135] (0.8 + 0.08,-0.8 + 0.15);

\draw (0.15,0.866-0.05) to [out = 90, in = 0] (0, 1.2) to [out = 180, in = 90] (-0.15,0.866-0.05);
\draw (0.8 + 0.08,-0.8 + 0.15) to [out = -45, in = 0] (0, -1.75) to [out = 180, in = 225] (-0.8 - 0.08,-0.8 + 0.15) ;
\draw (0.8 - 0.095,-0.8 - 0.1) to [out = -45, in = 0] (0, -1.5) to [out = 180, in = 225] (-0.8 + 0.095,-0.8 - 0.1);
\end{scope}

\begin{scope}[xshift = 4.5 cm]
\begin{scope}[xshift = 4.5 cm, yshift = 3 cm]
\draw[dashed] (0,-0.18) circle (1 cm);
\draw (-0.15,0.866-0.05) to [out = -90, in = 135] (0.8 + 0.08,-0.8 + 0.15);
\draw (0.15,0.866-0.05) to [out = -90,  in = 60] (-0.2, 0) to [out = 240, in = 45] (-0.8 + 0.095,-0.8 - 0.1);
\draw (-0.8 - 0.08,-0.8 + 0.15) to [out = 45, in = 135] (0.8 - 0.095,-0.8 - 0.1);

\draw (0.15,0.866-0.05) to [out = 90, in = 0] (0, 1.2) to [out = 180, in = 90] (-0.15,0.866-0.05);
\draw (0.8 + 0.08,-0.8 + 0.15) to [out = -45, in = 0] (0, -1.75) to [out = 180, in = 225] (-0.8 - 0.08,-0.8 + 0.15) ;
\draw (0.8 - 0.095,-0.8 - 0.1) to [out = -45, in = 0] (0, -1.5) to [out = 180, in = 225] (-0.8 + 0.095,-0.8 - 0.1);
\end{scope}

\begin{scope}[xshift = 4.5 cm, yshift = 0 cm]
\draw[dashed] (0,-0.18) circle (1 cm);
\draw (-0.15,0.866-0.05) to [out = -90, in = 120] (0.2, 0) to [out = -60, in = 135] (0.8 - 0.095,-0.8 - 0.1);
\draw (0.15,0.866-0.05) to [out = -90, in = 45] (-0.8 - 0.08,-0.8 + 0.15);
\draw (-0.8 + 0.095,-0.8 - 0.1) to [out = 45, in = 135] (0.8 + 0.08,-0.8 + 0.15);

\draw (0.15,0.866-0.05) to [out = 90, in = 0] (0, 1.2) to [out = 180, in = 90] (-0.15,0.866-0.05);
\draw (0.8 + 0.08,-0.8 + 0.15) to [out = -45, in = 0] (0, -1.75) to [out = 180, in = 225] (-0.8 - 0.08,-0.8 + 0.15) ;
\draw (0.8 - 0.095,-0.8 - 0.1) to [out = -45, in = 0] (0, -1.5) to [out = 180, in = 225] (-0.8 + 0.095,-0.8 - 0.1);
\end{scope}

\begin{scope}[xshift = 4.5 cm, yshift = -3 cm]
\draw[dashed] (0,-0.18) circle (1 cm);
\draw (0.15,0.866-0.05) to [out = -90, in = 135] (0.8 - 0.095,-0.8 - 0.1);
\draw (-0.15,0.866-0.05) to [out = -90, in = 45] (-0.8 + 0.095,-0.8 - 0.1);
\draw (-0.8 - 0.08,-0.8 + 0.15) to [out = 45, in = 180] (0, -0.5) to [out = 0, in = 135] (0.8 + 0.08,-0.8 + 0.15);

\draw (0.15,0.866-0.05) to [out = 90, in = 0] (0, 1.2) to [out = 180, in = 90] (-0.15,0.866-0.05);
\draw (0.8 + 0.08,-0.8 + 0.15) to [out = -45, in = 0] (0, -1.75) to [out = 180, in = 225] (-0.8 - 0.08,-0.8 + 0.15) ;
\draw (0.8 - 0.095,-0.8 - 0.1) to [out = -45, in = 0] (0, -1.5) to [out = 180, in = 225] (-0.8 + 0.095,-0.8 - 0.1);
\end{scope}

\end{scope}

\begin{scope}[xshift = 13.5 cm]
\begin{scope}
\draw[dashed] (0,-0.18) circle (1 cm);
\draw (-0.15,0.866-0.05) to [out = -90, in = 120] (0.2, 0) to [out = -60, in = 135] (0.8 - 0.095,-0.8 - 0.1);
\draw (0.15,0.866-0.05) to [out = -90,  in = 60] (-0.2, 0) to [out = 240, in = 45] (-0.8 + 0.095,-0.8 - 0.1);
\draw (-0.8 - 0.08,-0.8 + 0.15) to [out = 45, in = 180] (0, -0.5) to [out = 0, in = 135] (0.8 + 0.08,-0.8 + 0.15);

\draw (0.15,0.866-0.05) to [out = 90, in = 0] (0, 1.2) to [out = 180, in = 90] (-0.15,0.866-0.05);
\draw (0.8 + 0.08,-0.8 + 0.15) to [out = -45, in = 0] (0, -1.75) to [out = 180, in = 225] (-0.8 - 0.08,-0.8 + 0.15) ;
\draw (0.8 - 0.095,-0.8 - 0.1) to [out = -45, in = 0] (0, -1.5) to [out = 180, in = 225] (-0.8 + 0.095,-0.8 - 0.1);
\end{scope}

\end{scope}

\end{tikzpicture}$$
\caption{Configuration 3:  the six $3$-edge paths from the two circles of $\Gamma_\nu$ to the two circles of $\Gamma_{\nu'}$.}
\label{fig:Case3}
\end{figure}
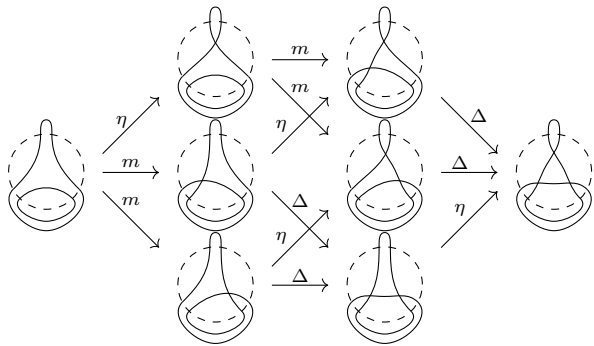

We now have enough to state the following theorem and define the vertex homology.

\begin{theorem}
$(C_n^{i,*}(\Gamma),\delta^i)$ is a chain complex with differential that increases the homological grading by one and preserves the quantum grading, i.e., it has bigrading $(1,0)$.
\label{thm:2ColorVertexHomology}
\end{theorem}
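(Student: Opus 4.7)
My plan is to separate the theorem into two assertions and handle each by leveraging the embedding into the hypercube of states of the bubbled blowup $\Gamma^B$ that has already been used to define $\delta$.

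First, I would verify the bigrading $(1,0)$. The homological part is trivial: by construction $\delta_{\nu\nu'}$ sends $V_\nu \subset C_n^{|\nu|,*}(\Gamma)$ to $V_{\nu'} \subset C_n^{|\nu|+1,*}(\Gamma)$. For the quantum grading, the key observation is that under the identification of $V_\nu$ with $V_\alpha$ (where $|\alpha|=3|\nu|$), the shift $\{3m|\nu|\}$ in \eqref{eq:grading-shift-on-V} agrees with the shift $\{m|\alpha|\}$ used in the bubbled blowup complex. Since each of the three maps $m$, $\Delta$, $\eta$ is quantum-degree preserving in $C^{*,*}(\Gamma^B)$ (as established in \Cref{sec:nColor}), the composition of three of them is also quantum-degree preserving, so $\delta_{\nu\nu'}$ has quantum degree $0$.

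Second, I would prove $\delta^{i+1}\circ\delta^i = 0$ by showing that every square face of the hypercube of vertex states anticommutes. Fix states $\Gamma_\nu,\Gamma_{\nu_1},\Gamma_{\nu_2},\Gamma_{\nu''}$ where $\nu_1$ and $\nu_2$ differ from $\nu$ by changing vertex smoothings at distinct vertices $v_j,v_k \in V$, and $\nu''$ differs from both. Each of the four edges of this square lifts to a $3$-edge path in the hypercube of states of $\Gamma^B$, and taken together the two routes around the square lift to $6$-edge paths between two corners of a $6$-dimensional sub-cube of $\Gamma^B$'s hypercube of states. By \Cref{thm:A-Chain-Complex}, every square in this sub-cube commutes up to the Khovanov sign; more precisely, the unsigned maps corresponding to all $6$-edge paths between the two corners agree, because the local moves at $v_j$ and $v_k$ occur in disjoint regions of the plane and hence correspond to commuting cobordisms under the TQFT of Section~9 of \cite{ColorHomology}. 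Consequently, the two unsigned compositions defining the two routes around the vertex-hypercube square are equal.

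Finally, with the sign rule in \eqref{definition:vertex-delta}, namely $\mathrm{sign}(\rho) = (-1)^{(\#\text{ of }1\text{'s to the left of }\star)}$, a direct check on any square (in which the $\star$ appears once at the position of $v_j$ and once at the position of $v_k$) shows that the two signed contributions differ by a factor of $-1$, so they cancel; this is exactly the standard Khovanov argument. Thus $\delta \circ \delta = 0$.

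I expect the well-definedness of each local $\delta_{\nu\nu'}$, which the paper handles via case analysis of the seven local configurations in \Cref{fig:VertexCases} together with the TQFT, to be the only subtle step. The commutativity of the square faces then reduces cleanly to the already-established commutativity of unsigned squares in the bubbled blowup, and the sign argument is entirely combinatorial.
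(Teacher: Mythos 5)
Your proposal is correct and takes essentially the same route as the paper: you lift each square face of the vertex hypercube to $6$-edge paths in the hypercube of states of $\Gamma^B$, use the already-established commutativity of faces there (equivalently, path-independence of each $\delta_{\nu\nu'}$) to get commutativity of the unsigned square, and then invoke the sign convention of \eqref{definition:vertex-delta} to conclude anticommutativity and $\delta^{i+1}\circ\delta^i=0$, with the bigrading $(1,0)$ following from the agreement of the shift $3m|\nu|$ with $m|\alpha|$ under $|\alpha|=3|\nu|$.
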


\begin{proof}
All that remains to show is that $\delta^{i+1}\circ \delta^i=0$.  This follows from the fact that all diagrams of faces of the hypercube of vertex states for $\Gamma_\bullet$ commute because, in the bubbled blowup $\Gamma^B$, 
\begin{enumerate}
\item each map $\delta_{\nu\nu'}$ is independent of the 3-edge path chosen in $\Gamma^B$, and more generally,
\item any diagram given by a face in the hypercube of states of $\Gamma^B$ commutes, which means that faces in $\Gamma_\bullet$ that correspond to diagrams made from four 3-edge paths in the hypercube of $\Gamma^B$ must also commute.
\end{enumerate}
Thus, with the chosen sign convention for the definition of $\delta^i$ in \Cref{definition:vertex-delta}, each diagram corresponding to a face in the hypercube of vertex states anticommute.  
\end{proof}

\begin{definition}
Let $\Gamma$ be a ribbon diagram of a trivalent graph of $G(V,E)$.  The {\em bigraded $n$-color vertex homology of $\Gamma$}  is
$$VCH_n^{i,j}(\Gamma;\mathbbm{k}) = \frac{\ker \delta^i:C_n^{i,j}(\Gamma) \ra C_n^{i+1,j}(\Gamma)}{\Ima \delta^{i-1}:C_n^{i-1,j}(\Gamma) \ra C_n^{i,j}(\Gamma)}.$$
\label{def:2ColorHomology}
\end{definition}

There now is enough background to prove the first theorem.

\begin{proof}[Proof of Theorem~\ref{thm:gradedEuler}]
The homology is invariant under the ribbon moves described in \cite{BKR}, hence is independent of the ribbon diagram used to define it. Also, the homology is  independent of how the vertices were indexed ($V=\{v_1,\ldots, v_{|V|}\}$ and $V=\{v'_1,\ldots,v'_{|V|}\}$ produce the same homology). Thus, the homology is a ribbon graph invariant.  

For $V=\mathbbm{k}[x]/ (x^n)$, the quantum dimension of $V$,  $q\dim V$, is equal to the expression given in Equation~\eqref{eq:immersed_circle} based on whether $n$ is even or odd. Since the chain groups, $C^{i,*}_n(\Gamma)$, are direct sums of shifted versions of $V$ (cf. \Cref{eq:chain-complex-vertex}), the graded Euler characteristic of the vertex $n$-color homology is equal to the $n$-color vertex polynomial. 
\end{proof}

It should be noted that $VCH^{i,j}_n(\Gamma;\mathbbm{k}) \not\cong CH^{3i,j}_n(\Gamma^B;\mathbbm{k})$ for all positive integers $n$. They are different theories, even though the same complex is used to define them. The difference is, by analogy, as if someone defined a new ``exterior differential''  ${d_3:\Omega^p(M;\BR) \ra \Omega^{p+3}(M;\BR)}$ on $p$-forms of a manifold $M$ such that $d_3\circ d_3 = 0$. The strange  differential with its jump in homological grading,  $\delta:C^{i,j}_n(\Gamma^B) \ra C^{i+3,j}_n(\Gamma^B)$, makes one wonder if there are exotic differentials on ``blowups'' of knots in Khovanov homology that lead to new knot invariants.

\begin{remark} \label{rem:generalize} The bigraded $n$-color vertex homology can be generalized to $r$-regular graphs with $r>3$, not just trivalent ones.  Beginning with any $r$-regular ribbon graph, construct the blowup to obtain a trivalent graph with canonical perfect matching $E$, and the bubbled blowup construction works as well.  The grading shift in \Cref{eq:grading-shift-on-V} of $3m|\nu|$ that accompanies each state $\Gamma_\nu$ of the hypercube of states of $\Gamma_\bullet$ is changed to $rm|\nu|$. (Similarly, there is a  shift from $q^{3m}$ in Equation~\eqref{eq:vertex-bracket} to $q^{rm}$ for the equivalent polynomial.)  The commutativity of the diagrams  for faces in the hypercube of states for the bubbled blowup will ensure a well-defined differential, and ultimately a homology theory.  

Our choice to restrict to trivalent graphs for this paper is due to the relative importance of trivalent graphs (blowup of any graph is trivalent), together with the fact that this restriction makes an analysis of the relevant maps (cf. \Cref{fig:VertexCases}) tractable.  In the general case of an $r$-regular graph, the configurations that would need to be studied grows with $r$.  Nevertheless, the TQFT defined in Section 9 of \cite{ColorHomology} works for regular graphs of any valence and thus, implies that such constructions will lead to well-defined homology theories (see \Cref{sec:4valent} for $r=4$).  In fact, one could even eliminate regularity.  The commentary above on the differentials still applies, however it is not clear how to define a quantum grading that leads to a bigraded theory. But a singularly graded homology still exists.
\end{remark}

\subsection{Examples}
In this section, we present several calculations of the bigraded $2$-color homology, as well as the $2$-color vertex polynomial.  Mathematica code for computation of the $n$-color vertex polynomials for $n=2,3,4$ is provided in \Cref{app:computations}.

\begin{example}
Consider a ribbon diagram, $\Gamma_\theta$, of the $\theta$ graph and its associated hypercube of vertex states, as depicted in \Cref{fig:bubbledBlowUpHyper}.  The hypercube of vertex states consists of the all-zero state with three circles, two states with one circle, and the all-one state with three circles.  The only nontrivial map takes $1\otimes 1\otimes 1 \in C_2^{0,3}(\Gamma_\theta)$ to $(x,x) \in C_2^{1,3}(\Gamma_\theta)$.  Thus we obtain the homology shown in \Cref{fig:ThetaHomology}.

\begin{center}
\begin{figure}[H]
\begin{tabular}{ | c | c | c | c | }
 \hline
 9 &   &  & $\langle 1\otimes 1\otimes 1\rangle$ \\ 
 \hline
8  &   &  & $\langle 1\otimes 1\otimes x, 1\otimes x\otimes 1,x\otimes 1\otimes 1\rangle$ \\
 \hline
7  &   &  & $\langle 1\otimes x\otimes x, x\otimes 1\otimes x,x\otimes x\otimes 1\rangle$ \\
 \hline
 6 &   &  & $\langle x\otimes x\otimes x\rangle$ \\   
 \hline
 5 &   &  &  \\  
 \hline
4  &   &  $\langle (1,0),(0,1) \rangle$& \\
 \hline
 3  &   &  $\langle (x,0) \rangle$& \\
 \hline
2  &   $\langle 1\otimes 1\otimes x, 1\otimes x\otimes 1,x\otimes 1\otimes 1\rangle$ &  & \\
 \hline
1  &   $\langle 1\otimes x\otimes x, x\otimes 1\otimes x,x\otimes x\otimes 1\rangle$ &  &\\
 \hline
0  &   $\langle x\otimes x\otimes x\rangle$ &  &\\  
 \hline
\diagbox[dir=SW]{$j$}{$i$} & 0 & 1 & 2 \\
  \hline
 \end{tabular}
\caption{The vertex homology of the $\theta$ graph, $VCH_2^{i,j}(\Gamma_\theta)$.}
\label{fig:ThetaHomology}
 \end{figure}
\end{center}

A few observations are in order here.  The first is that the graded Euler characteristic of this homology is the $n$-color vertex polynomial (cf. \Cref{eq:vertex-poly-of-theta}).   Second, almost all elements of the chain groups survive in this homology.  This is true in general: most of the maps used to define the differential are zero (though larger graphs, and a choice of $n>2$ will tend to produce more nonzero differentials than this example).  
\end{example}

\begin{example}
\label{ex:3lolly}
For the 3-lollipop ribbon graph, $\Gamma_{L3}$, shown in \Cref{fig:3Lolly}, observe that the hypercube of vertex states contains only maps taking one circle to one circle, or two circles to two circles (cf. \Cref{fig:Case3App,fig:Case7}).  In both cases, the differentials are identically zero, and hence $VCH_2^{i,j}(\Gamma_{L3};\mathbbm{k}) \cong C_n^{i,j}(\Gamma_{L3})$ for all $i,j$.  In this case, the homology contains no new information.  However, later it will be shown that  the filtered $n$-color vertex homology of this graph is identically zero, which indicates that it has no perfect matchings (cf. \Cref{cor:TFAE-perfect-matching}).  

\begin{figure}[h]
\includegraphics[scale=.45]{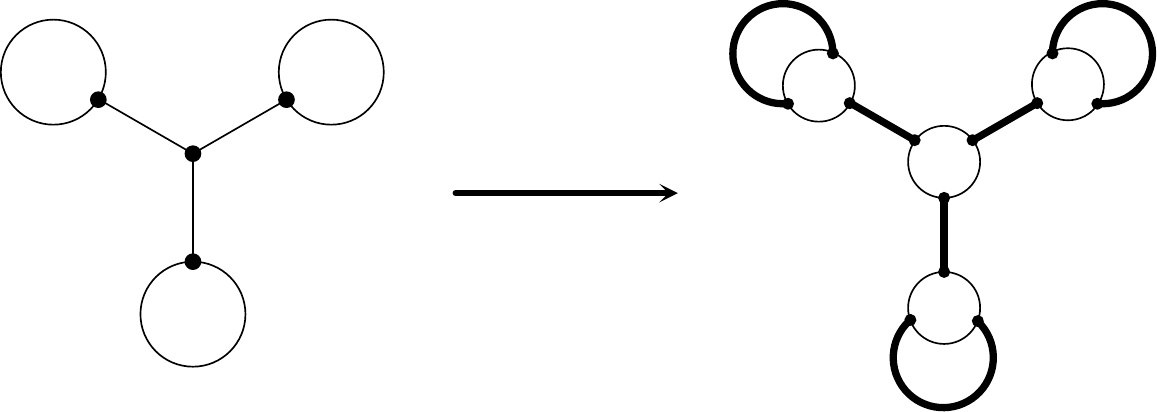}
\caption{The 3-lollipop ribbon graph $\Gamma_{L3}$ has no perfect matchings, but the blowup still has a canonical perfect matching consisting of the edges of $\Gamma_{L3}$.}\label{fig:3Lolly}
\end{figure}
\end{example}

\begin{example}\label{ex:P3Calc}
Let $\Gamma_{P3}$ be a ribbon graph for the $3$-prism in \Cref{fig:P3BlownUp}.  Observe that the 2-color vertex polynomial is given by
\begin{multline*}
\llangle \Gamma_{P3} \rrangle_2 = 1 + 5 q + 10 q^2 + 4 q^3 - 13 q^4 - 17 q^5 + 9 q^6 + 33 q^7 + 
 27 q^8 - 11 q^9 - 36 q^{10} - 24 q^{11} + 7 q^{12} + 33 q^{13} + 27 q^{14} + \\
 3 q^{15} - 18 q^{16} - 18 q^{17} - 5 q^{18} + 5 q^{19} + 10 q^{20} + 10 q^{21} + 
 5 q^{22} + q^{23},
 \end{multline*}
which can be calculated using the Mathematica code provided in \Cref{app:computations}. See the introduction of the appendix for code for $P3$ or \Cref{ex:P3}.
 
Like the difference between the Khovanov homology and the Jones polynomial, the $n$-color homology is a stronger invariant than the $n$-color vertex polynomial.  To see this, focus on the $9q^6$ term in the polynomial above; the quantum grading in grading six of the $2$-color vertex homology contains more information than the coefficient of this term.  In homological grading one, we have six states, each consisting of three circles.  In particular, observe that $\delta: C_2^{1,j} (\Gamma_{P3})\rightarrow C_2^{2,j}(\Gamma_{P3})$ is nonzero only when $j=6$.  Moreover, the nonzero maps occur where $1\otimes1\otimes1$ is mapped to $\sqrt{2} x$ on a single-cycle state in homological grading two.  \Cref{fig:P3BlownUp} shows the relevant states where these nonzero maps occur. Observe that these maps occur in pairs. Consequently,  there exists a linear combination of terms of the form $1\otimes1\otimes1$ that maps to 0.  Notice that none of these terms can be in the image of the previous differential.  Thus, $VCH_2^{1,6}(\Gamma_{P3};\mathbbm{k}) \cong \mathbbm{k}$.  

In homological grading two,  all fifteen generators of $C_2^{2,6}(\Gamma_{P3})$ are in the kernel of the differential, but the image of the previous differential has rank five.  Hence, $VCH_2^{2,6}(\Gamma_{P3};\mathbbm{k})\cong \mathbbm{k}^{10}$.  Thus, the graded Euler characteristic for quantum grading $j=6$ is $10-1=9$, which gives the $9q^6$ term. Therefore, the homology has more information in it than the polynomial.

\begin{theorem}
The $n$-color vertex homology is a stronger invariant than the $n$-color vertex polynomial.
\end{theorem}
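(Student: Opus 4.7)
The plan is two-part: first, use Theorem 1 to show that the polynomial is recoverable from the homology, so the homology is at least as strong an invariant; second, appeal to the explicit computation in Example 5.7 (the $3$-prism $\Gamma_{P3}$) to exhibit homological data that the polynomial cannot see, and upgrade this to a genuine separation of invariants. The easy direction is immediate: Theorem 1 gives $\llangle\Gamma\rrangle_n(q) = \chi_q(VCH_n^{*,*}(\Gamma;\mathbbm{k}))$, and the graded Euler characteristic is a many-to-one map from bigraded chain groups to polynomials — it records only the alternating sums $\sum_i (-1)^i \dim VCH_n^{i,j}(\Gamma;\mathbbm{k})$ in each quantum grading $j$, discarding how those dimensions are distributed across homological grading. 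So isomorphic homologies force equal polynomials.

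For the strict direction, the template is already present in Example 5.7: in quantum grading $j = 6$ the coefficient of $\llangle\Gamma_{P3}\rrangle_2$ is $9$, but this arises as the alternating sum $\dim VCH_2^{2,6}(\Gamma_{P3};\mathbbm{k}) - \dim VCH_2^{1,6}(\Gamma_{P3};\mathbbm{k}) = 10 - 1$. Thus the homology detects a rank-$1$ generator in bigrading $(1,6)$ and a rank-$10$ block in bigrading $(2,6)$ that the polynomial hides through cancellation. To turn this into a clean separation-of-invariants statement, I would exhibit a companion ribbon graph $\Gamma'$ with $\llangle\Gamma'\rrangle_2 = \llangle\Gamma_{P3}\rrangle_2$ but $VCH_2^{*,*}(\Gamma') \not\cong VCH_2^{*,*}(\Gamma_{P3})$ — e.g., one whose $j=6$ bigraded ranks are $(0,9)$ instead of $(1,10)$. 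Since the differentials on small trivalent ribbon graphs vanish on the vast majority of generators (as noted after the $\theta$ computation), candidates can be produced by a finite search: using the Mathematica implementation of Appendix A to tabulate $n$-color vertex polynomials for small trivalent ribbon graphs, then computing the bigraded ranks on any polynomial collision.

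The main obstacle is exhibiting this separating pair explicitly rather than abstractly. In principle one could proceed purely combinatorially by engineering the collision via the disjoint-union rule (3.4) — attaching auxiliary subgraphs that add matching polynomial factors but inject incompatible homological data — but in practice a computer search over small ribbon graphs is more efficient and concrete. No theoretical obstruction stands in the way: the result is a near-formal consequence of Theorem 1 together with any one explicit instance where the bigraded ranks exceed what the Euler characteristic encodes, and Example 5.7 already supplies such an instance.
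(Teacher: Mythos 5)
Your proposal follows essentially the same route as the paper: the paper's entire proof is the $\Gamma_{P3}$ computation you cite, namely that Theorem~1 recovers $\llangle\Gamma\rrangle_n(q)$ as the graded Euler characteristic, while in quantum grading $j=6$ the bigraded ranks $\dim VCH_2^{1,6}(\Gamma_{P3};\mathbbm{k})=1$ and $\dim VCH_2^{2,6}(\Gamma_{P3};\mathbbm{k})=10$ cancel to the coefficient $9$, so the homology records data the polynomial collapses. The one difference is that you treat the statement in the strict ``separation of invariants'' sense and therefore flag, as the main unfinished obstacle, the need for a companion ribbon graph $\Gamma'$ with $\llangle\Gamma'\rrangle_2=\llangle\Gamma_{P3}\rrangle_2$ but $VCH_2^{*,*}(\Gamma')\not\cong VCH_2^{*,*}(\Gamma_{P3})$; the paper never produces such a pair and takes the information-content argument (more bigraded data than the alternating sums encode) as the proof, so your proposed computer search goes beyond what the paper does rather than filling a gap in your own version of its argument. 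If you do want the stricter statement, the search (or the disjoint-union engineering you sketch) would indeed have to be carried out, since as written it is a plan rather than a proof.
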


\begin{figure}[h]
\includegraphics[scale=.35]{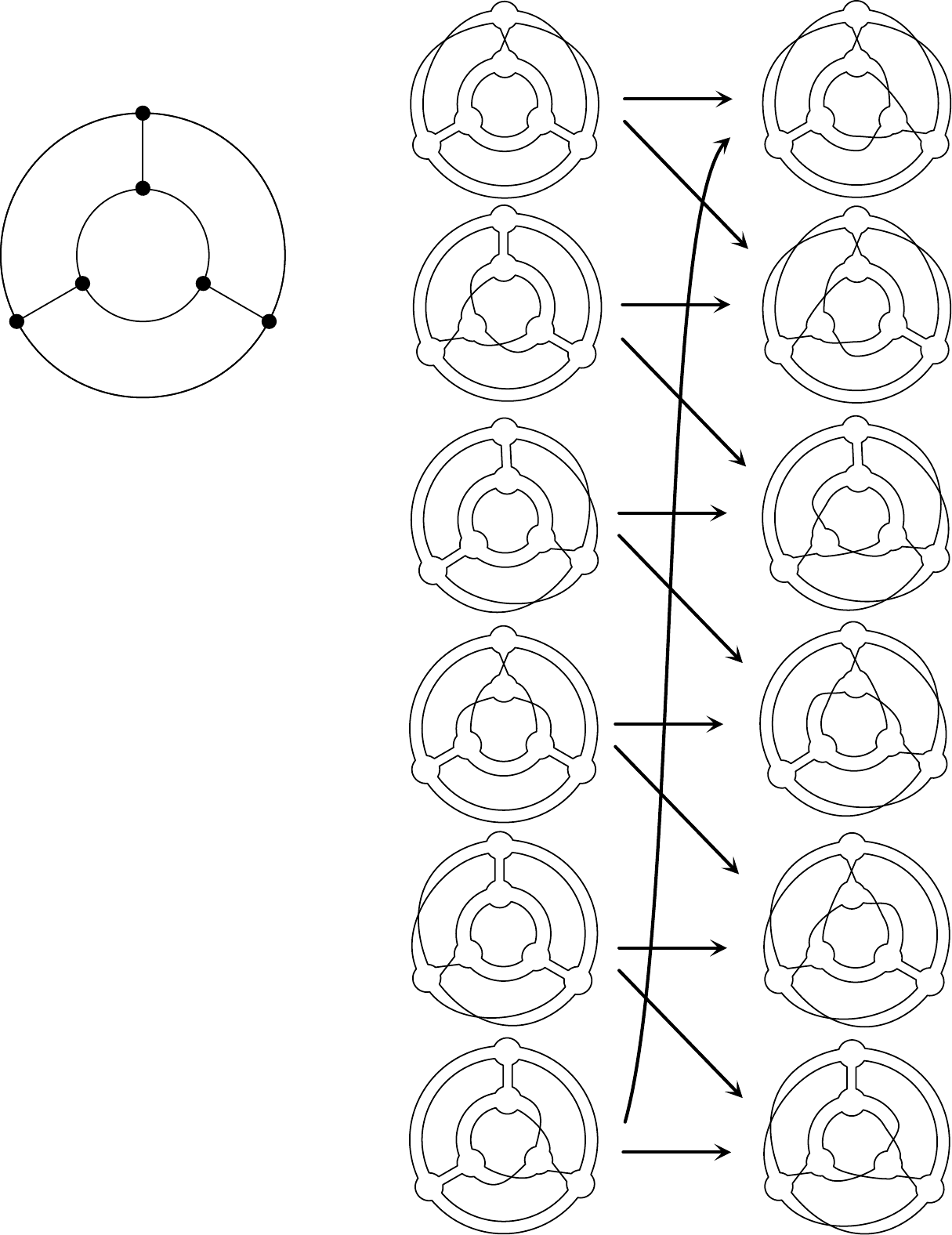}
\caption{A portion of the hypercube of vertex states for the $3$-prism.}\label{fig:P3BlownUp}
\end{figure}

\end{example}

\section{Filtered $n$-color vertex homology}
\label{sec:VertexLee}

In \cite{ColorHomology}, a differential $\widetilde{\del}$ was defined on the chain complex $C^{*,*}(\Gamma_M)$ that gave rise to a spectral sequence. The $E_1$-page of this sequence was the bigraded $n$-color homology and the $E_\infty$-page was the filtered $n$-color homology.  Moreover, the total differential in that case, $\widehat{\del} = \del  + \widetilde{\del}$, produced a homology theory that counted the $n$-face colorings of the ribbon graphs of the graph.  The goal of this section is to take advantage of this construction to build a spectral sequence for $n$-color vertex homology theory.  Just as in \Cref{subsec:VertexDiff}, compositions of multiplication, comultiplication, and the single-circle maps along 3-edge paths are used to define a vertex version of $\widehat{\del}$.   The twist in this section is that the maps in the composition are made up of {\em both} $\del_{\alpha\alpha'}$ and $\widetilde{\del}_{\alpha\alpha'}$ maps. Using these compositions leads to a new map, $\widetilde{\delta}$,  based upon three new maps, $\delta_n, \delta_{2n},$ and $\delta_{3n}$, each distinguished by their jump in quantum grading. (The map from  \Cref{subsec:VertexDiff} is $\delta_0 :=\delta$ in this context.)

\subsection{A new differential}\label{sec:fourDiffs}
The ideas behind $\widetilde{\del}: C^{i,*}(\Gamma_M) \rightarrow C^{i+1,*}(\Gamma_M)$ is sketched out here. See Section 5 of \cite{ColorHomology} for complete details. 

On the chain level, $\widetilde{\del}$ is defined in the same way as $\del$.  Let $\rho_{\alpha\alpha'}$ be an edge in the hypercube of states from $\Gamma_\alpha $ to $\Gamma_{\alpha'}$.  This gives rise to a linear map, $\widetilde{\del}_{\alpha\alpha'}:V_\alpha \rightarrow V_{\alpha'}$, which is defined based upon the circles in $\Gamma_\alpha$:

\begin{enumerate}
\item If $\rho_{\alpha\alpha'}$ represents fusing two circles in $\Gamma_\alpha$ into one circle in $\Gamma_{\alpha'}$, define the relevant map of $\widetilde{\del}_{\alpha\alpha'}$ by multiplication,
$$\widetilde{m}(x^i\ot x^j) = x^{i+j-n} \mbox{\ \  if $i+j\geq n$},$$
and zero if $i+j<n$.
\item If $\rho_{\alpha\alpha'}$ represents the splitting of a circle  in $\Gamma_\alpha$ into two circles in $\Gamma_{\alpha'}$,  define   the relevant map of  $\widetilde{\del}_{\alpha\alpha'}$ by comultiplication, $$\widetilde{\Delta}(x^k)= \sum_{\substack{i+j=k+2m-n\\0\leq i,j < n}} x^i \ot x^j \mbox{\ \  if $k+2m \geq n$},$$
and zero if $i+2m<n$.
\item If $\rho_{\alpha\alpha'}$ represents introducing a double point in a circle in $\Gamma_\alpha$ to get a circle in $\Gamma_{\alpha'}$, define  the relevant map of  $\widetilde{\del}_{\alpha\alpha'}$ by, $$\widetilde{\eta}(x^k) = \sqrt{n}\cdot x^{k+m-n} \mbox{\ \  if $k+m\geq n$},$$
and zero if $k+m<n$.
\end{enumerate}
Like $\del_{\alpha\alpha'}$, the map $\widetilde{\del}_{\alpha\alpha'}:V_\alpha\rightarrow V_{\alpha'}$ is defined on basis elements of $V_\alpha$ as a tensor product of maps. This product is given by the identity on the vector spaces associated with circles that do not change from $\Gamma_\alpha$ to $\Gamma_{\alpha'}$, and either $\widetilde{m}$, $\widetilde{\Delta}$, or $\widetilde{\eta}$ on the vector space(s) associated to circles that are modified by the change from a $0$-smoothing in $\Gamma_\alpha$ to a $1$-smoothing $\Gamma_{\alpha'}$. Extend this map linearly.

In particular, for $n=2$, the relevant maps of $\widetilde{\del}_{\alpha\alpha'}$ are defined by:

\hskip1in\begin{minipage}[t]{2in}\begin{eqnarray*}
1 \otimes 1, 1 \otimes x, x \otimes 1 
&\buildrel \widetilde{m} \over \longmapsto& 0 \\
x \otimes x &\buildrel \widetilde{m} \over \longmapsto& 1 \\
1 &\buildrel \widetilde{\eta} \over \longmapsto& 0 \\
x &\buildrel \widetilde{\eta} \over \longmapsto& \sqrt{2} \cdot 1
\end{eqnarray*}
\end{minipage} \hskip.5in \begin{minipage}[t]{2in}\begin{eqnarray*}
1 &\buildrel \widetilde{\Delta} \over \longmapsto& 1 \otimes 1 \\
x &\buildrel \widetilde{\Delta} \over \longmapsto& 1 \otimes x + x \otimes 1\\
\end{eqnarray*}
\end{minipage}

\begin{remark}
These maps are different than the ones used in both Khovanov homology \cite{Kho} and Lee homology \cite{LeeHomo}!
\end{remark}

As in the previous section, each edge $\rho_{\nu\nu'}$ of the hypercube of vertex states $\Gamma_\bullet$ joins two states, $\Gamma_\nu$ and $\Gamma_{\nu'}$, that differ by a single vertex smoothing.  Consider $\Gamma_\nu$ and $\Gamma_{\nu'}$ as states in the hypercube of states of the bubbled blowup $\Gamma^B$.  When thought of in this way, they are joined by a path of three edges, which we denote $\rho_1, \rho_2,$ and $\rho_3$.  Thus,  the $\widehat{\delta} = \delta + \widetilde{\delta}$ map from $\Gamma_\nu$ to $\Gamma_{\nu'}$ in $\Gamma_\bullet$ is defined using a $3$-fold composition of choices of maps $\del_{\alpha\alpha'}$ or $\widetilde{\del}_{\alpha\alpha'}$ for each of the three edges in $\Gamma^B$:  
\begin{enumerate}
\item The linear maps $\del_{\rho_i}$ are defined by $m$, $\Delta$, or $\eta$.
\item The linear maps $\widetilde{\del}_{\rho_i}$ are defined by $\widetilde{m}$, $\widetilde{\Delta}$, or $\widetilde{\eta}$.
\end{enumerate}
Choosing such a map for each of the three edges above gives $2^3$ possible compositions that are needed to define $\widehat{\delta}$ for the map that corresponds to the edge from $\Gamma_\nu$ to $\Gamma_{\nu'}$. These 8 possibilities can be grouped by their effect on the $q$-grading, as follows:  The composition, $\del_{\rho_3} \circ \del_{\rho_2} \circ \del_{\rho_1}$, coincides with $\delta$ and hence, preserves the quantum grading (see \Cref{thm:2ColorVertexHomology}).  Three 3-edge compositions involve a single $\widetilde{\del}_{\rho_i}$ map and therefore increase the quantum grading by $n$, since $\widetilde{m}$, $\widetilde{\Delta}$ and $\widetilde{\eta}$ each increase the quantum grading by $n$.  Three 3-edge compositions involve exactly two $\widetilde{\del}_{\rho_i}$ maps and therefore increase the quantum grading by $2n$.  Finally, a single 3-edge composition involves three $\widetilde{\del}_{\rho_i}$ maps and increases the quantum grading by $3n$.  This information is summarized in \Cref{table:QuantumJump}:
\begin {table}[ht]
\begin{center}
  \setlength\extrarowheight{6pt}
\begin{tabular}{| c | c |}
\hline
Jump in $q$-grading & Composition \\
\hline 
$0$ & $\del_{\rho_3} \circ \del_{\rho_2} \circ \del_{\rho_1}$ \\
\hline
$n$ & $\del_{\rho_3} \circ \widetilde{\del}_{\rho_2} \circ \del_{\rho_1}$, $\widetilde{\del}_{\rho_3} \circ \del_{\rho_2} \circ \del_{\rho_1}$, $\del_{\rho_3} \circ \del_{\rho_2} \circ \widetilde{\del}_{\rho_1}$ \\
\hline
$2n$ & $\widetilde{\del}_{\rho_3} \circ \del_{\rho_2} \circ \widetilde{\del}_{\rho_1}$, $\del_{\rho_3} \circ \widetilde{\del}_{\rho_2} \circ \widetilde{\del}_{\rho_1}$, $\widetilde{\del}_{\rho_3} \circ \widetilde{\del}_{\rho_2} \circ \del_{\rho_1}$ \\
\hline
$3n$ & $\widetilde{\del}_{\rho_3} \circ \widetilde{\del}_{\rho_2} \circ \widetilde{\del}_{\rho_1}$\\
\hline
\end{tabular}
\end{center}
\caption{The effect of each composition on the quantum grading.}
\label{table:QuantumJump}
\end{table}

Define the map $(\delta_{\nu\nu'})_{k}$ based upon the path of edges $\rho_1, \rho_2, \rho_3$ and the jump in the quantum grading $k\in\{0,n,2n,3n\}$ by taking the sum of compositions in \Cref{table:QuantumJump} for $k$. For example, for Configuration 1 in \Cref{fig:VertexCases}, if $\Gamma_\nu$ to $\Gamma_{\nu'}$ is the top $3$-edge path in \Cref{fig:Case1}, then for $k=2n$, 
$$(\delta_{\nu\nu'})_{2n} = \widetilde{\eta}\circ m \circ \widetilde{m} + \eta \circ \widetilde{m} \circ \widetilde{m} + \widetilde{\eta}\circ \widetilde{m} \circ m.$$
Next, define the four different graded maps, denoted $\delta_0, \delta_n, \delta_{2n}$ and $\delta_{3n}$, as the sum of all maps $(\delta_{\nu\nu'})_{k}$ for each edge from $\Gamma_\nu$ to some other state where $k$ is the change in quantum degree. Therefore, for $k\in\{0,n,2n,3n\}$ and $w\in V_\nu \subset C_n^{i,*}(\Gamma)$, 
\begin{equation}
\delta_{k}(w) = \sum_{\substack{\rho \ \mbox{\tiny such that }\\ \mbox{\tiny Tail}(\rho) = \nu}} \sign(\rho)(\delta_{\nu\nu'})_k(w),
\label{eq:delta-k-defintion}
\end{equation}
where $\sign(\rho) = (-1)^{\#\{\mbox{$1$'s to the left of $\ast$ in $\rho$}\}}$.  Extend this map linearly to all $C_n^{i,*}(\Gamma)$.

Below is an example of these maps, which will be discussed in more detail later in this section.

\begin{example}
\label{ex:ThetaLee}
Set $n=2$ and consider the homology of the $\theta$ graph shown in \Cref{fig:ThetaHomology}.  The first column (homological degree zero) has the following nontrivial maps:
$$\delta_2(1\otimes 1\otimes x) = \delta_2(1\otimes x\otimes 1) = \delta_2(x\otimes 1\otimes 1) = \sqrt{2}((1,0) +(0,1)),$$
$$\delta_2(1\otimes x\otimes x) = \delta_2(x\otimes 1\otimes x) = \delta_2(x\otimes x\otimes 1) = \sqrt{2}((x,0) +(0,x)),$$
$$\delta_4(x\otimes x\otimes x) = \sqrt{2}((1,0) +(0,1)).$$
The second column has the following nontrivial maps:
$$\delta_4(1,0) = -\delta_4(0,1) = \sqrt{2}(1\otimes 1\otimes x + 1\otimes x \otimes 1 + x \otimes 1 \otimes 1),$$
$$\delta_2(1,0) = -\delta_2(0,1) = \sqrt{2}(x\otimes x\otimes x),$$
$$\delta_4(x,0) = -\delta_4(0,x) = \sqrt{2}(1\otimes x\otimes x + x\otimes 1 \otimes x + x \otimes x \otimes 1),$$
$$\delta_6(x,0) = -\delta_6(0,x) = \sqrt{2}(1\otimes 1\otimes 1).$$
All other maps are zero.
\end{example}

Using the quantum grading, compositions of two of the $\delta_k$'s can be split into sums that have the same grading. For example, $\delta_0\delta_{3n}+\delta_{n}\delta_{2n}+\delta_{2n}\delta_{n}+\delta_{3n}\delta_0$ has grading $(1,3n)$.  When this is done, their sums are always equal to zero: 

\begin{proposition} For each $k\in \{0,1, \dots, 6\}$, sums of compositions that have the same grading are zero:
$$\sum_{\stackrel{0\leq p,q, \leq 3}{k=p+q}}\delta_{pn}\circ\delta_{qn} = 0,$$
and the sum of compositions has bigrading $(1,k)$.
\label{thm:LeeDiffs}\label{thm:diffsAntiCommute}
\end{proposition}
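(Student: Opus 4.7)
My plan is to assemble $\delta_0, \delta_n, \delta_{2n}, \delta_{3n}$ into a single ``total deformed differential'' $\widehat{\delta}$ on $C_n^{*,*}(\Gamma)$ and reduce the proposition to the single identity $\widehat{\delta}^{\,2} = 0$. Expanding this identity and splitting by quantum grading then yields the seven equations indexed by $k = p + q \in \{0, 1, \ldots, 6\}$ essentially for free.

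Concretely, I will define $\widehat{\delta} := \delta_0 + \delta_n + \delta_{2n} + \delta_{3n}$. For any edge $\rho_{\nu\nu'}$ in the hypercube of vertex states, I pick a 3-edge path $\rho_1, \rho_2, \rho_3$ in the hypercube of states of $\Gamma^B$ realizing it. The definition of each $(\delta_{\nu\nu'})_{kn}$ in \Cref{eq:delta-k-defintion} together with \Cref{table:QuantumJump} makes the contribution of this path to $\widehat{\delta}_{\nu\nu'}$ equal to the sum over all eight choices of $\del_{\rho_i}$ or $\widetilde{\del}_{\rho_i}$ on each edge, which factors cleanly as
\[
(\del_{\rho_3} + \widetilde{\del}_{\rho_3}) \circ (\del_{\rho_2} + \widetilde{\del}_{\rho_2}) \circ (\del_{\rho_1} + \widetilde{\del}_{\rho_1}) \; = \; \widehat{\del}_{\rho_3} \circ \widehat{\del}_{\rho_2} \circ \widehat{\del}_{\rho_1}.
\]
Thus $\widehat{\delta}$ is the vertex-level shadow of $\widehat{\del}$ on $\Gamma^B$, exactly analogous to how $\delta_0$ was built from $\del$ in \Cref{subsec:VertexDiff}.

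Next I will verify that $\widehat{\delta}_{\nu\nu'}$ is independent of the chosen 3-edge path and that every 2-face of the vertex hypercube anticommutes. Both are local statements in the hypercube of $\Gamma^B$, and both follow from the single input that $\widehat{\del}^{\,2} = 0$ on $C^{*,*}(\Gamma^B)$, i.e., that every 2-face of that larger hypercube commutes with the prescribed signs. This input is established in Section 5 of \cite{ColorHomology}, and the TQFT of Section 9 there applies verbatim to $\widehat{\del}$ because its local model is the cobordism category in which $m, \Delta, \eta, \widetilde{m}, \widetilde{\Delta}, \widetilde{\eta}$ all live. I expect this step --- porting the TQFT/face-commutativity argument from $\del$ to $\widehat{\del}$ --- to be the main obstacle, but it should reduce to citing the existing machinery together with a local check on the configurations of \Cref{fig:VertexCases}, exactly as in \Cref{thm:2ColorVertexHomology}.

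Granting these two facts, the proof of \Cref{thm:2ColorVertexHomology} transports \emph{mutatis mutandis}: each 2-face of the vertex hypercube of $\Gamma_\bullet$ corresponds, in the hypercube of $\Gamma^B$, to a diagram built from four 3-edge paths, and commutativity of that diagram (from iterating $\widehat{\del}^{\,2} = 0$) combined with the sign convention of \Cref{definition:vertex-delta} gives anticommutativity at the vertex level. Hence $\widehat{\delta}^{\,2} = 0$ as an endomorphism of $C_n^{*,*}(\Gamma)$. Since $\delta_{pn}$ has bigrading $(1, pn)$, the composition $\delta_{pn} \circ \delta_{qn}$ has bigrading $(2, (p+q)n)$, and the graded pieces of $\widehat{\delta}^{\,2} = 0$ are precisely
\[
\sum_{\substack{0 \leq p, q \leq 3 \\ p+q = k}} \delta_{pn} \circ \delta_{qn} = 0
\]
for each $k \in \{0, 1, \ldots, 6\}$, with the stated bigrading, completing the proof.
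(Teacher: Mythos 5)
Your proposal is correct and is essentially the paper's own argument: the decisive input in both cases is the face relations in the hypercube of states of $\Gamma^B$ (the pure and mixed relations of Section 5/Theorem 5.3 and the Section 9 TQFT of \cite{ColorHomology}), transported to the vertex hypercube through $3$-edge paths and the edge sign convention, exactly as in the proof of \Cref{thm:2ColorVertexHomology}. The only (harmless) difference is bookkeeping: you prove the single identity $\widehat{\delta}^{\,2}=0$ first and then split by quantum degree, which is valid since each $\delta_{pn}$ is homogeneous, with $\delta_{pn}\circ\delta_{qn}$ of bidegree $(2,(p+q)n)$, whereas the paper fixes $k$ and verifies the graded sums face-by-face through the polygon.
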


That is, $\delta_0\delta_0 = 0$, $\delta_{n}\delta_{0} + \delta_{0}\delta_n = 0$, $\delta_{2n}\delta_{0} +\delta_{n}\delta_{n}+ \delta_{0}\delta_{2n}=0$, and so on up to $\delta_{3n}\delta_{3n}=0$.  

Embedding the $\Gamma_\bullet$ hypercube in the bubbled blowup hypercube once again provides the necessary tools for working with these maps. It reduces showing the behavior of complicated-to-express diagrams that correspond to faces of the  hypercube of vertex states  to a sequence of  calculations corresponding to faces of the hypercube of states of the bubbled blowup. As will become evident, without the TQFT tools of \cite{ColorHomology}, proving this proposition for $n\gg 0$ would  be intractable. 

\begin{proof}
Consider a face joining two states $\Gamma_\nu$ and $\Gamma_{\nu''}$ in the $\Gamma_\bullet$ hypercube. Each 2-edge path of the face joining $\Gamma_\nu$ to $\Gamma_{\nu''}$ corresponds to a path of six edges in the hypercube of states of the bubbled blowup $\Gamma^B$ (see \Cref{fig:poly-faces}). Due to the quantum gradings,  compositions of maps with the same grading that take $w\in V_\nu\subset C^{i,j}_n(\Gamma)$ to an element of $C^{i,j+k}_n(\Gamma)$ for $k\in\{0,1,\dots, 6\}$ can be analyzed separately. Also, the claim about the bigrading is clear from the construction. 

\begin{figure}[h]
\includegraphics[scale=.28]{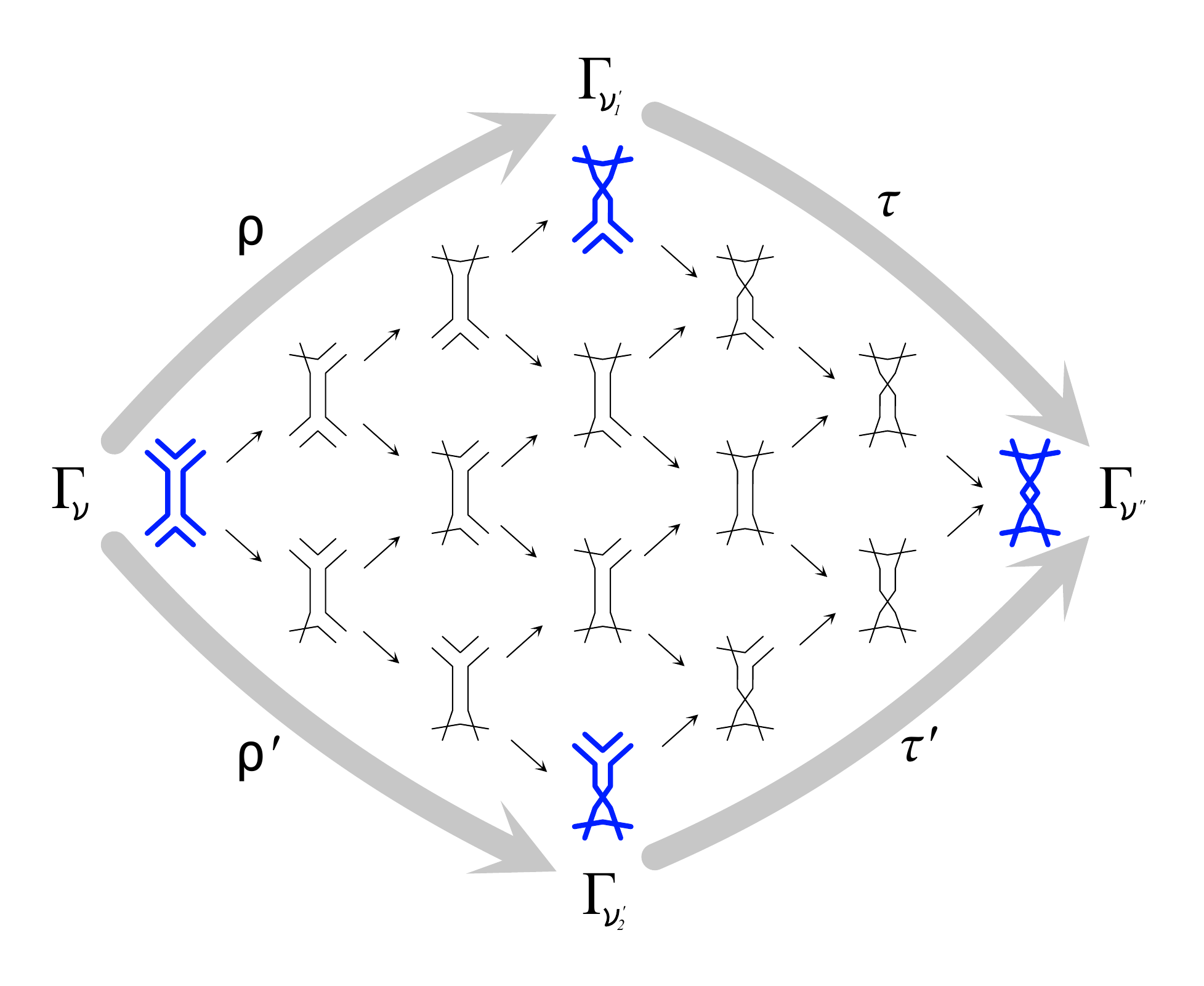}
\caption{A polygon of faces in the hypercube of states of $\Gamma^B$.}\label{fig:poly-faces}
\end{figure}

Each face in the hypercube of vertex states corresponds to a polygon of faces in the hypercube of states of the bubbled blowup.  \Cref{fig:poly-faces} shows an example of such a polygon where two vertices ``interact'' across an edge (most do not).  Every face in the $\Gamma_\bullet$ hypercube corresponds to a polygon in the $\Gamma^B$ hypercube like the one in the figure. 

To prove the proposition, a statement like the following needs to be shown for each face in the $\Gamma_\bullet$ hypercube: 
\begin{equation}
(\delta_\tau)_{2n}(\delta_\rho)_{0} + (\delta_\tau)_{n}(\delta_\rho)_{n} + (\delta_\tau)_{0}(\delta_\rho)_{2n} = (\delta_{\tau'})_{2n}(\delta_{\rho'})_{0} + (\delta_{\tau'})_{n}(\delta_{\rho'})_{n} + (\delta_{\tau'})_{0}(\delta_{\rho'})_{2n} \label{eq:6-edge-paths}
\end{equation}

\noindent This is the statement for $k=2$ in the proposition; the other statements are similar. The proof steps face-by-face through the polygon of the $\Gamma^B$ hypercube from the top $2$-edge path $\Gamma_\nu \ra \Gamma_{\nu'_1} \ra \Gamma_{\nu''}$ to the bottom $2$-edge path $\Gamma_\nu \ra \Gamma_{\nu'_2} \ra \Gamma_{\nu''}$ . Using the example of \Cref{fig:poly-faces}, the first step amounts to showing 
\begin{equation}
(\delta_\tau)_{2n}(\delta_\rho)_{0} + (\delta_\tau)_{n}(\delta_\rho)_{n} + (\delta_\tau)_{0}(\delta_\rho)_{2n} = (\delta_{\tau_1})_{2n}(\delta_{\rho_1})_{0} + (\delta_{\tau_1})_{n}(\delta_{\rho_1})_{n} + (\delta_{\tau_1})_{0}(\delta_{\rho_1})_{2n} \label{eq:first-step-6-edge-paths}
\end{equation}
for the top face shown in \Cref{fig:top-face}.

\begin{figure}[h]
\includegraphics[scale=.28]{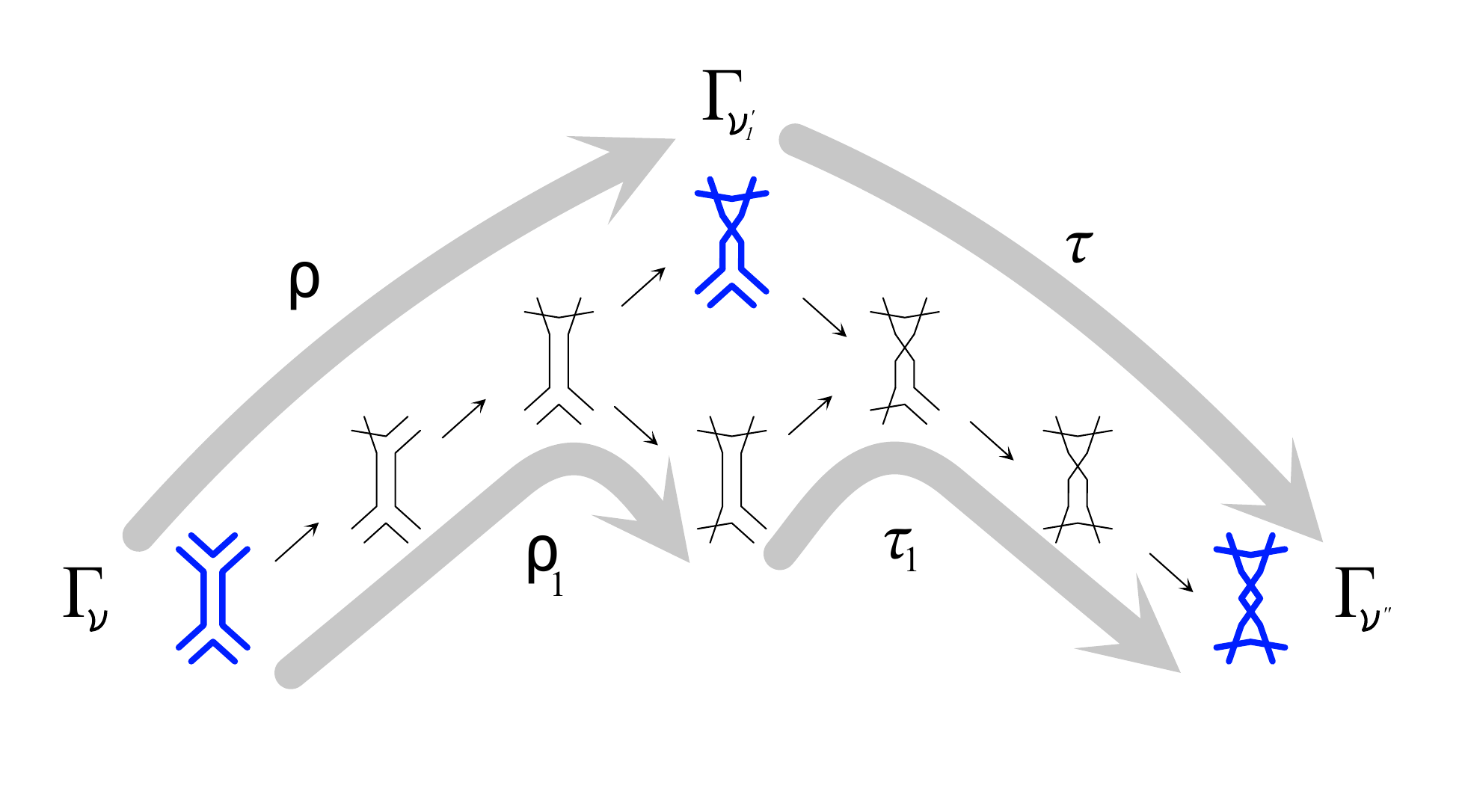}
\caption{Top face of the polygon in \Cref{fig:poly-faces}.}
\label{fig:top-face}
\end{figure}

Writing out the composition of maps (corresponding to the $\partial_{\rho_i}$'s  and $\widetilde{\partial}_{\rho_i}$'s from  \Cref{table:QuantumJump}) for each $3$-edge path in \Cref{eq:first-step-6-edge-paths}, the calculation reduces to proving the following about diagrams that correspond to faces in the $\Gamma^B$ hypercube:

\begin{enumerate}
\item  the commutativity of diagrams involving only $m$, $\Delta$, and $\eta$,
\item  the commutativity of diagrams involving only $\widetilde{m}$, $\widetilde{\Delta}$, and $\widetilde{\eta}$, and
\item  the equality of pairs of sums that correspond to $\partial \widetilde{\partial} + \widetilde{\partial}\partial$ for the two $2$-edge paths along the top and bottom, i.e., diagrams that ``mix''  $m$, $\Delta$, and $\eta$ maps with $\widetilde{m}$, $\widetilde{\Delta}$, and $\widetilde{\eta}$ maps.
\end{enumerate}
The first two statements are already known to commute. The last statement needs more explanation: Suppose the top face looked like the face in \Cref{fig:example-of-top-face}. In this case, the third statement translates into showing the following equality:
\begin{equation}
m\circ(\widetilde{\eta}\ot Id) +\widetilde{m}\circ(\eta\ot Id) = \eta\circ\widetilde{m} + \widetilde{\eta}\circ m.\label{eq:sum-of-compositions}
\end{equation} 
The lefthand side of the equation corresponds to the top $2$-edge path  and the righthand side corresponds to the bottom path.

\begin{figure}[H]
\includegraphics[scale=.60]{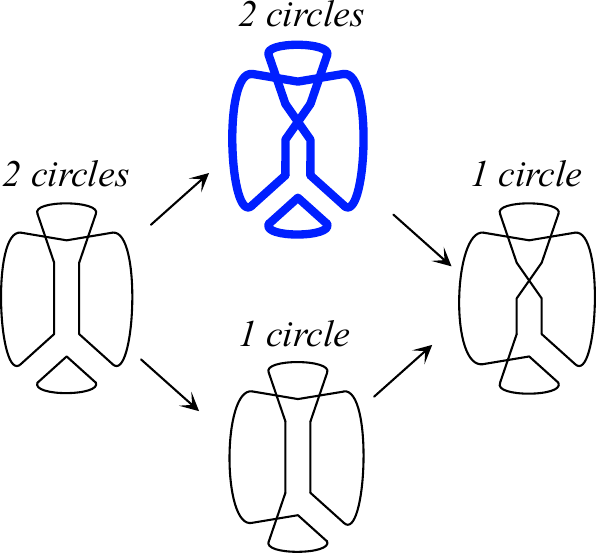}
\caption{An example of the top face of the polygon in \Cref{fig:top-face}.}\label{fig:example-of-top-face}
\end{figure}

Proving equalities like the one shown in \Cref{eq:sum-of-compositions} is the content of Theorem 5.3 in \cite{ColorHomology}. For example,  \Cref{eq:sum-of-compositions} is Equation (5) in the proof of that theorem. Ultimately, the reason why {\em all} pairs of sums of such maps for {\em all} $n$ are equal for {\em all} diagrams corresponding to faces in the $\Gamma^B$ hypercube rests on the category theory explained in Section 9 of \cite{ColorHomology}. 

Since the terms of each side of \Cref{eq:sum-of-compositions} can be paired up into directly commuting diagrams as in Statements (1) or (2) above, or sums of pairs that are equal as in Statement (3) above, the equality holds. The choice of restricting to each $k\in \{0,1,\dots, 6\}$ guarantees that the correct pairs appear on both sides. Moving face-by-face through the polygon in a similar fashion shows that \Cref{eq:6-edge-paths} holds in general.

To finish the proof, note that the chosen sign convention in the definition of the $\delta_k$'s (see \Cref{eq:delta-k-defintion}) guarantees that one of the maps $\delta_\rho$, $\delta_\tau$, $\delta_{\rho'}$, or $\delta_{\tau'}$ will have a minus sign, or three will come with a minus sign and the other is positive. Thus, moving the righthand side of  \Cref{eq:6-edge-paths} to the lefthand side  for all relevant faces in the $\Gamma_\bullet$ hypercube gives the equation:
$$2(\delta_{2n}\delta_{0} +\delta_{n}\delta_{n}+ \delta_{0}\delta_{2n})=0,$$
which proves the theorem for this case. All other cases for $k\in \{0,1,\dots,6\}$ follow the same reasoning.
\end{proof}

Notice that the maps $\delta_{k}$ for $k\in\{0,n,2n,3n\}$ are also invariant under the ribbon moves (cf. \cite{BKR}) for the same reasons that $\delta$ is invariant.  In order to obtain a differential $\widehat{\delta}$ that mimics the properties of the Lee differential (cf. \cite{ColorHomology,LeeHomo}), first take the sum of the maps:
\begin{equation}\widetilde{\delta} = \delta_n + \delta_{2n} + \delta_{3n}.\label{eq:delta-hat}\end{equation}
While this map does not preserve quantum grading, it still  respects a filtration described in the next section.  When combined with $\delta=\delta_0$,  \Cref{thm:diffsAntiCommute}  implies
$$(\delta + \widetilde{\delta})^2 = (\delta_0 + \delta_{n} +\delta_{2n}+\delta_{3n})^2 = 0.$$ 
Forgetting the quantum grading (for the moment), $\widehat{\delta} := \delta + \widetilde{\delta}$ is a new differential, called the {\em filtered differential}.

At this point, it will  become advantageous to work with the Frobenius algebra $V_t=\mathbbm{k}[x]/(x^n-t)$ for some $t\in [0,1]$ instead of just $V_0=\mathbbm{k}[x]/(x^n)$. All theorems above for $t=0$ also hold  for $t>0$. Hence:

\begin{proposition} Let $t\in [0,1]$ and build  graded chain groups $C^{i,j}_n(\Gamma)$ as in the definitions and theorems above using $V_t=\mathbbm{k}[x]/(x^n-t)$.  Then both $(C^{i,j}_n(\Gamma), \delta)$ and $(C^{i,*}_n(\Gamma), \widehat{\delta})$ are chain complexes. 
\end{proposition}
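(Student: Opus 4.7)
The plan is to prove both halves by imitating the arguments already developed for $t=0$, relying on the fact that nothing in those arguments actually used $t=0$ in an essential way. The underlying graded vector space of $V_t=\mathbbm{k}[x]/(x^n-t)$ is the same as that of $V_0$, so the chain groups $C^{i,j}_n(\Gamma)$ are unchanged. Only the Frobenius structure is deformed, and the TQFT of Section 9 of \cite{ColorHomology} is set up uniformly over the whole family, so the structural facts about cobordisms that we need carry over verbatim.

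For $(C^{i,*}_n(\Gamma),\delta)$, I would re-run the argument of Theorem \ref{thm:2ColorVertexHomology}. The two inputs there were: (i) each local map $\delta_{\nu\nu'}$ is independent of the choice of three-edge path in the hypercube of states of the bubbled blowup $\Gamma^B$, and (ii) any face diagram in that larger hypercube commutes. Both are consequences of the TQFT functor, whose outputs on a cobordism depend only on the cobordism (not its presentation) and which is defined for each $V_t$. With the same sign convention as in \eqref{definition:vertex-delta}, the faces of the hypercube of vertex states of $\Gamma_\bullet$ therefore anticommute, giving $\delta^2=0$ over $V_t$.

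For $(C^{i,*}_n(\Gamma),\widehat{\delta})$, I would expand
\begin{equation*}
\widehat{\delta}^{\,2}=(\delta_0+\delta_n+\delta_{2n}+\delta_{3n})^2=\sum_{k=0}^{6}\ \sum_{\substack{0\le p,q\le 3\\ p+q=k}}\delta_{pn}\circ\delta_{qn},
\end{equation*}
and invoke Proposition \ref{thm:diffsAntiCommute} to kill each inner sum. Hence it suffices to verify that Proposition \ref{thm:diffsAntiCommute} itself extends to arbitrary $t\in[0,1]$. Its proof proceeded face-by-face through a polygon in the $\Gamma^B$ hypercube (cf.\ \Cref{fig:poly-faces}) and reduced the anticommutation to (a) commutativity of diagrams built only from $m,\Delta,\eta$, (b) commutativity of diagrams built only from $\widetilde{m},\widetilde{\Delta},\widetilde{\eta}$, and (c) mixed identities of the form \eqref{eq:sum-of-compositions}. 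All three are instances of the relations established in Theorem~5.3 of \cite{ColorHomology} and hold for every member of the family $V_t$ by the same TQFT argument.

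The main obstacle is really just bookkeeping rather than new mathematics: one must track the six possible three-edge paths between adjacent vertex states when some of the $\partial_{\rho_i}$'s are replaced by $\widetilde{\partial}_{\rho_i}$'s, and confirm that the sign convention in \eqref{eq:delta-k-defintion} pairs terms correctly at each of the seven grading jumps $k\in\{0,1,\dots,6\}$. Because the TQFT supplies all the underlying commutativity and mixed-pair identities uniformly in $t$, no additional algebraic input beyond what was already used for $V_0$ is required, and the proof reduces to invoking the earlier results.
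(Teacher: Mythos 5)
Your argument is correct and follows essentially the same route as the paper, which simply observes that "all theorems above for $t=0$ also hold for $t>0$" — i.e., the chain groups are unchanged, $\delta^2=0$ comes from the three-edge-path/face-commutativity argument of Theorem~\ref{thm:2ColorVertexHomology}, and $\widehat{\delta}^{\,2}=0$ comes from expanding $(\delta_0+\delta_n+\delta_{2n}+\delta_{3n})^2$ and invoking Proposition~\ref{thm:diffsAntiCommute}, all of which rest on the TQFT of Section~9 of \cite{ColorHomology} uniformly in $t$. Your write-up just makes this implicit reasoning explicit.
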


The homology of the $\delta$ complex defined for any $t\in [0,1]$ is isomorphic to the bigraded $n$-color vertex homology, which was originally defined for $t=0$. For any two values of $t\in [0,1]$, the homology theories for the second complex based upon those values are isomorphic as well. However, when $t=1$, the algebra $V_1$ has excellent properties for describing colorings of  faces of ribbon graphs. Therefore, in the definition below, the chain complex $(C^{i,*}_n(\Gamma), \widehat{\delta})$ is defined for $t=1$. To help distinguish it from the chain complex for $V=V_0$, and to simplify notation in what follows, the notation $\widehat{V}$ will be used for $V_1$, and $(\widehat{C}^i_n(\Gamma),\widehat{\delta})$ will be used for the chain complex that is based upon $\widehat{V}$. 

\begin{definition}
Let $G(V,E)$ be a connected trivalent graph and $\Gamma$ be a ribbon graph of $G$.  The {\em filtered $n$-color vertex homology of $\Gamma$}  is 
\begin{equation*}
\widehat{VCH}_n^*(\Gamma,\mathbbm{k}):=H(\widehat{C}_n^{*}(\Gamma), \widehat{\delta}).
\end{equation*}
\label{def:FilteredColorVertexHomology}
\end{definition}

\subsection{A spectral sequence for filtered $n$-color vertex homology} In this subsection, we describe the spectral sequence for the filtered $n$-color vertex homology.  For $v\in C^{i,j}_n(\Gamma) \subset \widehat{C}^i_n(\Gamma)$, define a grading function for the $q$-grading so that $q(v)=j$. If $v\in \widehat{C}^i_n(\Gamma)$ is written as a sum of monomials $v=v_1+v_2+\dots +v_\ell$, then set $q(v) = \min\{q(v_i) \ | \ i=1,\ldots,\ell\}$.  Define a finite length decreasing filtration on $\widehat{C}^{i}_n(\Gamma)$ by
$$\mathcal{F}^p(\widehat{C}_n^*(\Gamma)) = \{ v\in \widehat{C}^{*}_n(\Gamma) \ | \ q(v)\geq p\}.$$

\bigskip

All three maps $\delta=\delta_0$, $\tilde{\delta}$, and $\widehat{\delta}$ respect the filtration, e.g., $\widehat{\delta}(\mathcal{F}^p(\widehat{C}_n^i(\Gamma)))\subset \mathcal{F}^p(\widehat{C}_n^{i+1}(\Gamma))$, and $\widetilde{\delta}$ is a filtered map of bigrading $(1,n)$. Therefore, the filtration induces a spectral sequence: The $E_0$-page is the original bigraded complex $C_n^{*,*}(\Gamma)$ with the bigraded differential $d_0:=\delta$. The first page, $E_1$, is  the bigraded $n$-color vertex homology.  The higher differentials $d_r$ on $E_r(\Gamma)$ are zero except when $r$ is a multiple of $n$. When $r$ is a multiple of $n$, then $d_r$ has bigrading $(1,r)$.  The differentials $d_r$ for $r>0$ are based upon $\widetilde{\delta}$.

A standard theorem (cf. \cite{McCleary}) of spectral sequences then implies:

\begin{theorem} \label{theorem:spectral-sequence-converges} Let $G(V,E)$ be a trivalent graph and $\Gamma$ be a ribbon graph of it. Let $n\in\NN$ and $\mathbbm{k}$ be a ring in which $\sqrt{n}$ is defined. There exists a spectral sequence that has the form
$$E^{i,j}_1 = VCH_n^{i,j}(\Gamma;\mathbbm{k}) \ \implies \ \widehat{VCH}^i_n(\Gamma;\mathbbm{k})$$
when expressed in terms of the gradings of $C^{i,j}_n(\Gamma)$. Thus, the $E_\infty$-page of this spectral sequence is isomorphic to $\widehat{VCH}^*_n(\Gamma;\mathbbm{k})$.\label{thm:spectral-sequence-for-filtered-homology}
\end{theorem}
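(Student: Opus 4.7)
The plan is to apply the standard convergence theorem for spectral sequences of bounded filtered chain complexes (e.g.\ Theorem~2.6 of McCleary) to the complex $(\widehat{C}^{*}_n(\Gamma),\widehat{\delta})$ equipped with the decreasing $q$-filtration $\mathcal{F}^p$ defined just before the theorem. The bulk of the technical work has already been done: Proposition~\ref{thm:diffsAntiCommute} gives $\widehat{\delta}^{2}=0$, and the paragraph preceding the statement records both that $\widehat{\delta}$ respects $\mathcal{F}^p$ and that $\widetilde{\delta}=\delta_{n}+\delta_{2n}+\delta_{3n}$ shifts filtration degree only by multiples of $n$. So the proof is essentially a verification that the hypotheses of the convergence theorem hold, together with an identification of the early pages.

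First I would check that the filtration is bounded. Since $\Gamma$ is a finite graph, each $\widehat{C}^{i}_n(\Gamma)$ is a finite-dimensional $\mathbbm{k}$-vector space, so the $q$-gradings of a monomial basis lie in a finite interval $[q_{\min},q_{\max}]$. Hence $\mathcal{F}^{p}(\widehat{C}^{i}_n(\Gamma))=\widehat{C}^{i}_n(\Gamma)$ for $p\le q_{\min}$ and $\mathcal{F}^{p}(\widehat{C}^{i}_n(\Gamma))=0$ for $p>q_{\max}$, giving a finite-length decreasing filtration in every homological degree. Bounded filtrations are precisely the setting in which the associated spectral sequence converges strongly to the homology of the total complex.

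Next I would identify the $E_{0}$- and $E_{1}$-pages. Decompose $\widehat{\delta}=\delta_{0}+\delta_{n}+\delta_{2n}+\delta_{3n}$ as in \Cref{eq:delta-hat}: only $\delta_{0}=\delta$ preserves filtration degree, while the remaining summands strictly raise it. Passing to the associated graded object $E_{0}^{p,*}=\mathcal{F}^{p}/\mathcal{F}^{p+1}$ recovers the $q$-graded piece $C^{*,p}_{n}(\Gamma)$ together with induced differential $d_{0}=\delta$, so $E_{0}\cong(C^{*,*}_{n}(\Gamma),\delta)$ and
$$E_{1}^{i,j}=H^{i,j}(C^{*,*}_{n}(\Gamma),\delta)=VCH_n^{i,j}(\Gamma;\mathbbm{k})$$
by \Cref{def:2ColorHomology}. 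Reindexed by the bigrading of $C^{i,j}_n(\Gamma)$, the higher differential $d_{r}$ is induced by the part of $\widetilde{\delta}$ that raises $q$-degree by $r$; since $\widetilde{\delta}$ only has components $\delta_{n},\delta_{2n},\delta_{3n}$, we get $d_{r}=0$ unless $r\in\{n,2n,3n\}$, and each nonzero $d_{r}$ has bigrading $(1,r)$, matching the description in the statement.

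Finally, boundedness of $\mathcal{F}^{p}$ applied to the convergence theorem yields $E_{\infty}^{i,*}\cong\operatorname{Gr}H^{i}(\widehat{C}^{*}_n(\Gamma),\widehat{\delta})=\operatorname{Gr}\widehat{VCH}_n^{i}(\Gamma;\mathbbm{k})$, which is the required $E_{\infty}$-identification. There is no substantive obstacle here: the nontrivial input is $\widehat{\delta}^{2}=0$, already handled by Proposition~\ref{thm:diffsAntiCommute}, and the only care required is the book-keeping translating the $q$-filtration into bigradings on each $E_{r}$-page. The hypothesis $\sqrt{n}\in\mathbbm{k}$ is inherited from the definition of $\widehat{\delta}$ itself and plays no additional role in the spectral sequence argument.
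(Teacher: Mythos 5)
Your proposal is correct and follows essentially the same route as the paper, which likewise takes the finite decreasing $q$-filtration on $(\widehat{C}^{*}_n(\Gamma),\widehat{\delta})$, notes that $\delta_0$ preserves and $\widetilde{\delta}$ strictly raises filtration (in multiples of $n$), and then invokes the standard convergence theorem from McCleary to identify $E_0$, $E_1=VCH_n^{*,*}$, and $E_\infty\cong \operatorname{Gr}\widehat{VCH}_n^{*}$. The only difference is that you spell out the boundedness check and the page identifications that the paper leaves to the cited reference.
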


\begin{example}
\label{ex:Theta2}
Returning to the $\theta$ graph and the $n=2$ case,  observe from the maps in \Cref{ex:ThetaLee} imply that $\widehat{VCH}_2^0(\Gamma_\theta;\mathbbm{k})$ and $\widehat{VCH}_2^2(\Gamma_\theta;\mathbbm{k})$ both have rank six, and $\widehat{VCH}_2^1(\Gamma_\theta;\mathbbm{k})$ has rank zero.  Note that the Euler characteristic of the filtered $2$-color vertex homology is twelve, which is twice the number of 3-edge colorings. This example supports the conclusion of Theorem~\ref{thm:oddMatchingsCancel}. 
\end{example}

At this point, the reason why nontrivial classes in \Cref{ex:Theta2} are counting $3$-edge colorings appears to be mysterious. We show how to interpret these classes as types of face colorings and perfect matchings of the ribbon graph $\Gamma$ next.

\subsection{A color basis.}
The new differential  $\widehat{\delta} = \delta + \widetilde{\delta}$ produces a homology theory whose properties are perhaps best understood after a change of basis.   Following Section 5.3 of \cite{ColorHomology}, this basis can be thought of as $n$ colors, $c_0,\ldots, c_{n-1}$. In order to use this basis, we take $\mathbbm{k} =\CC$ and  $\widehat{V} = \CC[x]/(x^n-1)$ as the algebra. From now on in this paper, when using $\mathbbm{k}=\CC$, the ring will be suppressed from the notation, i.e., $\widehat{VCH}^i_n(\Gamma)$ will stand for $\widehat{VCH}^i_n(\Gamma;\CC)$.

\begin{definition} Let $n$ be a positive integer and let $\lambda = e^{\frac{2\pi \mathrm{i}}{n}}\in \CC$ be an $n$th root of unity. The {\em color basis} of $\widehat{V}=\BC[x]/(x^n-1)$ consists of the elements,
$$c_i := \frac{1}{n}\left(1+ \lambda^i x +\lambda^{2i}x^2+ \lambda^{3i}x^3+\cdots+\lambda^{(n-1)i}x^{n-1}\right),$$
for $0\leq i \leq n-1$. \label{def:colorbasis}
\end{definition}
 
In the color basis, the vector space $\widehat{V}_\nu$ is generated by tensor products of colors $c_i$.  That is, if there are $k$ circles in a state $\Gamma_\nu$, then for  ${I\in \{0,\ldots, n-1\}^{k}}$, then $c_I$ is a multi-index that stands for $c_{i_1}\ot\cdots \ot c_{i_{k}}$. Thus, $\widehat{V}_\nu = \langle c_I\rangle.$
Following \cite{ColorHomology}, call each basis element $c_I$ a {\em coloring} of the state and call a linear combination of colorings a {\em mixture}.

The color basis is well-behaved with regard to multiplication and the local differentials in $\widehat{V}$.

\begin{lemma}[Lemma 5.9 of \cite{ColorHomology}]\label{lem:widehat-maps}
In the color basis, the following equations hold:
\begin{enumerate}
\item $c_i \cdot c_j =  \delta^{ij} c_j,$ hence $\widehat{m}(c_i \ot c_j) =  \delta^{ij} c_j$,
\item $\widehat{\Delta}(c_i) =  n \lambda^{-2mi} c_i \ot c_i,$ and
\item $\widehat{\eta}(c_i) = \sqrt n \lambda^{-mi} c_i$.
\end{enumerate}
\end{lemma}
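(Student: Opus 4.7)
The plan is to expand each color basis element in the monomial basis of $\widehat{V} = \CC[x]/(x^n - 1)$ and to reduce the three identities to the single orthogonality relation for $n$th roots of unity,
$$\sum_{k=0}^{n-1} \lambda^{k(i-j)} = n\,\delta^{ij}.$$
Conceptually, the color basis is nothing but the idempotent/character basis of the group algebra $\CC[\ZZ/n\ZZ] \cong \widehat{V}$, so (1) is essentially automatic and (2), (3) encode the behavior of comultiplication and the ``single-circle'' map under the discrete Fourier transform. First I would observe that, because we now quotient by $x^n - 1$ instead of $x^n$, the ``untilde'' and ``tilde'' versions of each map of \Cref{sec:fourDiffs} collapse into a single formula on monomials with exponents read modulo $n$: $\widehat{m}(x^i \otimes x^j) = x^{(i+j) \bmod n}$, $\widehat{\Delta}(x^k) = \sum_{\substack{0 \leq a,b < n \\ a+b \equiv k + 2m\,(\mathrm{mod}\ n)}} x^a \otimes x^b$, and $\widehat{\eta}(x^k) = \sqrt{n}\, x^{(k+m) \bmod n}$.

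For part (1), I would write
$$c_i \cdot c_j = \frac{1}{n^2}\sum_{k,l=0}^{n-1} \lambda^{ki + lj}\, x^{(k+l) \bmod n},$$
and, for each fixed residue $s = (k+l) \bmod n$, substitute $l \equiv s - k \pmod n$ to factor out $\lambda^{sj}$, leaving the character sum $\sum_k \lambda^{k(i-j)} = n\delta^{ij}$. The result simplifies to $\delta^{ij} c_j$, yielding both the idempotence identity and $\widehat{m}(c_i \otimes c_j) = \delta^{ij} c_j$.

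Parts (2) and (3) use the same reindexing trick. For $\widehat{\Delta}(c_i)$, each pair $(a,b)$ with $0 \leq a,b < n$ is matched by a unique $k \equiv a+b-2m \pmod n$ in the defining sum, so the double sum factors as
$$\widehat{\Delta}(c_i) = \frac{\lambda^{-2mi}}{n} \Bigl(\sum_a \lambda^{ai} x^a\Bigr) \otimes \Bigl(\sum_b \lambda^{bi} x^b\Bigr) = n\lambda^{-2mi}\, c_i \otimes c_i,$$
after recognizing each parenthesized sum as $n c_i$. For $\widehat{\eta}(c_i)$, substituting $\ell = (k+m) \bmod n$ and using $\lambda^n = 1$ pulls out a factor of $\lambda^{-mi}$ and leaves $\sqrt{n}\,\lambda^{-mi} c_i$.

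There is no serious obstacle here: once the reduction of the four separate local maps to a single mod-$n$ form is noted, everything else is bookkeeping for the $\lambda^n = 1$ identifications. In fact, this uniform treatment is the reason the same three formulas hold for both parities of $n$, with no case analysis needed on whether $m = n/2$ or $m = (n-1)/2$.
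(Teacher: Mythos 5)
Your computation is correct: the collapse of the untilded/tilded maps into the single mod-$n$ formulas on $\widehat{V}=\CC[x]/(x^n-1)$ is exactly right, and each identity then follows from the root-of-unity orthogonality $\sum_k\lambda^{k(i-j)}=n\delta^{ij}$ as you show. The paper does not reprove this lemma but imports it as Lemma 5.9 of \cite{ColorHomology}, whose argument is essentially the same direct expansion in the monomial basis (the discrete-Fourier/idempotent viewpoint you describe), so your route matches the intended one.
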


Suppose that $\Gamma_\nu \rightarrow \Gamma_{\nu'}$ is an edge in the hypercube of vertex states and $\widehat{\delta}_{\nu\nu'}$ is the local filtered differential associated to it.  Working in the color basis $\langle c_I\rangle$, and applying \Cref{lem:widehat-maps}, we see that $\widehat{\delta}_{\nu\nu'}(c_I)$ is zero if and only if, at the vertex being changed from a $0$-smoothing to a $1$-smoothing, the faces incident to the vertex are colored with exactly three different colors or exactly two different colors (cf. the diagrams on the right of \Cref{fig:possibleColors}). It is nonzero if the face(s) are colored with a single color.  The face colorings can be seen by applying the maps in  \Cref{lem:widehat-maps} to Configurations 1-7 in \Cref{fig:VertexCases} (cf. \Cref{fig:Case1} to \Cref{fig:Case7}).

This takes a particularly nice form when calculating the filtered $2$-color vertex homology of a  ribbon graph $\Gamma$ in degree zero.  Recall that the all-zero state, $\Gamma_{\vec{0}}$, is a set of circles that corresponds to the faces of $\Gamma$. Consider $\widehat{\delta}:\widehat{C}^{0}_2(\Gamma) \ra \widehat{C}^1_2(\Gamma)$. A coloring $c_I$ of the circles in the all-zero state satisfies $\widehat{\delta}(c_I) = 0$ when the circles incident to each vertex are colored by exactly two colors. This can be summarized as: 

\begin{proposition}\label{prop:degree-zero-harmonic-colorings}
Let $G(V,E)$ be a trivalent graph and $\Gamma$ be a ribbon graph of it.  Then a color basis for $\widehat{VCH}^0_2(\Gamma)$ is the set of all  colorings of the faces of $\Gamma$ where the faces incident to each vertex of $\Gamma$ are colored with exactly two colors.
\end{proposition}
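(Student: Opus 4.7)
The plan is to identify $\widehat{VCH}^0_2(\Gamma)$ with $\ker\widehat{\delta}|_{\widehat{C}^0_2(\Gamma)}$ (there is no lower homological degree to quotient by) and then to read off a basis of this kernel using the color basis of \Cref{def:colorbasis}. By construction, $\widehat{C}^0_2(\Gamma) = \widehat{V}^{\otimes k_{\vec{0}}}$, and the $k_{\vec{0}}$ circles of the all-zero state $\Gamma_{\vec{0}}$ are in bijection with the faces of $\Gamma$. Hence each color basis element $c_I \in \widehat{V}_{\vec{0}}$ is precisely a $2$-coloring of the faces of $\Gamma$.

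First, I would decompose $\widehat{\delta}$ on $\widehat{C}^0_2(\Gamma)$ into local pieces $\widehat{\delta}_{\vec{0},\nu_k}\colon \widehat{V}_{\vec{0}}\to \widehat{V}_{\nu_k}$, one for each vertex $v_k$, where $\nu_k\in\{0,1\}^{|V|}$ is the vector with a single $1$ in the $k$th slot. Because the target summands $\widehat{V}_{\nu_k}$ of $\widehat{C}^1_2(\Gamma)$ are distinct, an element of $\widehat{C}^0_2(\Gamma)$ lies in $\ker\widehat{\delta}$ if and only if every $\widehat{\delta}_{\vec{0},\nu_k}$-component vanishes. The paragraph immediately preceding the proposition, together with \Cref{lem:widehat-maps}, already shows that $\widehat{\delta}_{\vec{0},\nu_k}(c_I)=0$ iff the three faces incident to $v_k$ are assigned at least two distinct colors; for $n=2$ this collapses to the statement that the faces at $v_k$ use both of the colors $c_0$ and $c_1$.

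What remains is to rule out cancellations among mixtures, by showing that $\widehat{\delta}_{\vec{0},\nu_k}$ is injective on the span of those $c_I$ that are monochromatic at $v_k$. The strategy is to exploit \Cref{lem:widehat-maps}: in the color basis each of $\widehat{m}$, $\widehat{\Delta}$, $\widehat{\eta}$ is diagonal, sending a color basis element either to zero or to a nonzero scalar multiple of a single color basis element, and the same then holds for the three-edge composition defining $\widehat{\delta}_{\vec{0},\nu_k}$. Because the composition acts as the identity on the tensor factors attached to faces not meeting $v_k$ and preserves the color on each surviving circle at $v_k$ whenever its output is nonzero, distinct $v_k$-monochromatic colorings $c_I\neq c_J$ land on distinct color basis elements of $\widehat{V}_{\nu_k}$. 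Intersecting the vanishing conditions as $v_k$ ranges over the vertices then yields exactly the basis claimed in the proposition.

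The main obstacle is this last injectivity check, which must be carried out uniformly over the seven local vertex configurations of \Cref{fig:VertexCases}, since the specific $3$-edge composition depends on how the three arcs at $v_k$ are grouped into one, two, or three circles. In every configuration, however, the color-basis diagonalization reduces the task to a brief bookkeeping calculation tracking which color is retained by each $\widehat{m}$, $\widehat{\Delta}$, or $\widehat{\eta}$ factor in the composition, so that no collisions arise among monochromatic inputs.
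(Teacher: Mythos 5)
Your argument is correct and follows essentially the same route as the paper: identify $\widehat{VCH}^0_2(\Gamma)$ with $\ker\widehat{\delta}$ on the all-zero state, whose circles are the faces of $\Gamma$, and apply the color-basis computation of \Cref{lem:widehat-maps} vertex by vertex over the configurations of \Cref{fig:VertexCases}. Your explicit injectivity check ruling out cancellations among colorings that are monochromatic at a vertex is the same diagonality property the paper records in \Cref{lem:Colors2Colors}, so nothing essential is missing.
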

 
This proposition is the first step towards proving Theorem~\ref{thm:perfectMatchings}. (Compare this proposition to Theorem D.3 in \cite{ColorHomology}, which has a similar flavor.)

\subsection{A metric and Hodge decomposition} In Section 6 of \cite{ColorHomology}, a metric on the chain complex was used to define an adjoint operator and prove a Hodge decomposition theorem.  A similar construction goes through in this context as well.  We recall the basic definitions and theorems here and refer the reader to Section 6 of \cite{ColorHomology} for more details.

\begin{definition}[Section 6.1 of \cite{ColorHomology}] 
For two colorings $c_I, c_J \in \widehat{V}_\nu$ and two complex numbers $a,b\in\BC$, define a Hermitian metric $\langle \ , \ \rangle: \widehat{V}_\nu \ot \widehat{V}_\nu \ra \BC$
by $$\langle ac_I, bc_J \rangle = a\bar{b}\delta_{IJ},$$ where ${\delta_{IJ}=1}$ if $I=J$ and is zero otherwise. Extend this metric to all of $\widehat{C}_n^*(\Gamma)$.
\end{definition}

This metric defines an adjoint operator to $\widehat{\delta}$ just as in \cite{ColorHomology}: the adjoint $\widehat{\delta}^*:\widehat{C}_n^i(\Gamma) \ra \widehat{C}_n^{i-1}(\Gamma)$ is the operator that satisfies
$$\langle \widehat{\delta}(c),d \rangle = \langle c , \widehat{\delta}^*(d)\rangle.$$

The adjoint operator $\widehat{\delta}^*:\widehat{C}_n^i(\Gamma)\ra \widehat{C}_n^{i-1}(\Gamma)$ is then a differential composed from $3$-edge paths in the hypercube of states of $\Gamma^B$ of adjoints maps.  These compositions can be written down explicitly using the following maps:

\begin{lemma}[See Lemma 6.2 in \cite{ColorHomology}]\label{lemma:adjoint-hat-operators}
For an edge in the hypercube of states of $\Gamma^B$ given by $\widehat{\del}_{\alpha\alpha'}:\widehat{V}_{\alpha} \ra \widehat{V}_{\alpha'}$ with $|\alpha| = |\alpha'| -1$, the adjoint of $\widehat{\del}_{\alpha\alpha'}$, denoted $\widehat{\del}^*_{\alpha\alpha'}:\widehat{V}_{\alpha'} \ra \widehat{V}_{\alpha}$, is given by the following adjoints of $\widehat{m}$, $\widehat{\Delta}$, and $\widehat{\eta}$ on colors $c_i, c_j$:
\begin{eqnarray}\label{eqn:adjoint-of-m-delta-eta}
\widehat{m}^*(c_i) &=& c_i\ot c_i,\\ \nonumber
\widehat{\Delta}^*(c_i\ot c_j) &=& n \lambda^{2mi}\delta^{ij}c_i, \mbox{ and}\\ \nonumber
\widehat{\eta}^*(c_i) &=& \sqrt{n} \lambda^{mi}c_i.\nonumber
\end{eqnarray}
\end{lemma}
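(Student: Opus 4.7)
The plan is to verify the three adjoint formulas by direct computation, using that the color basis $\{c_i\}$ is orthonormal under the Hermitian metric and that the adjoint of a linear map $f$ is uniquely determined by the identity $\langle f(v), w\rangle = \langle v, f^*(w)\rangle$ on basis pairs. Since $\widehat{V}^{\otimes k}$ has the color basis $\{c_I\}$ which is also orthonormal (the induced Hermitian metric on tensors satisfies $\langle c_I, c_J\rangle = \delta_{IJ}$), it suffices to test each equation against a single basis element on the other side. A crucial observation throughout is that for $\lambda = e^{2\pi\mathrm{i}/n}$ a root of unity, $\overline{\lambda^k} = \lambda^{-k}$, so complex conjugates pull out as negative exponents.

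First, I would handle $\widehat{m}^*$. Using \Cref{lem:widehat-maps}, compute
\[
\langle \widehat{m}(c_i\otimes c_j), c_k\rangle = \langle \delta^{ij}c_j, c_k\rangle = \delta^{ij}\delta^{jk}.
\]
On the other side, with the candidate $\widehat{m}^*(c_k) = c_k\otimes c_k$,
\[
\langle c_i\otimes c_j, c_k\otimes c_k\rangle = \delta^{ik}\delta^{jk}.
\]
Both expressions equal $1$ exactly when $i=j=k$ and vanish otherwise, so they agree and the formula is confirmed.

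Next, for $\widehat{\Delta}^*$, compute on one side
\[
\langle \widehat{\Delta}(c_i), c_j\otimes c_k\rangle = \langle n\lambda^{-2mi}c_i\otimes c_i, c_j\otimes c_k\rangle = n\lambda^{-2mi}\delta^{ij}\delta^{ik},
\]
and on the other, using the conjugate-linearity of the metric in the second slot and $\overline{\lambda^{2mj}} = \lambda^{-2mj}$,
\[
\langle c_i, n\lambda^{2mj}\delta^{jk}c_j\rangle = n\lambda^{-2mj}\delta^{jk}\delta^{ij}.
\]
Both are nonzero only when $i=j=k$, in which case they both equal $n\lambda^{-2mi}$. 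The computation for $\widehat{\eta}^*$ is analogous: $\langle \widehat{\eta}(c_i), c_j\rangle = \sqrt{n}\lambda^{-mi}\delta^{ij}$ and $\langle c_i, \sqrt{n}\lambda^{mj}c_j\rangle = \sqrt{n}\lambda^{-mj}\delta^{ij}$ match.

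There is no significant obstacle here; the content is bookkeeping with roots of unity under a Hermitian metric. The only place one must be careful is the conjugate in the second slot of $\langle\,,\,\rangle$, which flips the sign of the exponents of $\lambda$ and is precisely what accounts for the sign change between the exponents $-2mi$ in $\widehat{\Delta}$ and $+2mi$ in $\widehat{\Delta}^*$ (and similarly for $\widehat{\eta}$). Once that is observed, the three identities drop out of a direct check on orthonormal basis elements.
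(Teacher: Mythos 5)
Your verification is correct: since the Hermitian metric is defined precisely so that the color basis is orthonormal, reading off the adjoints as conjugate transposes of the formulas in \Cref{lem:widehat-maps} is exactly the intended argument, and your sign bookkeeping with $\overline{\lambda^{k}}=\lambda^{-k}$ is right. The paper itself offers no proof beyond citing Lemma 6.2 of \cite{ColorHomology}, and your direct check on basis elements is the same standard computation carried out there, so there is nothing further to add.
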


Define a Laplace operator by $\slashed{\Delta} := (\widehat{\delta}+\widehat{\delta}^*)^2$. Denote the space of  {\em harmonic mixtures}, i.e., $c\in \widehat{C}_n^i(\Gamma)$ such that $\slashed{\Delta}(c)=0$, by $\widehat{\mathcal{VCH}}_n^i(\Gamma)$.  Standard arguments using the metric show that $\slashed{\Delta}(c)=0$ if and only if $\widehat{\delta}(c)=0$ and $\widehat{\delta}^*(c)=0$. Furthermore, again by standard arguments, the space $\widehat{C}_n^i(\Gamma)$ can be decomposed into subspaces using a Hodge-like theorem for $\widehat{\delta}$:

\begin{lemma}[Hodge decomposition theorem for $\widehat{\delta}$] \label{lem:hodge-decomposition-theorem} The space $\widehat{C}_n^i(\Gamma)$ can be decomposed as
$$\widehat{C}_n^i(\Gamma) = \widehat{\mathcal{VCH}}_n^i(\Gamma)\oplus \widehat{\delta}\widehat{C}_n^{i-1}(\Gamma)\oplus \widehat{\delta}^*\widehat{C}_n^{i+1}(\Gamma).$$
In particular, each element $c\in\widehat{C}_n^i(\Gamma)$ can be uniquely decomposed into the sum $c=c_h +\widehat{\delta}c_- + \widehat{\delta}^*c_+$.
\end{lemma}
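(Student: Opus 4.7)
The plan is to run the standard Hodge theory argument for a finite-dimensional Hermitian inner product space, leaning on only two facts already in the excerpt: $\widehat{\delta}^2 = 0$ (a direct consequence of \Cref{thm:diffsAntiCommute}), and the pointwise equivalence $\slashed{\Delta}(c) = 0 \iff \widehat{\delta}(c) = 0$ and $\widehat{\delta}^*(c) = 0$, which the text records just before the statement. The argument splits naturally into two stages: first verify pairwise orthogonality of the three summands with respect to the Hermitian metric, and then show that together they exhaust $\widehat{C}_n^i(\Gamma)$ via the spectral theorem applied to the self-adjoint Laplacian.

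For orthogonality, taking adjoints of $\widehat{\delta}^2 = 0$ gives $(\widehat{\delta}^*)^2 = 0$. For any $c_- \in \widehat{C}_n^{i-1}(\Gamma)$ and $c_+ \in \widehat{C}_n^{i+1}(\Gamma)$, one then has $\langle \widehat{\delta} c_-, \widehat{\delta}^* c_+\rangle = \langle c_-, (\widehat{\delta}^*)^2 c_+\rangle = 0$. If $c_h$ is harmonic, both $\widehat{\delta} c_h = 0$ and $\widehat{\delta}^* c_h = 0$, so $\langle c_h, \widehat{\delta} c_-\rangle = \langle \widehat{\delta}^* c_h, c_-\rangle = 0$, and symmetrically $\langle c_h, \widehat{\delta}^* c_+\rangle = 0$. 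Hence the three subspaces are mutually orthogonal; in particular their sum inside $\widehat{C}_n^i(\Gamma)$ is automatically a direct sum, so the uniqueness half of the statement is free once existence is established.

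For spanning, $\widehat{\delta} + \widehat{\delta}^*$ is manifestly self-adjoint, so $\slashed{\Delta} = (\widehat{\delta} + \widehat{\delta}^*)^2$ is self-adjoint and positive semidefinite on the finite-dimensional space $\widehat{C}_n^i(\Gamma)$. The spectral theorem yields an orthogonal decomposition $\widehat{C}_n^i(\Gamma) = \ker \slashed{\Delta} \oplus \Ima \slashed{\Delta}$, and the kernel characterization recorded above identifies $\ker \slashed{\Delta}$ with $\widehat{\mathcal{VCH}}_n^i(\Gamma)$. Expanding $\slashed{\Delta} = \widehat{\delta}\widehat{\delta}^* + \widehat{\delta}^*\widehat{\delta}$ (using $\widehat{\delta}^2 = 0 = (\widehat{\delta}^*)^2$) gives the inclusion $\Ima \slashed{\Delta} \subseteq \widehat{\delta}\widehat{C}_n^{i-1}(\Gamma) + \widehat{\delta}^*\widehat{C}_n^{i+1}(\Gamma)$, and the reverse inclusion is immediate from the orthogonality step because the right-hand side is orthogonal to $\ker \slashed{\Delta}$ and thus lies in its orthogonal complement $\Ima \slashed{\Delta}$.

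There is essentially no serious obstacle: the chain groups are finite dimensional, so no analytic subtleties can appear, and the two nontrivial inputs ($\widehat{\delta}^2 = 0$ and the kernel characterization of $\slashed{\Delta}$) are already in place in the excerpt. The only item worth double-checking when writing this out is that the Hermitian metric is genuinely positive definite on each graded piece; this is immediate since the color basis is declared orthonormal by definition, so the metric is just the standard one on $\mathbb{C}^N$ for $N = \dim \widehat{C}_n^i(\Gamma)$.
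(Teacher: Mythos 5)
Your proof is correct and is exactly the standard finite-dimensional Hodge argument (mutual orthogonality of the three summands via $\widehat{\delta}^2=0=(\widehat{\delta}^*)^2$, then spanning via the spectral decomposition of the self-adjoint, positive semidefinite $\slashed{\Delta}$ together with the kernel characterization), which is what the paper invokes when it states the lemma "by standard arguments," deferring details to Section~6 of \cite{ColorHomology}. No gaps; your check that the color basis is orthonormal, so the metric is positive definite, is the right thing to verify.
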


In Lemma 6.4 of \cite{ColorHomology}, it was shown that along any edge of the hypercube of states,  from $\Gamma_\alpha$ to $\Gamma_{\alpha'}$, the differential takes each element of the color basis for $V_\alpha$ to a multiple of a single element of the color basis for $V_{\alpha'}$.  The same is true for filtered $n$-color vertex homology using $3$-edge paths in $\Gamma^B$.  In fact, the following lemma is the key proposition of this section.

\begin{lemma}
\label{lem:Colors2Colors}
Let $G(V,E)$ be a connected trivalent graph and $\Gamma$ a ribbon graph of $G$ represented by a ribbon diagram.  Let $\widehat{\delta}_{\nu\nu'}: \widehat{V}_\nu\ra \widehat{V}_{\nu'}$ be the map corresponding to an edge in the hypercube of vertex states for $\Gamma_\bullet$.  If $\langle c_I\rangle$ is a color basis for $\widehat{V}_\nu = \widehat{V}^{\otimes k_\nu}$ and $\langle c'_{I}\rangle$ is a color basis for $\widehat{V}_{\nu'} = \widehat{V}^{\otimes k'_{\nu}}$, then either 
\begin{enumerate}
\item $\widehat{\delta}_{\nu\nu'} (c_I) = 0$, or 
\item $\widehat{\delta}_{\nu\nu'} (c_I)$ is a nonzero multiple of exactly one color from $\langle c'_{I}\rangle$.
\end{enumerate} 
In particular, let $a=\sum_I a^I c_I \in \widehat{V}_\nu$ such that $a^I\in \BC$ for all $I$. If $\widehat{\delta}_{\nu\nu'}(a)=0$, then for each $I$,  either $\widehat{\delta}_{\nu\nu'}(c_I)=0$ or, if $\widehat{\delta}_{\nu\nu'}(c_I)\not=0$, then $a^I=0$.
 All statements hold for $\widehat{\delta}^*_{\nu\nu'}: \widehat{V}_{\nu'} \ra \widehat{V}_{\nu}$ as well. 
\end{lemma}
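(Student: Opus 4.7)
The plan is to reduce the statement to the local behavior of the three algebra maps $\widehat{m}$, $\widehat{\Delta}$, and $\widehat{\eta}$ in the color basis, and then propagate that behavior through the three-edge composition that defines $\widehat{\delta}_{\nu\nu'}$ in the hypercube of states of $\Gamma^B$. First I would record the key observation from \Cref{lem:widehat-maps}: each of $\widehat{m}$, $\widehat{\Delta}$, and $\widehat{\eta}$ sends a single color basis element to either zero or a nonzero scalar multiple of a single color basis element, namely $\widehat{m}(c_i\ot c_j)=\delta^{ij}c_j$, $\widehat{\Delta}(c_i)=n\lambda^{-2mi}c_i\ot c_i$, and $\widehat{\eta}(c_i)=\sqrt{n}\,\lambda^{-mi}c_i$. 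Tensoring with the identity on the color basis elements attached to the unaffected circles preserves this property, so every local map $\widehat{\del}_{\alpha\alpha'}$ appearing along an edge of the hypercube of states of $\Gamma^B$ satisfies exactly the same dichotomy on color basis elements.

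Next I would invoke the embedding described in \Cref{subsec:VertexDiff}: the edge $\widehat{\delta}_{\nu\nu'}$ is realized (independently of choice, by the TQFT of Section~9 of \cite{ColorHomology}) as a three-fold composition $\widehat{\del}_{\alpha_2\alpha_3}\circ\widehat{\del}_{\alpha_1\alpha_2}\circ\widehat{\del}_{\alpha\alpha_1}$ along a three-edge path in the hypercube of states of $\Gamma^B$. By induction on composition length, applying the local dichotomy at each of the three stages gives that $\widehat{\delta}_{\nu\nu'}(c_I)$ is either zero or a nonzero scalar multiple of a single color basis element $c'_{I'}$ of $\widehat{V}_{\nu'}$. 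This proves conclusions (1) and (2).

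For the linear-combination statement I would show that the map $I \mapsto I'$ is injective on the set of colorings on which $\widehat{\delta}_{\nu\nu'}$ is nonzero. This again follows one edge at a time: for $\widehat{m}$, only colorings of the form $c_i\ot c_i$ are nonzero, and these go to distinct $c_i$; for $\widehat{\Delta}$ and $\widehat{\eta}$, the map on color basis elements is given by multiplication by nonzero scalars with distinct $c_i$ going to distinct $c_i\ot c_i$ or distinct $c_i$, respectively. Tensoring with identities preserves injectivity on non-kernel basis elements, and composing three injective-on-non-kernel maps remains injective-on-non-kernel. Hence distinct colorings $c_I$ with $\widehat{\delta}_{\nu\nu'}(c_I)\neq 0$ land on multiples of distinct basis elements $c'_{I'}$, so no cancellation is possible: writing $\widehat{\delta}_{\nu\nu'}\bigl(\sum_I a^I c_I\bigr)=\sum_I a^I\,\lambda_I\, c'_{I(I)}$ with $\lambda_I\neq 0$ exactly when $\widehat{\delta}_{\nu\nu'}(c_I)\neq 0$, vanishing of this sum forces $a^I=0$ on every such index $I$.

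Finally, the adjoint statement is handled identically using \Cref{lemma:adjoint-hat-operators}: the local maps $\widehat{m}^*$, $\widehat{\Delta}^*$, and $\widehat{\eta}^*$ have precisely the same single-basis-element-to-single-basis-element (up to nonzero scalar or zero) structure in the color basis, and $\widehat{\delta}^*_{\nu\nu'}$ is again a three-edge composition of such maps along a path in the hypercube of states of $\Gamma^B$, so the argument above transfers verbatim. The only real technical point, which I would flag as the main thing to be careful about, is confirming that the three-fold composition really does preserve injectivity on the non-kernel colorings; this is where one needs that $\widehat{m}$ is the unique non-invertible map among the three and that its kernel on pairs of colors is exactly the ``mismatched'' pairs, so the subset of color basis elements killed by the composition is determined combinatorially and the map on the complement is genuinely one-to-one into the target color basis.
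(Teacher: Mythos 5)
Your proposal is correct and follows essentially the same route as the paper: the paper's proof reduces the claim to the color-basis behavior of $\widehat{m}$, $\widehat{\Delta}$, $\widehat{\eta}$ (and their adjoints) from \Cref{lem:widehat-maps} and \Cref{lemma:adjoint-hat-operators}, propagated through the $3$-edge compositions in the $\Gamma^B$ hypercube and checked on the seven configurations, citing Lemma~6.4 of \cite{ColorHomology} for the underlying argument. Your explicit injectivity-on-non-kernel step for the ``in particular'' clause is exactly the no-cancellation content that the paper delegates to that cited lemma, so nothing is missing.
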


\begin{proof}
The lemma follows from the same argument as Lemma 6.4 of \cite{ColorHomology} together with the observation that the map $\widehat{\delta}_{\nu\nu'}$ is nonzero precisely when the face(s) incident to the  vertex smoothing site are colored with exactly one color (see  \Cref{lem:widehat-maps} and the discussion directly after it).  Note that all seven configurations in \Cref{fig:VertexCases} need to be checked using the diagrams shown in \Cref{app:differentials}.
\end{proof}

Following the same program as Section 6 of \cite{ColorHomology}, we define the space of \emph{harmonic mixtures of a state}, which will be denoted in this paper by $\widehat{\mathcal{VCH}}_n(\Gamma_\nu)$.  That is, given a state $\Gamma_\nu$,  define $\widehat{\delta}_\nu:\widehat{V}_\nu \ra \widehat{C}_n^{i+1}(\Gamma)$ by taking the sum of all  nontrivial differentials from  $\widehat{V}_\nu$ to  degree  $(|\nu|+1)$-states. Similarly, $\widehat{\delta}_\nu^*: \widehat{V}_\nu \ra \widehat{C}_n^{i-1}(\Gamma)$ is the sum of all nontrivial adjoint maps from $\widehat{V}_\nu$ to  degree $(|\nu|-1)$-states.  Define

$$\widehat{\mathcal{VCH}}_n(\Gamma_\nu) := ker \widehat{\delta}_\nu \bigcap ker \widehat{\delta}^*_\nu.$$

The set of harmonic colorings of each state form a basis for the filtered $n$-color vertex homology.  We omit the proof here and refer the reader to Sections~6.2 to 6.5 of \cite{ColorHomology} for the details, which are essentially the same due to the key \Cref{lem:Colors2Colors}.

\begin{proposition}[cf. Proposition 6.12 of \cite{ColorHomology}] Let $\Gamma$ be a ribbon graph of a connected trivalent graph $G(V,E)$.  Using the color basis, $$\widehat{\mathcal{VCH}}^i_n(\Gamma) = \bigoplus_{|\nu|=i} \widehat{\mathcal{VCH}}_n(\Gamma_\nu).$$ 
\label{prop:direct-sum-equals-all-harmonics}
\end{proposition}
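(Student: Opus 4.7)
The plan is to follow the pattern of Proposition 6.12 of \cite{ColorHomology}, with \Cref{lem:Colors2Colors} playing the role of Lemma 6.4 of that paper. Since $\widehat{C}^i_n(\Gamma) = \bigoplus_{|\nu|=i}\widehat{V}_\nu$ is an orthogonal decomposition with respect to the Hermitian metric, every $c \in \widehat{C}^i_n(\Gamma)$ has a unique expression $c = \sum_{|\nu|=i} c_\nu$ with $c_\nu \in \widehat{V}_\nu$, and the proposition reduces to showing that $c$ is harmonic if and only if each $c_\nu$ is harmonic in $\Gamma_\nu$.

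First I would dispatch the easy containment $\bigoplus_\nu \widehat{\mathcal{VCH}}_n(\Gamma_\nu) \subseteq \widehat{\mathcal{VCH}}^i_n(\Gamma)$: by definition $\widehat{\delta}_\nu$ is the sum of the nontrivial $\widehat{\delta}_{\nu\nu'}$ with $\nu \to \nu'$ and $\widehat{\delta}_\nu^*$ is the sum of the nontrivial $\widehat{\delta}^*_{\nu''\nu}$ with $\nu'' \to \nu$, with images landing in distinct summands of $\widehat{C}^{i\pm 1}_n(\Gamma)$. Consequently $\widehat{\delta}_\nu(c_\nu) = 0$ and $\widehat{\delta}_\nu^*(c_\nu) = 0$ are equivalent to vanishing of each summand individually, and summing over $|\nu| = i$ then produces $\widehat{\delta}(c) = 0$ and $\widehat{\delta}^*(c) = 0$.

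For the reverse containment, I would take a harmonic $c = \sum_\nu c_\nu$, expand each $c_\nu = \sum_I a^I_\nu c_I$ in the color basis of $\widehat{V}_\nu$, and decompose the conditions $\widehat{\delta}(c) = 0$ and $\widehat{\delta}^*(c) = 0$ over target and source states into the local equations
$$\sum_{\nu \to \nu'} \widehat{\delta}_{\nu\nu'}(c_\nu) = 0 \text{ in } \widehat{V}_{\nu'} \qquad \text{and} \qquad \sum_{\nu'' \to \nu} \widehat{\delta}^*_{\nu''\nu}(c_\nu) = 0 \text{ in } \widehat{V}_{\nu''}.$$
The main input is \Cref{lem:Colors2Colors}, which says that every $\widehat{\delta}_{\nu\nu'}$ and every $\widehat{\delta}^*_{\nu''\nu}$ sends a color basis element either to zero or to a nonzero scalar multiple of a single color basis element. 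Using this rigidity in the color basis, I would replay the color-by-color tracing argument of Section 6.4 of \cite{ColorHomology} to conclude that each term in the displayed sums must already vanish on its own, so that $\widehat{\delta}_{\nu\nu'}(c_\nu) = 0$ for every $\nu \to \nu'$ and $\widehat{\delta}^*_{\nu''\nu}(c_\nu) = 0$ for every $\nu'' \to \nu$, i.e., each $c_\nu$ is harmonic in $\Gamma_\nu$.

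The main obstacle, and the only genuinely non-formal step, is an inter-state cancellation worry in the hard direction: when two distinct states $\nu_1, \nu_2$ with $|\nu_1| = |\nu_2| = i$ both map into a common successor $\nu'$, the components $\widehat{\delta}_{\nu_1\nu'}(c_{\nu_1})$ and $\widehat{\delta}_{\nu_2\nu'}(c_{\nu_2})$ could \emph{a priori} cancel in $\widehat{V}_{\nu'}$ without either one vanishing. The resolution is that \Cref{lem:Colors2Colors} pins the image of each color to a specific target color, and combining this with the simultaneous adjoint condition $\widehat{\delta}^*(c) = 0$ eliminates the slack needed for such inter-state cancellation, forcing any apparent cancellation to occur within a single $\widehat{V}_\nu$. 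Making this rigorous means bookkeeping, for each of the seven configurations of \Cref{fig:VertexCases} under the three-edge-path description of $\widehat{\delta}_{\nu\nu'}$ in the bubbled blowup, which colors are paired by the local differentials; this is exactly the combinatorics handled in the reference and should transfer here without modification.
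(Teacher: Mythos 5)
Your proposal matches the paper's treatment: the paper omits the detailed argument, stating that the proof is the same as Sections~6.2--6.5 of \cite{ColorHomology} (in particular Proposition~6.12 there) with \Cref{lem:Colors2Colors} playing the role of Lemma~6.4, which is exactly the reduction you carry out, including the identification of inter-state cancellation as the only non-formal point. So your approach is essentially the paper's, with the hard combinatorial step correctly deferred to the same source the authors cite.
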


Of course,  by the Hodge decomposition, the space of harmonic mixtures is isomorphic to the filtered $n$-color vertex homology, i.e., $$ \widehat{VCH}^i_n(\Gamma) \cong \widehat{\mathcal{VCH}}^i_n(\Gamma)$$
for all $i=0,\ldots, |V|$.  This implies that, while the $E_1$-page of the spectral sequence involves many different states in the computation of $VCH_n^{i,j}(\Gamma)$, by the $E_\infty$-page, only colorings on specific states matter, i.e., the colorings of states appear out of the ``quantum fuzz'' of earlier pages.

\subsection{Harmonic colorings of a vertex state} In light of \Cref{prop:direct-sum-equals-all-harmonics}, from now on we focus on the harmonic colorings of a vertex state, i.e., colorings $c_I$ such that $\widehat{\delta}_\nu(c_I)=0$ and $\widehat{\delta}^*_\nu(c_I)=0$. More can be said about them. In fact, \Cref{prop:degree-zero-harmonic-colorings} can now be generalized to all  states in the hypercube of vertex states of $\Gamma_\bullet$:

\begin{proposition}[compare to Proposition 6.15 of \cite{ColorHomology}] Let $\Gamma$ be a ribbon graph of a connected trivalent graph $G(V,E)$.  Each harmonic coloring $c_I\in \widehat{\mathcal{VCH}}_n(\Gamma_\nu)$ of a vertex state $\Gamma_\nu$ of $\Gamma$ has the property that  there are at least two colors present at every vertex smoothing site (cf. the top and bottom pictures of  \Cref{fig:possibleColors}).
\label{prop:harmonic-vertex-smoothing-sites}
\end{proposition}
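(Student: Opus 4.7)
The plan is to decompose both the harmonic conditions $\widehat{\delta}_\nu(c_I)=0$ and $\widehat{\delta}_\nu^*(c_I)=0$ into conditions localized at each vertex smoothing site, and then show that each such local condition is equivalent to the existence of at least two colors at that site. Writing
\[
\widehat{\delta}_\nu(c_I) \;=\; \sum_{\nu'} \mathrm{sign}(\rho_{\nu\nu'})\,\widehat{\delta}_{\nu\nu'}(c_I),
\]
where the sum ranges over states $\Gamma_{\nu'}$ obtained from $\Gamma_\nu$ by flipping a single $0$-smoothing vertex to a $1$-smoothing, note that the images land in distinct summands $\widehat{V}_{\nu'} \subset \widehat{C}_n^{|\nu|+1}(\Gamma)$. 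Consequently $\widehat{\delta}_\nu(c_I)=0$ if and only if $\widehat{\delta}_{\nu\nu'}(c_I)=0$ for every such edge. Furthermore, \Cref{lem:Colors2Colors} ensures that each $\widehat{\delta}_{\nu\nu'}(c_I)$ is either zero or a nonzero scalar multiple of a single element of the color basis of $\widehat{V}_{\nu'}$, so there is no hope of cancellation within a single summand either.

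The next step is to show that $\widehat{\delta}_{\nu\nu'}(c_I)=0$ precisely when the faces incident to the flipped vertex smoothing site in $\Gamma_\nu$ are colored with at least two colors. This follows from the same case-by-case analysis of the seven configurations in \Cref{fig:VertexCases} that was used to define the differentials in \Cref{sec:vertexHomology}: apply the formulas of \Cref{lem:widehat-maps}, namely $\widehat{m}(c_i\ot c_j)=\delta^{ij}c_j$, $\widehat{\Delta}(c_i)=n\lambda^{-2mi}c_i\ot c_i$, and $\widehat{\eta}(c_i)=\sqrt n\,\lambda^{-mi}c_i$, to each of the three edges in the $3$-edge path through $\Gamma^B$ that defines $\widehat{\delta}_{\nu\nu'}$. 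In every configuration, the composition vanishes as soon as any one of its merges $\widehat{m}$ encounters two distinct colors; conversely, if all three arcs at the smoothing site carry the same color, then the composition is a nonzero scalar multiple of a single color basis element. This is the observation already recorded in the paragraph following \Cref{lem:widehat-maps}, and it handles all vertices of $\Gamma_\nu$ that are $0$-smoothings.

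For the $1$-smoothing vertices, the plan is a symmetric argument using the adjoint. Write
\[
\widehat{\delta}_\nu^*(c_I) \;=\; \sum_{\nu''} \mathrm{sign}(\rho_{\nu''\nu})\,\widehat{\delta}_{\nu''\nu}^*(c_I),
\]
where now $\nu''$ runs over states obtained by flipping a single $1$-smoothing vertex of $\Gamma_\nu$ back to a $0$-smoothing. As before, the images sit in distinct $\widehat{V}_{\nu''}$, and \Cref{lem:Colors2Colors} (which the statement explicitly extends to the adjoints) prevents cancellation within each summand. Using the explicit adjoint formulas of \Cref{lemma:adjoint-hat-operators}, namely $\widehat{m}^*(c_i)=c_i\ot c_i$, $\widehat{\Delta}^*(c_i\ot c_j)=n\lambda^{2mi}\delta^{ij}c_i$, and $\widehat{\eta}^*(c_i)=\sqrt n\,\lambda^{mi}c_i$, the same configuration-by-configuration analysis (with $\widehat{\Delta}^*$ now playing the role that $\widehat{m}$ played before, since it is the adjoint operation that annihilates mismatched colors) shows $\widehat{\delta}_{\nu''\nu}^*(c_I)=0$ iff the faces incident to the flipped vertex smoothing site in $\Gamma_\nu$ carry at least two colors. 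Combining the two halves gives the proposition.

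The main obstacle is the bookkeeping in the second half: one must verify that, in each of the seven configurations in \Cref{fig:VertexCases}, the adjoint composition detects single-color configurations in the same combinatorial way as the forward composition does, despite the reversal of direction and the swapped roles of $\widehat{m}$ and $\widehat{\Delta}$ (and their adjoints). This is routine but requires checking all configurations; it is essentially the same case analysis carried out in the proof of Proposition~6.15 of \cite{ColorHomology}, transported across the embedding of the vertex hypercube into the hypercube of states of $\Gamma^B$.
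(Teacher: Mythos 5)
Your argument is correct and is essentially the paper's own proof: decompose $\widehat{\delta}_\nu$ and $\widehat{\delta}^*_\nu$ into the local edge maps, then apply the color-basis formulas of \Cref{lem:widehat-maps} and \Cref{lemma:adjoint-hat-operators} configuration-by-configuration to the seven cases of \Cref{fig:VertexCases}, observing that vanishing occurs exactly when at least two colors meet the smoothing site. The only cosmetic difference is that you invoke \Cref{lem:Colors2Colors} to rule out cancellation, which is not really needed here since $c_I$ is a single basis element, but this does no harm.
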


\begin{proof} This proposition follows from \Cref{lem:widehat-maps}, the discussion following it,  the definition of the adjoint maps, and \Cref{lemma:adjoint-hat-operators}.  More explicitly, apply $\widehat{\delta}_\nu$ and $\widehat{\delta}^*_\nu$ to Configurations 1-7 in \Cref{fig:VertexCases} using $3$-edge paths in \Cref{fig:Case1} to \Cref{fig:Case7}. The harmonic colorings are precisely the ones that have at least two colors present at every vertex smoothing site. \end{proof}

To translate these conditions into more descriptive language, we introduce some new types of graph colorings that will be useful for interpreting the meaning of the harmonic classes.  In fact, \Cref{prop:harmonic-vertex-smoothing-sites} motivates \Cref{def:proper-and-partial-face-colorings-at-a-vertex} below.

To get started, recall the notion of state graphs from Section~6.6 of \cite{ColorHomology}. There is a correspondence between the circles in a vertex state $\Gamma_\nu$ and the faces of the ribbon graph determined by $\Gamma_\nu$, which is called a {\em state graph}. Hence, a coloring of the faces of a vertex state $\Gamma_\nu$ means a choice of colors $\{c_0,c_1,\ldots, c_{n-1}\}$ for each circle/face of the state graph $\Gamma_\nu$, where the $c_i$'s can be thought of as blue, red, purple, etc.

\begin{definition}\label{def:proper-and-partial-face-colorings-at-a-vertex}
Let $G(V,E)$ be a trivalent graph and $\Gamma$ a ribbon graph of it. Given a  coloring of the faces of some state graph $\Gamma_\nu$ and some vertex $v\in V$,  define 

\begin{enumerate}
\item a \emph{proper $n$-face coloring at $v$} to be a coloring of the faces incident to $v$ such that there are three distinct colors among the faces (top picture in \Cref{fig:possibleColors}), and 

\item a \emph{partial $n$-face coloring at $v$} to be a  coloring of the faces incident to $v$ such that there are exactly two colors  among the faces (bottom picture in \Cref{fig:possibleColors}) .
\end{enumerate}
\end{definition}

A third notion, an improper $n$-face coloring at $v$, is a coloring where all face(s) are the same color at $v$. It will not be needed in this paper.

\begin{figure}[H]
\includegraphics[scale=.45]{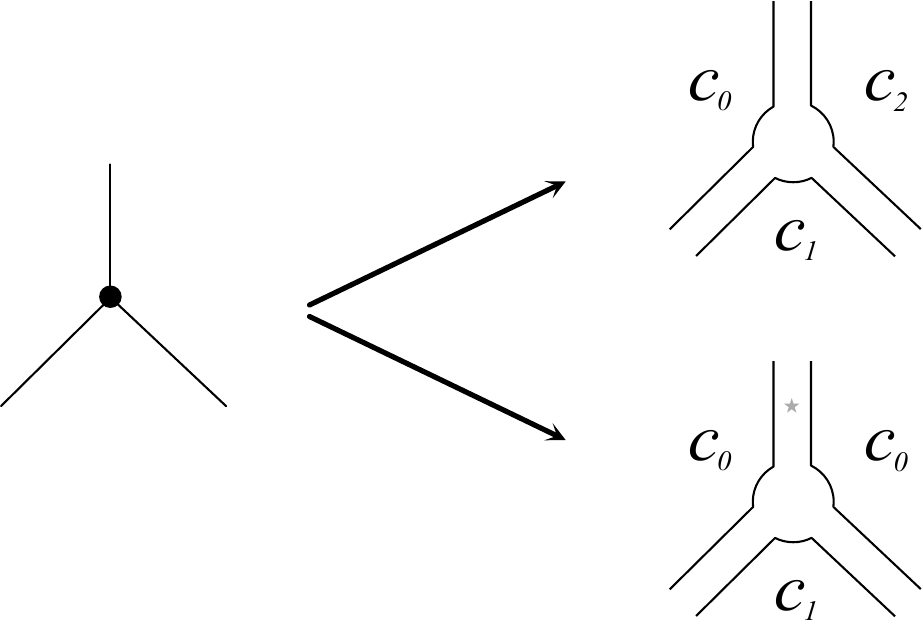}
\caption{Two possibilities for the harmonic coloring at a vertex, up to symmetry.}\label{fig:possibleColors}
\end{figure}

\begin{remark} Proper and partial face colorings were defined here so that they could be generalized to $r$-regular graphs in future research, see \Cref{fig:4valentCases} in \Cref{sec:4valent} for example. 
\end{remark}

\begin{definition}\label{def:PartialFace}
Let $G(V,E)$ be a trivalent graph and $\Gamma$ a ribbon graph of $G$.  A  coloring of the faces of a vertex state $\Gamma_\nu$ of $\Gamma$ is called a \emph{partial $n$-face coloring} if the coloring is proper or partial at each vertex of $G$. 
\end{definition}

If a partial $n$-face coloring of $\Gamma_\nu$ is proper at each vertex $v\in V$, then this coloring is just the usual notion of a {\em proper $n$-face coloring}, or simply, an {\em $n$-face coloring}. The other extreme, i.e., a coloring where all vertices are partially colored, is also interesting  because they can be identified with perfect matchings of $G$. First, all partial $n$-face colorings generate matching sets:

\begin{definition}
Let $G(V,E)$ be a trivalent graph and $\Gamma$ a ribbon graph of it.  A partial $n$-face coloring of state graph $\Gamma_\nu$ of $\Gamma$ generates an {\em induced matching set $M$}, $M\subset E$, on $G$ by selecting all edges that are incident to face(s) that share the same color (cf. the starred edge in the bottom picture of \Cref{fig:possibleColors}). 
\end{definition}

For example, for any partial $n$-face coloring, a bridge edge of $G$ will always be in the induced matching set for the coloring since all ribbon graphs of $G$ will have only one face incident to the bridge edge.  

We can now relate basis elements of $\widehat{\mathcal{VCH}}_n(\Gamma_\nu)$ to graph theory notions of face coloring and  to perfect matchings:

\begin{proposition} Let $\Gamma$ be a ribbon graph of a connected trivalent graph $G(V,E)$.  Harmonic colorings of $\widehat{\mathcal{VCH}}_n(\Gamma_\nu)$ are in one-to-one correspondence with  partial $n$-face colorings of the state graph $\Gamma_\nu$. Furthermore, each partial $n$-face coloring induces a matching set $M\subset E$ such that 
\begin{enumerate}
\item when every vertex of $G$ is a proper $n$-face coloring, then $M=\emptyset $, and
\item when every vertex of $G$ is a partial $n$-face coloring, then $M$ is a perfect matching.
\end{enumerate}                                            
\label{lem:harmonicsToColors}
\end{proposition}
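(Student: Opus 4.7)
The proof has two components: first, a bijection between harmonic colorings of $\Gamma_\nu$ and partial $n$-face colorings of $\Gamma_\nu$, and second, a case analysis of the induced matching set $M$ in the two extremal coloring regimes. The plan is to derive the bijection as an immediate consequence of \Cref{prop:harmonic-vertex-smoothing-sites}, then to unpack the defining rule for $M$ locally at each trivalent vertex.

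For the bijection, I will observe that a color basis element $c_I \in \widehat{V}_\nu$ assigns one of the colors $c_0,\ldots,c_{n-1}$ to each of the $k_\nu$ circles of $\Gamma_\nu$, equivalently to each face of the associated state graph, and hence to each of the three face-corners around every vertex smoothing site. Applying \Cref{prop:harmonic-vertex-smoothing-sites} (whose proof, via inspection of Configurations 1--7 in \Cref{fig:VertexCases}, establishes both directions of the biconditional), $c_I$ is harmonic if and only if at least two distinct colors appear among the three corners at every site. At a trivalent vertex, ``at least two colors on three corners'' means exactly two or exactly three, which is precisely the local ``partial or proper'' condition of \Cref{def:proper-and-partial-face-colorings-at-a-vertex} and thus of \Cref{def:PartialFace}. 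This gives the claimed bijection.

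For the induced matching set, the defining rule is that an edge $e$ of $G$ lies in $M$ exactly when the pair of face-corners separated by $e$ at one (hence either) endpoint shares a color. If every vertex is colored properly, the three corners at each vertex all carry distinct colors, so no edge is flagged anywhere and $M = \emptyset$, proving (1). If every vertex is colored partially, at each vertex $v$ exactly one color is repeated, producing exactly one same-color pair among the three corners at $v$; since the three corners lie cyclically, this pair is separated by exactly one of the three incident edge-ends, so exactly one edge at $v$ is flagged. Consistency across a non-loop edge $e = \{u,v\}$ follows because the two faces bordering $e$ are the same from both endpoints' perspectives, so the shared-color test yields the same verdict at $u$ as at $v$; combined with the one-edge-per-vertex count, this produces a subset of $E$ hitting each vertex exactly once, i.e., a perfect matching.

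The main obstacle I expect is the careful bookkeeping that aligns the color-basis data on the circles of $\Gamma_\nu$ with the face-corner data at vertex smoothing sites, given that $\Gamma_\nu$ is itself a state graph rather than $\Gamma$ proper; this is handled by direct inspection of the seven configurations of \Cref{fig:VertexCases} together with \Cref{lem:widehat-maps}. The cyclic-combinatorial step at a trivalent vertex is elementary but is the crucial piece that makes the induced set a perfect matching in case (2). The argument runs parallel to the perfect-matching analysis in Section~6.6 of \cite{ColorHomology}, from which the bookkeeping conventions can be imported.
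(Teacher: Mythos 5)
Your proposal is correct and follows essentially the same route as the paper, which simply derives the bijection from \Cref{prop:harmonic-vertex-smoothing-sites} (both directions of which are established by the inspection of the seven configurations) and reads off the matching-set statements from the definitions of proper/partial colorings at a vertex and of the induced matching set. Your write-up merely makes explicit the local corner-counting at a trivalent vertex and the consistency of the shared-color test along an edge, details the paper leaves implicit.
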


\begin{proof} Follows from the discussion above and \Cref{prop:harmonic-vertex-smoothing-sites}.\end{proof}

The second statement of \Cref{lem:harmonicsToColors} motivates the following definition:

\begin{definition} Let $\Gamma$ be a ribbon graph of a connected trivalent graph $G(V,E)$. A harmonic coloring on a  vertex state $\Gamma_\nu$  is called a \emph{perfect matching face coloring} if there are at least two distinct colors present at every vertex smoothing site.
\label{def:PMColoring}
\end{definition}

When $n=2$, \Cref{lem:harmonicsToColors} implies that harmonic colorings {\em are} perfect matching face colorings, i.e., there is no distinction between harmonic colorings and perfect matching face colorings (other than to emphasize that perfect matchings are induced by the harmonic colorings).  For $n>2$, harmonic colorings allow for three colors to be used at a vertex.  In such a case a partial $n$-face coloring will potentially have some number of vertices at which the faces are properly colored with the remainder being partially colored.  Each of the vertices where the faces are partially colored induce a matching edge for the graph and altogether a matching set for the harmonic coloring.  However,  more complicated phenomena may occur when $n\not=2$. For example, a perfect matching face coloring may use more than two colors overall even though separately there are only two colors present at each vertex. This leads to the interesting notion of a {\em colored perfect matching set}, something we will not explore further in this paper.  

\begin{remark} In \cite{BKM2}, perfect matching $n$-colorings were defined.  These should not be confused with the perfect matching face coloring defined above.  In that paper, the coloring was an edge-coloring that leaves the edges of the perfect matching uncolored.  Here, a perfect matching face coloring is a special type of face coloring that specifies a perfect matching. 
\end{remark}

\subsection{Filtered $2$-color vertex homology and perfect matchings}
\label{sec:TwoColor}  In light of \Cref{prop:degree-zero-harmonic-colorings}, \Cref{prop:direct-sum-equals-all-harmonics}, \Cref{prop:harmonic-vertex-smoothing-sites}, \Cref{lem:harmonicsToColors}, and \Cref{def:PMColoring}, we obtain ``half'' of Theorem~\ref{thm:perfectMatchings} and a little bit more:

\begin{proposition}
\label{thm:PMClasses}
Let $\Gamma$ be a ribbon graph of a connected trivalent graph $G(V,E)$. The filtered $2$-color vertex homology, $\widehat{VCH}_2^*(\Gamma)$, has a basis given by perfect matching face colorings of the vertex states. In particular, 
$$\text{dim } \widehat{VCH}_2^{0}(\Gamma) \leq 2\cdot \#\{\text{perfect matchings of }G\}.$$
\end{proposition}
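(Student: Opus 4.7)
The plan is to read off both statements directly from the state-by-state decomposition of $\widehat{VCH}_2^*(\Gamma)$ into harmonic mixtures and then specialize the harmonic characterization to $n=2$. By \Cref{prop:direct-sum-equals-all-harmonics} together with the Hodge decomposition (\Cref{lem:hodge-decomposition-theorem}), we have $\widehat{VCH}^i_2(\Gamma) \cong \widehat{\mathcal{VCH}}^i_2(\Gamma) = \bigoplus_{|\nu|=i} \widehat{\mathcal{VCH}}_2(\Gamma_\nu)$, so it suffices to produce a basis of each state summand and then control the summands in degree zero.

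First I would apply \Cref{prop:harmonic-vertex-smoothing-sites}, which shows that $\widehat{\mathcal{VCH}}_2(\Gamma_\nu)$ is spanned by those colorings of the circles of $\Gamma_\nu$ having at least two distinct colors at every vertex smoothing site. When $n=2$, only the two colors $c_0, c_1$ are available, so ``at least two'' forces ``exactly two.'' By \Cref{def:PMColoring}, these are precisely the perfect matching face colorings of $\Gamma_\nu$, which establishes the basis statement.

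For the bound in degree zero, I would specialize to the all-zero vertex state $\Gamma_{\vec 0}$, whose circles are the faces of $\Gamma$ and whose vertex smoothing sites are the original trivalent vertices of $G$. By \Cref{lem:harmonicsToColors}, every perfect matching face coloring of $\Gamma_{\vec 0}$ induces a perfect matching $M \subset E$, namely the set of edges that separate equally colored adjacent faces. To obtain the factor of two, I would then show that any given $M$ arises from at most two such colorings: a valid coloring must keep the color constant across edges of $M$ and must flip the color across edges of $E \setminus M$, so the connectedness of $G$ forces the entire coloring to be determined by the color of one chosen face, giving the color-swap pair $c_0 \leftrightarrow c_1$ as the only choice. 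This produces a map from valid colorings to perfect matchings with fibers of size at most two, yielding $\dim \widehat{VCH}_2^0(\Gamma) \leq 2 \cdot \#\{\text{perfect matchings of }G\}$.

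The main obstacle is modest: it is the verification that on a connected trivalent graph the local constraints propagate rigidly and determine the coloring up to the global color swap. Inequality rather than equality appears because this propagation can be globally inconsistent for some matchings, so those matchings contribute zero harmonic classes; the converse, which upgrades the bound to equality in the planar setting, is the content of \Cref{thm:perfectMatchings} and is not needed here.
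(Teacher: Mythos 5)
Your proposal is correct and follows essentially the same route as the paper: the basis statement is read off from the harmonic decomposition by states together with the two-colors-at-every-vertex characterization (which for $n=2$ coincides with perfect matching face colorings), and the factor-of-two bound comes from the same rigidity argument — a coloring inducing a given matching $M$ must preserve color across $M$-edges and flip it across the remaining edges, so connectedness determines it up to the global color swap. The paper phrases this as showing two distinct harmonic colorings inducing the same $M$ must be global swaps of one another, but the propagation argument is identical.
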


\begin{proof} Only the inequality needs further elaboration. In fact, the  question that remains is whether two different harmonic colorings, $c_I$ and $c_J$, can induce the same perfect matching. Suppose they do. 

If the color of each face of $c_I$ is the opposite of the color of $c_J$ on that face for all faces, then the induce matching edges will be the same: a perfect matching edge with two incident faces colored ``red'' in $c_I$ will become an edge where the two incident faces are colored ``blue'' in $c_J$, and vice versa.  In this case, both $c_I$ and $c_J$ induce the same perfect matching, which explains the factor of two in the inequality.

Next, assume one face incident to an induced perfect matching edge is colored with the same color for both $c_I$ and $c_J$, e.g., $i_1=j_1$. Then the opposite face along the perfect matching edge must be colored the same for both $c_I$ and $c_J$, which implies the remaining faces incident to vertices of the perfect matching edge must be colored the opposite color. Continuing in this fashion, keeping the same color across each perfect matching edge, and switching colors along non-perfect matching edges, one gets $I= J$, which contradicts that they were chosen to be $c_I\not=c_J$.
\end{proof}
 
\begin{example}
In \Cref{ex:Theta2}, the Euler characteristic of the filtered $2$-color vertex homology of the $\theta$ graph was shown to be twelve, which is twice the number of 3-edge colorings.  The color basis and \Cref{thm:PMClasses} can be used to show that there are six harmonic colorings that generate $\widehat{\mathcal{VCH}}_2^0(\Gamma_\theta)$.  \Cref{fig:thetaColors} shows three such classes, while the other three are obtained by swapping $c_0$ and $c_1$.  Each coloring defines a perfect matching $M$ by choosing the edges where the incident circles have the same color (indicated by a star in \Cref{fig:thetaColors}).  Edges where incident circles differ in color define cycles of $\Gamma_\theta \setminus M$.  The remaining six harmonic colorings lie in the all-one vertex state.

\begin{figure}[H]
\includegraphics[scale=.6]{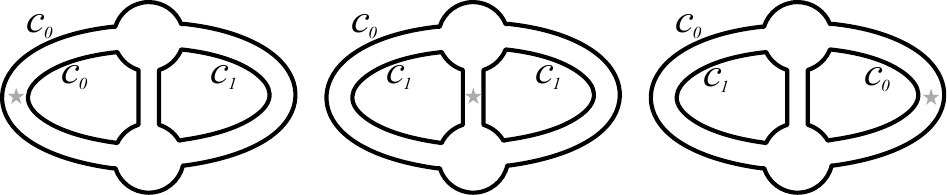}
\caption{Three colorings of the all-zero vertex smoothing.}\label{fig:thetaColors}
\end{figure}
\end{example}

\begin{example}
For the graph $\Gamma_{L3}$, it was already observed in \Cref{ex:3lolly} that the only maps in the hypercube of vertex states correspond to Case 3 and Case 7 in \Cref{fig:VertexCases}.  In particular, no circles are created or destroyed, and every state has exactly four circles just as shown in  \Cref{fig:3LollyZero}.  

\begin{figure}[H]
\includegraphics[scale=.45]{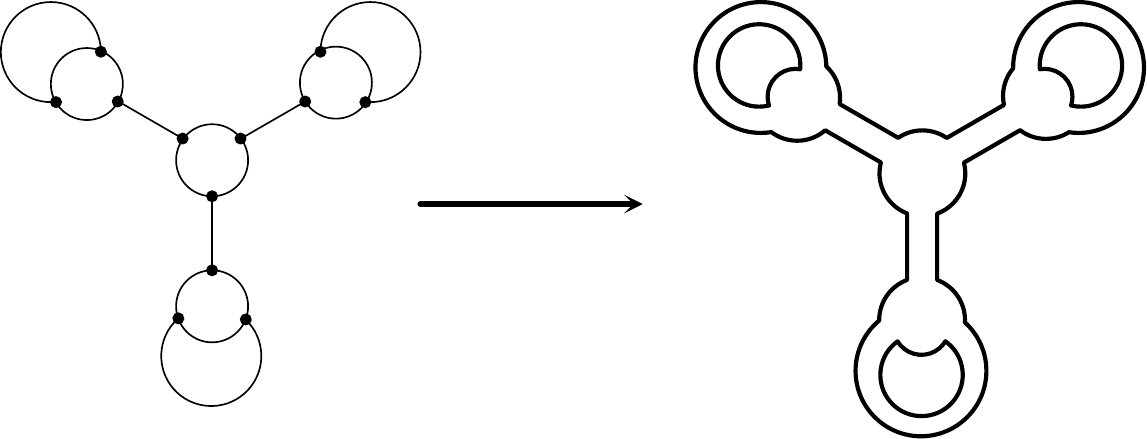}
\caption{The all-zero vertex state of $\Gamma_{L3}$.}\label{fig:3LollyZero}
\end{figure}

Note that the only states that support an element of the kernel of $\widehat{\delta}$ are states for which all outgoing edges of the hypercube of vertex states correspond to Case 3 (this happens when the central vertex has already been resolved).  However, each such element will lie in the image of an incoming Case 7 edge corresponding to changing the vertex smoothing at the central vertex.  Therefore, the filtered $2$-color vertex homology must be trivial.  Of course, considering \Cref{thm:PMClasses}, it had to be since there are no perfect matchings on $\Gamma_{L3}$.  
\end{example}

\Cref{thm:PMClasses} says that all nontrivial harmonic colorings represent a perfect matching of the graph when $n=2$, but not in a unique manner.  In fact, for a plane ribbon graph all possible perfect matchings are represented (twice) in homological grading zero, which is the content of Theorem~\ref{thm:perfectMatchings}:

\perfectMatchings*

\begin{proof}
The ``$\leq$'' part of this theorem was proven in \Cref{thm:PMClasses}.  All that is needed is to show every perfect matching corresponds to two harmonic colorings in $\widehat{\mathcal{VCH}}_2^{0}(\Gamma)$.  Choose a perfect matching $M$ for $\Gamma$ and temporarily delete the edges $M$ from $G$ to obtain a collection of cycles embedded in the plane.  The complement of those cycles is a collection of regions.  Assign colors red or blue to each region as follows:  The red (blue) regions are separated from infinity by an even (odd) number of cycles.  This coloring scheme induces a harmonic coloring on the all-zero vertex smoothing, which corresponds to the perfect matching $M$.

Finally, another harmonic coloring corresponding to the same perfect matching can be obtained by changing red to blue and vice-versa.  
\end{proof}

The proof of Theorem~\ref{thm:perfectMatchings} also showed the following:

\begin{scholium}
\label{sch:PMZero}
Every perfect matching for a plane graph is represented by a perfect matching face coloring in the zeroth filtered $2$-color vertex homology group.
\end{scholium}
 
In fact, the argument used in Theorem~\ref{thm:perfectMatchings} can be modified slightly to show that filtered $n$-color vertex homology is usually nontrivial, even for $n>2$.

\begin{figure}[H]
\includegraphics[scale=.4]{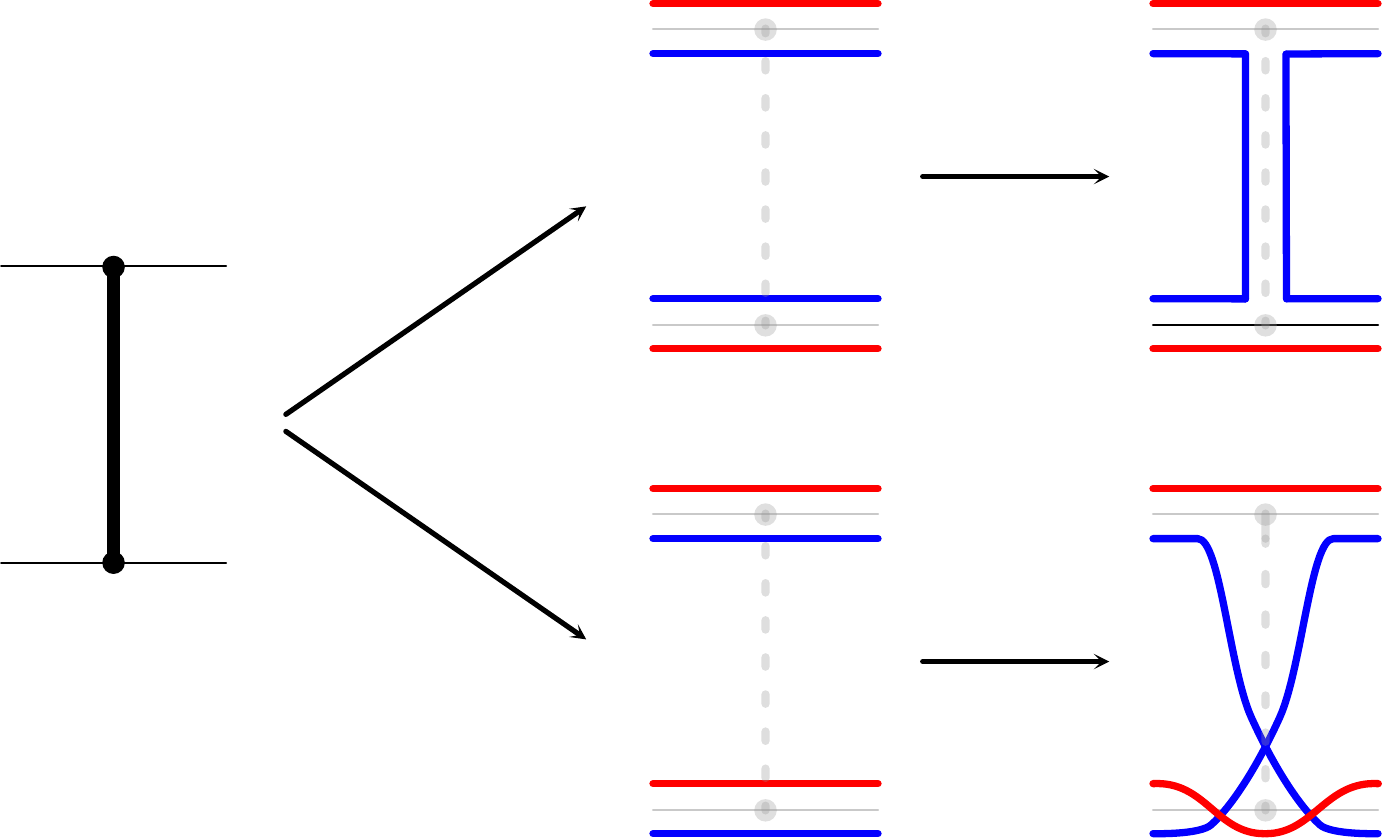}
\caption{Construction of a perfect matching face coloring for a given perfect matching.}\label{fig:PMConstruction}
\end{figure}

\begin{theorem}
Let $\Gamma$ be an oriented ribbon graph for an abstract trivalent graph $G(V,E)$ and let $M$ be a perfect matching for $G$.  Then, $M$ can be represented by a perfect matching face coloring in $\widehat{\mathcal{VCH}}_n^{i}(\Gamma)$ for some $i\in\{0,\ldots, |V|\}$ and for any $n\geq 2$.
\label{thm:nonTriv}
\end{theorem}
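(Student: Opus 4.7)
The plan is to produce an explicit perfect matching face coloring in $\widehat{\mathcal{VCH}}_n^{|\nu|}(\Gamma)$ for some vertex state $\nu$, whose induced matching set equals $M$. By \Cref{lem:harmonicsToColors}, \Cref{prop:harmonic-vertex-smoothing-sites}, and \Cref{prop:direct-sum-equals-all-harmonics} (combined with the Hodge decomposition of \Cref{lem:hodge-decomposition-theorem}), any such coloring determines a harmonic element of $\widehat{\mathcal{VCH}}_n^{|\nu|}(\Gamma) \cong \widehat{VCH}_n^{|\nu|}(\Gamma)$, which is by \Cref{def:PMColoring} the required perfect matching face coloring.

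The approach generalizes the construction in the proof of Theorem~\ref{thm:perfectMatchings}. In the planar case, one takes $\nu = \vec{0}$, uses the fact that $K := G \setminus M$ is a disjoint union of cycles embedded in $S^2$, and defines a canonical $2$-coloring by the parity of cycles of $K$ separating each region from infinity. For a general oriented ribbon graph this parity argument can fail because a cycle of $K$ need not separate $\overline{\Gamma}$. The strategy is therefore to allow $\nu \neq \vec{0}$ and to choose the vertex smoothings adaptively so as to restore a globally consistent coloring.

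Concretely, I would begin with $\nu = \vec{0}$ and attempt the $2$-coloring by propagating from an initial face: switch the color across every non-matching edge and preserve it across every matching edge. If propagation around a closed loop forces a contradiction, I would flip the vertex smoothing at a well-chosen vertex $v$ lying on that loop, thereby altering the local face structure of $\Gamma_\nu$ according to one of the seven configurations of \Cref{fig:VertexCases}; then I would re-attempt propagation on the new state graph and iterate. After producing a consistent coloring on $\Gamma_\nu$, it is immediate from the construction that (a) at every vertex at least two colors appear (two faces share the color across $e_v \in M$, while the third face carries a different color), so \Cref{prop:harmonic-vertex-smoothing-sites} gives harmonicity, and (b) the induced matching set is exactly $M$.

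The main obstacle is showing that the iteration terminates. I expect this to follow from a monovariant argument: each vertex flip should strictly decrease a topological/combinatorial invariant measuring the failure of $2$-colorability of the face-adjacency graph of $\Gamma_\nu$ modulo $M$ (for example, the $\mathbb{Z}/2$-dimension of an associated obstruction group arising from the embedding in $\overline{\Gamma}$). For $n \geq 3$, the additional colors provide extra flexibility to resolve local conflicts without modifying $\nu$, which should streamline the analysis by reducing it, in many cases, to a direct extension of the planar argument.
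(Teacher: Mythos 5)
Your reduction of the problem to exhibiting a single partial coloring with induced matching $M$ on some vertex state is fine, and matches the paper's use of \Cref{lem:harmonicsToColors}, \Cref{prop:harmonic-vertex-smoothing-sites}, and \Cref{prop:direct-sum-equals-all-harmonics}. The gap is in the construction itself: your ``propagate, and when a loop forces a contradiction flip a vertex smoothing on it and re-propagate'' scheme is exactly the point that needs proof, and you leave it as an expectation. Two things are missing. First, a vertex $1$-smoothing inserts half-twists into all three bands at that vertex, so a flip does not only repair the offending loop; it changes the face structure of $\Gamma_\nu$ globally along the two cycle-edges through that vertex and can create new monodromy obstructions on other loops. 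Second, you never identify the monovariant: the failure of $2$-colorability is a $\ZZ/2$-obstruction determined by the ribbon structure relative to $M$, and showing that vertex flips can always kill it (rather than merely move it around) is essentially the whole theorem. As written, termination and correctness of the iteration are unproven, so the argument does not close. The aside that extra colors for $n\geq 3$ ``streamline'' matters is also a red herring: a coloring with exactly two colors at every vertex is harmonic for every $n\geq 2$, so nothing new is needed beyond the $n=2$ case.

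The paper sidesteps the iteration entirely with a direct, one-pass construction that you may want to compare against. Delete the ribbons of the matching edges from the oriented ribbon diagram; since $G\setminus M$ is a union of cycles and $\Gamma$ is orientable, what remains is a disjoint union of annuli, each of which is properly $2$-colored by giving its two sides different colors. Now reinsert the matching ribbons one at a time: if the colors forced at the two ends of a ribbon disagree, insert three half-twists around one of its two endpoints, i.e.\ take the vertex $1$-smoothing there (\Cref{fig:PMConstruction}). Because every vertex lies on exactly one matching edge and the annulus colorings are fixed in advance, these repairs are local and independent, so no propagation or termination argument is needed; the homological degree $i$ is just the number of $1$-smoothings used. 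If you want to salvage your approach, the missing lemma is precisely that the $\ZZ/2$ coloring obstruction can always be trivialized by such flips, and the cleanest proof of that is the annulus construction above.
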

 
 \begin{proof}
Let $\Gamma$ be represented by an oriented ribbon diagram thought of as ribbons (edges) attached to disks (vertices). Note that $G\setminus M$ is a collection of cycles.  Delete the ribbons corresponding to the edges of $M$ from $\Gamma$.  The resulting ribbon graph, being a collection of annuli, can be properly $2$-colored.  Each ribbon corresponding to an edge of $M$ can then be reinserted to get partial $2$-face coloring at the vertices incident to the edge, after possibly inserting three half-twists around one of the two vertices, that is, a vertex $1$-smoothing (see \Cref{fig:PMConstruction}).  Let $i\in\{0,\ldots, |V|\}$ be the number of these vertex $1$-smoothings. Since this construction produces a coloring with the property that there are exactly two colors present at each vertex, the result is a harmonic coloring of a state in the hypercube of vertex states, which represents a perfect matching face coloring in $\widehat{\mathcal{VCH}}_n^{i}(\Gamma)$ for $n=2$, in fact, for any $n\geq 2$.   
 \end{proof}

Perfect matchings are not rare.  A theorem of Petersen \cite{Frink} says that any trivalent graph with at most two bridges has a perfect matching.  This condition is improved by a theorem of Tutte \cite{Tutte} that says that if every vertex subset $S\subset V$ is at least as large as the number of components of $G\setminus S$ having an odd number of vertices then the graph possesses a perfect matching.

\subsection{Symmetries of perfect matching face colorings}
Let $\Gamma$ be a ribbon graph for a graph $G(V,E)$. Suppose $\Gamma_\nu$ is a vertex state of the $\Gamma_\bullet$  hypercube that supports a perfect matching face coloring, $c_I\in \widehat{\mathcal{VCH}}_2(\Gamma_\nu)$, corresponding to a perfect matching $M\subset E$.  

\begin{figure}[H]
\includegraphics[scale=.4]{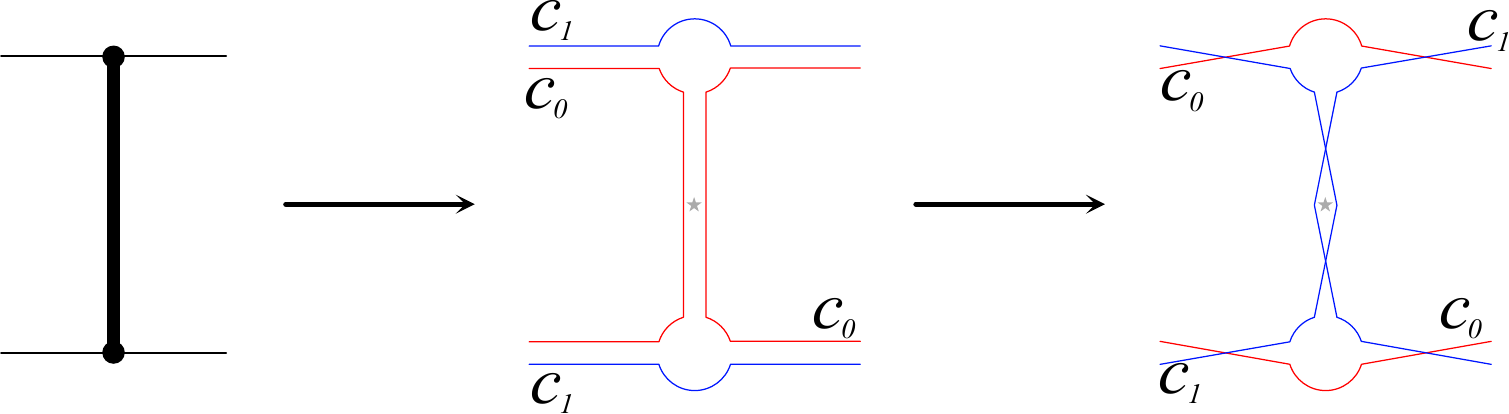}
\caption{Changing the vertex smoothings at a perfect matching edge.}\label{fig:PMSwitch}
\end{figure}

There is a symmetry that occurs in $c_I$ when vertex smoothings in $\Gamma_\nu$ are changed at each vertex of a perfect matching edge of $M$.  Switching the smoothings at each vertex (from a zero-smoothing to a one-smoothing and vice-versa) has the effect of cutting the ribbon surface at the four edges emanating from the perfect matching edge, inserting a half-twist of the ribbon on each, and then gluing it back (cf. \Cref{fig:PMSwitch}).  As shown in \Cref{fig:PMSwitch}, the resulting state can be colored in the same manner before and after the switch.  Since the switch can be performed at each perfect match edge independently, we obtain a family of $2^{\frac{1}{2}|V|}$  perfect matching face colorings in different vertex smoothings.  This number becomes the factor in the equation of Theorem~\ref{thm:oddMatchingsCancel}.

\begin{lemma}
\label{lem:MatchingSwitch}
Let $\Gamma$ be a ribbon graph for a trivalent graph $G(V,E)$ and let $M$ be a perfect matching for $G$.  Suppose $\Gamma_\nu$ is a state of the hypercube of vertex states for $\Gamma_\bullet$ that supports a perfect matching face coloring, $c_I\in \widehat{\mathcal{VCH}}_2(\Gamma_\nu)$, that corresponds to the perfect matching $M$.  Let $\Gamma_{\nu'}$ be the state obtained from $\Gamma_\nu$ by switching the vertex smoothings at each vertex of a perfect matching edge of $M$.  Then $\Gamma_{\nu'}$ also supports a perfect matching face coloring, $c'_J\in \widehat{\mathcal{VCH}}_2(\Gamma_{\nu'})$, corresponding to $M$.
\end{lemma}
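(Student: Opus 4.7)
The plan is to argue locally at the perfect matching edge $e=\{v_1,v_2\}$ where the smoothings are switched. By construction, the operation that produces $\Gamma_{\nu'}$ from $\Gamma_\nu$ changes only the vertex smoothings at $v_1$ and $v_2$ and inserts a half-twist on each of the four ribbon-edges emanating from $e$ (as depicted in \Cref{fig:PMSwitch}); in particular, the ribbon surface and its face decomposition are unchanged outside a small neighborhood of $e$. The first step is to establish a canonical bijection $\varphi$ between the faces of $\Gamma_\nu$ and those of $\Gamma_{\nu'}$ that is the identity on faces disjoint from this neighborhood. Then I would use $\varphi$ to transport the coloring $c_I$ to a candidate $c'_J$ on $\Gamma_{\nu'}$ by assigning each face of $\Gamma_{\nu'}$ the color of its preimage in $\Gamma_\nu$. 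The remaining work is to verify that $c'_J$ is harmonic and that it induces the same matching $M$.

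For harmonicity I would invoke \Cref{prop:harmonic-vertex-smoothing-sites}: at every vertex of $G$ other than $v_1,v_2$, the local configuration at the smoothing site is unchanged, so the at-least-two-colors condition is automatic from harmonicity of $c_I$. At $v_1$ and $v_2$, the central observation is that because $M$ is the induced matching of $c_I$ and $e\in M$, the two faces incident to $e$ at each endpoint are monochromatic; the switch reconnects the three arcs at $v_1$ (and at $v_2$) in the complementary way, but because the switch is performed at both endpoints of $e$ simultaneously, the pair of faces meeting along $e$ remains a pair of faces in $\Gamma_{\nu'}$ (with the same colors), while the other four face-incidences at $v_1,v_2$ are permuted without creating any new monochromatic triple. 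One then checks the finitely many local color patterns at $v_1,v_2$ and confirms that each gives rise to a partial $2$-face coloring after the switch.

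The verification that $c'_J$ induces $M$ is then immediate: the induced matching set is determined by which edges of $G$ have two like-colored incident faces, and $\varphi$ preserves colors along every edge of $G$; in particular $e$ remains a matched edge and every other edge retains its matched/unmatched status. The main obstacle is the local case analysis at $v_1$ and $v_2$, since one must carefully track how the three arcs at a vertex reconnect under a $0\leftrightarrow 1$ smoothing switch and how this interacts with the twists inserted on the four outgoing ribbons; however, because of the simultaneous switch at both endpoints of $e$ this reduces to a single symmetric local picture (essentially the one drawn in \Cref{fig:PMSwitch}), so no appeal to the TQFT of Section 9 of \cite{ColorHomology} is needed here.
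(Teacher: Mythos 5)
Your first step fails: there is, in general, no bijection between the faces of $\Gamma_\nu$ and those of $\Gamma_{\nu'}$, canonical or otherwise. Switching the smoothings at both endpoints of the matching edge $e$ (equivalently, inserting a half-twist in each of the four emanating bands) reconnects the boundary circles globally: the faces incident to $e$ can merge or split, the total number of faces can change, and even the genus of the associated closed surface can change. For example, take the plane $3$-prism with $M$ the three vertical edges and perform the switch at one vertical edge $e$: before the switch the faces meeting the neighborhood of $e$ are two squares (the side faces $L,R$ of $e$) and the two triangles; after the switch the two triangles and both sides of $e$ fuse into a single face, and the two squares fuse into another, so the face count drops from $5$ to $3$ (sphere to torus). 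Consequently ``assigning each face of $\Gamma_{\nu'}$ the color of its preimage'' is not defined, and your companion claim that ``the pair of faces meeting along $e$ remains a pair of faces in $\Gamma_{\nu'}$ (with the same colors)'' is false --- in the example above the two sides of $e$ lie on one new face, and in the paper's construction the color along $e$ is not preserved but flipped.

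The argument the paper intends (cf.\ \Cref{fig:PMSwitch} and the remark following \Cref{lem:MatchingSwitch}) is different in exactly this respect. Since $n=2$, harmonicity at $v_1,v_2$ together with the fact that $e$ is an induced matching edge forces the two side faces of $e$ to share one color (say red) and the two opposite faces at $v_1,v_2$ to share the other (blue). The reconnection caused by the switch only joins far portions of like-colored faces (blue with blue, red with red), so one defines $c'_J$ by keeping every color away from $e$ and coloring the new face(s) running along $e$ with the \emph{opposite} color (both-red becomes both-blue). This is manifestly a coloring of the new state, it has two colors at $v_1$ and $v_2$ and is unchanged at every other vertex, so it is harmonic by \Cref{prop:harmonic-vertex-smoothing-sites}, and the two like-colored faces still meet exactly along $e$ (and along the other edges the matched/unmatched status is unchanged), so it induces the same matching $M$. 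Your proposal could be repaired by replacing the bijection-and-transport step with this direct recoloring argument, but as written the construction of $c'_J$ does not exist.
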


The perfect matching switch of \Cref{lem:MatchingSwitch} involves locally switching the colors at the perfect matching edge from both-red to both-blue or vice-versa.  Away from the perfect matching edge, nothing else changes. Note that this switch only works for $n=2$ perfect matching face colorings.
\\

There is another operation one can do on a perfect matching face coloring of a vertex state $\Gamma_\nu$ to get a corresponding perfect matching face coloring on another vertex state $\Gamma_{\nu'}$.  Start with a perfect matching face coloring on $\Gamma_\nu$ and change the vertex smoothing at each vertex of a cycle of $G\setminus M$ (for example, see \Cref{fig:SaturnV}). The new state $\Gamma_{\nu'}$ has a perfect matching face coloring whose corresponding perfect matching is the same.  This is because  every perfect matching edge emanating from the chosen cycle had to have incident circles of the same color before the switch, and so even if the number of circles is different after the switch, we can still color them the same.  Moreover, there are two twists of the ribbon surface on every edge of the cycle, and so the remaining faces can be colored the same.   Since the switch can be performed at each cycle of $G\setminus M$ independently, we obtain a family of $2^{C}$ classes for the given perfect matching face coloring where $C$ is the number of cycles of $G\setminus M$.  Thus,

\begin{lemma}
\label{lem:cyclePairs}
Let $\Gamma$ be an  ribbon graph for a trivalent graph $G(V,E)$ and let $M$ be a perfect matching for $G$.  Suppose $\Gamma_\nu$ is a state of the hypercube of vertex states for $\Gamma_\bullet$ that supports a perfect matching face coloring, $c_I\in \widehat{\mathcal{VCH}}_2(\Gamma_\nu)$, that corresponds to the perfect matching $M$.  Let $\Gamma_{\nu'}$ be the state obtained from $\Gamma_\nu$ by switching the vertex smoothings at each vertex of a cycle of $G\setminus M$.  Then $\Gamma_{\nu'}$ supports a perfect matching face coloring, $c'_J\in \widehat{\mathcal{VCH}}_2(\Gamma_{\nu'})$, corresponding to $M$.
\end{lemma}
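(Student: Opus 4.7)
The plan is to construct $c'_J$ on $\Gamma_{\nu'}$ by locally transporting the colors of $c_I$ through the simultaneous vertex-smoothing flip along the cycle $C$, and then verify that $c'_J$ has exactly two colors at every vertex (so it is harmonic) and induces the same matching $M$.

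First I would analyze the local picture at each vertex $v_i \in V(C)$. Because $c_I$ is a perfect matching face coloring inducing $M$, and the $M$-edge $e_{v_i}$ at $v_i$ is not a $C$-edge, the two faces adjacent to $e_{v_i}$ at $v_i$ share a common color $c_{a_{v_i}}$, while the third face (incident to the two $C$-edges at $v_i$) carries the opposite color $c_{b_{v_i}}$. A vertex-smoothing flip at $v_i$ is a local modification inside a disk-neighborhood of $v_i$ with three fixed boundary endpoints: two on the $e_{v_i}$-side and one on the opposite side. After the flip, the two $e_{v_i}$-side endpoints can still be assigned the color $c_{a_{v_i}}$ and the opposite endpoint $c_{b_{v_i}}$; this is exactly the scenario enforced by \Cref{lem:widehat-maps} and \Cref{lem:Colors2Colors} applied to each of the seven configurations in \Cref{fig:VertexCases}. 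For vertices outside $V(C)$, the smoothing and coloring are both unchanged.

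Next I would glue this local data together globally. Along each $C$-edge $v_iv_{i+1}$, the ribbon is untouched outside the small disks around $v_i$ and $v_{i+1}$, and the two flips at the endpoints together produce the ``two twists'' of the authors' sketch, under which the strand identifications transport coherently. Combining this with the unchanged portion of $\Gamma$ outside $V(C)$ yields a coloring $c'_J$ on $\Gamma_{\nu'}$. By construction: (a) at every vertex $v \in V$, exactly two colors appear among the incident faces, so $c'_J$ is harmonic by \Cref{prop:harmonic-vertex-smoothing-sites}; and (b) at every $M$-edge $vw$, the two incident faces still carry the common color $c_{a_v} = c_{a_w}$, so the induced matching of $c'_J$ remains $M$.

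The principal obstacle is rigorously justifying that the local color transport glues into a globally consistent coloring on $\Gamma_{\nu'}$, because a single vertex flip can change the number of circles. The cleanest way to address this is to observe that the boundary data on each small disk around $v_i \in V(C)$ is identical before and after the flip, so re-inserting the flipped disk into the remainder of the diagram introduces no color conflict; applying this simultaneously at every vertex of $C$ produces a well-defined global coloring by finite induction on the number of vertices in $C$. Once global consistency is established, verification of harmonicity and of the induced matching reduces to direct inspection using \Cref{lem:widehat-maps}.
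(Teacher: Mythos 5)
Your overall strategy is the same as the paper's (transport the coloring through the simultaneous flip, using that the two faces flanking each $M$-edge share a color and that each $C$-edge receives two half-twists), but the step that actually carries the lemma --- showing the transported coloring is globally consistent even though the circles of the state change --- is not established. The assertion that ``the boundary data on each small disk around $v_i$ is identical before and after the flip, so re-inserting the flipped disk introduces no color conflict'' is false as a disk-by-disk statement: the flip changes the \emph{internal} connectivity of the disk (that is exactly why circles merge and split), and a vertex-smoothing site has three arcs with \emph{six} boundary arc-germs, not three endpoints. If one keeps the old colors on those germs, the $1$-smoothing at a single cycle vertex joins a germ of a red face flanking the $M$-edge to a germ of the blue face between the two $C$-edges; equivalently, a single half-twist in a $C$-band glues a red face to a blue face. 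So a single flip \emph{does} create a conflict along each incident $C$-edge, and consistency holds only because both endpoints of every $C$-edge are flipped at once. In particular your proposed ``finite induction on the number of vertices in $C$'' passes through intermediate states in which some $C$-edge carries exactly one half-twist, and on those states the transported coloring does not exist; the inductive step fails as stated.

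What is missing is the one observation the paper's argument turns on: after flipping \emph{all} vertices of the cycle, every $C$-band carries two half-twists, so the boundary-curve connectivity along it is combinatorially unchanged, and the only bands along which the circle structure of $\Gamma_{\nu'}$ genuinely differs from that of $\Gamma_\nu$ are the matching edges emanating from the cycle, each of which acquires a single (odd) half-twist. The two faces on either side of such a matching edge carry the same color, because they flank an edge of the induced matching; hence all cutting and regluing of circles happens along monochromatic pairs, every new circle meets old arcs of only one color, and assigning each new circle that common color is well defined. This transported coloring reproduces the old two-color pattern at every vertex, so it is harmonic by \Cref{prop:harmonic-vertex-smoothing-sites} and \Cref{lem:harmonicsToColors}, and its induced matching is again $M$. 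Your claims (a) and (b) assert these conclusions but never prove the monochromatic-regluing fact; adding it, and replacing the vertex-by-vertex induction with the simultaneous whole-cycle argument, closes the gap.
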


\begin{example}
Let $\Gamma_V$ be the plane graph shown in \Cref{fig:SaturnV} with perfect matching given by the thickened edges.  The middle picture in \Cref{fig:SaturnV} shows a perfect matching face coloring in $\widehat{\mathcal{VCH}}_2^0(\Gamma_V)$ that corresponds to an odd perfect matching with cycles of length three and five.  The picture on the right in that figure shows the result of performing a vertex 1-smoothing at every vertex of the 5-cycle.  Note that the coloring shown corresponds to the same odd perfect matching.

\begin{figure}[H]
\includegraphics[scale=.3]{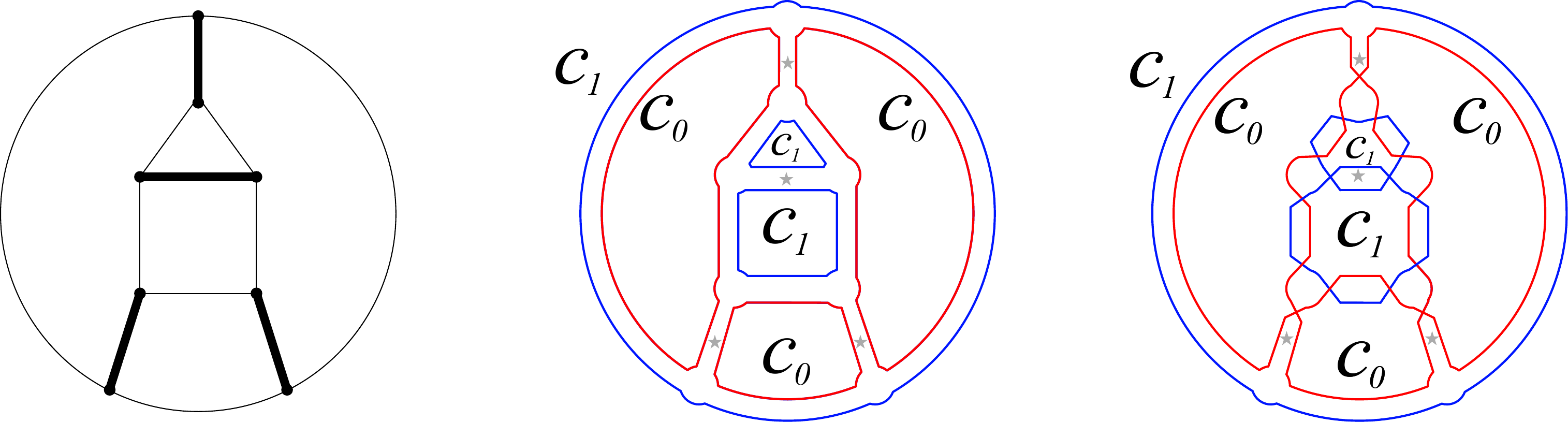}
\caption{The graph $\Gamma_V$ along with two colorings.}\label{fig:SaturnV}
\end{figure}

\end{example}

\begin{lemma}
\label{lem::oddMatchingsCancel}
Let $\Gamma$ be a ribbon graph for a trivalent graph $G(V,E)$.  Only even perfect matchings contribute to the Euler characteristic of the filtered $2$-color vertex homology $\widehat{VCH}_2^*(\Gamma)$.
\end{lemma}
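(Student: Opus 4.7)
The plan is to decompose the Euler characteristic into a sum of contributions indexed by perfect matchings, then show that odd perfect matchings each contribute zero by using the cycle switch of \Cref{lem:cyclePairs} as a sign-reversing involution.

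First, using the Hodge decomposition of \Cref{lem:hodge-decomposition-theorem} together with \Cref{prop:direct-sum-equals-all-harmonics}, I would rewrite
$$\chi\bigl(\widehat{VCH}_2^*(\Gamma)\bigr) \;=\; \sum_{\nu \in \{0,1\}^{|V|}} (-1)^{|\nu|} \dim \widehat{\mathcal{VCH}}_2(\Gamma_\nu).$$
By \Cref{lem:harmonicsToColors} (and the discussion following it for $n=2$), each harmonic class in $\widehat{\mathcal{VCH}}_2(\Gamma_\nu)$ is a perfect matching face coloring of $\Gamma_\nu$ and thereby determines a unique perfect matching $M\subset E$. Partitioning the harmonic basis by induced perfect matching,
$$\chi\bigl(\widehat{VCH}_2^*(\Gamma)\bigr) \;=\; \sum_M \chi_M, \qquad \chi_M \;:=\; \sum_{\nu} (-1)^{|\nu|} N(\nu, M),$$
where $N(\nu, M)$ counts the perfect matching face colorings of $\Gamma_\nu$ inducing $M$.

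Next, I would fix an odd perfect matching $M$ and a cycle $C \subset G\setminus M$ of odd length $k$. Applying the cycle switch of \Cref{lem:cyclePairs} simultaneously at every vertex of $C$ produces a new vertex state $\Gamma_{\nu'}$ from $\Gamma_\nu$ and sends perfect matching face colorings of $\Gamma_\nu$ inducing $M$ to perfect matching face colorings of $\Gamma_{\nu'}$ inducing $M$. Because the switch is an involution on the set $\bigsqcup_\nu \{\text{harmonic colorings of } \Gamma_\nu \text{ inducing } M\}$, it is a bijection between its two ``halves.'' Moreover, if $j_\nu$ is the number of $C$-vertices already at a $1$-smoothing in $\nu$, then $|\nu'| - |\nu| = k - 2j_\nu$, which is odd. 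Hence the switch pairs colorings in states of opposite $|\nu|$-parity, so the paired contributions cancel in $\chi_M$ and $\chi_M = 0$.

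Summing over all $M$ leaves only even perfect matchings contributing to $\chi(\widehat{VCH}_2^*(\Gamma))$, which is the claim. The main technical point to verify is that the cycle switch genuinely defines a bijection on the level of colorings rather than merely a pointwise existence statement; this should follow from the fact that the operation is locally determined (only circles meeting $C$ are affected, and the remaining faces keep their colors), is its own inverse when applied twice to $C$, and preserves the underlying matching $M$. The rest of the argument is purely combinatorial accounting with signs.
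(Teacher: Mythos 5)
Your proposal is correct and follows essentially the same route as the paper: the cancellation comes from the cycle switch of \Cref{lem:cyclePairs} applied to an odd cycle of $G\setminus M$, which flips the parity of $|\nu|$ while preserving the induced matching, combined with the harmonic decomposition of the filtered homology over states. The only cosmetic difference is that you package the cancellation as a sign-reversing involution built from one chosen odd cycle, whereas the paper sums over the $2^{C'}$-state sub-hypercube generated by switching all odd cycles; both hinge on the same lemma and the same parity observation.
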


\begin{proof}
By \Cref{lem:cyclePairs}, any perfect matching face coloring $c_I$ that corresponds to an odd perfect matching, $M$,   is canceled by a perfect matching face coloring $c_I'$ for the same perfect matching an odd number of columns away. This forms a sub-hypercube with $2^{C'}$  states where $C'$ is the number of odd cycles of $G\setminus M$. Therefore, the  signed sum of these perfect matching face colorings is zero. 
\end{proof}

There are non-planar examples of graphs, like  $K_{3,3}$ in \Cref{ex:K33Filtered} (see also \Cref{ex:K33Computation}), where even perfect matchings do not contribute to the Euler characteristic. But if the Euler characteristic is nonzero, then \Cref{lem::oddMatchingsCancel} implies that the count is coming from some set of even perfect matchings on $G$.

\begin{lemma}
\label{lem:EvenPMEvenDegree}
Let $\Gamma$ be a plane graph for a planar trivalent graph $G(V,E)$.  Perfect matching face colorings that correspond to even perfect matchings of $\Gamma$ only occur in even homological degrees, i.e., if $c_I \in \widehat{VCH}^i_2(\Gamma)$ corresponds to an even perfect matching, then $i\in \BZ$ is even. 
\end{lemma}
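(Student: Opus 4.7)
The plan is to reduce an arbitrary perfect matching face coloring $c_I \in \widehat{\mathcal{VCH}}_2(\Gamma_\nu)$ corresponding to the even perfect matching $M$ to the canonical PMFC produced in the proof of Theorem~\ref{thm:perfectMatchings}. That canonical base class sits in the all-zero vertex state $\Gamma_{\vec 0}$ and so lies in $\widehat{VCH}^0_2(\Gamma)$, for which $|\vec 0|=0$ is even. It therefore suffices to connect $c_I$ to this base class by moves that preserve the parity of $|\nu|$.

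The two natural moves are supplied by \Cref{lem:MatchingSwitch} and \Cref{lem:cyclePairs}. A perfect matching switch at an edge $e\in M$ simultaneously flips the vertex smoothings at both endpoints of $e$, so $|\nu|$ changes by $-2$, $0$, or $+2$. A cycle switch at a cycle $C$ of $G\setminus M$ flips the smoothings at every vertex of $C$, so $|\nu|$ changes by an integer of the same parity as $|C|$. Because $M$ is \emph{even}, every cycle of $G\setminus M$ has even length, so both moves preserve the parity of $|\nu|$. Consequently every state in the orbit of $\Gamma_{\vec 0}$ under these two moves supports a PMFC for $M$ in even homological degree.

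The main step is therefore a connectivity claim: every state $\Gamma_\nu$ supporting a PMFC for $M$ can be reached from $\Gamma_{\vec 0}$ by a finite sequence of perfect matching switches and cycle switches. My strategy is to lower $|\nu|$ by applying a perfect matching switch at every $e\in M$ whose two endpoints both carry $1$-smoothings, each such application dropping $|\nu|$ by $2$. The hardest part of the argument, and the main obstacle, is to rule out ``stuck'' configurations in which some $e\in M$ carries a $1$-smoothing at exactly one endpoint. I would rule these out by a local analysis at a matching edge, examining how the three incident face sectors at each endpoint glue across $e$ using the seven configurations recorded in \Cref{fig:VertexCases} and the explicit maps of \Cref{lem:widehat-maps}. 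Because $\Gamma$ is a plane graph, its bands carry no half-twists, so the gluing across $e$ is orientation-preserving; the requirement that the face $2$-coloring be continuous across $e$, combined with the harmonic condition that exactly two colors appear at each of the endpoints of $e$, then forces the vertex smoothings at the two endpoints to agree. Once this local symmetry is established, iterated perfect matching switches reduce $\Gamma_\nu$ to $\Gamma_{\vec 0}$ through a sequence of states in which $|\nu|$ retains its parity, forcing $|\nu|$ to be even and completing the proof.
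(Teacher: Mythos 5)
Your reduction hinges on a local claim that is false: that for a plane graph, continuity of the $2$-face coloring across a matching edge, together with the harmonic condition at its endpoints, forces the vertex smoothings at the two endpoints of that edge to agree (so that no ``stuck'' configuration with a $1$-smoothing at exactly one endpoint can occur). A concrete counterexample comes from your own second move. Take the cube graph with $M$ the four vertical edges, so that $G\setminus M$ consists of the top and bottom $4$-cycles and $M$ is even. Start from a perfect matching face coloring in the all-zero state and perform a cycle switch (\Cref{lem:cyclePairs}) along the top $4$-cycle only. The resulting state supports a perfect matching face coloring for the same even $M$, but every vertical matching edge now has a $1$-smoothing at its top endpoint and a $0$-smoothing at its bottom endpoint: it is exactly a ``stuck'' configuration in your sense, with $|\nu|=4$, unreachable from the all-zero state by perfect matching switches alone. (Independently, the degree-one classes of Theorem~\ref{thm:degreeOne} at bridge edges show that no purely local analysis at a matching edge can force the two endpoint smoothings to agree, since your proposed argument never uses evenness of $M$, which is a global property.) With the local claim gone, your connectivity statement --- that every state supporting a PMFC for $M$ is reachable from $\Gamma_{\vec 0}$ by the two switches --- is unproven, and the monovariant strategy of decreasing $|\nu|$ by matching switches does not terminate at the all-zero state; so the proof has a genuine gap even though the parity bookkeeping for the two moves (matching switches change $|\nu|$ by $0,\pm 2$; cycle switches change it by the parity of the cycle length, which is even when $M$ is even) is correct.

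The paper avoids connectivity altogether and proves the parity statement directly. Since $M$ is even, there is a $2$-factor of $G$ that factors through $M$ (alternating between edges of $M$ and edges of $G\setminus M$). Given a PMFC on a state $\Gamma_\nu$, one builds a family of closed curves by following the ``red'' side of the coloring along each cycle of this $2$-factor, crossing each matching edge to the red side at the other end. A local check at each vertex shows that the parity of the number of intersections of this curve family with the cycles of $G\setminus M$ equals the parity of the number of vertex $1$-smoothings in $\nu$; the Jordan curve theorem (this is where planarity enters) forces that intersection number to be even, hence $|\nu|$ is even. If you want to salvage your approach, you would need to actually prove the orbit-connectivity of PMFC-supporting states under matching and cycle switches, which looks at least as hard as the parity statement itself; the Jordan-curve argument is both shorter and does not require it.
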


\begin{proof}
Suppose $\Gamma_\nu$ is a vertex state that supports a perfect matching face coloring that corresponds to an even perfect matching $M$, with faces of the state graph $\Gamma_\nu$ colored ``red'' and ``blue.''  Recall from Definition~3.22 in \cite{BaldCohomology} (see also Definition~1 in \cite{BLM}) that a two-factor that factors through the perfect matching is a set of cycles in $G$ in which the edges of each cycle alternate between edges of $M$ and edges of $G\setminus M$, and every edge in $M$ is contained in some cycle (see \Cref{fig:FactorsThroughM}).  
\begin{figure}[H]
\includegraphics[scale=1]{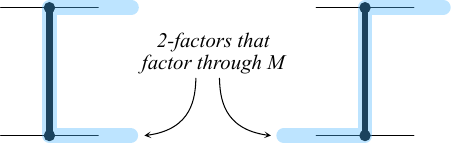}
\caption{Local pictures of a $2$-factor that factors through a perfect matching.}\label{fig:FactorsThroughM}
\end{figure}
By Proposition 3 of \cite{BLM}, an even perfect matching has $2^C$ such two-factors where $C$ is the number of cycles of $G\setminus M$. Choose one.  Construct a family of circles corresponding to the chosen two-factor by doing the following:
\begin{enumerate}
\item Beginning at the red side of an edge of $G\setminus M$ that is contained in some cycle of your chosen two-factor, follow it until you reach a perfect matching edge.
\item Follow the perfect matching edge to the red side at the other end of the perfect matching edge.
\item Continue in this way until the loop closes by following your chosen cycle in the $2$-factor.
\item Repeat for all other cycles in the two-factor.
\end{enumerate}

\begin{figure}[H]
\includegraphics[scale=1]{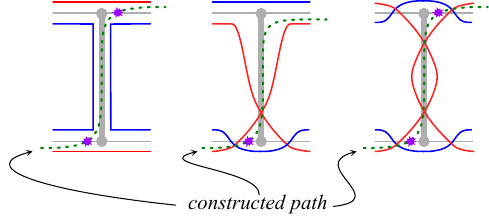}
\caption{Local picture of the path constructed above, at a perfect matching edge.  Intersections of the path with the cycles of $G\setminus M$ are shown in purple.}\label{fig:constructedPath}
\end{figure}

After doing this, count the number of intersections of the constructed family of circles with $G\setminus M$.  The parity of the number of intersections of this family of circles with the cycles of $G\setminus M$ matches the parity of the number of vertex 1-smoothings used to construct $\Gamma_\nu$.  In \Cref{fig:constructedPath}, local pictures of the three cases are drawn.  In each case, the intersection (marked by a purple star) has the same parity as the number of vertex $1$-smoothings.  By the Jordan curve theorem, the number of intersections is even, which implies that $|\nu|$ is even.
\end{proof}

We are now ready to give meaning to the filtered $2$-color vertex homology, and prove Theorem~\ref{thm:oddMatchingsCancel}, restated below, by observing that its nontrivial classes correspond to perfect matchings of the graph, and that in the Euler characteristic, only even perfect matchings are counted.  Since even perfect matchings correspond to $3$-edge colorings, the Euler characteristic provides a way to count the number of 3-edge-colorings of the graph. 

\oddMatchingsCancel*

\begin{proof}
First, observe that an even perfect matching has $2^{\mbox{\# of cycles of $G\setminus M$}}$ 3-edge-colorings that fix the color of the edges of $M$.  Also, by \Cref{lem::oddMatchingsCancel}, odd perfect matchings do not contribute to the Euler characteristic.  

By \Cref{lem:EvenPMEvenDegree}, every even perfect matching occurs only in even homological degree.  Finally,  \Cref{lem:cyclePairs,lem:MatchingSwitch} demonstrate that such matchings occur with the appropriate multiplicities.
\end{proof}

 \Cref{lem:MatchingSwitch} and \Cref{lem:cyclePairs} show that performing a perfect matching switch or cycle switch for a perfect matching face coloring on the all-zero vertex state will produce perfect matching face colorings on state graphs $\Gamma_\nu$ with $|\nu|\geq 2$.  Thus, one may conjecture that the $2$-color vertex homology vanishes in grading one for planar graphs.  The next theorem shows that this is not true.

\begin{theorem}
Let $\Gamma$ be a plane ribbon graph of a connected, trivalent graph $G(V,E)$ with $b$ bridges and $m$ perfect matchings.  Then
$$\dim \widehat{VCH}_2^1(\Gamma) = 4mb.$$
\label{thm:degreeOne}
\end{theorem}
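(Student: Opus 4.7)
The plan is to reduce the computation to a harmonic--basis count and then carry out a local analysis at the single $1$-smoothed vertex. By the Hodge decomposition (Lemma~\ref{lem:hodge-decomposition-theorem}) and Proposition~\ref{prop:direct-sum-equals-all-harmonics},
\[
\dim \widehat{VCH}_2^1(\Gamma) \;=\; \sum_{|\nu|=1} \dim \widehat{\mathcal{VCH}}_2(\Gamma_\nu),
\]
so the first step is to enumerate the perfect matching face colorings supported on vertex states with exactly one $1$-smoothing. Because $n=2$ admits no proper face coloring at any vertex (which would require three distinct colors), Proposition~\ref{lem:harmonicsToColors} forces each such harmonic coloring to be partial at every vertex, hence to induce a perfect matching $M \subset E$ of $G$.

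The core technical step is a local analysis at the unique $1$-smoothed vertex $v$ in $\Gamma_\nu$. Using Lemma~\ref{lem:Colors2Colors} together with the explicit form of $\widehat{m}, \widehat{\Delta}, \widehat{\eta}$ and their adjoints (Lemmas~\ref{lem:widehat-maps} and~\ref{lemma:adjoint-hat-operators}), I would work through the seven vertex configurations of Figure~\ref{fig:VertexCases} via the $3$-edge paths in the bubbled-blowup hypercube $\Gamma^B$ detailed in Appendix~\ref{app:differentials}. The claim to establish is that a harmonic coloring extending the partial coloring induced by $M$ survives a vertex $1$-smoothing at $v$ if and only if at least two of the three face-corners at $v$ belong to the same face of $\Gamma$. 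In a plane graph, this happens precisely when $v$ is an endpoint of a bridge, since a bridge is the unique type of edge whose two sides are incident to a single face.

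With the local characterization in hand, counting is direct. Every bridge of a trivalent graph lies in every perfect matching: deleting a bridge $e$ leaves a component whose degree sum equals $3|V|-1$, forcing $|V|$ to be odd and precluding any internal matching. Thus for each pair $(M,e)$, Theorem~\ref{thm:perfectMatchings} supplies two color-swapped harmonic representatives of $M$ in grading zero, each of which extends across the bridge to a harmonic coloring in the state obtained by $1$-smoothing either endpoint of $e$. This yields $2 \times 2 = 4$ distinct harmonic classes per $(M,e)$ pair, giving $4mb$ in total. Completeness of this enumeration then follows from the local characterization together with the symmetries of Lemmas~\ref{lem:MatchingSwitch} and~\ref{lem:cyclePairs}, which constrain how grading-$1$ colorings relate to their grading-$0$ counterparts.

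The hard part is the local analysis: ruling out $1$-smoothings at vertices incident only to non-bridge edges. For such a vertex the three face-corners at $v$ lie in three distinct faces of $\Gamma$, and one must show that the conditions $\widehat{\delta}_\nu(c_I) = 0$ and $\widehat{\delta}_\nu^*(c_I) = 0$ cannot be simultaneously satisfied by any coloring consistent with $M$ without violating the two-color condition of Proposition~\ref{prop:harmonic-vertex-smoothing-sites} at some other vertex. This reduces to a configuration-by-configuration verification of how the $1$-smoothing reconnects the three arcs at $v$ relative to the surrounding circles of the all-zero state, applying the color-basis formulas to each case. Once this incompatibility is in hand, the bridge count $4mb$ drops out cleanly.
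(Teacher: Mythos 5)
Your plan is essentially the paper's proof: reduce to harmonic (perfect matching face) colorings on states with a single vertex $1$-smoothing, show such colorings exist only when the smoothed vertex is an endpoint of a bridge, and count $2$ color-swapped colorings times $2$ endpoints per (matching, bridge) pair to get $4mb$. Two of your justifications, however, need repair. First, the exclusion of non-bridge vertices is not a global compatibility issue ``at some other vertex'': for a plane graph, a vertex not incident to a bridge has its three corners on three distinct faces, so the local picture is Configuration 1 of \Cref{fig:VertexCases}, and the vertex $1$-smoothing merges those three circles into a single circle through the smoothing site. Then only one color can appear at that site, which already violates the two-color requirement of \Cref{prop:harmonic-vertex-smoothing-sites} at $v$ itself; no reference to $M$ or to other vertices is needed, and this is exactly how the paper argues. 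Your stated criterion (survival iff two face-corners at $v$ lie on one face, iff $v$ meets a bridge in the plane case) is correct, but its ``only if'' half is precisely this one-line Configuration 1 observation rather than a delicate case analysis.

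Second, \Cref{lem:MatchingSwitch} and \Cref{lem:cyclePairs} cannot deliver completeness in homological degree one: both switch smoothings at two or more vertices (the two endpoints of a matching edge, or all vertices of a cycle), so they say nothing about states with $|\nu|=1$. What you actually need is an upper bound of $2m$ harmonic colorings on each state obtained by $1$-smoothing a bridge endpoint. The paper gets this from the flip-move equivalence of \Cref{fig:1smoothFlip} (the $1$-smoothed state represents a ribbon graph with the same perfect matching face colorings as the all-zero state); alternatively, you can argue that every $n=2$ harmonic coloring on such a state induces a perfect matching, which necessarily contains the bridge, and that at most two colorings on a fixed state induce the same matching (the propagation argument in the proof of \Cref{thm:PMClasses}). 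With either repair, your count $4mb$ goes through exactly as in the paper.
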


\begin{proof} Since the theorem is true for $m=0$ by \Cref{lem:harmonicsToColors}, assume $m>0$. Recall that a connected trivalent graph $G$ with a bridge must include that bridge in every perfect matching of $G$ (cf. Lemma~2.2 in \cite{BaldCohomology}). By Theorem~\ref{thm:perfectMatchings}, $\dim \widehat{VCH}_2^0(\Gamma) = 2m$, and is generated by harmonic colorings (perfect matching face colorings) that correspond to perfect matchings---two for each perfect matching.  For each bridge, \Cref{fig:1smoothFlip} shows that, after performing a 1-smoothing on one of the two vertices incident to the bridge, the resulting state graph represents a ribbon graph with the same perfect matching face colorings as the all-zero state of $\Gamma$.  Thus, there exist two harmonic colorings in homological degree one for each harmonic coloring in homological degree zero, one for each vertex.  Finally, all harmonic colorings in homological degree one are of this type since  performing a 1-smoothing at any vertex that is not incident to a bridge edge results in a single loop at that vertex (recall that for a planar graph, such a vertex is modeled by \Cref{fig:Case1}).  

\begin{figure}[H]
\includegraphics[scale=.4]{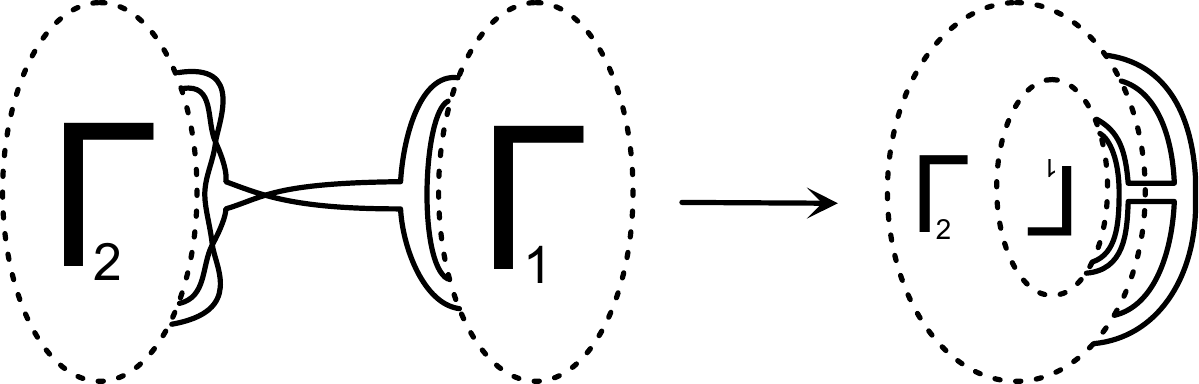}
\caption{Equivalent surfaces before and after flip moves.}\label{fig:1smoothFlip}
\end{figure}

\end{proof}

\begin{remark} One consequence of \Cref{thm:degreeOne} is that for any plane graph with no bridges, the filtered $2$-color vertex homology {\em is} trivial in homological degree one.
\end{remark}

\section{The Vertex and Total Matching Polynomials}\label{sec:TotalMatching}
In this section, we explore a few results about the vertex polynomial and define a new polynomial, called the total matching polynomial.  

\subsection{The vertex polynomial}\label{sec:vertexPoly}
Recall from \Cref{defn:VertexPenrose} that the vertex polynomial is characterized by the following vertex bracket and loop value for any $n\in\ZZ$:
\begin{eqnarray}\label{eq:vertex-poly-bracket}
V\left( \vertexbracketvertex \right) &=&  V\left( \vertexbracketzero \right) \ - \ V\left(\vertexbracketone \right), \mbox{ \ and} \\ [.2cm]
\label{eq:vertex-poly-loop-value}  V\left( \bigcirc  \right)& = & n.
\end{eqnarray}

The vertex polynomial and filtered $n$-color vertex homology are eminently encodable on a computer (see \Cref{app:computations} for  code of the vertex polynomial). In \Cref{ex:dodecahedron}, the  polynomial  of the dodecahedron is computed, $V(Dodec,n) = 2 (n+1) n^2 (n-1) (240 - 116 n^2 + 114 n^4 + 11 n^6 + n^8)$, which together with Theorem~\ref{thm:oddMatchingsCancel} implies that $$\chi\left(\widehat{VCH}^*_2(Dodec)\right) = V(Dodec,2) = 2^{10} \cdot 60= 2^{10}\cdot \#\{\mbox{Tait colorings of the dodecahedron}\}.$$ Compare this calculation to  papers on the computation of (or bounds on) the number of Tait colorings of the dodecahedron using  Kronheimer and Mrowka's  instanton homology (cf. \cite{KM3, Boozer, KhoRob}). 

The vertex polynomial has a particularly interesting ``history that never was,'' but a history nonetheless. It is a polynomial that could have been defined fifty years ago and studied as intensely as the Penrose polynomial. We briefly explain why. In Roger Penrose's seminal 1971 paper \cite{Penrose}, he defined a number $K$ for a plane graph and showed it was equal to the number of $3$-edge colorings (Tait colorings) of that graph.  This number was based upon computations in a graphical calculus for an abstract tensor system of dimension three, i.e., a calculus with loop value~$3$. As a side note, TQFTs, graphical calculuses in representation theory, and quantum invariants of manifolds (plus many other fields) can find much of their modern origins in Penrose's paper. 

Penrose then related the computation of $K$ to a negative dimensional abstract tensor system, which he called a {\em binor system}.  This system was for a ``vector space'' with dimension $-2$, i.e., a graphical calculus with loop value $-2$. He labeled such negative dimensional tensor systems as ``unreasonable.''  Later mathematicians would find these unreasonable systems completely reasonable! For example, Louis Kauffman's original formulation of the Kauffman bracket for the Jones polynomial has loop polynomial $\langle \bigcirc\rangle= -A^2-A^{-2}$ with a loop value of $-2$.  More generally, it gives rise to the study of Jones-Wenzl projectors and Chebyshev polynomials (defined recursively by $\Delta_0=1$, $\Delta_1=d$, $\Delta_{n+1}= d\cdot \Delta_n-\Delta_{n-1}$)  used in Temperley-Lieb algebras and  quantum invariants of $3$-manifolds (cf. \cite{Kauffman-Lins}). 

In this binor system, the computation of $K$ uses a similar graphical calculus, but replaces the loop value of $3$ with $-2$. Also, the computation is carried out on the blowup of the plane graph $\Gamma$ instead. This lead Penrose to generalize these computations to the {\em Penrose polynomial}, $P(\Gamma,n)$, which allows for the  loop value to be any $n\in \ZZ$. He then went on to show that  
$$P(\Gamma,3)=K=\left(-\frac{1}{4}\right)^{\frac12 |V|} P(\Gamma,-2)$$ 
for a trivalent plane graph $\Gamma$. He saw that the evaluation of $P(\Gamma,n)$ for $n$ had meaning in that it counted colorings of collections of circuits of $\Gamma$ with $n$ colors when $n$ was a positive integer.  The introduction of the Penrose polynomial kicked off 50 years of research into the polynomial (see \cite{Jaeger,Aigner,EMM,EMMKM,Moffat2013} for a few examples) that recently culminated in a description of the Penrose polynomial using proper $n$-face colorings of CW complexes of smooth surfaces in \cite{ColorHomology}. 

Penrose went further in his paper in relating $K$ to evaluations of other graphical calculuses: Using the binor system, he  established a formula for calculating $P(\Gamma,-2)$ using the vertex bracket of Equation~\ref{eq:vertex-poly-bracket}) with  loop value of $-2$ through a binor relation described at the bottom of page 239 of \cite{Penrose}.  In this setup, he showed that, if $K'$ was the evaluation of the vertex bracket for loop value $n=-2$, then $K = (-\frac12)^{\frac12|V|}K'$. Furthermore, if the  loop value was changed to $n=2$ instead (page 240), he showed that the resulting evaluation $K''$ would satisfy $K'=\pm K''$, with the signed determined by the number of vertices of $\Gamma$.

At this point in 1971, the equality up to sign of the evaluations of $K'$ and $K''$ should have triggered the math community to  generalize these computations to a polynomial for any $n\in \ZZ$, just like what happened for the Penrose polynomial.  This generalization, in 2024, is the vertex polynomial $V(\Gamma,n)$ defined in \Cref{defn:VertexPenrose}.  We suspect that the reasons for this oversight are three-fold: (1) Penrose did not {\em explicitly} define the vertex polynomial like he did for the Penrose polynomial, probably because (2) it was not clear what evaluating the vertex polynomial for $n>2$ meant,  and (3) maybe it was assumed that, since 
\begin{equation}\label{eq:equalities-of-vertex-and-Penrose} 
2^{\frac12 |V|} \cdot K \ = \ V(\Gamma, 2) \ = \  \left(-1\right)^{\frac12|V|} V(\Gamma,-2) \ = \ \left(-\frac12\right)^{\frac12 |V|}P(\Gamma, -2)
\end{equation}
for plane graphs $\Gamma$, the vertex polynomial contained essentially the same information as the Penrose polynomial. 

The theorems and propositions in this paper reveal that the third reason could not be farther from the truth.  Furthermore, the next theorem establishes what the vertex polynomial counts when evaluated at $n>2$:

\begin{theorem}\label{theorem:partial-coloring-ribbon-states}  Let $\Gamma$ be a ribbon graph of a connected trivalent graph $G(V,E)$. Let $\Gamma_\nu$ be the state graph that corresponds to the vertex state $\nu\in\{0,1\}^{|V|}$ in the $\Gamma_\bullet$ hypercube of $\Gamma$. Then the vertex polynomial of $\Gamma$, evaluated for any $n\in\NN$, is
$$V(\Gamma,n)  = \sum_{\mbox{$|\nu|$ even}}\#\left\{\mbox{partial $n$-face colorings of $\Gamma_\nu$}\right\} -  \sum_{\mbox{$|\nu|$ odd}}\#\left\{\mbox{partial $n$-face colorings of $\Gamma_\nu$}\right\}.$$
\end{theorem}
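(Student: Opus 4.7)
The plan is to identify both sides of the claimed equality with the (ungraded) Euler characteristic of the chain complex $(\widehat{C}_n^*(\Gamma), \widehat{\delta})$ underlying the filtered $n$-color vertex homology. The left side is obtained by unpacking the vertex bracket; the right side is obtained from the harmonic decomposition assembled in \Cref{sec:VertexLee}.

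First, I would apply the vertex bracket relation \eqref{eq:vertexP-bracket} inductively at each of the $|V|$ vertex dots of $\Gamma_\bullet$. The result is a signed sum indexed by states $\nu \in \{0,1\}^{|V|}$, with sign $(-1)^{|\nu|}$, where each state contributes a product of loop values, one per circle. By \eqref{eq:VPimmersed_circle} and \eqref{eq:vertexP-disjoint-union}, this product is $n^{k_\nu}$, where $k_\nu$ is the number of circles in $\Gamma_\nu$. Since $\widehat{V} = \mathbbm{k}[x]/(x^n-1)$ has dimension $n$ and the grading shift in \eqref{eq:grading-shift-on-V} does not affect total dimension, we have $\dim \widehat{V}_\nu = n^{k_\nu}$, so
$$V(\Gamma,n) \;=\; \sum_{\nu \in \{0,1\}^{|V|}} (-1)^{|\nu|} \dim \widehat{V}_\nu \;=\; \chi\bigl(\widehat{C}_n^*(\Gamma)\bigr).$$

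Next, I would pass to homology. Since $\widehat{C}_n^*(\Gamma)$ is a bounded complex of finite-dimensional vector spaces, $\chi(\widehat{C}_n^*(\Gamma)) = \chi(\widehat{VCH}_n^*(\Gamma))$. The Hodge decomposition \Cref{lem:hodge-decomposition-theorem} identifies the filtered homology with the space of harmonic mixtures, which \Cref{prop:direct-sum-equals-all-harmonics} further decomposes as
$$\widehat{\mathcal{VCH}}_n^i(\Gamma) \;=\; \bigoplus_{|\nu|=i} \widehat{\mathcal{VCH}}_n(\Gamma_\nu).$$
By \Cref{lem:harmonicsToColors}, $\dim \widehat{\mathcal{VCH}}_n(\Gamma_\nu)$ equals the number of partial $n$-face colorings of $\Gamma_\nu$. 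Substituting and collecting terms by the parity of $|\nu|$ yields the claimed formula.

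No step is a genuine obstacle; the theorem is essentially a bookkeeping consequence of the harmonic basis already in hand. The only point meriting explicit verification is the first identification $V(\Gamma,n) = \chi(\widehat{C}_n^*(\Gamma))$, namely that the iterated vertex bracket produces precisely the signed sum $\sum_\nu (-1)^{|\nu|} n^{k_\nu}$ matching the Euler characteristic of $\widehat{C}_n^*(\Gamma)$. An alternative route, which I would mention for robustness, is to invoke \Cref{thm:gradedEuler} and evaluate $\llangle \Gamma \rrangle_n(q)$ at $q=1$ to obtain $\chi(VCH_n^*(\Gamma))$, then use \Cref{thm:spectral-sequence-for-filtered-homology} together with the invariance of Euler characteristic across the pages of a convergent spectral sequence to conclude $\chi(VCH_n^*(\Gamma)) = \chi(\widehat{VCH}_n^*(\Gamma))$, arriving at the same endpoint.
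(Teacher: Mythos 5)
Your proposal is correct, and it reaches the statement by a slightly different path than the paper. The paper's proof chains together \Cref{lem:harmonicsToColors}, \Cref{prop:direct-sum-equals-all-harmonics} (plus the Hodge isomorphism), then passes from the filtered to the bigraded theory via the spectral sequence of \Cref{theorem:spectral-sequence-converges}, and finally uses Theorem~\ref{thm:gradedEuler} together with the definitions to identify the graded Euler characteristic at $q=1$ with $V(\Gamma,n)$ --- in other words, your ``alternative route'' at the end is essentially the paper's argument. Your main route is more direct: you compute $V(\Gamma,n)=\sum_\nu(-1)^{|\nu|}n^{k_\nu}=\chi\bigl(\widehat{C}_n^*(\Gamma)\bigr)$ straight from the bracket axioms and the dimensions of the chain groups, then invoke the elementary identity $\chi(\widehat{C}_n^*(\Gamma))=\chi(\widehat{VCH}_n^*(\Gamma))$ for a bounded complex of finite-dimensional spaces (valid because $\widehat{\delta}$ raises homological degree by one), and only then use the Hodge decomposition, \Cref{prop:direct-sum-equals-all-harmonics}, and \Cref{lem:harmonicsToColors}. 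This bypasses the spectral sequence and Theorem~\ref{thm:gradedEuler} entirely, which makes the argument shorter and more self-contained; what the paper's version buys is the explicit link to the bigraded theory and its $q$-graded Euler characteristic, which fits the narrative of the section but is not logically needed for this ungraded statement. The only points worth stating explicitly in your write-up are that the dimension count $\dim\widehat{V}_\nu=n^{k_\nu}$ is insensitive to the choice $t=1$ versus $t=0$ of the Frobenius algebra, and that the harmonic-basis results you cite are established over $\mathbbm{k}=\CC$ with $\widehat{V}=\CC[x]/(x^n-1)$; neither affects the conclusion.
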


\begin{proof}
This theorem follows from a chain of results in this paper:  \Cref{lem:harmonicsToColors} establishes that the partial $n$-face colorings of that state are in one-to-one correspondence with the harmonic colorings of a state. \Cref{prop:direct-sum-equals-all-harmonics} and the isomorphism following it imply that the Euler characteristic of the filtered $n$-color vertex homology is equal to the alternating sum of harmonic colorings of states based upon the state's parity.  Next, the Euler characteristic of the filtered $n$-color vertex homology is equal to the graded Euler characteristic of the bigraded $n$-color vertex homology evaluated at $q=1$ by \Cref{theorem:spectral-sequence-converges}. This Euler characteristic is equal to the $n$-color polynomial evaluated at $q=1$ by Theorem~\ref{thm:gradedEuler}. Finally, the vertex polynomial evaluated at $n$ is equal to this Euler characteristic by \Cref{def:vertexPoly} and \Cref{defn:VertexPenrose}.
\end{proof}

We are now ready to prove Theorem~\ref{thm:evenOddRibbonGraphs} as an application of \Cref{theorem:partial-coloring-ribbon-states}. 

\begin{proof}[Proof of Theorem~\ref{thm:evenOddRibbonGraphs}] Let $\Gamma$ be a ribbon diagram of an oriented ribbon graph of a connected trivalent graph $G(V,E)$ with trivial automorphism group.  By Theorem~6.17 of \cite{ColorHomology}, when the automorphism group of $G$ is trivial, the hypercube of states of $\Gamma^\flat$ contains {\em all} ribbon graphs of $G$; each corresponding state graph in this hypercube is {\em distinct}.  Furthermore, from the proof of part (1) of Theorem~6.17 of \cite{ColorHomology}, all oriented ribbon graphs appear in the hypercube of states of $\Gamma^\flat$ as a collection of vertex $1$-smoothings on the all-zero state $\Gamma_{\vec{0}}$ of $\Gamma$ (\Cref{def:all-zero-state}). See Figure 12 in that paper, for example. Hence, the set of distinct oriented ribbon graphs of $G$ is a subset $S$ of the hypercube of states of $\Gamma^\flat$, each determined by performing vertex 1-smoothings to a set of vertices of $\Gamma$ on $\Gamma_{\vec{0}}$. 

There is a $2$-to-$1$ map from the hypercube of vertex states $\Gamma_\bullet$ to this set $S$. Given a  state graph $\Gamma_\nu$ in the $\Gamma_\bullet$ hypercube, there is an equivalent state graph $\Gamma_{\nu'}$ of $\Gamma_\bullet$ found by switching {\em all} vertex $0$-smoothings of $\Gamma_\nu$ to vertex $1$-smoothings, and switching all vertex $1$-smoothings to vertex $0$-smoothings. This new state graph $\Gamma_{\nu'}$ is equivalent to the original state graph $\Gamma_\nu$ as ribbon graphs. For example, if one applies this operation to the vertex all-zero state, one gets the vertex all-one state, which is the same ribbon graph as the vertex all-zero state but with two half-twists put into each band (see \Cref{fig:vertex-state-of-theta} for example). 

The distinctness of the ribbon graphs in $S$ together with the $2$-to-$1$ map explains the distinctness and factor of two in the righthand side of the equation in Theorem~\ref{thm:evenOddRibbonGraphs}.  Finally, since performing a vertex 1-smoothing places a half-twist into each of the three bands at a vertex, the parity of a vertex state $\Gamma_\nu$ with respect to $\Gamma$ is the same as the parity of $|\nu|=\nu_1+\nu_2+\dots+\nu_{|V|}$.

Finally, the proof that the vertex polynomial is an abstract graph invariant follows from the lemma and definition below.
\end{proof}

Let $G(V,E)$ be a connected trivalent ribbon graph and $\Gamma$ and $\Gamma'$ be two ribbon graphs of it. Define the ribbon graph $\Gamma$ to be {\em even with respect to $\Gamma'$} if the number of half-twists that need to be inserted into the bands of $\Gamma$ to make it equivalent to $\Gamma'$ as ribbon graphs is even. This defines an equivalence class by part (1) of Theorem~6.17 of \cite{ColorHomology}. For a discussion of this concept, see Definition~7.1 in \cite{ColorHomology} and the text that follows it.  

\begin{lemma}
Let $G(V,E)$ be a connected trivalent graph and let $\Gamma_1, \Gamma_2$ be any two oriented ribbon diagrams of it.  The vertex polynomial is an invariant of oriented ribbon structures up to sign, that is,
$$V(\Gamma_1,n) = \pm V(\Gamma_2,n)$$
with the sign is positive if they are even with respect to each other and negative if they are odd.
\label{thm:VInvt-of-oriented-ribbon-graph}
\end{lemma}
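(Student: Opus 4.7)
The plan is to express both $V(\Gamma_1, n)$ and $V(\Gamma_2, n)$ via \Cref{theorem:partial-coloring-ribbon-states}, then exploit the fact that the two vertex hypercubes enumerate the same set of oriented ribbon graphs of $G$ under a parity-shifting bijection.

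First, by \Cref{theorem:partial-coloring-ribbon-states}, for each $i \in \{1,2\}$,
\begin{equation*}
V(\Gamma_i, n) \;=\; \sum_{\nu \in \{0,1\}^{|V|}} (-1)^{|\nu|}\, P(\Gamma_{i,\nu}),
\end{equation*}
where $P(\Gamma_{i,\nu})$ denotes the number of partial $n$-face colorings of the state graph $\Gamma_{i,\nu}$ in the vertex hypercube of $\Gamma_i$. The problem reduces to comparing these two signed sums.

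Second, by the same reasoning as in the proof of \Cref{thm:evenOddRibbonGraphs} (based on Theorem~6.17 of \cite{ColorHomology}), the oriented ribbon graph $\Gamma_2$ appears as a state graph $\Gamma_{1,\nu^*}$ in the vertex hypercube of $\Gamma_1$ for some $\nu^* \in \{0,1\}^{|V|}$. Since a vertex 1-smoothing is an involution---two of them at the same vertex insert two half-twists on each of the three incident bands, which cancel---the map $\nu \mapsto \nu \oplus \nu^*$ gives a bijection from $\Gamma_2$'s vertex hypercube onto $\Gamma_1$'s for which $\Gamma_{2,\nu}$ is equivalent to $\Gamma_{1, \nu \oplus \nu^*}$ as an oriented ribbon graph. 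In particular, $P(\Gamma_{2,\nu}) = P(\Gamma_{1, \nu \oplus \nu^*})$.

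Third, using $|\nu \oplus \nu^*| \equiv |\nu| + |\nu^*| \pmod{2}$ and reindexing $\nu' = \nu \oplus \nu^*$,
\begin{equation*}
V(\Gamma_2, n) \;=\; \sum_{\nu} (-1)^{|\nu|}\, P(\Gamma_{1, \nu \oplus \nu^*}) \;=\; (-1)^{|\nu^*|} \sum_{\nu'} (-1)^{|\nu'|}\, P(\Gamma_{1, \nu'}) \;=\; (-1)^{|\nu^*|}\, V(\Gamma_1, n).
\end{equation*}
To match $|\nu^*| \pmod 2$ with the half-twist count, note that each vertex 1-smoothing at $v \in \nu^*$ introduces three half-twists at the bands incident to $v$: edges with both endpoints in $\nu^*$ receive two half-twists (cancelling), while edges with exactly one endpoint receive one half-twist. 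Hence the net number of half-twists required to transform $\Gamma_1$ into $\Gamma_2$ is $|\delta(\nu^*)|$, and for trivalent $G$ this satisfies $|\delta(\nu^*)| \equiv 3|\nu^*| \equiv |\nu^*| \pmod{2}$, matching the parity condition in the statement.

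The main obstacle will be handling the case when $\operatorname{Aut}(G)$ is nontrivial, because then $\Gamma_2$ may appear as $\Gamma_{1,\nu^*}$ for several distinct $\nu^*$ (Theorem~6.17 of \cite{ColorHomology} only asserts distinctness in the trivial automorphism case), and I must ensure the resulting sign is independent of the choice. This is forced by the identity itself: once $V(\Gamma_2, n) = (-1)^{|\nu^*|} V(\Gamma_1, n)$ is established for any one valid $\nu^*$, the parity must be an invariant of the ordered pair $(\Gamma_1, \Gamma_2)$, and equals the half-twist parity by the trivalent cut argument applied to any specific sequence of vertex 1-smoothings realizing the transformation, so the statement is well-defined in all cases.
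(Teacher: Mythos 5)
Your proposal is correct and follows essentially the same route as the paper's proof: both reduce to \Cref{theorem:partial-coloring-ribbon-states} together with the fact (from part (1) of Theorem~6.17 of \cite{ColorHomology}, as used in the proof of Theorem~\ref{thm:evenOddRibbonGraphs}) that $\Gamma_2$ appears as a vertex state $\Gamma_{1,\nu^*}$ of $\Gamma_1$'s hypercube, yielding $V(\Gamma_2,n)=(-1)^{|\nu^*|}V(\Gamma_1,n)$ with $|\nu^*|$ matching the half-twist parity. You simply make explicit the $\nu\mapsto\nu\oplus\nu^*$ reindexing and the boundary-edge parity count that the paper leaves implicit.
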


The orientability condition is important. An orientable ribbon graph and a nonorientable ribbon graph for the same graph may have different vertex polynomials. For instance, compare \Cref{ex:theta} versus \Cref{ex:VPDThetaNeg}. 
 
\begin{proof}
This lemma follows from ideas presented in the proof of Theorem~\ref{thm:evenOddRibbonGraphs}.  Observe that, even if the automorphism group of $G$ is not trivial, the proof of part (1) of Theorem~6.17 of \cite{ColorHomology} still implies that all oriented ribbon graphs of $G$ are contained within the hypercube of vertex states of $\Gamma_1$.  Hence, $\Gamma_2$ is some vertex state $\Gamma_\nu$ of that hypercube.  \Cref{theorem:partial-coloring-ribbon-states} shows that   ${V(\Gamma_1,n) = (-1)^{|\nu|} V(\Gamma_\nu,n)}$.
\end{proof}

Therefore, the vertex polynomial can be defined as an abstract graph invariant if an oriented ribbon graph is always chosen to define it. There is still a sign issue in the polynomial itself, but this can be resolved by forcing the vertex polynomial to be positive when evaluated at large values of $n\in\NN$.  This is done so that the vertex polynomial of bridgeless planar graphs is positive when evaluated at $n=2$. An oriented ribbon graph of a connected trivalent graph is called {\em positive}, {\em negative}, or {\em zero} if the leading coefficient of its vertex polynomial is positive, negative, or zero.  

\begin{definition} Let $G(V,E)$ be a connected trivalent graph.  The {\em vertex polynomial of $G$} is defined to be $$V(G,n) := V(\Gamma,n)$$ 
for any  nonnegative oriented ribbon graph $\Gamma$ of $G$. \label{definition:abstract-vertex-poly}
\end{definition}

\Cref{thm:VInvt-of-oriented-ribbon-graph} together with \Cref{definition:abstract-vertex-poly} completes the proof of  Theorem~\ref{thm:evenOddRibbonGraphs}.

The remainder of this subsection describes some of the properties of the vertex polynomial. The first property is that the vertex polynomial is an even or odd function for oriented ribbon graphs. Not only does this property generalize Penrose's equality $V(\Gamma,2) = (-1)^{\frac12|V|}V(\Gamma,2)$  in \Cref{eq:equalities-of-vertex-and-Penrose} to all $n\in\ZZ$, but it opens the door to categorifications of Penrose's binor systems (negative dimensional tensor systems) by working with positive dimensional systems instead. One can think of Penrose's transformation from a negative dimensional tensor system to a positive one as similar to the transformation of the Jones polynomial defined with Kauffman's loop value of $d=-A^2-A^{-2}$ to the Jones polynomial defined with loop value $d=q+q^{-1}$, which was useful in discovering Khovanov homology.  In fact, the symmetry in $n$ of $V(G,n)$ played a large part in our discovery of the homology theories of this paper.

\begin{theorem}\label{thm:VertexParity}
Let $\Gamma$ be an orientable ribbon graph for a connected trivalent graph $G(V,E)$.  Then the vertex polynomial $V(\Gamma,n)$ is either an even or odd function in the variable $n$.  Furthermore, it is even precisely when $\frac12 |V|$ is even, and it is odd when $\frac12 |V|$ is odd.
\end{theorem}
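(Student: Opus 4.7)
The plan is to expand $V(\Gamma,n)$ as a state sum and reduce the theorem to a parity statement about face counts. Iteratively applying the vertex bracket \eqref{eq:vertexP-bracket}, the loop value \eqref{eq:VPimmersed_circle}, and the disjoint-union rule \eqref{eq:vertexP-disjoint-union} to the vertex ribbon diagram $\Gamma_\bullet$ produces
$$V(\Gamma,n) \;=\; \sum_{\nu\in\{0,1\}^{|V|}} (-1)^{|\nu|}\, n^{k_\nu},$$
where $k_\nu$ is the number of loops in the state graph $\Gamma_\nu$. The polynomial will be an even (respectively odd) function of $n$ as soon as every exponent $k_\nu$ shares the parity of $\frac12|V|$, so the entire argument reduces to establishing this common parity.

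The key input is that each state graph $\Gamma_\nu$ is itself a ribbon graph of $G$, and remains orientable whenever $\Gamma$ is. As used in the proof of \Cref{thm:evenOddRibbonGraphs}, a vertex $1$-smoothing at $v$ places one half-twist into each of the three bands incident to $v$. In the signed rotation system description, this operation modifies the sign cochain $\sigma\colon E \to \ZZ/2$ by the coboundary $\delta(\chi_v)$ of the characteristic $0$-cochain of $v$. Iterating over all $1$-smoothings prescribed by $\nu$, the state graph $\Gamma_\nu$ has sign cochain $\sigma_\nu = \sigma_0 + \delta(\nu)$, hence the same cohomology class $[\sigma_\nu] = [\sigma_0]$ in $H^1(G;\ZZ/2)$. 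Because orientability of a ribbon graph is detected precisely by vanishing of this cohomology class (a local re-orientation of the vertex disk at $v$ flips exactly the signs on incident bands), $\overline{\Gamma_\nu}$ is an orientable closed surface for every $\nu$.

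Orientability forces the Euler characteristic $\chi(\overline{\Gamma_\nu}) = 2 - 2g_\nu$ to be even. Combining with Euler's formula $\chi(\overline{\Gamma_\nu}) = |V| - |E| + k_\nu$ and the trivalent identity $|E| = \frac{3}{2}|V|$ yields
$$k_\nu \;=\; \chi(\overline{\Gamma_\nu}) + |E| - |V| \;=\; (2-2g_\nu) + \tfrac12|V| \;\equiv\; \tfrac12|V| \pmod{2},$$
independent of $\nu$. Every monomial in the state sum therefore contributes an exponent of $n$ of the same parity as $\frac12|V|$, so $V(\Gamma,n)$ is an even function when $\frac12|V|$ is even and an odd function when $\frac12|V|$ is odd.

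The step requiring the most care is the middle one: recognizing the vertex $1$-smoothing as the coboundary perturbation $\sigma_\nu = \sigma_0 + \delta(\nu)$ and invoking the cohomological characterization of orientability of ribbon graphs. The paper already records the half-twist description in the proof of \Cref{thm:evenOddRibbonGraphs}, so the remaining work is to make the translation to $H^1(G;\ZZ/2)$ explicit with respect to the paper's conventions on signs, cyclic orderings, and re-orientations of vertex disks. Everything downstream is a straightforward Euler characteristic calculation.
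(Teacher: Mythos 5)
Your proposal is correct, but it reaches the key step by a genuinely different route than the paper. Both arguments end with the same Euler-characteristic computation ($k_\nu=\chi(\overline{\Gamma_\nu})+|E|-|V|$ with $|E|=\tfrac32|V|$), but they differ in how they establish that all exponents $k_\nu$ have a common parity. The paper argues locally: for an orientable ribbon graph only Configurations 1, 3, 6, and 7 of \Cref{fig:VertexCases} can occur along edges of the hypercube, so each edge changes the circle count by $0$ or $\pm 2$, and then the Euler formula is applied once, to the all-zero state. You argue globally: a vertex $1$-smoothing inserts a half-twist in each band at that vertex, so the signature of the state graph is $\sigma_0+\delta(\nu)$, its class in $H^1(G;\ZZ/2)$ is unchanged, hence every state graph is again an orientable ribbon graph of $G$, and the Euler formula applied state-by-state gives $k_\nu\equiv\tfrac12|V|\pmod 2$ uniformly. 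Your route trades the paper's case analysis of local circle configurations for the standard switching-class criterion for orientability of an embedding scheme (available in \cite{MT}, and consistent with the half-twist description the paper uses in the proof of Theorem~\ref{thm:evenOddRibbonGraphs}); it yields the slightly stronger uniform statement about every state and, in effect, re-derives the paper's claim that only even-change configurations arise in the orientable case. One small convention point to tighten: re-orienting a vertex disk flips the signs of the incident non-loop bands \emph{and} reverses the cyclic order there (and leaves loop signs unchanged), but since orientability depends only on the switching class of the signature, this does not affect your argument.
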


\begin{proof}
For an orientable ribbon graph the only possible maps (as shown in \Cref{fig:VertexCases}) in the vertex hypercube $\Gamma_\bullet$ are Configurations 1, 3, 6, and 7.  Thus, if $k_\nu$ is the number of circles in the state $\Gamma_\nu$, then  $k_\nu$ has the same parity as the number of circles in the all-zero state for every state $\Gamma_\nu$.  Since the vertex polynomial is, by definition, a sum of terms of the form $n^{k_\nu}$, it must be even or odd.

For the second claim, observe that the number of edges in the graph is $\frac32 |V|$.  Hence, if the all-zero state corresponds to a genus $g$ surface, then we can write the number of faces, $F$, as
$$F = 2 - 2g - |V| + 3\left(\frac12 |V|\right).$$
Since $|V|$ is even, the parity of the number of faces, which correspond to the number of circles in the all-zero state, is determined by $\frac12 |V|$. 
\end{proof}

\begin{remark}
Application of Theorem~\ref{thm:oddMatchingsCancel} requires the graph to be planar.  However, one should note that the parity result for the vertex polynomial described in \Cref{thm:VertexParity} holds for any orientable ribbon graph.  For non-orientable ribbon graphs $\Gamma$, $V(\Gamma,n)$ may not be an even or odd function (cf. \Cref{ex:VPDThetaNeg}).
\end{remark}

The vertex polynomial is well-behaved when blowing up the graph at a vertex, as the following theorem shows.

\begin{theorem}\label{thm:VertexPolyBlowUp}
Let $\Gamma_1$ be a ribbon graph for a trivalent graph $G(V,E)$, and $\Gamma_2$ be the ribbon graph obtained by blowing up $\Gamma_1$ at a single vertex.  Then 
$$V(\Gamma_2,n) = n\cdot V(\Gamma_1,n).$$
\end{theorem}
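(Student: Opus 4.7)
My plan is to localize the calculation at the blown-up vertex and reduce the theorem to a skein-type identity in the vertex bracket. Let $v$ be the blown-up vertex of $\Gamma_1$, with three new vertices $u_1, u_2, u_3$ (connected by triangle edges $f_{12}, f_{23}, f_{31}$) appearing in $\Gamma_2$. Expanding $V(\Gamma,n) = \sum_\nu (-1)^{|\nu|} n^{k_\nu}$ as in \Cref{defn:VertexPenrose}, and fixing vertex smoothings at every vertex of $\Gamma_1$ away from $v$ (equivalently, at every vertex of $\Gamma_2$ away from $u_1, u_2, u_3$), the theorem reduces to showing the local identity
\[
\sum_{(a,b,c)\in\{0,1\}^3}(-1)^{a+b+c}\,n^{k(a,b,c)} \;=\; n\bigl(n^{k(0)}-n^{k(1)}\bigr)
\]
for each outer state, where $k(s)$ and $k(a,b,c)$ count circles in the resulting global state graphs of $\Gamma_1$ and $\Gamma_2$ respectively.

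The key topological content is that blowing up a trivalent vertex of a ribbon graph adds exactly one face to the associated closed surface: a direct Euler-characteristic computation gives $|F(\overline{\Gamma}_2)| = |F(\overline{\Gamma}_1)| + 1$, since $|V|$ increases by $2$ and $|E|$ increases by $3$. Geometrically, this new face is the triangular interior enclosed by $f_{12}, f_{23}, f_{31}$. To prove the local identity, I would show that the signed sum on the left-hand side, obtained by applying the three vertex brackets at $u_1, u_2, u_3$, telescopes to the two-term expression on the right multiplied by one extra disjoint circle corresponding to the boundary of the new face. By Equations~\eqref{eq:vertexP-disjoint-union} and \eqref{eq:VPimmersed_circle}, this extra circle contributes the desired factor of $n$.

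Concretely, I would begin by using the $S_3$-symmetry of the local triangle to organize the eight resolutions into weight classes $w = a+b+c$; the extremal cases $w=0$ and $w=3$ are identified (respectively) with the two resolutions at $v$ in $\Gamma_1$ with the new face-circle added, giving $k(0,0,0) = k(0)+1$ and $k(1,1,1) = k(1)+1$. The heart of the argument is then to show that the six mixed states (weights $1$ and $2$) cancel in the signed sum, either because they share circle counts pairwise under the involution $(a,b,c) \mapsto (1-a,1-b,1-c)$ or, more generally, because their signed combination realizes a null state-sum diagrammatically.

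The principal obstacle is exactly this middle cancellation: verifying that the six mixed resolutions combine to zero in the signed sum, in spite of the external arcs being distinguishable through the outer state. One approach is direct case-by-case analysis, using the seven local patterns of \Cref{fig:VertexCases} lifted to the blown-up triangle and tracking how external and internal arcs reconnect under each mixed resolution. A cleaner route invokes the TQFT of Section~9 of \cite{ColorHomology}, where each vertex smoothing is realized as an elementary cobordism and the signed combination of the eight blown-up resolutions presents as the original two resolutions attached to a disk-bounding ``bubble'' cobordism whose partition function is $n$. Either route yields the multiplicative factor and completes the proof.
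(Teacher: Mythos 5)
Your proposal is correct and follows essentially the same route as the paper: the all-$0$ and all-$1$ local states at the blowup triangle reproduce the two smoothings of the original vertex together with one extra circle (the factor of $n$), while the six mixed states cancel in pairs under the antipodal involution $(a,b,c)\mapsto(1-a,1-b,1-c)$, which is precisely the paper's cycle switch along the blowup cycle (\Cref{lem:cyclePairs}, \Cref{fig:SwitchAtBlowUp}). The ``principal obstacle'' you flag is resolved affirmatively in exactly the way your first route suggests: each weight-$1$ state and its antipodal weight-$2$ partner induce identical outer-arc connectivity and closed-circle counts, so the pairwise cancellation holds for every outer state.
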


\begin{proof}
For each state graph of $\Gamma_1$ there is a corresponding state graph for $\Gamma_2$ with one extra circle at the blown up vertex,  as shown in \Cref{fig:Switch2BlowUp}. This extra circle results in each of those states contributing an extra factor of $n$.
\begin{figure}[H]
\includegraphics[scale=.4]{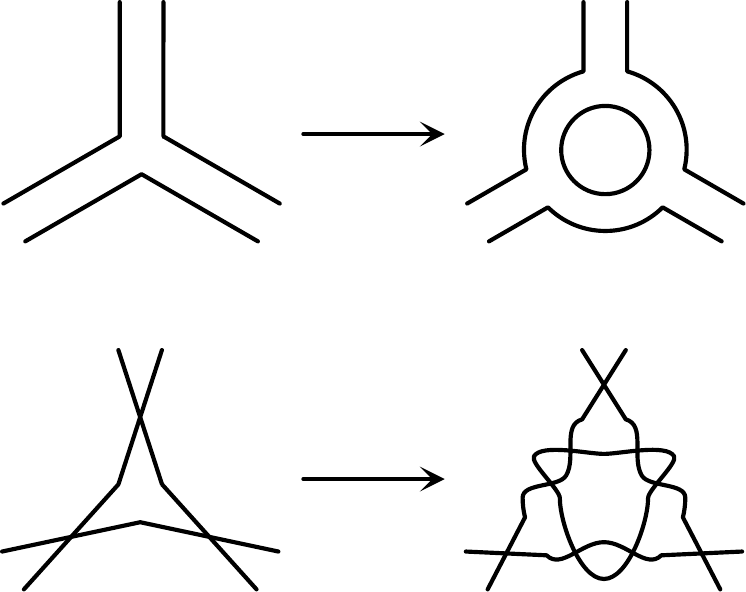}
\caption{The effect of blowing up at a vertex.}\label{fig:Switch2BlowUp}
\end{figure}
The cube of vertex resolutions for $\Gamma_2$, however, contains states which have a single vertex 1-smoothing at one of the vertices of the blowup cycle, as pictured on the left of \Cref{fig:SwitchAtBlowUp}.  By \Cref{lem:cyclePairs}, performing a cycle switch along  the blowup cycle  results in the configuration shown on the right side of \Cref{fig:SwitchAtBlowUp}.  Since the number of circles on the left side of \Cref{fig:SwitchAtBlowUp} is equal to the number of circles on the right, and because the two states corresponding to them differ in the parity of the number of vertex 1-smoothings, the terms corresponding to these states  do not contribute to the vertex polynomial.  

\begin{figure}[H]
\includegraphics[scale=.4]{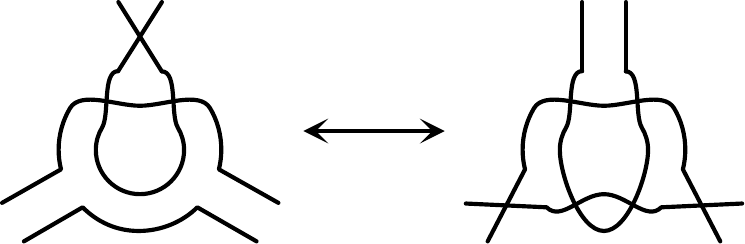}
\caption{Cycle switch on a blowup cycle.}\label{fig:SwitchAtBlowUp}
\end{figure}

\end{proof}

\Cref{thm:VertexPolyBlowUp} implies that to calculate the vertex polynomial of any ribbon graph, one may first blow-down any triangular face (i.e., collapse the $3$-cycle boundary of the face to a single vertex).  The vertex polynomial of the result, together with the number of triangular faces blown down determines the vertex polynomial of the original.  Thus:

\begin{corollary}\label{cor:VertexPolyBlowUp}
Let $\Gamma$ be a ribbon graph for a trivalent graph $G(V,E)$.  Suppose $\Gamma$ has $\tau$ triangular faces and $\Gamma'$ is the ribbon graph obtained from $\Gamma$ by blowing down every triangular face.  Then,
$$V(\Gamma',n) = n^\tau V(\Gamma,n).$$
\end{corollary}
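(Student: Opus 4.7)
My plan is induction on $\tau$, with \Cref{thm:VertexPolyBlowUp} doing all the work at each step. The theorem asserts that blowing up a vertex into a triangle multiplies the vertex polynomial by $n$; since blow-up and blow-down are inverse operations on ribbon graphs, blowing down one triangular face must strip one factor of $n$ from the polynomial. Setting $\Gamma_0 = \Gamma$ and $\Gamma_k$ to be the ribbon graph obtained after blowing down $k$ triangular faces (so $\Gamma_\tau = \Gamma'$), one application of \Cref{thm:VertexPolyBlowUp} at each stage gives $V(\Gamma_{k-1}, n) = n \cdot V(\Gamma_k, n)$, and telescoping $k = 1, \dots, \tau$ relates $V(\Gamma, n)$ to $V(\Gamma', n)$.

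For the inductive step I choose a triangular face $f$ of $\Gamma$, blow it down to obtain $\Gamma_1$, invoke \Cref{thm:VertexPolyBlowUp} on the pair $(\Gamma_1, \Gamma)$ to get $V(\Gamma, n) = n \cdot V(\Gamma_1, n)$, and then apply the inductive hypothesis to $\Gamma_1$, which has $\tau - 1$ triangular faces and whose full blow-down is $\Gamma'$. The base case $\tau = 0$ is trivial since $\Gamma = \Gamma'$.

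A technical point requiring care is that blowing down one triangular face must preserve the triangularity of the remaining $\tau - 1$. This is automatic when the triangular faces of $\Gamma$ are pairwise edge-disjoint. When two triangles share an edge, collapsing one turns the other into a digon and the naive count of triangular faces drops by more than one; I would handle this either by choosing an order of blow-downs that defers edge-sharing configurations, or by a small strengthening of \Cref{thm:VertexPolyBlowUp} covering several adjacent triangles collapsed simultaneously, each still contributing a single factor of $n$.

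The principal obstacle, and the one I want to be explicit about, is the direction of the identity. Iterating \Cref{thm:VertexPolyBlowUp} as above produces $V(\Gamma, n) = n^\tau \cdot V(\Gamma', n)$, because $\Gamma'$ is the \emph{smaller} graph obtained by $\tau$ blow-downs and each blow-down removes one factor of $n$. This matches the narrative sentence immediately preceding the corollary, which states that the polynomial of the result together with $\tau$ determines the polynomial of the original. The corollary as worded, $V(\Gamma', n) = n^\tau V(\Gamma, n)$, is the reverse relation and would require $\Gamma'$ to have more circles than $\Gamma$ in the averaged state-sum; no such mechanism is available from \Cref{thm:VertexPolyBlowUp}. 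Any proof routed through the previous theorem must yield $V(\Gamma, n) = n^\tau V(\Gamma', n)$, so to prove the corollary exactly as worded one would need an independent mechanism that inverts the factor, which I do not see from the tools developed in this paper.
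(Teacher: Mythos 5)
Your approach---iterating \Cref{thm:VertexPolyBlowUp} once per triangular face---is exactly the argument the paper intends; the corollary is presented as an immediate consequence of that theorem and carries no separate proof. You are also right about the direction of the identity: the theorem gives $V(\Gamma_2,n)=n\,V(\Gamma_1,n)$ with $\Gamma_2$ the blown-up (larger) graph, so telescoping yields $V(\Gamma,n)=n^{\tau}V(\Gamma',n)$, which is what the sentence preceding the corollary describes (``the vertex polynomial of the result, together with the number of triangular faces blown down, determines the vertex polynomial of the original'') and what the appendix examples confirm: $V(K_4,n)=2n^2(n^2-1)=n\cdot V(\theta,n)$, not the reverse. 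The displayed equation in the corollary has $\Gamma$ and $\Gamma'$ transposed and should read $V(\Gamma,n)=n^{\tau}V(\Gamma',n)$; no mechanism in the paper produces the printed identity, so your refusal to manufacture one is correct. Your caveat about edge-sharing triangles is likewise a genuine gap in the corollary's formulation rather than in your argument (for instance, all four faces of the $K_4$ obtained by blowing up $\theta$ at a vertex are triangular, yet after one blow-down the remaining faces are bigons), and the paper does not address it; restricting to pairwise edge-disjoint triangular faces, or fixing an order of blow-downs and counting only the faces that are actually collapsed, is the right repair.
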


Examples to illustrate \Cref{thm:VertexPolyBlowUp} and \Cref{cor:VertexPolyBlowUp} are included in \Cref{app:computations} along with Mathematica code that may be used for computation of the polynomials.  

To illustrate one advantage that the filtered $n$-color vertex homology has over the vertex polynomial, we include the following example.

\Cref{thm:VertexPolyBlowUp} allows us to generalize the vertex polynomial to {\em any} abstract graph $G$ with vertices of different valences.

\begin{definition} \label{definition:any-valence-abstract-vertex-poly} Let $G(V,E)$ be a connected graph and $\Gamma$ an oriented ribbon graph of it such that the blowup, $\Gamma^\flat$, is nonnegative in the sense that $V(\Gamma^\flat,n) \geq 0$ for large $n\in\NN$. Define the {\em vertex polynomial of $G$} to be
$$V(G,n):= \left(\frac{1}{n}\right)^{|V|} \cdot V(\Gamma^\flat,n).$$
\end{definition}

Note that when $G$ is trivalent, this definition will produce the same polynomial as in \Cref{definition:abstract-vertex-poly}. It is not clear how or if this polynomial for non-trivalent graphs can be categorified. However, in \Cref{rem:generalize}, we briefly described how to modify the definition of bigraded $n$-color vertex homology to generalize it to $r$-regular graphs, which {\em can} be thought of as a categorification of a different generalization of the vertex polynomial. \Cref{sec:4valent} on $4$-regular graphs shows that this polynomial is different than $V(G,n)$.

Due to the factor in the definition, there is an interesting question that is yet to be resolved:

\begin{question}
Is the vertex polynomial of an abstract graph always a polynomial, or can it be a Laurent polynomial?
\end{question}

All examples we have tried it on (using the Mathematica code in \Cref{app:computations}) suggest that it is always a polynomial. 

Finally, we present the vertex polynomial of a nonplanar graph and analyze its relationship to the filtered $n$-color vertex homology.

\begin{example}
\label{ex:K33Filtered}
For the graph $K_{3,3}$ shown in \Cref{fig:K33}, the filtered $n$-color vertex homology can be calculated by hand.  Given the symmetry in the hypercube of vertex states, only the number of partial $n$-face colorings for $32$ vertex states need to be calculated (the other $32$ states will have corresponding numbers to those states).  For example, in homological degree one there are six vertex states to consider.  Two of these states consist of a single circle; hence they do not support partial $n$-face colorings.  The remaining four, up to symmetry, are represented by the state shown in \Cref{fig:K33}.  To obtain a partial $n$-face coloring on such a state we observe that once a color is chosen for the blue circle shown ($n$ choices), the red circle must be colored differently to avoid a single-color vertex ($n-1$ choices).  The final circle, shown in black, may be colored red, but not blue ($n-1$ choices).  Thus, the number of partial $n$-face colorings on such a state is $n(n-1)^2$.  Performing a similar calculation for the entire cube of vertex resolutions, the following ranks for the filtered $2$-color vertex homology are given by $\{2,8,22,32,22,8,2\}$. In general, they are
$$\bigg\{n(n-1)^2, 4n(n-1)^2,n(7-16n+9n^2)), 4n(2-5n+3n^2),n(7-16n+9n^2),4n(n-1)^2,n(n-1)^2\bigg\}.$$  
For every $n$, observe that the Euler characteristic of this homology is zero, and in fact, $V(K_{3,3},n) = 0$ for all $n$ (cf. \Cref{app:computations} for Mathematica code that can also be used to verify this computation).  Hence, while the vertex polynomial is zero, the homology theories of this paper are not. In particular, the vertex polynomial evaluated at $n=2$ does not count the number of $3$-edge colorings of $K_{3,3}$. This is because $K_{3,3}$ is nonplanar for which Theorem~\ref{thm:oddMatchingsCancel} does not apply. However, there is still valuable information in the ranks of the filtered $2$-color vertex homology.

\begin{figure}[H]
\includegraphics[scale=.3]{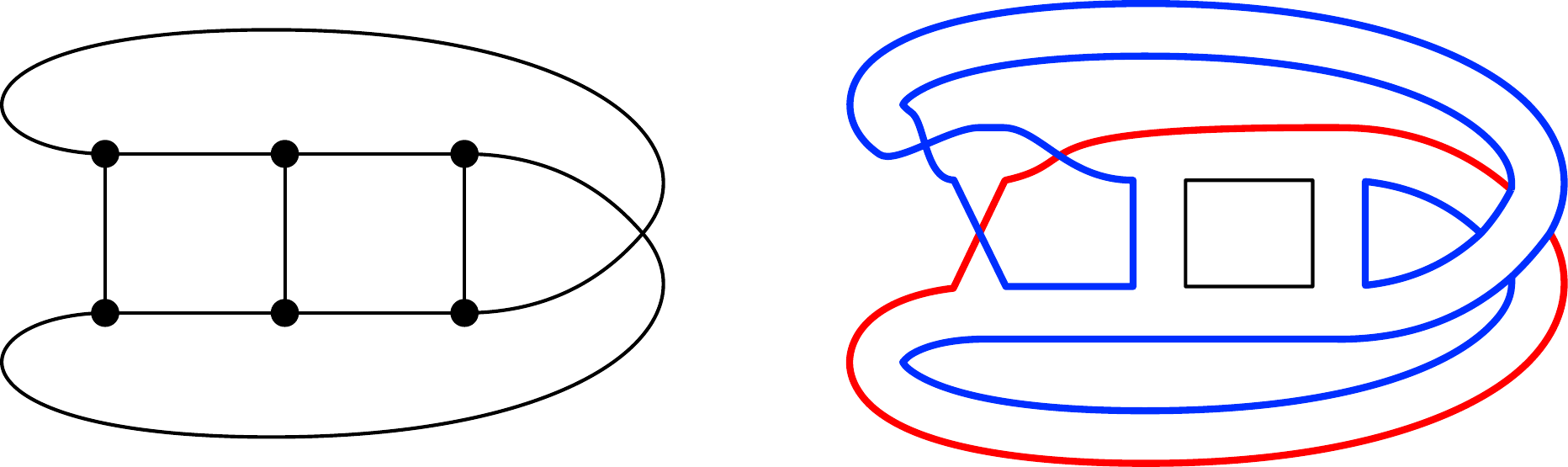}
\caption{The graph $K_{3,3}$ and a state with a single vertex 1-smoothing.}\label{fig:K33}
\end{figure}
\end{example}

If one sums the ranks of the homology groups $\{2,8,22,32,22,8,2\}$, the result is $96$.  This is equal to $2^{\frac12 |V|} \cdot 12$.  Since there are twelve $3$-edge colorings of $K_{3,3}$, one might guess that the Poincar\'{e} polynomial of the filtered $2$-color homology counts $3$-edge colorings.  This conjecture does not hold in general if the graph possesses odd perfect matchings ($K_{3,3}$ has only even perfect matchings).  In the general case, the sum of the ranks of the filtered $2$-color homology will be greater than or equal to $2^{\frac12 |V|} \cdot \# \{ \text{3-edge-colorings}\}$, with equality if the graph possesses only even perfect matchings. We explore this idea a bit more below.\\

\subsection{The total matching polynomial} 
In addition to the vertex polynomial, which is the Euler characteristic of the filtered $n$-color vertex homology, we obtain an abstract graph invariant by taking the Poincar\'{e} polynomial instead.  

\begin{definition} Let $G(V,E)$ be a connected trivalent graph and let $\Gamma$ be any ribbon diagram of it. The Poincar\'{e} polynomials of the filtered $n$-color vertex homologies generate the {\em $2$-variable total matching polynomial}, which is characterized by
$$TM(\Gamma,n,t) := \sum_{|\nu|=i} t^i \dim \widehat{\mathcal{VCH}}_n(\Gamma_\nu).$$
The {\em total matching polynomial of $\Gamma$} is $TM(\Gamma,n):=TM(\Gamma,n,1)$.  \label{definition:totalfacecolorpolynomial}
\end{definition}

\Cref{lem:harmonicsToColors} says that the filtered $n$-color vertex homology is generated by partial $n$-face colorings.  Theorem~6.17 and Remark~7.5, both from \cite{ColorHomology},  imply that the total matching polynomial does not depend on the oriented ribbon structure.

\begin{theorem}
Let $G(V,E)$ be a connected trivalent graph and let $\Gamma_1, \Gamma_2$ be any two oriented ribbon diagrams of it.  The total matching polynomial is invariant of the oriented ribbon structures, that is,
$$TM(\Gamma_1,n) = TM(\Gamma_2,n).$$
Consequently, define $TM(G,n) := TM(\Gamma,n)$ for any oriented ribbon graph $\Gamma$.
\label{thm:TMInvt}
\end{theorem}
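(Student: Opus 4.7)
The plan is to reinterpret $TM(\Gamma,n)$ as a count that depends only on the multiset of state graphs appearing in the hypercube of vertex states, and then invoke the structural result from \cite{ColorHomology} to conclude that this multiset is independent of the chosen oriented ribbon diagram.

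First I would unpack the definition. Setting $t=1$ in \Cref{definition:totalfacecolorpolynomial} gives
\[
TM(\Gamma,n) \;=\; \sum_{\nu \in \{0,1\}^{|V|}} \dim \widehat{\mathcal{VCH}}_n(\Gamma_\nu).
\]
By \Cref{prop:direct-sum-equals-all-harmonics}, the right-hand side is the total dimension of the space of harmonic mixtures over every vertex state. By \Cref{lem:harmonicsToColors}, $\dim \widehat{\mathcal{VCH}}_n(\Gamma_\nu)$ equals the number of partial $n$-face colorings of the state graph $\Gamma_\nu$. Hence $TM(\Gamma,n)$ is the total number of partial $n$-face colorings, summed over all state graphs of the hypercube of vertex states of $\Gamma_\bullet$. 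Crucially, this count is a function only of the underlying ribbon structure of each state graph, not of how we arrived at it.

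Next I would apply the structural result used in the proof of \Cref{thm:evenOddRibbonGraphs}: part (1) of Theorem~6.17 of \cite{ColorHomology} shows that every oriented ribbon graph of $G$ appears as a state graph $\Gamma_\nu$ of $\Gamma_\bullet$, and that the assignment $\nu \mapsto \Gamma_\nu$ (viewed as an oriented ribbon graph of $G$) is $2$-to-$1$, with $\nu$ and its bitwise complement $\nu'$ giving equivalent oriented ribbon graphs (because switching every smoothing inserts two half-twists at every band). Consequently, for any oriented ribbon diagram $\Gamma$ of $G$,
\[
TM(\Gamma,n) \;=\; 2\sum_{[\Gamma']}\#\bigl\{\text{partial $n$-face colorings of }\Gamma'\bigr\},
\]
where $[\Gamma']$ ranges over equivalence classes of oriented ribbon graphs of $G$. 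The right-hand side is manifestly a function of $G$ alone, so $TM(\Gamma_1,n)=TM(\Gamma_2,n)$. When the automorphism group of $G$ is nontrivial, distinct $\nu$ can produce the same oriented ribbon graph up to automorphism; Remark~7.5 of \cite{ColorHomology} explains that this overcounting is a function of $G$ only and hence contributes identically for $\Gamma_1$ and $\Gamma_2$, preserving the conclusion.

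The main obstacle I anticipate is verifying that the identifications used above are genuinely about the oriented ribbon structure and not the particular ribbon diagram. This is exactly where one needs to appeal to Theorem~6.17 and Remark~7.5 of \cite{ColorHomology} rather than attempting to construct a diagram-level bijection by hand: the partial $n$-face coloring count at $\Gamma_\nu$ depends on the multiset of circles of $\Gamma_\nu$ together with the combinatorics of which three circles meet at each vertex, and both are intrinsic to $\Gamma_\nu$ as an oriented ribbon graph. Once this intrinsic dependence is recorded, the invariance of $TM$ follows from the fact that the same bag of oriented ribbon graphs of $G$ is produced by either of the hypercubes for $\Gamma_1$ or $\Gamma_2$.
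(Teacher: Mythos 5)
Your plan is correct and follows essentially the route the paper intends: it reduces $TM(\Gamma,n)$ via \Cref{prop:direct-sum-equals-all-harmonics} and \Cref{lem:harmonicsToColors} to a count of partial $n$-face colorings over the state graphs, and then invokes Theorem~6.17 (with the $2$-to-$1$ complementation map) and Remark~7.5 of \cite{ColorHomology}, exactly the ingredients the paper cites when it leaves the proof as an exercise modeled on the proofs of \Cref{thm:evenOddRibbonGraphs} and \Cref{thm:VInvt-of-oriented-ribbon-graph}. The only difference is cosmetic: since the Poincar\'e polynomial carries no signs, the parity bookkeeping of those earlier arguments drops out, which your write-up correctly exploits.
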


The proof to this theorem follows the same ideas as presented in the proof of Theorem~\ref{thm:evenOddRibbonGraphs} and \Cref{thm:VInvt-of-oriented-ribbon-graph}. It is left as an exercise to the reader. 

\remark \Cref{thm:TMInvt} says that the total matching polynomial,  $TM(G,n)$, is  an abstract graph invariant as defined. It is important that an orientable ribbon graph $\Gamma$ be chosen to define the invariant: a nonorientable ribbon graph may lead to a different polynomial (cf. \Cref{ex:VPDThetaNeg}). Furthermore, the $2$-variable total matching polynomial still depends on the ribbon structure chosen. Picking a different ribbon graph for $G$ may change the homological degrees of the nontrivial classes in ways that lead to a different polynomial. Finally, note that there is no corresponding theorem for the bigraded vertex homology.\\

For $n=2$, recall that the homology is generated by perfect matching face colorings (cf. \Cref{thm:PMClasses}).  Thus, the total matching polynomial evaluated at $n=2$ may be expressed in terms of perfect matchings.

\begin{theorem}
Let $G(V,E)$ be a connected trivalent graph.  Suppose that $G$  has $m$ perfect matchings $\{M_1,\ldots, M_m\}$. If $m>0$, then
$$TM(G,2) = 2^{\frac{1}{2}|V|} \sum_{i=1}^m 2^{\#\{\text{cycles of }G\setminus M_i\}}.$$ 
Otherwise, if $m=0$, then $TM(G,2) = 0$.
\label{thm:TMPMCount}
\end{theorem}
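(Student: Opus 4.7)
The plan is to reduce $TM(G,2)$ to counting perfect matching face colorings (PMFCs) per perfect matching and then, for each matching $M$, count the PMFCs via an orbit structure under matching switches, cycle switches, and color swap. By \Cref{prop:direct-sum-equals-all-harmonics} and \Cref{thm:PMClasses}, $TM(G,2) = \sum_\nu \dim \widehat{\mathcal{VCH}}_2(\Gamma_\nu)$ counts all PMFCs over all vertex states. Each PMFC induces a perfect matching of $G$ by \Cref{lem:harmonicsToColors}, so if $m=0$ there are no PMFCs and $TM(G,2)=0$; otherwise, partitioning by induced matching gives
$$TM(G,2) = \sum_{i=1}^m N(M_i), \qquad N(M) := \#\{\text{PMFCs inducing } M\}.$$

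\textbf{Switch group and free action.} Fix $M$ with $C = \#\text{cycles}(G\setminus M)$. Let $H \leq \mathbb{F}_2^V$ be the subgroup generated by the matching-switch vectors $\mathbbm{1}_e$ (for $e \in M$, supported on the two endpoints of $e$) and cycle-switch vectors $\mathbbm{1}_D$ (for each cycle $D$ of $G \setminus M$, supported on the vertices of $D$). Since matching edges partition $V$ and cycles of $G\setminus M$ also partition $V$, the two families are each linearly independent, and the unique relation between them is $\sum_{e\in M}\mathbbm{1}_e = \sum_D \mathbbm{1}_D = \mathbbm{1}_V$. Hence $\dim H = \tfrac12|V|+C-1$, so $|H| = 2^{\tfrac12|V|+C-1}$. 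By \Cref{lem:MatchingSwitch} and \Cref{lem:cyclePairs}, $H$ acts on the set $S_M$ of states supporting a PMFC for $M$ (via translation on $\{0,1\}^V$), and this action is free. \Cref{thm:nonTriv} ensures $S_M \neq \emptyset$, and each $\nu \in S_M$ supports exactly two PMFCs for $M$---a coloring and its global color swap---since on a connected ribbon surface the PMFC condition is an $\mathbb{F}_2$-linear system with a one-dimensional solution space.

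\textbf{Transitivity (the main obstacle).} I must show $|S_M| = |H|$, i.e., the $H$-action on $S_M$ is transitive. Given $\nu_1,\nu_2 \in S_M$ with PMFCs $c_1, c_2$, define at each vertex $v$ the \emph{matching color} $a_v^{(j)} \in \mathbb{F}_2$ under $c_j$ (the shared color of the two matching-side faces at $v$), and set $w_v = a_v^{(1)} + a_v^{(2)}$. Ribbon-graph bookkeeping shows $w$ is constant along matching pairs: both endpoints of $e \in M$ see the same pair of faces along $e$, forcing $a_u^{(j)} = a_v^{(j)}$. Moreover, the parity of $w$ around each cycle $D$ of $G \setminus M$ is controlled precisely by whether $\nu_1 \oplus \nu_2$ flips an odd or even number of vertex smoothings along $D$, since a 1-smoothing at $v \in D$ alters which of the two non-matching-side faces at $v$ continues along $D$. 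Dualizing these orthogonality relations via the standard pairing on $\mathbb{F}_2^V$ exhibits $\nu_1 \oplus \nu_2$ as an element of $H$, yielding the required transitivity. Combining with the free action and the color-swap factor, $N(M) = 2|H| = 2^{\tfrac12|V|+C}$, so
$$TM(G,2) = \sum_{i=1}^m 2^{\tfrac12|V|+C_i} = 2^{\tfrac12|V|} \sum_{i=1}^m 2^{\#\text{cycles}(G \setminus M_i)}.$$
The main technical difficulty lies in formalizing the ribbon bookkeeping in the transitivity step---tracking how vertex 1-smoothings twist the ribbon structure and toggle the alternation of the matching color along cycles of $G\setminus M$---to verify that the orthogonality conditions on $w$ cut out exactly the subgroup $H$.
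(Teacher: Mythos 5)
Your route is, at the top level, the same as the paper's: reduce $TM(G,2)$ to a count of perfect matching face colorings grouped by their induced matching (via \Cref{prop:direct-sum-equals-all-harmonics} and \Cref{thm:PMClasses}), get existence of at least one such coloring per matching from \Cref{thm:nonTriv}, and produce the multiplicity from the matching and cycle switches of \Cref{lem:MatchingSwitch,lem:cyclePairs}. Your bookkeeping is in fact more precise than the paper's one-line appeal to ``correct multiplicity'': the computation $\dim H=\tfrac12|V|+C-1$ (using connectivity for the uniqueness of the relation), the freeness of the translation action, and the fact that a supporting state carries exactly two colorings for $M$ (the homogeneous system forces a constant coloring on the connected face-adjacency graph) are all correct, and they do give $N(M)=2|H|=2^{\frac12|V|+C}$ \emph{once} transitivity is known.

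The genuine gap is the step you flag yourself: transitivity, i.e.\ $|S_M|=|H|$. Without it your argument only yields $TM(G,2)\geq 2^{\frac12|V|}\sum_i 2^{C_i}$, and the equality is the actual content of the theorem; the paragraph about the matching-color function $w$ and ``dualizing orthogonality relations'' is a sketch, not a proof (you never verify that the parity conditions you extract cut out exactly $H$). The claim is true and can be closed along the lines you indicate: encode a coloring inducing $M$ on the state $\Gamma_\nu$ by the repeated color $a_v\in\mathbb{F}_2$ at each vertex; consistency along a matching band forces $a_u+a_v=0$ regardless of twisting, while along a non-matching band $f=vw$ it reads $a_v+a_w=\epsilon^0_f+\nu_v+\nu_w$, since a vertex $1$-smoothing half-twists the band and toggles the constraint. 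Solvability in $a$ is then one affine-linear condition on $\nu$ for each independent cycle $Z$ of $G$, with linear part $\nu\mapsto\sum_{uv\in Z\cap M}(\nu_u+\nu_v)$; this functional vanishes identically exactly when $Z$ lies in the span of the cycles of $G\setminus M$, so the linear parts have rank $\bigl(\tfrac12|V|+1\bigr)-C$ and $S_M$, being nonempty by \Cref{thm:nonTriv} (which is where the oriented ribbon structure defining $TM$ enters), is a coset of a subspace of dimension $\tfrac12|V|+C-1$. Since one checks directly that the switch vectors annihilate every such functional, $H$ sits inside that subspace and has the same dimension, so $S_M$ is a single $H$-orbit. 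With that inserted your proof is complete; note the paper's own proof is equally silent on this upper bound, so you have correctly isolated where the real work lies, but as written your proposal does not yet supply it.
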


\begin{proof}
\Cref{thm:nonTriv} implies that every perfect matching is represented by a perfect matching face coloring.  \Cref{lem:MatchingSwitch} and \Cref{lem:cyclePairs} guarantee that the harmonic colorings corresponding to this matching occur with the correct multiplicity. 
\end{proof}

\Cref{thm:TMPMCount} may be seen as an analog of Theorem~\ref{thm:oddMatchingsCancel} for non-planar graphs (compare to \cite{Jaeger2}).  When the graph is planar, only the even perfect matchings contribute to the Euler characteristic, and each even perfect matching yields two $3$-edge colorings obtained by coloring the matching edges one color (e.g. purple) and alternating red and blue on the cycles of the complement of the perfect matching.  However, even for non-planar graphs, it is sometimes the case that all perfect matchings are even. For example, see the $K_{3,3}$ graph in \Cref{ex:K33Filtered}.  In that case, the same coloring scheme, combined with \Cref{thm:TMPMCount}, implies:

\begin{corollary}
\label{cor:TMPMCount}
Let $G(V,E)$ be a connected trivalent graph.  Then
$$TM(G,2) \geq 2^{\frac12 |V|} \cdot \# \{ \text{3-edge-colorings of }\Gamma\}$$
with equality if all the perfect matchings of $G$ are even.
 \end{corollary}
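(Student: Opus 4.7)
The plan is to combine the formula for $TM(G,2)$ from \Cref{thm:TMPMCount} with the classical correspondence between $3$-edge colorings of a cubic graph and its even perfect matchings. When $m>0$, splitting the sum in \Cref{thm:TMPMCount} by the parity of the perfect matchings yields
\begin{equation*}
TM(G,2) = 2^{\frac12|V|}\left(\sum_{\text{even }M} 2^{\#\{\text{cycles of }G\setminus M\}} + \sum_{\text{odd }M} 2^{\#\{\text{cycles of }G\setminus M\}}\right).
\end{equation*}
Since each term $2^{\#\{\text{cycles}\}}$ is strictly positive, the desired inequality will follow at once from an identification of the even-matching sum with $\#\{3\text{-edge colorings of }G\}$, and equality will hold exactly when the odd-matching sum is empty, i.e., when every perfect matching of $G$ is even.

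The main step is thus the combinatorial identity
\begin{equation*}
\sum_{\text{even }M} 2^{\#\{\text{cycles of }G\setminus M\}} = \#\{3\text{-edge colorings of }G\},
\end{equation*}
which I would prove by exhibiting an explicit bijection. Fix three distinguished colors $\{c_0,c_1,c_2\}$ and designate $c_2$ as the ``matching color.'' Given a proper $3$-edge coloring $\phi$ of $G$, the preimage $\phi^{-1}(c_2)$ is automatically a perfect matching (all three colors appear at every trivalent vertex), and $G\setminus \phi^{-1}(c_2)$ is a $2$-regular subgraph whose cycles are properly $2$-edge-colored by $\phi$ using $\{c_0,c_1\}$; the colors must alternate, forcing each cycle to have even length, so $\phi^{-1}(c_2)$ is an even perfect matching. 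Conversely, starting from any even perfect matching $M$, paint $M$ with color $c_2$ and note that each even cycle of $G\setminus M$ admits exactly two proper alternating $2$-edge colorings in $\{c_0,c_1\}$, giving precisely $2^{\#\{\text{cycles of }G\setminus M\}}$ completions to a proper $3$-edge coloring of $G$. These two constructions are mutually inverse, establishing the identity.

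Combining the two displays proves the inequality together with its equality condition. The boundary case $m=0$ is immediate: then $TM(G,2)=0$ by \Cref{thm:TMPMCount}, while $G$ also admits no $3$-edge coloring (any such coloring would produce perfect matchings as its three color classes), so both sides vanish and equality holds vacuously with the condition ``all perfect matchings are even'' satisfied trivially. I expect no serious obstacle here: the key step is the standard bijection between Tait colorings and (even-matching, alternating-cycle-coloring) pairs, and the only subtlety worth flagging is that the factor $2^{\#\{\text{cycles}\}}$ appearing in \Cref{thm:TMPMCount} must be matched against precisely the two-fold choice of alternating $2$-coloring on each cycle of $G\setminus M$ rather than, say, a global choice independent of the number of components.
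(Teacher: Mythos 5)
Your proposal is correct and follows essentially the same route as the paper: it applies the formula of \Cref{thm:TMPMCount} and identifies the even-matching terms with Tait colorings via the standard correspondence (matching edges get the third color, each even cycle of $G\setminus M$ has exactly two alternating $2$-colorings), with the odd-matching terms accounting for the slack in the inequality. Your explicit treatment of the bijection and of the $m=0$ case only spells out details the paper leaves implicit.
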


Finally, we observe that in the planar case, Theorem~\ref{thm:perfectMatchings} and \Cref{sch:PMZero} show that all perfect matchings are represented in homological degree zero, and \Cref{lem:MatchingSwitch,lem:cyclePairs} show that, in fact, every perfect matching face coloring, in any homological degree, is obtained from a corresponding one in homological degree zero.  Thus, all information needed to compute the evaluation of the total matching polynomial at $n=2$, for a planar graph, is contained in the degree zero filtered $2$-color vertex homology.  However, if we begin with a nonplanar ribbon graph or choose $n>2$, then it is not necessarily true that all relevant information is contained in homological degree zero.  Even in the planar case, though, the rest of the homology provides a useful obstruction to the non-existence of a perfect matching.  The entire homology, and consequently, the total matching polynomial, must vanish for a connected trivalent graph without a perfect matching.  Thus, we obtain the following corollary.

\begin{corollary}
Let $G(V,E)$ be a connected trivalent graph and $\Gamma$ an oriented ribbon graph of it.  Then the following are equivalent:
\begin{itemize}
\item G has a perfect matching.
\item $TM(G,2) > 0$.
\item $\text{dim } \widehat{VCH}_2^i(\Gamma) >0$ for some $i$.
\end{itemize}\label{cor:TFAE-perfect-matching}
\end{corollary}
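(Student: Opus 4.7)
The plan is to treat this as an equivalence of three statements, driven entirely by the tools already established. First, I would note that the equivalence of the second and third bullets is essentially a bookkeeping statement. By \Cref{definition:totalfacecolorpolynomial} together with the Hodge decomposition (\Cref{lem:hodge-decomposition-theorem} and \Cref{prop:direct-sum-equals-all-harmonics}), we have
\begin{equation*}
TM(G,2) \;=\; \sum_{i=0}^{|V|}\dim\widehat{VCH}_2^{i}(\Gamma),
\end{equation*}
a sum of nonnegative integers. Hence $TM(G,2)>0$ if and only if at least one summand is positive, which gives (ii) $\Leftrightarrow$ (iii) immediately.

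Next, I would handle (i) $\Rightarrow$ (iii) by a direct appeal to \Cref{thm:nonTriv}: if $M$ is any perfect matching of $G$, that theorem produces a perfect matching face coloring representing $M$ in $\widehat{\mathcal{VCH}}_2^{i}(\Gamma)$ for some $i\in\{0,\ldots,|V|\}$. By the Hodge decomposition this nonzero harmonic class is a nonzero class in $\widehat{VCH}_2^{i}(\Gamma)$, so $\dim\widehat{VCH}_2^{i}(\Gamma)>0$.

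For the reverse direction (iii) $\Rightarrow$ (i), I would argue by contrapositive, which is the only step with any real content. Suppose $G$ has no perfect matching. By \Cref{prop:direct-sum-equals-all-harmonics} it suffices to show that $\widehat{\mathcal{VCH}}_2(\Gamma_\nu)=0$ for every vertex state $\Gamma_\nu$. Any harmonic coloring in $\widehat{\mathcal{VCH}}_2(\Gamma_\nu)$ is a partial $n$-face coloring by \Cref{lem:harmonicsToColors}, meaning that at each vertex of $G$ there are at least two colors present among the incident faces. But for $n=2$ there are only two colors available, so ``at least two'' forces ``exactly two'' at every vertex; that is, every vertex is partially colored in the sense of \Cref{def:proper-and-partial-face-colorings-at-a-vertex}. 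The induced matching set then contains the unique edge witnessing the repeated color at each vertex, and so is a perfect matching of $G$, contradicting our assumption. Thus $\widehat{\mathcal{VCH}}_2^{\ast}(\Gamma)=0$, completing the loop.

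The only step requiring genuine care is the last one, and the key observation making it go through cleanly is the coincidence that when $n=2$, the ``at least two colors'' condition of \Cref{prop:harmonic-vertex-smoothing-sites} forces the partial (rather than proper) case at every vertex, so harmonic classes are automatically perfect matching face colorings in the sense of \Cref{def:PMColoring}. This phenomenon is special to $n=2$: for $n\geq 3$ one could in principle have harmonic classes supported on states in which some vertices are properly colored with three distinct colors, which is why the analogous statement for $n>2$ would require more than what is asserted here. No new computation or spectral sequence argument is needed; the corollary is a direct repackaging of \Cref{thm:nonTriv}, \Cref{thm:PMClasses}, and the Hodge decomposition for the $n=2$ case.
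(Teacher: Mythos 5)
Your proposal is correct and follows essentially the same route as the paper, which presents the corollary as an immediate consequence of \Cref{thm:nonTriv}, \Cref{thm:PMClasses}, \Cref{thm:TMPMCount}, and the Hodge-theoretic identification of the filtered homology with harmonic (perfect matching face) colorings. Your only addition is to spell out, via \Cref{lem:harmonicsToColors}, why the $n=2$ harmonic classes necessarily induce perfect matchings, which is exactly the content already packaged in \Cref{thm:PMClasses}, so no gap remains.
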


\section{$4$-regular graphs}
\label{sec:4valent}
In \Cref{rem:generalize}, it was noted that the $n$-color vertex homology (bigraded or filtered) can be generalized from trivalent graphs to $r$-regular graphs.  In this section we discuss this generalization for $r=4$ and give an example of a result that follows from this generalization.

Begin with a $4$-regular graph $G(V,E)$, and a ribbon graph $\Gamma$ for $G$.  Construct the hypercube of vertex states of $\Gamma$, which exists as a subset of the states of the cube of resolutions for the bubbled blowup $\Gamma^B$.  One may then proceed to construct the vertex polynomial by adapting \Cref{eq:vertex-bracket} for a $4$-valent vertex in the obvious way, changing $q^{3m}$ to $q^{4m}$ (with similar shifts in the quantum gradings).  Following the constructions of \Cref{sec:vertexHomology} and \Cref{sec:VertexLee}, we obtain both a bigraded homology theory, denoted $VCH_n^*(\Gamma)$, and a filtered homology theory, denoted $\widehat{VCH}_n^*(\Gamma)$.  The definitions of the differentials all proceed just as in the trivalent case.  The fact that the maps are well-defined and commute follows from the TQFT in Section 9 of \cite{ColorHomology}.  

While the vertex polynomial and homology are well-defined for any $n>1$, a number of interesting properties become apparent when we study the $n=2$ case for the filtered homology specifically.  Just as in the trivalent case, the filtered differential is nonzero precisely when only a single color appears at some $0$-smoothed vertex.  Similarly, the presence of a vertex $1$-smoothing at which only one color appears indicates that the coloring is in the image of some local differential.  Studying the cases of \Cref{fig:4valentCases},  each partial $2$-face coloring in $\widehat{VCH}^*_2(\Gamma)$ specifies two subgraphs:
\begin{enumerate}
\item The \emph{color-match subgraph} is a collection of cycles such that for each edge of the cycle, the face-colors match on each side (shown in green in the second and fourth pictures of \Cref{fig:4valentCases}).
\item The \emph{properly colored subgraph} is the subgraph obtained by deleting the edges of the color-match subgraph (shown in purple in the last three pictures of \Cref{fig:4valentCases}).
\end{enumerate}

\begin{figure}[H]
\includegraphics[scale=1]{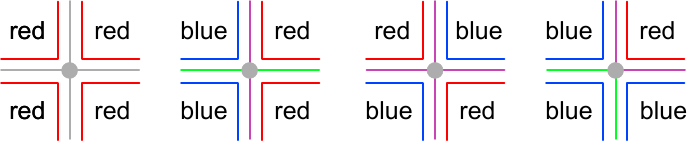}
\caption{The four possible ways of coloring the faces at a $4$-valent vertex (up to symmetry and color-switching).  A partial $2$-face coloring in $\widehat{VCH}_2^*(\Gamma)$ will have no vertices with adjacent faces colored all-red (or all-blue).}\label{fig:4valentCases}
\end{figure}

\Cref{lem:MatchingSwitch} and \Cref{lem:cyclePairs} may be generalized to this context.  Given a state for a $4$-regular graph that supports a partial $2$-face coloring, one may switch the vertex resolutions at each vertex of any component of the properly colored subgraph or any component of the color-match subgraph to obtain a state that supports an equivalent partial $2$-face coloring (equivalent in the sense that the properly colored subgraphs and the color-match subgraphs are identical in the two partial $2$-face colorings).  Moreover, one additional type of switch is available in this case (in the vein of \Cref{lem:MatchingSwitch} and \Cref{lem:cyclePairs}), in that any single degree four vertex in the properly colored subgraph (the third picture in \Cref{fig:4valentCases}) may be switched independently.  Consequently, the only partial $2$-face colorings of $4$-regular graphs that can contribute to the Euler characteristic of the filtered vertex homology are those for which the color-match subgraph and the properly colored subgraph is a family of cycles (second and fourth pictures in \Cref{fig:4valentCases}).  Each of these decompositions defines a family of $4$-edge colorings by labeling the cycles of the properly colored subgraph with two alternating colors and labeling the cycles of the color-match subgraph with a different pair of alternating colors.  In a subsequent paper we will use this approach to prove that the filtered $2$-color vertex homology can be used to count $4$-edge colorings of $4$-regular graphs, which is a generalization of Theorem~\ref{thm:oddMatchingsCancel}.

\section{Conclusion}

Computations using a graphical calculus are usually treated as computations defined only by the (labeled) edges and vertices of the graph used in the calculation. For example,  Penrose's formulas for counting the $3$-edge colorings of a trivalent plane graph describes a property of the edge set of that graph. In calculating with Feynman diagrams, Kauffman-like brackets, spin networks, Penrose graphical notations, $(0+1)$-dimensional TQFTs, and more generally category theory and representation theory, we as researchers tend to concentrate only on the meaning of the edges drawn in our planar diagrams. Rarely do we step back and look globally at the diagrams and see the higher genus closed surfaces, i.e., the ribbon graphs, that our diagrams correspond to.

We hope that the results of this paper and of \cite{ColorHomology} show how valuable it is to study these surfaces and their faces, and that the original diagrams under study (wherever they occur in mathematics)   can be enriched by including this extra information.  For example, the $3$-edge color information about the edges of $G$ in Theorem~\ref{thm:oddMatchingsCancel} becomes more enlightening when one realizes that this information is actually derived from the partial $2$-face colorings on all the possible oriented ribbon graphs of $G$. Armed with this additional knowledge, it becomes possible to generalize Theorem~\ref{thm:oddMatchingsCancel}  from $n=2$ to all natural numbers as in Theorem~\ref{thm:evenOddRibbonGraphs}, and in the process, create and give meaning to a new abstract graph invariant. 

This kind of thinking may lead to new insights. For example, it is well known that  quantum spin networks \cite{KirResh, TV, Kauffman-Lins, T1}, based upon spin networks (cf. \cite{Penrose2, Penrose}),  are closely related to quantum invariants of $3$-manifolds and the Jones polynomial. A spin network is a trivalent ribbon graph where the edges are ``colored'' by natural numbers.  (In this paper, the blowup of a ribbon graph $\Gamma$ can be thought of as a spin network where the edges of the original graph are labeled by ``2''  in the blowup and the remaining edges are labeled by ``1.'')  Spin networks are a diagrammatic description of abstract tensor systems where the color $k$ on an edge indicates the $k+1$ dimensional irreducible representation of $SU(2)$. The evaluation of the network is a contraction of those tensors.  Quantum spin networks are similar but evaluate to rational functions of a variable $q$ using expressions involving  quantum numbers $[n]$. (See the discussion on Chebyshev polynomials and quantum numbers following \Cref{def:vertexPoly}.) These quantum spin networks, in particular the theta and tetrahedral graphs, form the building blocks for topological invariants of closed $3$-manifolds \cite{TV}. We speculate that the homology theories of this paper and in \cite{ColorHomology} are the beginnings of new quantum invariants of manifolds that take into consideration the face colorings of the quantum spin networks. For instance, there is already evidence that the $n$-color polynomials in \cite{ColorHomology} satisfy the pentagon identity like quantum $6j$ symbols do (see the Biedenharn-Elliot Identity in \cite{Kauffman-Lins}), which is needed to prove invariance using Pachner moves for $3$-manifold invariants. Since the $n$-color homology theories have nice cobordism properties, they are likely to satisfy the pentagon identity also. Future research will be in this direction.

\appendix
\section{The seven types of local vertex differentials}\label{app:differentials}

\begin{figure}[H]
$$\begin{tikzpicture}[scale = 0.5]

\begin{scope}
\draw[dashed] (0,-0.18) circle (1 cm);
\draw (0.15,0.866-0.05) to [out = -90, in = 135] (0.8 + 0.08,-0.8 + 0.15);
\draw (-0.15,0.866-0.05) to [out = -90, in = 45] (-0.8 - 0.08,-0.8 + 0.15);
\draw (-0.8 + 0.095,-0.8 - 0.1) to [out = 45, in = 135] (0.8 - 0.095,-0.8 - 0.1);

\draw (0.15,0.866-0.05) to [out = 90, in = 135] (1.2,0.7) to [out = -45, in = -45] (0.8 + 0.08,-0.8 + 0.15);
\draw (-0.15,0.866-0.05) to [out = 90, in = 45] (-1.20, 0.7) to [out = 225, in = 225] (-0.8 - 0.08,-0.8 + 0.15);
\draw (0.8 - 0.095,-0.8 - 0.1) to [out = -45, in = 0] (0, -1.6) to [out = 180, in = 225]  (-0.8 + 0.095,-0.8 - 0.1);
\end{scope}

\begin{scope}[xshift = 1 cm, yshift = -0.2 cm]
\draw[->] (0.5,0.5) --(2,2);
\draw[->] (0.5,0) --(2,0);
\draw[->] (0.5,-0.5) --(2,-2);
\end{scope}

\begin{scope}[xshift = 5.5 cm, yshift = -0.2 cm]
\draw[->] (0.5,0.5) --(2,2);
\draw[->] (0.5,-0.5) --(2,-2);
\draw[->] (0.5,3) --(2,3);
\draw[->] (0.5,-3) --(2,-3);
\draw[->] (0.5,-2.5) --(2,-1);
\draw[->] (0.5,2.5) --(2,1);
\end{scope}

\begin{scope}[xshift = 8.5 cm, yshift = -0.2 cm]
\draw[->] (2,2) -- (3.5,0.5);
\draw[->] (2,0) -- (3.5,0);
\draw[->] (2,-2) -- (3.5,-0.5);
\end{scope}

\begin{scope}[xshift = 4.5 cm, yshift = 3 cm]
\draw[dashed] (0,-0.18) circle (1 cm);
\draw (-0.15,0.866-0.05) to [out = -90, in = 135] (0.8 + 0.08,-0.8 + 0.15);
\draw (0.15,0.866-0.05) to [out = -90, in = 45] (-0.8 - 0.08,-0.8 + 0.15);
\draw (-0.8 + 0.095,-0.8 - 0.1) to [out = 45, in = 135] (0.8 - 0.095,-0.8 - 0.1);

\draw (0.15,0.866-0.05) to [out = 90, in = 135] (1.2,0.7) to [out = -45, in = -45] (0.8 + 0.08,-0.8 + 0.15);
\draw (-0.15,0.866-0.05) to [out = 90, in = 45] (-1.20, 0.7) to [out = 225, in = 225] (-0.8 - 0.08,-0.8 + 0.15);
\draw (0.8 - 0.095,-0.8 - 0.1) to [out = -45, in = 0] (0, -1.6) to [out = 180, in = 225]  (-0.8 + 0.095,-0.8 - 0.1);
\end{scope}

\begin{scope}[xshift = 4.5 cm, yshift = 0 cm]
\draw[dashed] (0,-0.18) circle (1 cm);
\draw (0.15,0.866-0.05) to [out = -90, in = 135] (0.8 + 0.08,-0.8 + 0.15);
\draw (-0.15,0.866-0.05) to [out = -90, in = 45] (-0.8 + 0.095,-0.8 - 0.1);
\draw (-0.8 - 0.08,-0.8 + 0.15) to [out = 45, in = 135] (0.8 - 0.095,-0.8 - 0.1);
\draw (0.15,0.866-0.05) to [out = 90, in = 135] (1.2,0.7) to [out = -45, in = -45] (0.8 + 0.08,-0.8 + 0.15);
\draw (-0.15,0.866-0.05) to [out = 90, in = 45] (-1.20, 0.7) to [out = 225, in = 225] (-0.8 - 0.08,-0.8 + 0.15);
\draw (0.8 - 0.095,-0.8 - 0.1) to [out = -45, in = 0] (0, -1.6) to [out = 180, in = 225]  (-0.8 + 0.095,-0.8 - 0.1);
\end{scope}

\begin{scope}[xshift = 4.5 cm, yshift = -3 cm]
\draw[dashed] (0,-0.18) circle (1 cm);
\draw (0.15,0.866-0.05) to [out = -90, in = 135] (0.8 - 0.095,-0.8 - 0.1);
\draw (-0.15,0.866-0.05) to [out = -90, in = 45] (-0.8 - 0.08,-0.8 + 0.15);
\draw (-0.8 + 0.095,-0.8 - 0.1) to [out = 45, in = 135] (0.8 + 0.08,-0.8 + 0.15);
\draw (0.15,0.866-0.05) to [out = 90, in = 135] (1.2,0.7) to [out = -45, in = -45] (0.8 + 0.08,-0.8 + 0.15);
\draw (-0.15,0.866-0.05) to [out = 90, in = 45] (-1.20, 0.7) to [out = 225, in = 225] (-0.8 - 0.08,-0.8 + 0.15);
\draw (0.8 - 0.095,-0.8 - 0.1) to [out = -45, in = 0] (0, -1.6) to [out = 180, in = 225]  (-0.8 + 0.095,-0.8 - 0.1);
\end{scope}

\begin{scope}[xshift = 4.5 cm]
\begin{scope}[xshift = 4.5 cm, yshift = 3 cm]
\draw[dashed] (0,-0.18) circle (1 cm);
\draw (-0.15,0.866-0.05) to [out = -90, in = 135] (0.8 + 0.08,-0.8 + 0.15);
\draw (0.15,0.866-0.05) to [out = -90,  in = 60] (-0.2, 0) to [out = 240, in = 45] (-0.8 + 0.095,-0.8 - 0.1);
\draw (-0.8 - 0.08,-0.8 + 0.15) to [out = 45, in = 135] (0.8 - 0.095,-0.8 - 0.1);
\draw (0.15,0.866-0.05) to [out = 90, in = 135] (1.2,0.7) to [out = -45, in = -45] (0.8 + 0.08,-0.8 + 0.15);
\draw (-0.15,0.866-0.05) to [out = 90, in = 45] (-1.20, 0.7) to [out = 225, in = 225] (-0.8 - 0.08,-0.8 + 0.15);
\draw (0.8 - 0.095,-0.8 - 0.1) to [out = -45, in = 0] (0, -1.6) to [out = 180, in = 225]  (-0.8 + 0.095,-0.8 - 0.1);
\end{scope}

\begin{scope}[xshift = 4.5 cm, yshift = 0 cm]
\draw[dashed] (0,-0.18) circle (1 cm);
\draw (-0.15,0.866-0.05) to [out = -90, in = 120] (0.2, 0) to [out = -60, in = 135] (0.8 - 0.095,-0.8 - 0.1);
\draw (0.15,0.866-0.05) to [out = -90, in = 45] (-0.8 - 0.08,-0.8 + 0.15);
\draw (-0.8 + 0.095,-0.8 - 0.1) to [out = 45, in = 135] (0.8 + 0.08,-0.8 + 0.15);
\draw (0.15,0.866-0.05) to [out = 90, in = 135] (1.2,0.7) to [out = -45, in = -45] (0.8 + 0.08,-0.8 + 0.15);
\draw (-0.15,0.866-0.05) to [out = 90, in = 45] (-1.20, 0.7) to [out = 225, in = 225] (-0.8 - 0.08,-0.8 + 0.15);
\draw (0.8 - 0.095,-0.8 - 0.1) to [out = -45, in = 0] (0, -1.6) to [out = 180, in = 225]  (-0.8 + 0.095,-0.8 - 0.1);
\end{scope}

\begin{scope}[xshift = 4.5 cm, yshift = -3 cm]
\draw[dashed] (0,-0.18) circle (1 cm);
\draw (0.15,0.866-0.05) to [out = -90, in = 135] (0.8 - 0.095,-0.8 - 0.1);
\draw (-0.15,0.866-0.05) to [out = -90, in = 45] (-0.8 + 0.095,-0.8 - 0.1);
\draw (-0.8 - 0.08,-0.8 + 0.15) to [out = 45, in = 180] (0, -0.5) to [out = 0, in = 135] (0.8 + 0.08,-0.8 + 0.15);
\draw (0.15,0.866-0.05) to [out = 90, in = 135] (1.2,0.7) to [out = -45, in = -45] (0.8 + 0.08,-0.8 + 0.15);
\draw (-0.15,0.866-0.05) to [out = 90, in = 45] (-1.20, 0.7) to [out = 225, in = 225] (-0.8 - 0.08,-0.8 + 0.15);
\draw (0.8 - 0.095,-0.8 - 0.1) to [out = -45, in = 0] (0, -1.6) to [out = 180, in = 225]  (-0.8 + 0.095,-0.8 - 0.1);
\end{scope}

\end{scope}

\begin{scope}[xshift = 13.5 cm]
\begin{scope}
\draw[dashed] (0,-0.18) circle (1 cm);
\draw (-0.15,0.866-0.05) to [out = -90, in = 120] (0.2, 0) to [out = -60, in = 135] (0.8 - 0.095,-0.8 - 0.1);
\draw (0.15,0.866-0.05) to [out = -90,  in = 60] (-0.2, 0) to [out = 240, in = 45] (-0.8 + 0.095,-0.8 - 0.1);
\draw (-0.8 - 0.08,-0.8 + 0.15) to [out = 45, in = 180] (0, -0.5) to [out = 0, in = 135] (0.8 + 0.08,-0.8 + 0.15);
\draw (0.15,0.866-0.05) to [out = 90, in = 135] (1.2,0.7) to [out = -45, in = -45] (0.8 + 0.08,-0.8 + 0.15);
\draw (-0.15,0.866-0.05) to [out = 90, in = 45] (-1.20, 0.7) to [out = 225, in = 225] (-0.8 - 0.08,-0.8 + 0.15);
\draw (0.8 - 0.095,-0.8 - 0.1) to [out = -45, in = 0] (0, -1.6) to [out = 180, in = 225]  (-0.8 + 0.095,-0.8 - 0.1);
\end{scope}

\end{scope}

\end{tikzpicture}$$
\caption{Case 1:  Three circles to one circle.}
\label{fig:Case1}
\end{figure}
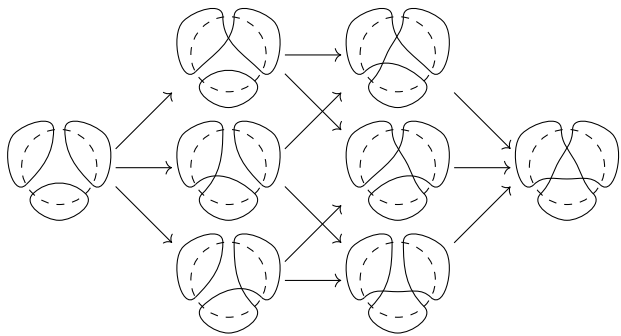

\begin{figure}[H]
$$\begin{tikzpicture}[scale = 0.5]

\begin{scope}
\draw[dashed] (0,-0.18) circle (1 cm);
\draw (0.15,0.866-0.05) to [out = -90, in = 135] (0.8 + 0.08,-0.8 + 0.15);
\draw (-0.15,0.866-0.05) to [out = -90, in = 45] (-0.8 - 0.08,-0.8 + 0.15);
\draw (-0.8 + 0.095,-0.8 - 0.1) to [out = 45, in = 135] (0.8 - 0.095,-0.8 - 0.1);

\draw (0.15,0.866-0.05) to [out = 90, in = 135] (1.2,0.7) to [out = -45, in = -45] (0.8 - 0.095,-0.8 - 0.1);
\draw (-0.15,0.866-0.05) to [out = 90, in = 45] (-1.20, 0.7) to [out = 225, in = 225] (-0.8 - 0.08,-0.8 + 0.15);
\draw (0.8 + 0.08,-0.8 + 0.15) to [out = -45, in = 0] (0, -1.6) to [out = 180, in = 225]  (-0.8 + 0.095,-0.8 - 0.1);
\end{scope}

\begin{scope}[xshift = 1 cm, yshift = -0.2 cm]
\draw[->] (0.5,0.5) --(2,2);
\draw[->] (0.5,0) --(2,0);
\draw[->] (0.5,-0.5) --(2,-2);
\end{scope}

\begin{scope}[xshift = 5.5 cm, yshift = -0.2 cm]
\draw[->] (0.5,0.5) --(2,2);
\draw[->] (0.5,-0.5) --(2,-2);
\draw[->] (0.5,3) --(2,3);
\draw[->] (0.5,-3) --(2,-3);
\draw[->] (0.5,-2.5) --(2,-1);
\draw[->] (0.5,2.5) --(2,1);
\end{scope}

\begin{scope}[xshift = 8.5 cm, yshift = -0.2 cm]
\draw[->] (2,2) -- (3.5,0.5);
\draw[->] (2,0) -- (3.5,0);
\draw[->] (2,-2) -- (3.5,-0.5);
\end{scope}

\begin{scope}[xshift = 4.5 cm, yshift = 3 cm]
\draw[dashed] (0,-0.18) circle (1 cm);
\draw (-0.15,0.866-0.05) to [out = -90, in = 135] (0.8 + 0.08,-0.8 + 0.15);
\draw (0.15,0.866-0.05) to [out = -90, in = 45] (-0.8 - 0.08,-0.8 + 0.15);
\draw (-0.8 + 0.095,-0.8 - 0.1) to [out = 45, in = 135] (0.8 - 0.095,-0.8 - 0.1);

\draw (0.15,0.866-0.05) to [out = 90, in = 135] (1.2,0.7) to [out = -45, in = -45] (0.8 - 0.095,-0.8 - 0.1);
\draw (-0.15,0.866-0.05) to [out = 90, in = 45] (-1.20, 0.7) to [out = 225, in = 225] (-0.8 - 0.08,-0.8 + 0.15);
\draw (0.8 + 0.08,-0.8 + 0.15) to [out = -45, in = 0] (0, -1.6) to [out = 180, in = 225]  (-0.8 + 0.095,-0.8 - 0.1);
\end{scope}

\begin{scope}[xshift = 4.5 cm, yshift = 0 cm]
\draw[dashed] (0,-0.18) circle (1 cm);
\draw (0.15,0.866-0.05) to [out = -90, in = 135] (0.8 + 0.08,-0.8 + 0.15);
\draw (-0.15,0.866-0.05) to [out = -90, in = 45] (-0.8 + 0.095,-0.8 - 0.1);
\draw (-0.8 - 0.08,-0.8 + 0.15) to [out = 45, in = 135] (0.8 - 0.095,-0.8 - 0.1);

\draw (0.15,0.866-0.05) to [out = 90, in = 135] (1.2,0.7) to [out = -45, in = -45] (0.8 - 0.095,-0.8 - 0.1);
\draw (-0.15,0.866-0.05) to [out = 90, in = 45] (-1.20, 0.7) to [out = 225, in = 225] (-0.8 - 0.08,-0.8 + 0.15);
\draw (0.8 + 0.08,-0.8 + 0.15) to [out = -45, in = 0] (0, -1.6) to [out = 180, in = 225]  (-0.8 + 0.095,-0.8 - 0.1);
\end{scope}

\begin{scope}[xshift = 4.5 cm, yshift = -3 cm]
\draw[dashed] (0,-0.18) circle (1 cm);
\draw (0.15,0.866-0.05) to [out = -90, in = 135] (0.8 - 0.095,-0.8 - 0.1);
\draw (-0.15,0.866-0.05) to [out = -90, in = 45] (-0.8 - 0.08,-0.8 + 0.15);
\draw (-0.8 + 0.095,-0.8 - 0.1) to [out = 45, in = 135] (0.8 + 0.08,-0.8 + 0.15);
\draw (0.15,0.866-0.05) to [out = 90, in = 135] (1.2,0.7) to [out = -45, in = -45] (0.8 - 0.095,-0.8 - 0.1);
\draw (-0.15,0.866-0.05) to [out = 90, in = 45] (-1.20, 0.7) to [out = 225, in = 225] (-0.8 - 0.08,-0.8 + 0.15);
\draw (0.8 + 0.08,-0.8 + 0.15) to [out = -45, in = 0] (0, -1.6) to [out = 180, in = 225]  (-0.8 + 0.095,-0.8 - 0.1);
\end{scope}

\begin{scope}[xshift = 4.5 cm]
\begin{scope}[xshift = 4.5 cm, yshift = 3 cm]
\draw[dashed] (0,-0.18) circle (1 cm);
\draw (-0.15,0.866-0.05) to [out = -90, in = 135] (0.8 + 0.08,-0.8 + 0.15);
\draw (0.15,0.866-0.05) to [out = -90,  in = 60] (-0.2, 0) to [out = 240, in = 45] (-0.8 + 0.095,-0.8 - 0.1);
\draw (-0.8 - 0.08,-0.8 + 0.15) to [out = 45, in = 135] (0.8 - 0.095,-0.8 - 0.1);
\draw (0.15,0.866-0.05) to [out = 90, in = 135] (1.2,0.7) to [out = -45, in = -45] (0.8 - 0.095,-0.8 - 0.1);
\draw (-0.15,0.866-0.05) to [out = 90, in = 45] (-1.20, 0.7) to [out = 225, in = 225] (-0.8 - 0.08,-0.8 + 0.15);
\draw (0.8 + 0.08,-0.8 + 0.15) to [out = -45, in = 0] (0, -1.6) to [out = 180, in = 225]  (-0.8 + 0.095,-0.8 - 0.1);
\end{scope}

\begin{scope}[xshift = 4.5 cm, yshift = 0 cm]
\draw[dashed] (0,-0.18) circle (1 cm);
\draw (-0.15,0.866-0.05) to [out = -90, in = 120] (0.2, 0) to [out = -60, in = 135] (0.8 - 0.095,-0.8 - 0.1);
\draw (0.15,0.866-0.05) to [out = -90, in = 45] (-0.8 - 0.08,-0.8 + 0.15);
\draw (-0.8 + 0.095,-0.8 - 0.1) to [out = 45, in = 135] (0.8 + 0.08,-0.8 + 0.15);
\draw (0.15,0.866-0.05) to [out = 90, in = 135] (1.2,0.7) to [out = -45, in = -45] (0.8 - 0.095,-0.8 - 0.1);
\draw (-0.15,0.866-0.05) to [out = 90, in = 45] (-1.20, 0.7) to [out = 225, in = 225] (-0.8 - 0.08,-0.8 + 0.15);
\draw (0.8 + 0.08,-0.8 + 0.15) to [out = -45, in = 0] (0, -1.6) to [out = 180, in = 225]  (-0.8 + 0.095,-0.8 - 0.1);
\end{scope}

\begin{scope}[xshift = 4.5 cm, yshift = -3 cm]
\draw[dashed] (0,-0.18) circle (1 cm);
\draw (0.15,0.866-0.05) to [out = -90, in = 135] (0.8 - 0.095,-0.8 - 0.1);
\draw (-0.15,0.866-0.05) to [out = -90, in = 45] (-0.8 + 0.095,-0.8 - 0.1);
\draw (-0.8 - 0.08,-0.8 + 0.15) to [out = 45, in = 180] (0, -0.5) to [out = 0, in = 135] (0.8 + 0.08,-0.8 + 0.15);
\draw (0.15,0.866-0.05) to [out = 90, in = 135] (1.2,0.7) to [out = -45, in = -45] (0.8 - 0.095,-0.8 - 0.1);
\draw (-0.15,0.866-0.05) to [out = 90, in = 45] (-1.20, 0.7) to [out = 225, in = 225] (-0.8 - 0.08,-0.8 + 0.15);
\draw (0.8 + 0.08,-0.8 + 0.15) to [out = -45, in = 0] (0, -1.6) to [out = 180, in = 225]  (-0.8 + 0.095,-0.8 - 0.1);
\end{scope}

\end{scope}

\begin{scope}[xshift = 13.5 cm]
\begin{scope}
\draw[dashed] (0,-0.18) circle (1 cm);
\draw (-0.15,0.866-0.05) to [out = -90, in = 120] (0.2, 0) to [out = -60, in = 135] (0.8 - 0.095,-0.8 - 0.1);
\draw (0.15,0.866-0.05) to [out = -90,  in = 60] (-0.2, 0) to [out = 240, in = 45] (-0.8 + 0.095,-0.8 - 0.1);
\draw (-0.8 - 0.08,-0.8 + 0.15) to [out = 45, in = 180] (0, -0.5) to [out = 0, in = 135] (0.8 + 0.08,-0.8 + 0.15);
\draw (0.15,0.866-0.05) to [out = 90, in = 135] (1.2,0.7) to [out = -45, in = -45] (0.8 - 0.095,-0.8 - 0.1);
\draw (-0.15,0.866-0.05) to [out = 90, in = 45] (-1.20, 0.7) to [out = 225, in = 225] (-0.8 - 0.08,-0.8 + 0.15);
\draw (0.8 + 0.08,-0.8 + 0.15) to [out = -45, in = 0] (0, -1.6) to [out = 180, in = 225]  (-0.8 + 0.095,-0.8 - 0.1);
\end{scope}

\end{scope}

\end{tikzpicture}$$
\caption{Case 2:  Two circles to one circle.}
\label{fig:Case2}
\end{figure}
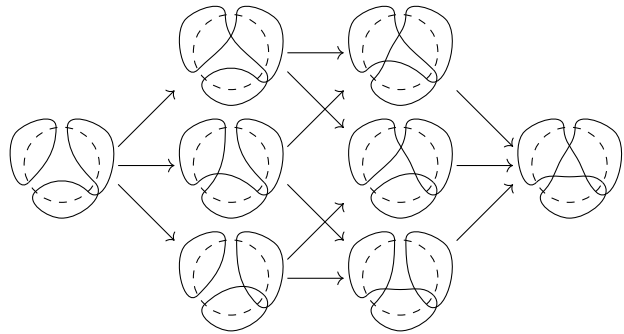

\begin{figure}[H]
$$\begin{tikzpicture}[scale = 0.5]

\begin{scope}
\draw[dashed] (0,-0.18) circle (1 cm);
\draw (0.15,0.866-0.05) to [out = -90, in = 135] (0.8 + 0.08,-0.8 + 0.15);
\draw (-0.15,0.866-0.05) to [out = -90, in = 45] (-0.8 - 0.08,-0.8 + 0.15);
\draw (-0.8 + 0.095,-0.8 - 0.1) to [out = 45, in = 135] (0.8 - 0.095,-0.8 - 0.1);

\draw (0.15,0.866-0.05) to [out = 90, in = 0] (0, 1.2) to [out = 180, in = 90] (-0.15,0.866-0.05);
\draw (0.8 + 0.08,-0.8 + 0.15) to [out = -45, in = 0] (0, -1.75) to [out = 180, in = 225] (-0.8 - 0.08,-0.8 + 0.15) ;
\draw (0.8 - 0.095,-0.8 - 0.1) to [out = -45, in = 0] (0, -1.5) to [out = 180, in = 225] (-0.8 + 0.095,-0.8 - 0.1);
\end{scope}

\begin{scope}[xshift = 1 cm, yshift = -0.2 cm]

\draw[->] (0.5,0.5) --(2,2);
\draw[->] (0.5,0) --(2,0);
\draw[->] (0.5,-0.5) --(2,-2);
\end{scope}

\begin{scope}[xshift = 5.5 cm, yshift = -0.2 cm]

\draw[->] (0.5,0.5) --(2,2);
\draw[->] (0.5,-0.5) --(2,-2);
\draw[->] (0.5,3) --(2,3);
\draw[->] (0.5,-3) --(2,-3);
\draw[->] (0.5,-2.5) --(2,-1);
\draw[->] (0.5,2.5) --(2,1);
\end{scope}

\begin{scope}[xshift = 8.5 cm, yshift = -0.2 cm]

\draw[->] (2,2) -- (3.5,0.5);
\draw[->] (2,0) -- (3.5,0);
\draw[->] (2,-2) -- (3.5,-0.5);
\end{scope}

\begin{scope}[xshift = 4.5 cm, yshift = 3 cm]
\draw[dashed] (0,-0.18) circle (1 cm);
\draw (-0.15,0.866-0.05) to [out = -90, in = 135] (0.8 + 0.08,-0.8 + 0.15);
\draw (0.15,0.866-0.05) to [out = -90, in = 45] (-0.8 - 0.08,-0.8 + 0.15);
\draw (-0.8 + 0.095,-0.8 - 0.1) to [out = 45, in = 135] (0.8 - 0.095,-0.8 - 0.1);

\draw (0.15,0.866-0.05) to [out = 90, in = 0] (0, 1.2) to [out = 180, in = 90] (-0.15,0.866-0.05);
\draw (0.8 + 0.08,-0.8 + 0.15) to [out = -45, in = 0] (0, -1.75) to [out = 180, in = 225] (-0.8 - 0.08,-0.8 + 0.15) ;
\draw (0.8 - 0.095,-0.8 - 0.1) to [out = -45, in = 0] (0, -1.5) to [out = 180, in = 225] (-0.8 + 0.095,-0.8 - 0.1);
\end{scope}

\begin{scope}[xshift = 4.5 cm, yshift = 0 cm]
\draw[dashed] (0,-0.18) circle (1 cm);
\draw (0.15,0.866-0.05) to [out = -90, in = 135] (0.8 + 0.08,-0.8 + 0.15);
\draw (-0.15,0.866-0.05) to [out = -90, in = 45] (-0.8 + 0.095,-0.8 - 0.1);
\draw (-0.8 - 0.08,-0.8 + 0.15) to [out = 45, in = 135] (0.8 - 0.095,-0.8 - 0.1);

\draw (0.15,0.866-0.05) to [out = 90, in = 0] (0, 1.2) to [out = 180, in = 90] (-0.15,0.866-0.05);
\draw (0.8 + 0.08,-0.8 + 0.15) to [out = -45, in = 0] (0, -1.75) to [out = 180, in = 225] (-0.8 - 0.08,-0.8 + 0.15) ;
\draw (0.8 - 0.095,-0.8 - 0.1) to [out = -45, in = 0] (0, -1.5) to [out = 180, in = 225] (-0.8 + 0.095,-0.8 - 0.1);
\end{scope}

\begin{scope}[xshift = 4.5 cm, yshift = -3 cm]
\draw[dashed] (0,-0.18) circle (1 cm);
\draw (0.15,0.866-0.05) to [out = -90, in = 135] (0.8 - 0.095,-0.8 - 0.1);
\draw (-0.15,0.866-0.05) to [out = -90, in = 45] (-0.8 - 0.08,-0.8 + 0.15);
\draw (-0.8 + 0.095,-0.8 - 0.1) to [out = 45, in = 135] (0.8 + 0.08,-0.8 + 0.15);

\draw (0.15,0.866-0.05) to [out = 90, in = 0] (0, 1.2) to [out = 180, in = 90] (-0.15,0.866-0.05);
\draw (0.8 + 0.08,-0.8 + 0.15) to [out = -45, in = 0] (0, -1.75) to [out = 180, in = 225] (-0.8 - 0.08,-0.8 + 0.15) ;
\draw (0.8 - 0.095,-0.8 - 0.1) to [out = -45, in = 0] (0, -1.5) to [out = 180, in = 225] (-0.8 + 0.095,-0.8 - 0.1);
\end{scope}

\begin{scope}[xshift = 4.5 cm]
\begin{scope}[xshift = 4.5 cm, yshift = 3 cm]
\draw[dashed] (0,-0.18) circle (1 cm);
\draw (-0.15,0.866-0.05) to [out = -90, in = 135] (0.8 + 0.08,-0.8 + 0.15);
\draw (0.15,0.866-0.05) to [out = -90,  in = 60] (-0.2, 0) to [out = 240, in = 45] (-0.8 + 0.095,-0.8 - 0.1);
\draw (-0.8 - 0.08,-0.8 + 0.15) to [out = 45, in = 135] (0.8 - 0.095,-0.8 - 0.1);

\draw (0.15,0.866-0.05) to [out = 90, in = 0] (0, 1.2) to [out = 180, in = 90] (-0.15,0.866-0.05);
\draw (0.8 + 0.08,-0.8 + 0.15) to [out = -45, in = 0] (0, -1.75) to [out = 180, in = 225] (-0.8 - 0.08,-0.8 + 0.15) ;
\draw (0.8 - 0.095,-0.8 - 0.1) to [out = -45, in = 0] (0, -1.5) to [out = 180, in = 225] (-0.8 + 0.095,-0.8 - 0.1);
\end{scope}

\begin{scope}[xshift = 4.5 cm, yshift = 0 cm]
\draw[dashed] (0,-0.18) circle (1 cm);
\draw (-0.15,0.866-0.05) to [out = -90, in = 120] (0.2, 0) to [out = -60, in = 135] (0.8 - 0.095,-0.8 - 0.1);
\draw (0.15,0.866-0.05) to [out = -90, in = 45] (-0.8 - 0.08,-0.8 + 0.15);
\draw (-0.8 + 0.095,-0.8 - 0.1) to [out = 45, in = 135] (0.8 + 0.08,-0.8 + 0.15);

\draw (0.15,0.866-0.05) to [out = 90, in = 0] (0, 1.2) to [out = 180, in = 90] (-0.15,0.866-0.05);
\draw (0.8 + 0.08,-0.8 + 0.15) to [out = -45, in = 0] (0, -1.75) to [out = 180, in = 225] (-0.8 - 0.08,-0.8 + 0.15) ;
\draw (0.8 - 0.095,-0.8 - 0.1) to [out = -45, in = 0] (0, -1.5) to [out = 180, in = 225] (-0.8 + 0.095,-0.8 - 0.1);
\end{scope}

\begin{scope}[xshift = 4.5 cm, yshift = -3 cm]
\draw[dashed] (0,-0.18) circle (1 cm);
\draw (0.15,0.866-0.05) to [out = -90, in = 135] (0.8 - 0.095,-0.8 - 0.1);
\draw (-0.15,0.866-0.05) to [out = -90, in = 45] (-0.8 + 0.095,-0.8 - 0.1);
\draw (-0.8 - 0.08,-0.8 + 0.15) to [out = 45, in = 180] (0, -0.5) to [out = 0, in = 135] (0.8 + 0.08,-0.8 + 0.15);

\draw (0.15,0.866-0.05) to [out = 90, in = 0] (0, 1.2) to [out = 180, in = 90] (-0.15,0.866-0.05);
\draw (0.8 + 0.08,-0.8 + 0.15) to [out = -45, in = 0] (0, -1.75) to [out = 180, in = 225] (-0.8 - 0.08,-0.8 + 0.15) ;
\draw (0.8 - 0.095,-0.8 - 0.1) to [out = -45, in = 0] (0, -1.5) to [out = 180, in = 225] (-0.8 + 0.095,-0.8 - 0.1);
\end{scope}

\end{scope}

\begin{scope}[xshift = 13.5 cm]
\begin{scope}
\draw[dashed] (0,-0.18) circle (1 cm);
\draw (-0.15,0.866-0.05) to [out = -90, in = 120] (0.2, 0) to [out = -60, in = 135] (0.8 - 0.095,-0.8 - 0.1);
\draw (0.15,0.866-0.05) to [out = -90,  in = 60] (-0.2, 0) to [out = 240, in = 45] (-0.8 + 0.095,-0.8 - 0.1);
\draw (-0.8 - 0.08,-0.8 + 0.15) to [out = 45, in = 180] (0, -0.5) to [out = 0, in = 135] (0.8 + 0.08,-0.8 + 0.15);

\draw (0.15,0.866-0.05) to [out = 90, in = 0] (0, 1.2) to [out = 180, in = 90] (-0.15,0.866-0.05);
\draw (0.8 + 0.08,-0.8 + 0.15) to [out = -45, in = 0] (0, -1.75) to [out = 180, in = 225] (-0.8 - 0.08,-0.8 + 0.15) ;
\draw (0.8 - 0.095,-0.8 - 0.1) to [out = -45, in = 0] (0, -1.5) to [out = 180, in = 225] (-0.8 + 0.095,-0.8 - 0.1);
\end{scope}

\end{scope}

\end{tikzpicture}$$
\caption{Case 3:  Two circles to two circles.}
\label{fig:Case3App}
\end{figure}
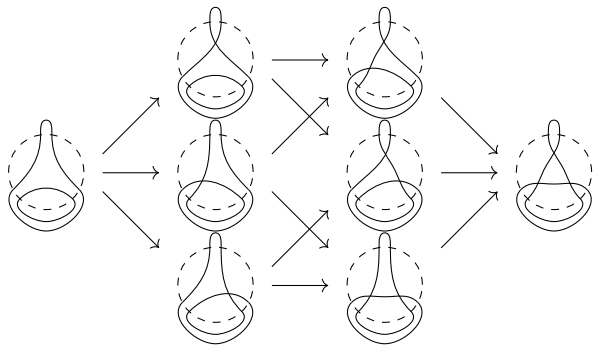

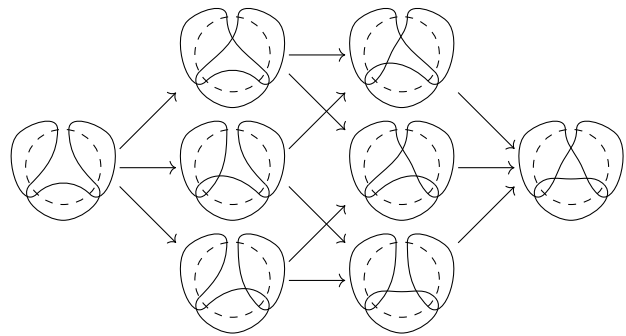
\begin{figure}[H]
$$\begin{tikzpicture}[scale = 0.5]

\begin{scope}
\draw[dashed] (0,-0.18) circle (1 cm);
\draw (0.15,0.866-0.05) to [out = -90, in = 135] (0.8 + 0.08,-0.8 + 0.15);
\draw (-0.15,0.866-0.05) to [out = -90, in = 45] (-0.8 - 0.08,-0.8 + 0.15);
\draw (-0.8 + 0.095,-0.8 - 0.1) to [out = 45, in = 135] (0.8 - 0.095,-0.8 - 0.1);
\draw (0.15,0.866-0.05) to [out = 90, in = 135] (1.2,0.7) to [out = -45, in = -45] (0.8 - 0.095,-0.8 - 0.1);
\draw (-0.15,0.866-0.05) to [out = 90, in = 45] (-1.20, 0.7) to [out = 225, in = 225]  (-0.8 + 0.095,-0.8 - 0.1);
\draw (0.8 + 0.08,-0.8 + 0.15) to [out = -45, in = 0] (0, -1.6) to [out = 180, in = 225] (-0.8 - 0.08,-0.8 + 0.15);
\end{scope}

\begin{scope}[xshift = 1 cm, yshift = -0.2 cm]
\draw[->] (0.5,0.5) --(2,2);
\draw[->] (0.5,0) --(2,0);
\draw[->] (0.5,-0.5) --(2,-2);
\end{scope}

\begin{scope}[xshift = 5.5 cm, yshift = -0.2 cm]
\draw[->] (0.5,0.5) --(2,2);
\draw[->] (0.5,-0.5) --(2,-2);
\draw[->] (0.5,3) --(2,3);
\draw[->] (0.5,-3) --(2,-3);
\draw[->] (0.5,-2.5) --(2,-1);
\draw[->] (0.5,2.5) --(2,1);
\end{scope}

\begin{scope}[xshift = 8.5 cm, yshift = -0.2 cm]
\draw[->] (2,2) -- (3.5,0.5);
\draw[->] (2,0) -- (3.5,0);
\draw[->] (2,-2) -- (3.5,-0.5);
\end{scope}

\begin{scope}[xshift = 4.5 cm, yshift = 3 cm]
\draw[dashed] (0,-0.18) circle (1 cm);
\draw (-0.15,0.866-0.05) to [out = -90, in = 135] (0.8 + 0.08,-0.8 + 0.15);
\draw (0.15,0.866-0.05) to [out = -90, in = 45] (-0.8 - 0.08,-0.8 + 0.15);
\draw (-0.8 + 0.095,-0.8 - 0.1) to [out = 45, in = 135] (0.8 - 0.095,-0.8 - 0.1);
\draw (0.15,0.866-0.05) to [out = 90, in = 135] (1.2,0.7) to [out = -45, in = -45] (0.8 - 0.095,-0.8 - 0.1);
\draw (-0.15,0.866-0.05) to [out = 90, in = 45] (-1.20, 0.7) to [out = 225, in = 225]  (-0.8 + 0.095,-0.8 - 0.1);
\draw (0.8 + 0.08,-0.8 + 0.15) to [out = -45, in = 0] (0, -1.6) to [out = 180, in = 225] (-0.8 - 0.08,-0.8 + 0.15);
\end{scope}

\begin{scope}[xshift = 4.5 cm, yshift = 0 cm]
\draw[dashed] (0,-0.18) circle (1 cm);
\draw (0.15,0.866-0.05) to [out = -90, in = 135] (0.8 + 0.08,-0.8 + 0.15);
\draw (-0.15,0.866-0.05) to [out = -90, in = 45] (-0.8 + 0.095,-0.8 - 0.1);
\draw (-0.8 - 0.08,-0.8 + 0.15) to [out = 45, in = 135] (0.8 - 0.095,-0.8 - 0.1);

\draw (0.15,0.866-0.05) to [out = 90, in = 135] (1.2,0.7) to [out = -45, in = -45] (0.8 - 0.095,-0.8 - 0.1);
\draw (-0.15,0.866-0.05) to [out = 90, in = 45] (-1.20, 0.7) to [out = 225, in = 225]  (-0.8 + 0.095,-0.8 - 0.1);
\draw (0.8 + 0.08,-0.8 + 0.15) to [out = -45, in = 0] (0, -1.6) to [out = 180, in = 225] (-0.8 - 0.08,-0.8 + 0.15);
\end{scope}

\begin{scope}[xshift = 4.5 cm, yshift = -3 cm]
\draw[dashed] (0,-0.18) circle (1 cm);
\draw (0.15,0.866-0.05) to [out = -90, in = 135] (0.8 - 0.095,-0.8 - 0.1);
\draw (-0.15,0.866-0.05) to [out = -90, in = 45] (-0.8 - 0.08,-0.8 + 0.15);
\draw (-0.8 + 0.095,-0.8 - 0.1) to [out = 45, in = 135] (0.8 + 0.08,-0.8 + 0.15);
\draw (0.15,0.866-0.05) to [out = 90, in = 135] (1.2,0.7) to [out = -45, in = -45] (0.8 - 0.095,-0.8 - 0.1);
\draw (-0.15,0.866-0.05) to [out = 90, in = 45] (-1.20, 0.7) to [out = 225, in = 225]  (-0.8 + 0.095,-0.8 - 0.1);
\draw (0.8 + 0.08,-0.8 + 0.15) to [out = -45, in = 0] (0, -1.6) to [out = 180, in = 225] (-0.8 - 0.08,-0.8 + 0.15);
\end{scope}

\begin{scope}[xshift = 4.5 cm]
\begin{scope}[xshift = 4.5 cm, yshift = 3 cm]
\draw[dashed] (0,-0.18) circle (1 cm);
\draw (-0.15,0.866-0.05) to [out = -90, in = 135] (0.8 + 0.08,-0.8 + 0.15);
\draw (0.15,0.866-0.05) to [out = -90,  in = 60] (-0.2, 0) to [out = 240, in = 45] (-0.8 + 0.095,-0.8 - 0.1);
\draw (-0.8 - 0.08,-0.8 + 0.15) to [out = 45, in = 135] (0.8 - 0.095,-0.8 - 0.1);
\draw (0.15,0.866-0.05) to [out = 90, in = 135] (1.2,0.7) to [out = -45, in = -45] (0.8 - 0.095,-0.8 - 0.1);
\draw (-0.15,0.866-0.05) to [out = 90, in = 45] (-1.20, 0.7) to [out = 225, in = 225]  (-0.8 + 0.095,-0.8 - 0.1);
\draw (0.8 + 0.08,-0.8 + 0.15) to [out = -45, in = 0] (0, -1.6) to [out = 180, in = 225] (-0.8 - 0.08,-0.8 + 0.15);
\end{scope}

\begin{scope}[xshift = 4.5 cm, yshift = 0 cm]
\draw[dashed] (0,-0.18) circle (1 cm);
\draw (-0.15,0.866-0.05) to [out = -90, in = 120] (0.2, 0) to [out = -60, in = 135] (0.8 - 0.095,-0.8 - 0.1);
\draw (0.15,0.866-0.05) to [out = -90, in = 45] (-0.8 - 0.08,-0.8 + 0.15);
\draw (-0.8 + 0.095,-0.8 - 0.1) to [out = 45, in = 135] (0.8 + 0.08,-0.8 + 0.15);
\draw (0.15,0.866-0.05) to [out = 90, in = 135] (1.2,0.7) to [out = -45, in = -45] (0.8 - 0.095,-0.8 - 0.1);
\draw (-0.15,0.866-0.05) to [out = 90, in = 45] (-1.20, 0.7) to [out = 225, in = 225]  (-0.8 + 0.095,-0.8 - 0.1);
\draw (0.8 + 0.08,-0.8 + 0.15) to [out = -45, in = 0] (0, -1.6) to [out = 180, in = 225] (-0.8 - 0.08,-0.8 + 0.15);
\end{scope}

\begin{scope}[xshift = 4.5 cm, yshift = -3 cm]
\draw[dashed] (0,-0.18) circle (1 cm);
\draw (0.15,0.866-0.05) to [out = -90, in = 135] (0.8 - 0.095,-0.8 - 0.1);
\draw (-0.15,0.866-0.05) to [out = -90, in = 45] (-0.8 + 0.095,-0.8 - 0.1);
\draw (-0.8 - 0.08,-0.8 + 0.15) to [out = 45, in = 180] (0, -0.5) to [out = 0, in = 135] (0.8 + 0.08,-0.8 + 0.15);
\draw (0.15,0.866-0.05) to [out = 90, in = 135] (1.2,0.7) to [out = -45, in = -45] (0.8 - 0.095,-0.8 - 0.1);
\draw (-0.15,0.866-0.05) to [out = 90, in = 45] (-1.20, 0.7) to [out = 225, in = 225]  (-0.8 + 0.095,-0.8 - 0.1);
\draw (0.8 + 0.08,-0.8 + 0.15) to [out = -45, in = 0] (0, -1.6) to [out = 180, in = 225] (-0.8 - 0.08,-0.8 + 0.15);
\end{scope}

\end{scope}

\begin{scope}[xshift = 13.5 cm]
\begin{scope}
\draw[dashed] (0,-0.18) circle (1 cm);
\draw (-0.15,0.866-0.05) to [out = -90, in = 120] (0.2, 0) to [out = -60, in = 135] (0.8 - 0.095,-0.8 - 0.1);
\draw (0.15,0.866-0.05) to [out = -90,  in = 60] (-0.2, 0) to [out = 240, in = 45] (-0.8 + 0.095,-0.8 - 0.1);
\draw (-0.8 - 0.08,-0.8 + 0.15) to [out = 45, in = 180] (0, -0.5) to [out = 0, in = 135] (0.8 + 0.08,-0.8 + 0.15);

\draw (0.15,0.866-0.05) to [out = 90, in = 135] (1.2,0.7) to [out = -45, in = -45] (0.8 - 0.095,-0.8 - 0.1);
\draw (-0.15,0.866-0.05) to [out = 90, in = 45] (-1.20, 0.7) to [out = 225, in = 225]  (-0.8 + 0.095,-0.8 - 0.1);
\draw (0.8 + 0.08,-0.8 + 0.15) to [out = -45, in = 0] (0, -1.6) to [out = 180, in = 225] (-0.8 - 0.08,-0.8 + 0.15);
\end{scope}

\end{scope}

\end{tikzpicture}$$
\caption{Case 4:  One circle to two circles.}
\label{fig:Case4}
\end{figure}

\begin{figure}[H]
$$\begin{tikzpicture}[scale = 0.5]

\begin{scope}
\draw[dashed] (0,-0.18) circle (1 cm);
\draw (0.15,0.866-0.05) to [out = -90, in = 135] (0.8 + 0.08,-0.8 + 0.15);
\draw (-0.15,0.866-0.05) to [out = -90, in = 45] (-0.8 - 0.08,-0.8 + 0.15);
\draw (-0.8 + 0.095,-0.8 - 0.1) to [out = 45, in = 135] (0.8 - 0.095,-0.8 - 0.1);

\draw (0.15,0.866-0.05) to [out = 90, in = 0] (0, 1.2) to [out = 180, in = 90] (-0.15,0.866-0.05);
\draw (0.8 + 0.08,-0.8 + 0.15) to [out = -45, in = -20] (0, -1.5) to [out = 160, in = 225] (-0.8 + 0.095,-0.8 - 0.1);
\draw (0.8 - 0.095,-0.8 - 0.1) to [out = -45, in = 20] (0, -1.5) to [out = 200, in = 225]  (-0.8 - 0.08,-0.8 + 0.15);
\end{scope}

\begin{scope}[xshift = 1 cm, yshift = -0.2 cm]
\draw[->] (0.5,0.5) --(2,2);
\draw[->] (0.5,0) --(2,0);
\draw[->] (0.5,-0.5) --(2,-2);
\end{scope}

\begin{scope}[xshift = 5.5 cm, yshift = -0.2 cm]
\draw[->] (0.5,0.5) --(2,2);
\draw[->] (0.5,-0.5) --(2,-2);
\draw[->] (0.5,3) --(2,3);
\draw[->] (0.5,-3) --(2,-3);
\draw[->] (0.5,-2.5) --(2,-1);
\draw[->] (0.5,2.5) --(2,1);
\end{scope}

\begin{scope}[xshift = 8.5 cm, yshift = -0.2 cm]
\draw[->] (2,2) -- (3.5,0.5);
\draw[->] (2,0) -- (3.5,0);
\draw[->] (2,-2) -- (3.5,-0.5);
\end{scope}

\begin{scope}[xshift = 4.5 cm, yshift = 3 cm]
\draw[dashed] (0,-0.18) circle (1 cm);
\draw (-0.15,0.866-0.05) to [out = -90, in = 135] (0.8 + 0.08,-0.8 + 0.15);
\draw (0.15,0.866-0.05) to [out = -90, in = 45] (-0.8 - 0.08,-0.8 + 0.15);
\draw (-0.8 + 0.095,-0.8 - 0.1) to [out = 45, in = 135] (0.8 - 0.095,-0.8 - 0.1);
\draw (0.15,0.866-0.05) to [out = 90, in = 0] (0, 1.2) to [out = 180, in = 90] (-0.15,0.866-0.05);
\draw (0.8 + 0.08,-0.8 + 0.15) to [out = -45, in = -20] (0, -1.5) to [out = 160, in = 225] (-0.8 + 0.095,-0.8 - 0.1);
\draw (0.8 - 0.095,-0.8 - 0.1) to [out = -45, in = 20] (0, -1.5) to [out = 200, in = 225]  (-0.8 - 0.08,-0.8 + 0.15);
\end{scope}

\begin{scope}[xshift = 4.5 cm, yshift = 0 cm]
\draw[dashed] (0,-0.18) circle (1 cm);
\draw (0.15,0.866-0.05) to [out = -90, in = 135] (0.8 + 0.08,-0.8 + 0.15);
\draw (-0.15,0.866-0.05) to [out = -90, in = 45] (-0.8 + 0.095,-0.8 - 0.1);
\draw (-0.8 - 0.08,-0.8 + 0.15) to [out = 45, in = 135] (0.8 - 0.095,-0.8 - 0.1);
\draw (0.15,0.866-0.05) to [out = 90, in = 0] (0, 1.2) to [out = 180, in = 90] (-0.15,0.866-0.05);
\draw (0.8 + 0.08,-0.8 + 0.15) to [out = -45, in = -20] (0, -1.5) to [out = 160, in = 225] (-0.8 + 0.095,-0.8 - 0.1);
\draw (0.8 - 0.095,-0.8 - 0.1) to [out = -45, in = 20] (0, -1.5) to [out = 200, in = 225]  (-0.8 - 0.08,-0.8 + 0.15);
\end{scope}

\begin{scope}[xshift = 4.5 cm, yshift = -3 cm]
\draw[dashed] (0,-0.18) circle (1 cm);
\draw (0.15,0.866-0.05) to [out = -90, in = 135] (0.8 - 0.095,-0.8 - 0.1);
\draw (-0.15,0.866-0.05) to [out = -90, in = 45] (-0.8 - 0.08,-0.8 + 0.15);
\draw (-0.8 + 0.095,-0.8 - 0.1) to [out = 45, in = 135] (0.8 + 0.08,-0.8 + 0.15);
\draw (0.15,0.866-0.05) to [out = 90, in = 0] (0, 1.2) to [out = 180, in = 90] (-0.15,0.866-0.05);
\draw (0.8 + 0.08,-0.8 + 0.15) to [out = -45, in = -20] (0, -1.5) to [out = 160, in = 225] (-0.8 + 0.095,-0.8 - 0.1);
\draw (0.8 - 0.095,-0.8 - 0.1) to [out = -45, in = 20] (0, -1.5) to [out = 200, in = 225]  (-0.8 - 0.08,-0.8 + 0.15);
\end{scope}

\begin{scope}[xshift = 4.5 cm]
\begin{scope}[xshift = 4.5 cm, yshift = 3 cm]
\draw[dashed] (0,-0.18) circle (1 cm);
\draw (-0.15,0.866-0.05) to [out = -90, in = 135] (0.8 + 0.08,-0.8 + 0.15);
\draw (0.15,0.866-0.05) to [out = -90,  in = 60] (-0.2, 0) to [out = 240, in = 45] (-0.8 + 0.095,-0.8 - 0.1);
\draw (-0.8 - 0.08,-0.8 + 0.15) to [out = 45, in = 135] (0.8 - 0.095,-0.8 - 0.1);
\draw (0.15,0.866-0.05) to [out = 90, in = 0] (0, 1.2) to [out = 180, in = 90] (-0.15,0.866-0.05);
\draw (0.8 + 0.08,-0.8 + 0.15) to [out = -45, in = -20] (0, -1.5) to [out = 160, in = 225] (-0.8 + 0.095,-0.8 - 0.1);
\draw (0.8 - 0.095,-0.8 - 0.1) to [out = -45, in = 20] (0, -1.5) to [out = 200, in = 225]  (-0.8 - 0.08,-0.8 + 0.15);
\end{scope}

\begin{scope}[xshift = 4.5 cm, yshift = 0 cm]
\draw[dashed] (0,-0.18) circle (1 cm);
\draw (-0.15,0.866-0.05) to [out = -90, in = 120] (0.2, 0) to [out = -60, in = 135] (0.8 - 0.095,-0.8 - 0.1);
\draw (0.15,0.866-0.05) to [out = -90, in = 45] (-0.8 - 0.08,-0.8 + 0.15);
\draw (-0.8 + 0.095,-0.8 - 0.1) to [out = 45, in = 135] (0.8 + 0.08,-0.8 + 0.15);
\draw (0.15,0.866-0.05) to [out = 90, in = 0] (0, 1.2) to [out = 180, in = 90] (-0.15,0.866-0.05);
\draw (0.8 + 0.08,-0.8 + 0.15) to [out = -45, in = -20] (0, -1.5) to [out = 160, in = 225] (-0.8 + 0.095,-0.8 - 0.1);
\draw (0.8 - 0.095,-0.8 - 0.1) to [out = -45, in = 20] (0, -1.5) to [out = 200, in = 225]  (-0.8 - 0.08,-0.8 + 0.15);
\end{scope}

\begin{scope}[xshift = 4.5 cm, yshift = -3 cm]
\draw[dashed] (0,-0.18) circle (1 cm);
\draw (0.15,0.866-0.05) to [out = -90, in = 135] (0.8 - 0.095,-0.8 - 0.1);
\draw (-0.15,0.866-0.05) to [out = -90, in = 45] (-0.8 + 0.095,-0.8 - 0.1);
\draw (-0.8 - 0.08,-0.8 + 0.15) to [out = 45, in = 180] (0, -0.5) to [out = 0, in = 135] (0.8 + 0.08,-0.8 + 0.15);
\draw (0.15,0.866-0.05) to [out = 90, in = 0] (0, 1.2) to [out = 180, in = 90] (-0.15,0.866-0.05);
\draw (0.8 + 0.08,-0.8 + 0.15) to [out = -45, in = -20] (0, -1.5) to [out = 160, in = 225] (-0.8 + 0.095,-0.8 - 0.1);
\draw (0.8 - 0.095,-0.8 - 0.1) to [out = -45, in = 20] (0, -1.5) to [out = 200, in = 225]  (-0.8 - 0.08,-0.8 + 0.15);
\end{scope}

\end{scope}

\begin{scope}[xshift = 13.5 cm]
\begin{scope}
\draw[dashed] (0,-0.18) circle (1 cm);
\draw (-0.15,0.866-0.05) to [out = -90, in = 120] (0.2, 0) to [out = -60, in = 135] (0.8 - 0.095,-0.8 - 0.1);
\draw (0.15,0.866-0.05) to [out = -90,  in = 60] (-0.2, 0) to [out = 240, in = 45] (-0.8 + 0.095,-0.8 - 0.1);
\draw (-0.8 - 0.08,-0.8 + 0.15) to [out = 45, in = 180] (0, -0.5) to [out = 0, in = 135] (0.8 + 0.08,-0.8 + 0.15);
\draw (0.15,0.866-0.05) to [out = 90, in = 0] (0, 1.2) to [out = 180, in = 90] (-0.15,0.866-0.05);
\draw (0.8 + 0.08,-0.8 + 0.15) to [out = -45, in = -20] (0, -1.5) to [out = 160, in = 225] (-0.8 + 0.095,-0.8 - 0.1);
\draw (0.8 - 0.095,-0.8 - 0.1) to [out = -45, in = 20] (0, -1.5) to [out = 200, in = 225]  (-0.8 - 0.08,-0.8 + 0.15);
\end{scope}

\end{scope}

\end{tikzpicture}$$
\caption{Case 5:  One circle to one circle.}
\label{fig:Case5}
\end{figure}

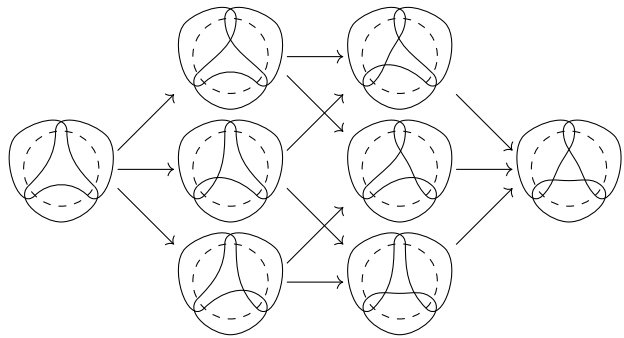
\begin{figure}[H]
$$\begin{tikzpicture}[scale = 0.5]

\begin{scope}
\draw[dashed] (0,-0.18) circle (1 cm);
\draw (0.15,0.866-0.05) to [out = -90, in = 135] (0.8 + 0.08,-0.8 + 0.15);
\draw (-0.15,0.866-0.05) to [out = -90, in = 45] (-0.8 - 0.08,-0.8 + 0.15);
\draw (-0.8 + 0.095,-0.8 - 0.1) to [out = 45, in = 135] (0.8 - 0.095,-0.8 - 0.1);
\draw (-0.15,0.866-0.05) to [out = 90, in = 135] (1.2,0.7) to [out = -45, in = -45] (0.8 - 0.095,-0.8 - 0.1);
\draw (0.15,0.866-0.05) to [out = 90, in = 45] (-1.20, 0.7) to [out = 225, in = 225]  (-0.8 + 0.095,-0.8 - 0.1);
\draw (0.8 + 0.08,-0.8 + 0.15) to [out = -45, in = 0] (0, -1.6) to [out = 180, in = 225] (-0.8 - 0.08,-0.8 + 0.15);
\end{scope}

\begin{scope}[xshift = 1 cm, yshift = -0.2 cm]
\draw[->] (0.5,0.5) --(2,2);
\draw[->] (0.5,0) --(2,0);
\draw[->] (0.5,-0.5) --(2,-2);
\end{scope}

\begin{scope}[xshift = 5.5 cm, yshift = -0.2 cm]
\draw[->] (0.5,0.5) --(2,2);
\draw[->] (0.5,-0.5) --(2,-2);
\draw[->] (0.5,3) --(2,3);
\draw[->] (0.5,-3) --(2,-3);
\draw[->] (0.5,-2.5) --(2,-1);
\draw[->] (0.5,2.5) --(2,1);
\end{scope}

\begin{scope}[xshift = 8.5 cm, yshift = -0.2 cm]
\draw[->] (2,2) -- (3.5,0.5);
\draw[->] (2,0) -- (3.5,0);
\draw[->] (2,-2) -- (3.5,-0.5);
\end{scope}

\begin{scope}[xshift = 4.5 cm, yshift = 3 cm]
\draw[dashed] (0,-0.18) circle (1 cm);
\draw (-0.15,0.866-0.05) to [out = -90, in = 135] (0.8 + 0.08,-0.8 + 0.15);
\draw (0.15,0.866-0.05) to [out = -90, in = 45] (-0.8 - 0.08,-0.8 + 0.15);
\draw (-0.8 + 0.095,-0.8 - 0.1) to [out = 45, in = 135] (0.8 - 0.095,-0.8 - 0.1);
\draw (-0.15,0.866-0.05) to [out = 90, in = 135] (1.2,0.7) to [out = -45, in = -45] (0.8 - 0.095,-0.8 - 0.1);
\draw (0.15,0.866-0.05) to [out = 90, in = 45] (-1.20, 0.7) to [out = 225, in = 225]  (-0.8 + 0.095,-0.8 - 0.1);
\draw (0.8 + 0.08,-0.8 + 0.15) to [out = -45, in = 0] (0, -1.6) to [out = 180, in = 225] (-0.8 - 0.08,-0.8 + 0.15);
\end{scope}

\begin{scope}[xshift = 4.5 cm, yshift = 0 cm]
\draw[dashed] (0,-0.18) circle (1 cm);
\draw (0.15,0.866-0.05) to [out = -90, in = 135] (0.8 + 0.08,-0.8 + 0.15);
\draw (-0.15,0.866-0.05) to [out = -90, in = 45] (-0.8 + 0.095,-0.8 - 0.1);
\draw (-0.8 - 0.08,-0.8 + 0.15) to [out = 45, in = 135] (0.8 - 0.095,-0.8 - 0.1);
\draw (-0.15,0.866-0.05) to [out = 90, in = 135] (1.2,0.7) to [out = -45, in = -45] (0.8 - 0.095,-0.8 - 0.1);
\draw (0.15,0.866-0.05) to [out = 90, in = 45] (-1.20, 0.7) to [out = 225, in = 225]  (-0.8 + 0.095,-0.8 - 0.1);
\draw (0.8 + 0.08,-0.8 + 0.15) to [out = -45, in = 0] (0, -1.6) to [out = 180, in = 225] (-0.8 - 0.08,-0.8 + 0.15);
\end{scope}

\begin{scope}[xshift = 4.5 cm, yshift = -3 cm]
\draw[dashed] (0,-0.18) circle (1 cm);
\draw (0.15,0.866-0.05) to [out = -90, in = 135] (0.8 - 0.095,-0.8 - 0.1);
\draw (-0.15,0.866-0.05) to [out = -90, in = 45] (-0.8 - 0.08,-0.8 + 0.15);
\draw (-0.8 + 0.095,-0.8 - 0.1) to [out = 45, in = 135] (0.8 + 0.08,-0.8 + 0.15);
\draw (-0.15,0.866-0.05) to [out = 90, in = 135] (1.2,0.7) to [out = -45, in = -45] (0.8 - 0.095,-0.8 - 0.1);
\draw (0.15,0.866-0.05) to [out = 90, in = 45] (-1.20, 0.7) to [out = 225, in = 225]  (-0.8 + 0.095,-0.8 - 0.1);
\draw (0.8 + 0.08,-0.8 + 0.15) to [out = -45, in = 0] (0, -1.6) to [out = 180, in = 225] (-0.8 - 0.08,-0.8 + 0.15);
\end{scope}

\begin{scope}[xshift = 4.5 cm]
\begin{scope}[xshift = 4.5 cm, yshift = 3 cm]
\draw[dashed] (0,-0.18) circle (1 cm);
\draw (-0.15,0.866-0.05) to [out = -90, in = 135] (0.8 + 0.08,-0.8 + 0.15);
\draw (0.15,0.866-0.05) to [out = -90,  in = 60] (-0.2, 0) to [out = 240, in = 45] (-0.8 + 0.095,-0.8 - 0.1);
\draw (-0.8 - 0.08,-0.8 + 0.15) to [out = 45, in = 135] (0.8 - 0.095,-0.8 - 0.1);
\draw (-0.15,0.866-0.05) to [out = 90, in = 135] (1.2,0.7) to [out = -45, in = -45] (0.8 - 0.095,-0.8 - 0.1);
\draw (0.15,0.866-0.05) to [out = 90, in = 45] (-1.20, 0.7) to [out = 225, in = 225]  (-0.8 + 0.095,-0.8 - 0.1);
\draw (0.8 + 0.08,-0.8 + 0.15) to [out = -45, in = 0] (0, -1.6) to [out = 180, in = 225] (-0.8 - 0.08,-0.8 + 0.15);
\end{scope}

\begin{scope}[xshift = 4.5 cm, yshift = 0 cm]
\draw[dashed] (0,-0.18) circle (1 cm);
\draw (-0.15,0.866-0.05) to [out = -90, in = 120] (0.2, 0) to [out = -60, in = 135] (0.8 - 0.095,-0.8 - 0.1);
\draw (0.15,0.866-0.05) to [out = -90, in = 45] (-0.8 - 0.08,-0.8 + 0.15);
\draw (-0.8 + 0.095,-0.8 - 0.1) to [out = 45, in = 135] (0.8 + 0.08,-0.8 + 0.15);
\draw (-0.15,0.866-0.05) to [out = 90, in = 135] (1.2,0.7) to [out = -45, in = -45] (0.8 - 0.095,-0.8 - 0.1);
\draw (0.15,0.866-0.05) to [out = 90, in = 45] (-1.20, 0.7) to [out = 225, in = 225]  (-0.8 + 0.095,-0.8 - 0.1);
\draw (0.8 + 0.08,-0.8 + 0.15) to [out = -45, in = 0] (0, -1.6) to [out = 180, in = 225] (-0.8 - 0.08,-0.8 + 0.15);
\end{scope}

\begin{scope}[xshift = 4.5 cm, yshift = -3 cm]
\draw[dashed] (0,-0.18) circle (1 cm);
\draw (0.15,0.866-0.05) to [out = -90, in = 135] (0.8 - 0.095,-0.8 - 0.1);
\draw (-0.15,0.866-0.05) to [out = -90, in = 45] (-0.8 + 0.095,-0.8 - 0.1);
\draw (-0.8 - 0.08,-0.8 + 0.15) to [out = 45, in = 180] (0, -0.5) to [out = 0, in = 135] (0.8 + 0.08,-0.8 + 0.15);
\draw (-0.15,0.866-0.05) to [out = 90, in = 135] (1.2,0.7) to [out = -45, in = -45] (0.8 - 0.095,-0.8 - 0.1);
\draw (0.15,0.866-0.05) to [out = 90, in = 45] (-1.20, 0.7) to [out = 225, in = 225]  (-0.8 + 0.095,-0.8 - 0.1);
\draw (0.8 + 0.08,-0.8 + 0.15) to [out = -45, in = 0] (0, -1.6) to [out = 180, in = 225] (-0.8 - 0.08,-0.8 + 0.15);
\end{scope}

\end{scope}

\begin{scope}[xshift = 13.5 cm]
\begin{scope}
\draw[dashed] (0,-0.18) circle (1 cm);
\draw (-0.15,0.866-0.05) to [out = -90, in = 120] (0.2, 0) to [out = -60, in = 135] (0.8 - 0.095,-0.8 - 0.1);
\draw (0.15,0.866-0.05) to [out = -90,  in = 60] (-0.2, 0) to [out = 240, in = 45] (-0.8 + 0.095,-0.8 - 0.1);
\draw (-0.8 - 0.08,-0.8 + 0.15) to [out = 45, in = 180] (0, -0.5) to [out = 0, in = 135] (0.8 + 0.08,-0.8 + 0.15);
\draw (-0.15,0.866-0.05) to [out = 90, in = 135] (1.2,0.7) to [out = -45, in = -45] (0.8 - 0.095,-0.8 - 0.1);
\draw (0.15,0.866-0.05) to [out = 90, in = 45] (-1.20, 0.7) to [out = 225, in = 225]  (-0.8 + 0.095,-0.8 - 0.1);
\draw (0.8 + 0.08,-0.8 + 0.15) to [out = -45, in = 0] (0, -1.6) to [out = 180, in = 225] (-0.8 - 0.08,-0.8 + 0.15);
\end{scope}

\end{scope}

\end{tikzpicture}$$
\caption{Case 6:  One circle to three circles.}
\label{fig:Case6}
\end{figure}

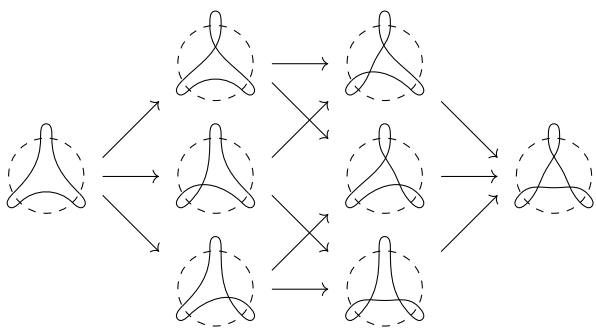
\begin{figure}[H]
$$\begin{tikzpicture}[scale = 0.5]

\begin{scope}
\draw[dashed] (0,-0.18) circle (1 cm);
\draw (0.15,0.866-0.05) to [out = -90, in = 135] (0.8 + 0.08,-0.8 + 0.15);
\draw (-0.15,0.866-0.05) to [out = -90, in = 45] (-0.8 - 0.08,-0.8 + 0.15);
\draw (-0.8 + 0.095,-0.8 - 0.1) to [out = 45, in = 135] (0.8 - 0.095,-0.8 - 0.1);

\draw (0.15,0.866-0.05) to [out = 90, in = 0] (0, 1.2) to [out = 180, in = 90] (-0.15,0.866-0.05);
\draw (0.8 + 0.08,-0.8 + 0.15) to [out = -45, in = 45]  (0.8 + 0.2,-0.8 - 0.2) to [out = 225, in = -45]  (0.8 - 0.095,-0.8 - 0.1);
\draw (-0.8 - 0.08,-0.8 + 0.15) to [out = 225, in = 135] (-0.8 - 0.2,-0.8 - 0.2) to [out = -45, in = 225] (-0.8 + 0.095,-0.8 - 0.1);
\end{scope}

\begin{scope}[xshift = 1 cm, yshift = -0.2 cm]
\draw[->] (0.5,0.5) --(2,2);
\draw[->] (0.5,0) --(2,0);
\draw[->] (0.5,-0.5) --(2,-2);
\end{scope}

\begin{scope}[xshift = 5.5 cm, yshift = -0.2 cm]
\draw[->] (0.5,0.5) --(2,2);
\draw[->] (0.5,-0.5) --(2,-2);
\draw[->] (0.5,3) --(2,3);
\draw[->] (0.5,-3) --(2,-3);
\draw[->] (0.5,-2.5) --(2,-1);
\draw[->] (0.5,2.5) --(2,1);
\end{scope}

\begin{scope}[xshift = 8.5 cm, yshift = -0.2 cm]
\draw[->] (2,2) -- (3.5,0.5);
\draw[->] (2,0) -- (3.5,0);
\draw[->] (2,-2) -- (3.5,-0.5);
\end{scope}

\begin{scope}[xshift = 4.5 cm, yshift = 3 cm]
\draw[dashed] (0,-0.18) circle (1 cm);
\draw (-0.15,0.866-0.05) to [out = -90, in = 135] (0.8 + 0.08,-0.8 + 0.15);
\draw (0.15,0.866-0.05) to [out = -90, in = 45] (-0.8 - 0.08,-0.8 + 0.15);
\draw (-0.8 + 0.095,-0.8 - 0.1) to [out = 45, in = 135] (0.8 - 0.095,-0.8 - 0.1);

\draw (0.15,0.866-0.05) to [out = 90, in = 0] (0, 1.2) to [out = 180, in = 90] (-0.15,0.866-0.05);
\draw (0.8 + 0.08,-0.8 + 0.15) to [out = -45, in = 45]  (0.8 + 0.2,-0.8 - 0.2) to [out = 225, in = -45]  (0.8 - 0.095,-0.8 - 0.1);
\draw (-0.8 - 0.08,-0.8 + 0.15) to [out = 225, in = 135] (-0.8 - 0.2,-0.8 - 0.2) to [out = -45, in = 225] (-0.8 + 0.095,-0.8 - 0.1);
\end{scope}

\begin{scope}[xshift = 4.5 cm, yshift = 0 cm]
\draw[dashed] (0,-0.18) circle (1 cm);
\draw (0.15,0.866-0.05) to [out = -90, in = 135] (0.8 + 0.08,-0.8 + 0.15);
\draw (-0.15,0.866-0.05) to [out = -90, in = 45] (-0.8 + 0.095,-0.8 - 0.1);
\draw (-0.8 - 0.08,-0.8 + 0.15) to [out = 45, in = 135] (0.8 - 0.095,-0.8 - 0.1);

\draw (0.15,0.866-0.05) to [out = 90, in = 0] (0, 1.2) to [out = 180, in = 90] (-0.15,0.866-0.05);
\draw (0.8 + 0.08,-0.8 + 0.15) to [out = -45, in = 45]  (0.8 + 0.2,-0.8 - 0.2) to [out = 225, in = -45]  (0.8 - 0.095,-0.8 - 0.1);
\draw (-0.8 - 0.08,-0.8 + 0.15) to [out = 225, in = 135] (-0.8 - 0.2,-0.8 - 0.2) to [out = -45, in = 225] (-0.8 + 0.095,-0.8 - 0.1);
\end{scope}

\begin{scope}[xshift = 4.5 cm, yshift = -3 cm]
\draw[dashed] (0,-0.18) circle (1 cm);
\draw (0.15,0.866-0.05) to [out = -90, in = 135] (0.8 - 0.095,-0.8 - 0.1);
\draw (-0.15,0.866-0.05) to [out = -90, in = 45] (-0.8 - 0.08,-0.8 + 0.15);
\draw (-0.8 + 0.095,-0.8 - 0.1) to [out = 45, in = 135] (0.8 + 0.08,-0.8 + 0.15);

\draw (0.15,0.866-0.05) to [out = 90, in = 0] (0, 1.2) to [out = 180, in = 90] (-0.15,0.866-0.05);
\draw (0.8 + 0.08,-0.8 + 0.15) to [out = -45, in = 45]  (0.8 + 0.2,-0.8 - 0.2) to [out = 225, in = -45]  (0.8 - 0.095,-0.8 - 0.1);
\draw (-0.8 - 0.08,-0.8 + 0.15) to [out = 225, in = 135] (-0.8 - 0.2,-0.8 - 0.2) to [out = -45, in = 225] (-0.8 + 0.095,-0.8 - 0.1);
\end{scope}

\begin{scope}[xshift = 4.5 cm]
\begin{scope}[xshift = 4.5 cm, yshift = 3 cm]
\draw[dashed] (0,-0.18) circle (1 cm);
\draw (-0.15,0.866-0.05) to [out = -90, in = 135] (0.8 + 0.08,-0.8 + 0.15);
\draw (0.15,0.866-0.05) to [out = -90,  in = 60] (-0.2, 0) to [out = 240, in = 45] (-0.8 + 0.095,-0.8 - 0.1);
\draw (-0.8 - 0.08,-0.8 + 0.15) to [out = 45, in = 135] (0.8 - 0.095,-0.8 - 0.1);

\draw (0.15,0.866-0.05) to [out = 90, in = 0] (0, 1.2) to [out = 180, in = 90] (-0.15,0.866-0.05);
\draw (0.8 + 0.08,-0.8 + 0.15) to [out = -45, in = 45]  (0.8 + 0.2,-0.8 - 0.2) to [out = 225, in = -45]  (0.8 - 0.095,-0.8 - 0.1);
\draw (-0.8 - 0.08,-0.8 + 0.15) to [out = 225, in = 135] (-0.8 - 0.2,-0.8 - 0.2) to [out = -45, in = 225] (-0.8 + 0.095,-0.8 - 0.1);
\end{scope}

\begin{scope}[xshift = 4.5 cm, yshift = 0 cm]
\draw[dashed] (0,-0.18) circle (1 cm);
\draw (-0.15,0.866-0.05) to [out = -90, in = 120] (0.2, 0) to [out = -60, in = 135] (0.8 - 0.095,-0.8 - 0.1);
\draw (0.15,0.866-0.05) to [out = -90, in = 45] (-0.8 - 0.08,-0.8 + 0.15);
\draw (-0.8 + 0.095,-0.8 - 0.1) to [out = 45, in = 135] (0.8 + 0.08,-0.8 + 0.15);

\draw (0.15,0.866-0.05) to [out = 90, in = 0] (0, 1.2) to [out = 180, in = 90] (-0.15,0.866-0.05);
\draw (0.8 + 0.08,-0.8 + 0.15) to [out = -45, in = 45]  (0.8 + 0.2,-0.8 - 0.2) to [out = 225, in = -45]  (0.8 - 0.095,-0.8 - 0.1);
\draw (-0.8 - 0.08,-0.8 + 0.15) to [out = 225, in = 135] (-0.8 - 0.2,-0.8 - 0.2) to [out = -45, in = 225] (-0.8 + 0.095,-0.8 - 0.1);
\end{scope}

\begin{scope}[xshift = 4.5 cm, yshift = -3 cm]
\draw[dashed] (0,-0.18) circle (1 cm);
\draw (0.15,0.866-0.05) to [out = -90, in = 135] (0.8 - 0.095,-0.8 - 0.1);
\draw (-0.15,0.866-0.05) to [out = -90, in = 45] (-0.8 + 0.095,-0.8 - 0.1);
\draw (-0.8 - 0.08,-0.8 + 0.15) to [out = 45, in = 180] (0, -0.5) to [out = 0, in = 135] (0.8 + 0.08,-0.8 + 0.15);

\draw (0.15,0.866-0.05) to [out = 90, in = 0] (0, 1.2) to [out = 180, in = 90] (-0.15,0.866-0.05);
\draw (0.8 + 0.08,-0.8 + 0.15) to [out = -45, in = 45]  (0.8 + 0.2,-0.8 - 0.2) to [out = 225, in = -45]  (0.8 - 0.095,-0.8 - 0.1);
\draw (-0.8 - 0.08,-0.8 + 0.15) to [out = 225, in = 135] (-0.8 - 0.2,-0.8 - 0.2) to [out = -45, in = 225] (-0.8 + 0.095,-0.8 - 0.1);
\end{scope}

\end{scope}

\begin{scope}[xshift = 13.5 cm]
\begin{scope}
\draw[dashed] (0,-0.18) circle (1 cm);
\draw (-0.15,0.866-0.05) to [out = -90, in = 120] (0.2, 0) to [out = -60, in = 135] (0.8 - 0.095,-0.8 - 0.1);
\draw (0.15,0.866-0.05) to [out = -90,  in = 60] (-0.2, 0) to [out = 240, in = 45] (-0.8 + 0.095,-0.8 - 0.1);
\draw (-0.8 - 0.08,-0.8 + 0.15) to [out = 45, in = 180] (0, -0.5) to [out = 0, in = 135] (0.8 + 0.08,-0.8 + 0.15);

\draw (0.15,0.866-0.05) to [out = 90, in = 0] (0, 1.2) to [out = 180, in = 90] (-0.15,0.866-0.05);
\draw (0.8 + 0.08,-0.8 + 0.15) to [out = -45, in = 45]  (0.8 + 0.2,-0.8 - 0.2) to [out = 225, in = -45]  (0.8 - 0.095,-0.8 - 0.1);
\draw (-0.8 - 0.08,-0.8 + 0.15) to [out = 225, in = 135] (-0.8 - 0.2,-0.8 - 0.2) to [out = -45, in = 225] (-0.8 + 0.095,-0.8 - 0.1);
\end{scope}

\end{scope}

\end{tikzpicture}$$
\caption{Case 7:  One circle to one circle.}
\label{fig:Case7}
\end{figure}

\section{Computations of the Vertex Polynomial}\label{app:computations}
In this section we provide Mathematica code for the computation of the various vertex polynomials for (signed) ribbon graphs.  We begin with a description of a vertex planar diagram, or VPD notation. 

Let $\Gamma$ be a ribbon diagram for a trivalent graph $G(V,E)$.  For simplicity, we assume all edges are positive edges (negative edges will be described later).  Enumerate the edges, $E = \{e_1,\ldots, e_k\}$, and split each edge into two half-edges.  We then number the half-edges associated to $e_i$ with the labels $2i-1$ and $2i$ (see \Cref{fig:VPD}).  Note that there are two ways to do this for each half-edge, but the choice is arbitrary, and equivalent to the choice of an orientation of each edge that runs from the lesser to the greater label.

\begin{figure}[H]
\includegraphics[scale=1]{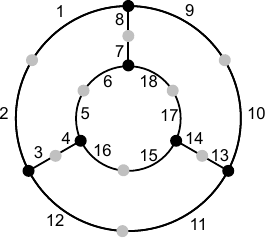}
\caption{Labeling for the vertex planar diagram or VPD notation.  The original vertices of the graph are shown in black, and the vertices used to split the half-edges are shown in grey.  }\label{fig:VPD}
\end{figure}

We record the VPD notation using a series of $3$-tuples to specify the cyclic order of the half-edges at each of the original vertices of the graph, recorded counterclockwise around each vertex.  For the $P3$ shown in \Cref{fig:VPD} we obtain the code:
$$P3 := G[V[1,8,9],V[6,18,7],V[4,16,5],V[14,17,15],V[2,12,3],V[10,13,11]]$$
This encoding scheme is equivalent to the choice of a \emph{signed rotation system}, which is a pair of permutations  $(\sigma, \theta)$, each acting on the set of integers $i = 1, \ldots 2k$, where $\theta$ is a fixed-point free involution, and the group generated by $\sigma$ and $\theta$ acts transitively on the integers $i = 1,\ldots 2k$.  For the VPD code, $\theta$ is assumed to be a product of transpositions, $\Pi_{i=1}^{k} (2i-1 \ \  2i)$, and therefore suppressed from the VPD notation.  The VPD notation itself records a triple for each vertex, which is the $3$-cycle specifying the cyclic orientation of the edges at the vertex.

After copying and pasting the code below to a Mathematica notebook, along with the VPD notation above, one may run the following command to obtain the $2$-color vertex polynomial: \verb$TwoColorVert[P3]$.
$$(1 + q) (-6 q^3 (1 + q)^2 - 6 q^{15} (1 + q)^2 + (1 + q)^4 + q^{18} (1 + q)^4 - 4 q^9 (3 + 2 (1 + q)^2) + 3 q^6 (2 + 3 (1 + q)^2) + 3 q^{12} (2 + 3 (1 + q)^2))$$
Expanding the polynomial, we see that it is equivalent to the calculation in \Cref{ex:P3Calc}.

\begin{example}\label{ex:theta}
The $\theta$ graph shown in \Cref{fig:VPDTheta} has VPD code
$$theta :=G[V[1,6,4],V[2,3,5]].$$
The vertex polynomial is calculated with the command \verb$Vert[theta]$ and produces the output $2n(n^2 - 1)$.

\begin{figure}[H]
\includegraphics[scale=1]{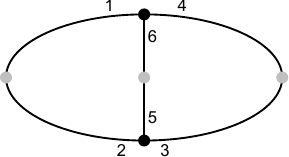}
\caption{VPD notation labels for the $\theta$ graph. }\label{fig:VPDTheta}
\end{figure}
\end{example}

To encode a negative edge, which allows one to encode VPD notation for non-orientable ribbon graphs, we use a minus sign on the half-edge with an odd label.  Otherwise, the code is generated in the same manner as described above.

\begin{example} \label{ex:VPDThetaNeg}
The $\theta$ graph shown in \Cref{fig:VPDThetaNeg} has VPD code
$$thetaNeg:=G[V[1,6,4],V[2,3,-5]].$$
The vertex polynomial is calculated with the command \verb$Vert[thetaNeg]$ and produces the output $2n(n-1)$.

\begin{figure}[H]
\includegraphics[scale=1]{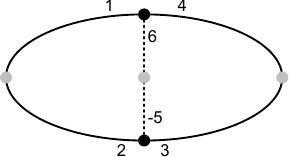}
\caption{VPD notation labels for the $\theta$ graph with a negative edge. }\label{fig:VPDThetaNeg}
\end{figure}
This example shows that \Cref{thm:VertexParity} does not hold for non-orientable graphs.  Compare this to the next example, where we obtain an even polynomial.
\end{example}

\begin{example}\label{ex:K4}
Blowing up the $\theta$ graph at a single vertex produces the $K_4$ graph shown in \Cref{fig:VPDK4}.  The VPD code is given by
$$K4 := G[V[1, 10, 6], V[9, 12, 8], V[4, 5, 7], V[2, 3, 11]].$$
The vertex polynomial is calculated with the command \verb$Vert[K4]$, and produces the output $2n^2(n^2-1)$.

\begin{figure}[H]
\includegraphics[scale=.9]{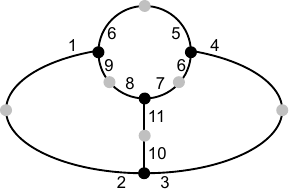}
\caption{Blowing up a $\theta$ graph at one vertex produces a $K_4$. }\label{fig:VPDK4}
\end{figure}

\end{example}

\begin{example}\label{ex:P3}
Blowing up the $\theta$ graph at both vertices produces the $3$-prism, i.e., $\theta^\flat= P3$, shown in \Cref{fig:VPDP3}.  The VPD code is given by
$$thetab:=G[V[1,10,6],V[8,9,18],V[4,5,7],V[2,12,13],V[14,15,17],V[3,16,11]].$$
The vertex polynomial is calculated with the command \verb$Vert[thetab]$, and produces the output $2n^3(n^2-1)$.

\begin{figure}[H]
\includegraphics[scale=.85]{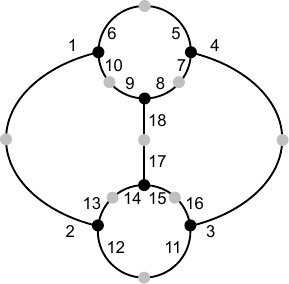}
\caption{Blowing up a $\theta$ graph at both vertices produces a $P3$. }\label{fig:VPDP3}
\end{figure}

\end{example}

It should be noted that \Cref{ex:theta}, \Cref{ex:K4}, and \Cref{ex:P3} are all related via blowing up at a single vertex at a time.  The vertex polynomial of each is related to the vertex polynomial of the previous one by a factor of $n$, which illustrates \Cref{thm:VertexPolyBlowUp}. 

\begin{example}\label{ex:K33Computation}
The $K_{3,3}$ of \Cref{ex:K33Filtered} is shown again in \Cref{fig:VPDK33}, and has VPD code:
$$K33 := G[V[1,14,12],V[11,16,10],V[9,18,8],V[7,6,13],V[5,4,15],V[3,2,17]].$$
\begin{figure}[H]
\includegraphics[scale=.37]{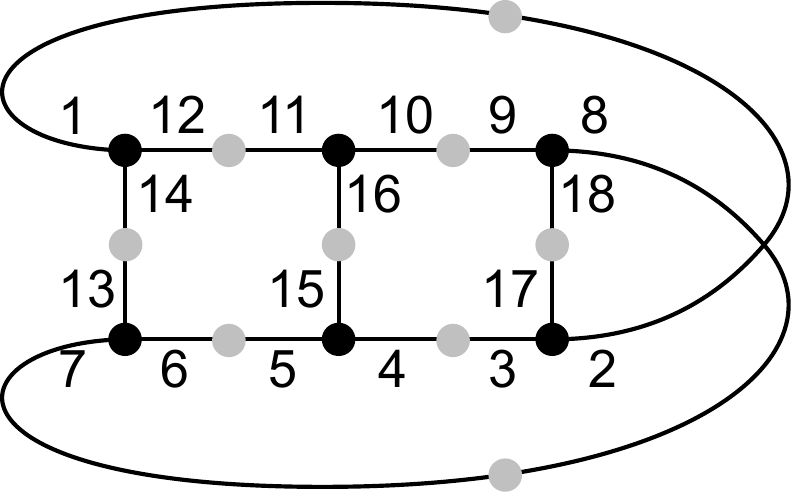}
\caption{The $K_{3,3}$ graph. }\label{fig:VPDK33}
\end{figure}
\end{example}
Running the command $Vert[K33]$ we obtain that the vertex polynomial is $0$.  However, running the command $TwoColorVert[K33]$ we obtain
$$(1 + q) (-1 + q^3)^2 ((1 + q)^2 + q^{12} (1 + q)^2 - 
   2 q^3 (1 + (1 + q)^2) - 2 q^9 (1 + (1 + q)^2) + 
   2 q^6 (1 + 2 (1 + q)^2)).$$
Since the $2$-color vertex polynomial is nonzero, this indicates that the filtered $2$-color vertex homology is non-trivial, as observed in \Cref{ex:K33Filtered}.  Of course, to determine the ranks of the homology groups requires more work but is feasible to do by hand.

\begin{example}\label{ex:dodecahedron}
The dodecahedron of \Cref{fig:DodecVDP} has VPD code:
\begin{multline*}
Dodec := G[V[2, 3, 13], V[1, 11, 10], V[8, 9, 19], V[6, 7, 17], V[4, 5, 15], V[12, 21, 40],V[22, 23, 41],\\  
V[24, 14, 25], V[26, 27, 43], V[16, 29, 28],  V[30, 31, 45], V[18, 33, 32],  V[34, 35, 47],V[36, 20, 37],\\ 
V[38, 39, 49],V[60, 50, 51], V[52, 42, 53], V[54, 44, 55],V[46, 57, 56], V[58, 48, 59]].
\end{multline*}

\begin{figure}[H]
\includegraphics[scale=.37]{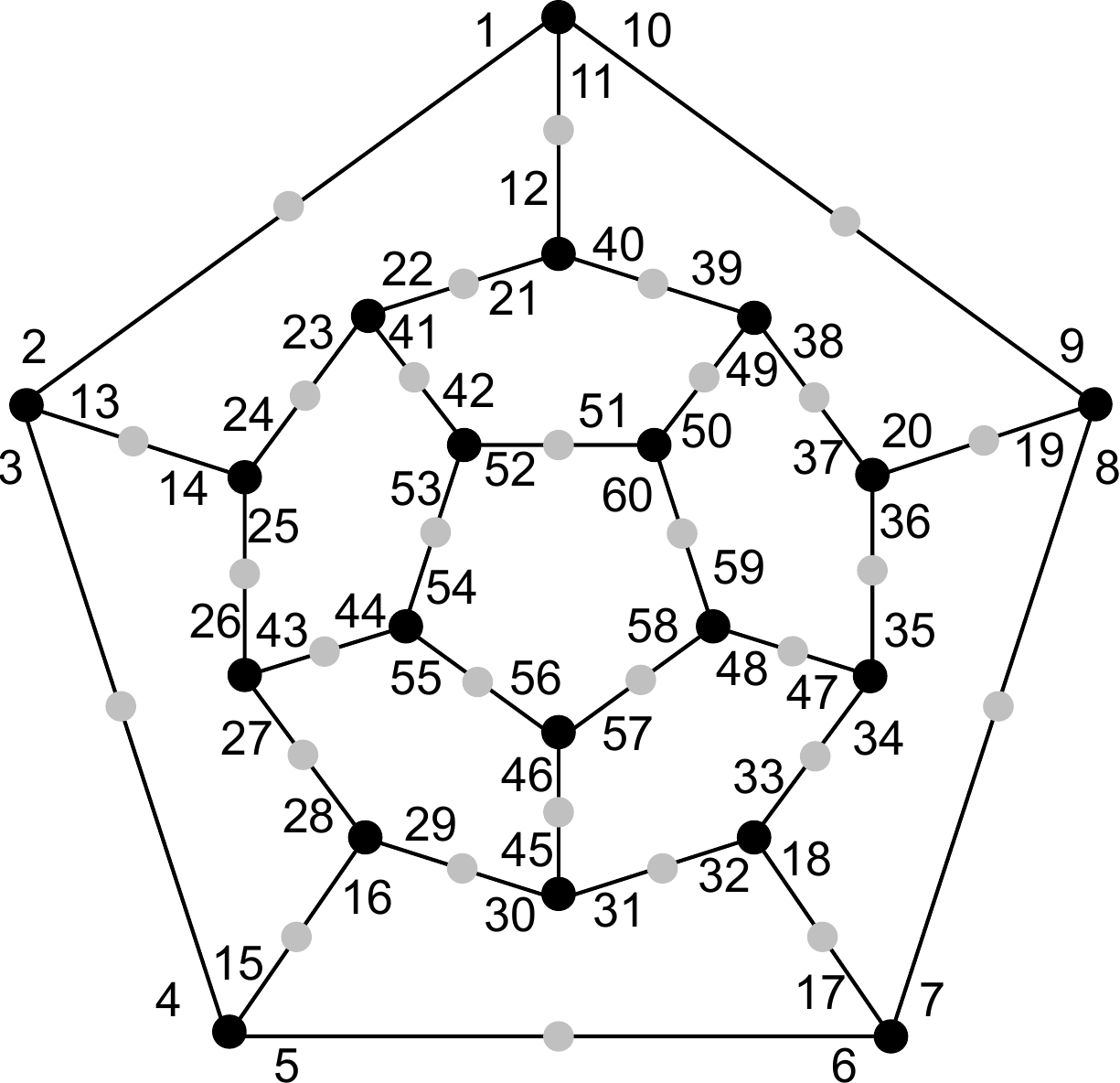}
\caption{The dodecahedron graph. }\label{fig:DodecVDP}
\end{figure}
\end{example}
Running the command $Vert[Dodec]$ we obtain that the vertex polynomial is 
$$2 (n+1) n^2 (n-1) (240 - 116 n^2 + 114 n^4 + 11 n^6 + n^8).$$

This example illustrates the difference between computations of filtered vertex homology and, at least, initial strategies employed to estimate the ranks of Kronheimer and Mrowka's instanton homology (see \cite{KM3, KR, Boozer}). Briefly, the strategy obtains (lower) bounds on the rank of instanton homology of a planar graph (a web $K$) by computing the rank of a bilinear form restricted to a subspace by evaluating it on clever choices of closed foams that contain the web.  Even though each closed foam can be computed algorithmically, the procedure to test the forms may not  be since there is potentially an infinite number of foams that might need to be tested. Boozer in \cite{Boozer} used these techniques to further restrict the bounds on the number of Tait colorings of the dodecahedron to $$58\leq \dim J^\flat(Dodec) \leq 60 \leq \dim J^\sharp(Dodec) \leq 68,$$ where $J^\flat$ and $J^\sharp$ are functors from from the category of webs and foams to the category of vector spaces. (The dimension of $J^\sharp(K)$ was conjectured to equal the number of Tait colorings in \cite{KM3}.)

The computations of our homology theories, on the other hand, are completely algorithmic and give exact values. For example, Theorem~\ref{thm:oddMatchingsCancel} shows that the number of Tait colorings of the dodecahedron graph is $60$ by evaluating $Vert[Dodec]$ at $n=2$.

Note that the calculation of $Vert[Dodec]$ will take about a day to run.  Nevertheless, this example also shows that the polynomial can be calculated in a reasonable amount of time even for relatively ``large" graphs. 
   
\subsection{Mathematica Code} The code below may be copied and pasted into a Mathematica notebook. It uses the VPD notation described above to encode graphs.  The function \verb$Vert$ computes the vertex polynomial, while the functions \verb$TwoColorVert$, \verb$ThreeColorVert$, and \verb$FourColorVert$ may be used to compute the $n$-color vertex polynomials for $n=2,3,4$.

\begin{remark}[An important pointer to running the code correctly] When pasting in the code, an error may be encountered if one does not paste the code from each successive page on a new line (not a new input cell) in Mathematica.
\end{remark}

\noindent \verb$ToProduct[graph_G] := ($\\
\verb$  Product[graph[[i]], {i, 1, Length[graph]}])$\\
\verb$rule1 = {V[a_, b_, c_] :> $\\
\verb$    A arc[{a, 2}, {b, 1}] arc[{b, 2}, {c, 1}] arc[{c, 2}, {a, 1}] + $\\
\verb$      B arc[{a, 2}, {c, 1}] arc[{b, 2}, {a, 1}] arc[{c, 2}, {b, 1}] /;$\\
\verb$      OddQ[a] && OddQ[b] && OddQ[c],$\\
\verb$   V[a_, b_, c_] :> $\\
\verb$    A arc[{a, 1}, {b, 1}] arc[{b, 2}, {c, 1}] arc[{c, 2}, {a, 2}] + $\\
\verb$      B arc[{a, 1}, {c, 1}] arc[{b, 2}, {a, 2}] arc[{c, 2}, {b, 1}] /;$\\
\verb$      EvenQ[a] && OddQ[b] && OddQ[c],$\\
\verb$   V[a_, b_, c_] :> $\\
\verb$    A arc[{a, 2}, {b, 2}] arc[{b, 1}, {c, 1}] arc[{c, 2}, {a, 1}] + $\\
\verb$      B arc[{a, 2}, {c, 1}] arc[{b, 1}, {a, 1}] arc[{c, 2}, {b, 2}] /;$\\
\verb$      OddQ[a] && EvenQ[b] && OddQ[c],$\\
\verb$   V[a_, b_, c_] :> $\\
\verb$    A arc[{a, 2}, {b, 1}] arc[{b, 2}, {c, 2}] arc[{c, 1}, {a, 1}] + $\\
\verb$      B arc[{a, 2}, {c, 2}] arc[{b, 2}, {a, 1}] arc[{c, 1}, {b, 1}] /;$\\
\verb$      OddQ[a] && OddQ[b] && EvenQ[c],$\\
\verb$   V[a_, b_, c_] :> $\\
\verb$    A arc[{a, 1}, {b, 2}] arc[{b, 1}, {c, 1}] arc[{c, 2}, {a, 2}] + $\\
\verb$      B arc[{a, 1}, {c, 1}] arc[{b, 1}, {a, 2}] arc[{c, 2}, {b, 2}] /;$\\
\verb$      EvenQ[a] && EvenQ[b] && OddQ[c],$\\
\verb$   V[a_, b_, c_] :> $\\
\verb$    A arc[{a, 1}, {b, 1}] arc[{b, 2}, {c, 2}] arc[{c, 1}, {a, 2}] + $\\
\verb$      B arc[{a, 1}, {c, 2}] arc[{b, 2}, {a, 2}] arc[{c, 1}, {b, 1}] /;$\\
\verb$      EvenQ[a] && OddQ[b] && EvenQ[c],$\\
\verb$   V[a_, b_, c_] :> $\\
\verb$    A arc[{a, 2}, {b, 2}] arc[{b, 1}, {c, 2}] arc[{c, 1}, {a, 1}] + $\\
\verb$      B arc[{a, 2}, {c, 2}] arc[{b, 1}, {a, 1}] arc[{c, 1}, {b, 2}] /;$\\
\verb$      OddQ[a] && EvenQ[b] && EvenQ[c],$\\
\verb$   V[a_, b_, c_] :> $\\
\verb$    A arc[{a, 1}, {b, 2}] arc[{b, 1}, {c, 2}] arc[{c, 1}, {a, 2}] + $\\
\verb$      B arc[{a, 1}, {c, 2}] arc[{b, 1}, {a, 2}] arc[{c, 1}, {b, 2}] /;$\\
\verb$      EvenQ[a] && EvenQ[b] && EvenQ[c]};$\\
\verb$rule2 = {arc[{a_, x_}, y__] arc[{b_, x_}, z__] :> $\\
\verb$    arc[y, z] /; OddQ[a] && a > 0 && b == a + 1, $\\
\verb$   arc[{a_, x_}, y__] arc[z__, {b_, x_}] :> $\\
\verb$    arc[y, z] /; OddQ[a] && a > 0 && b == a + 1,$\\
\verb$   arc[y__, {a_, x_}] arc[{b_, x_}, z__] :> $\\
\verb$    arc[y, z] /; OddQ[a] && a > 0 && b == a + 1,$\\
\verb$   arc[y__, {a_, x_}] arc[z__, {b_, x_}] :> $\\
\verb$    arc[y, z] /; OddQ[a] && a > 0 && b == a + 1,$\\
   
\verb$   arc[{a_, 1}, y__] arc[{b_, 2}, z__] :> $\\
\verb$    arc[y, z] /; OddQ[a] && a < 0 && b == Abs[a] + 1, $\\
\verb$   arc[{a_, 1}, y__] arc[z__, {b_, 2}] :> $\\
\verb$    arc[y, z] /; OddQ[a] && a < 0 && b == Abs[a] + 1,$\\
\verb$   arc[{a_, 2}, y__] arc[{b_, 1}, z__] :> $\\
\verb$    arc[y, z] /; OddQ[a] && a < 0 && b == Abs[a] + 1, $\\
\verb$   arc[{a_, 2}, y__] arc[z__, {b_, 1}] :> $\\
\verb$    arc[y, z] /; OddQ[a] && a < 0 && b == Abs[a] + 1,$\\
   
\verb$   arc[y__, {a_, 1}] arc[{b_, 2}, z__] :> $\\
\verb$    arc[y, z] /; OddQ[a] && a < 0 && b == Abs[a] + 1,$\\
\verb$   arc[y__, {a_, 1}] arc[z__, {b_, 2}] :> $\\
\verb$    arc[y, z] /; OddQ[a] && a < 0 && b == Abs[a] + 1,$\\
\verb$   arc[y__, {a_, 2}] arc[{b_, 1}, z__] :> $\\
\verb$    arc[y, z] /; OddQ[a] && a < 0 && b == Abs[a] + 1,$\\
\verb$   arc[y__, {a_, 2}] arc[z__, {b_, 1}] :> $\\
\verb$    arc[y, z] /; OddQ[a] && a < 0 && b == Abs[a] + 1};$\\
\verb$rule3 = {arc[x__, y__] :> L};$\\
\verb$RawBracket[t_] := Simplify[(t /. rule1 // Expand) //. rule2 //. rule3]$\\
\verb$rule4 = {A :> 1, B :> -q^3, L :> 1 + q};$\\
\verb$TwoColorVert[t_] := Simplify[RawBracket[ToProduct[t]] /. rule4]$\\
\verb$rule5 = {A :> 1, B :> -q^3, L :> q + 1 + q^(-1)};$\\
\verb$ThreeColorVert[t_] := Simplify[RawBracket[ToProduct[t]] /. rule5]$\\
\verb$rule6 = {A :> 1, B :> -q^6, L :> q^2 + q + 1 + q^(-1)};$\\
\verb$FourColorVert[t_] := Simplify[RawBracket[ToProduct[t]] /. rule6]$\\
\verb$rule7 = {A :> 1, B :> -1, L :> n};$\\
\verb$Vert[t_] := Simplify[RawBracket[ToProduct[t]] /. rule7]$\\


\begin{thebibliography}{99}

\bibitem{Aigner} M. Aigner, {\em The Penrose polynomial of a plane graph}, Mathematische Annalen {\bf 307} (1997), 173-189.

\bibitem{BaldCohomology} S. Baldridge,  {\em A Cohomology Theory for Planar Trivalent Graphs with Perfect Matchings}, 	arXiv:1810.07302.

\bibitem{ColorHomology} S. Baldridge and B. McCarty, {\em A topological quantum field theory approach to graph coloring}, arXiv:2303.12010.

\bibitem{BaldKauffMc} S. Baldridge, L. H. Kauffman, and B. McCarty, {\em Unoriented Virtual Khovanov Homology},  New York Journal of Mathematics {\bf 28} (2022), 367-401, arXiv:2001.04512.

\bibitem{BKM2} S. Baldridge, L. H. Kauffman, and B. McCarty, {\em A state sum for the total face color polynomial},  arXiv:2308.02732.

\bibitem{BKR} S. Baldridge, L. H. Kauffman, and W. Rushworth, {\em On ribbon graphs and virtual links}, European Journal of Combinatorics {\bf103} (2022).

\bibitem{BLM} S. Baldridge, A. Lowrance, and B. McCarty, {\em The 2-factor polynomial detects
even perfect matchings}, The Electronic Journal of Combinatorics {\bf 27} (2020),
no. 2, P2.27, 16 pp. doi: 10.37236/9214, arXiv:1812.10346.

\bibitem{Boozer} D. Boozer, \emph{Computer bounds for Kronheimer-Mrowka Foam evaluations}, Experimental Mathematics {\bf 32} (2023), no. 4, pp. 615-630.

\bibitem{EMMKM} J. A. Ellis-Monaghan, L. H. Kauffman, and I. Moffatt, {\em Edge colorings and topological graph polynomials}, Australasian Journal of Combinatorics  {\bf 72} (2018), no. 2, 90-305.

\bibitem{Moffat2013} J.~A. Ellis-Monaghan and I.~Moffatt, Graphs on surfaces, Springer Briefs in Mathematics. Springer, New York, 2013.

\bibitem{EMM} J. A. Ellis-Monaghan and I. Moffatt, {\em A Penrose polynomial for embedded graphs}, European J. Combin. {\bf 34} (2013), 424-445.

\bibitem{Frink} O. Frink, {\em A Proof of Petersen's Theorem}, Annals of Mathematics  {\bf 27} (1926), no. 4, 491-493. 

\bibitem{Heawood} P. J. Heawood, \emph{Map-colour theorem}, Quarterly Journal of Mathematics {\bf 24}  (1890), 332-338.

\bibitem{Jaeger2} F. Jaeger, {\em On edge-coloring of cubic graphs and a formula of Roger Penrose}, Annals of Discrete Mathematics {\bf 41} (1988), 267-279. 

\bibitem{Jaeger}  F. Jaeger, {\em On transition polynomials of 4-regular graphs}, Cycles and Rays (Hahn et al, eds.). Kluwer, 1990, 123-150.

\bibitem{KauffmanFormation} L. H. Kauffman, {\em A state calculus for graph coloring}, Illinois Journal of Mathematics {\bf 24} (2015), no. 1, 251-271.

\bibitem{Kauffman-Lins} L. H. Kauffman and S. L. Lins,   Temperley-Lieb Recoupling Theory and Invariants of $3$-manifolds (AM-134). Princeton University Press, 1994.

\bibitem{KM3} P. B. Kronheimer and T. S. Mrowka, {\em Tait colorings, and an instanton homology for webs and foams}, Journal of the European Mathematical Society {\bf 21} (2019), no. 1, 55-119.

\bibitem{Kho} M. Khovanov, {\em A categorification of the Jones polynomial},
 Duke Math. J. {\bf 101} (2000), no. 3, 359--426.

\bibitem{KhoRob} M. Khovanov and L.-H. Robert, {\em Conical $SL(3)$ foams}, Journal of Combinatorial Algebra {\bf 6} (2022), 79-108.

\bibitem{KR} M. Khovanov and L.-H. Robert, {\em Foam evaluation and Kronheimer--Mrowka theories}, Advances in Mathematics {\bf 376} (2021), 07433.

\bibitem{KhoRoz1} M. Khovanov and L. Rozansky, {\em Matrix factorizations and link homology}, Fundamenta Mathematicae {\bf 199} (2008) 1-91.

\bibitem{KhoRoz2} M. Khovanov and L. Rozansky, {\em Matrix factorizations and link homology II}, Geometry \& Topology {\bf 12} (2008) 1387-1425.

\bibitem{KirResh} A. N. Kirillov and N. Yu. Reshetikhin, {\em Representations of the algebra Uq(sl(2)), q-orthogonal polynomials and invariants of links}, Infinite-dimensional Lie algebras and groups (Luminy-Marseille, 1988), Adv. Ser. Math. Phys., vol. 7, World Sci. Publ., Teaneck, NJ, 1989, 285-339.

\bibitem{LeeHomo} E. S. Lee, {\em An endomorphism of the Khovanov invariant},  Adv. Math. {\bf 197} (2005), no. 2, 554-586.

\bibitem{McCleary} J. McCleary, A User's Guide to Spectral Sequences, 2nd edition, Cambridge Studies in Advanced Mathematics, Cambridge University Press, Cambridge England, 2000.

\bibitem{MT} B. Mohar and C. Thomassen,  Graphs on Surfaces. The Johns Hopkins University Press, Baltimore, Maryland, 2001.

\bibitem{MOY} H. Murakami, T. Ohtsuki, and S. Yamada, {\em HOMFLY polynomial via an invariant of colored plane graphs}, Enseign. Math. (2)  {\bf 44} (1998), no. 3-4, 325-360.

\bibitem{Penrose2} R. Penrose, {\em Angular momentum: an approach to combinatorial space-time}, Quantum Theory and Beyond (Ed. E. A. Bastin), Cambridge University Press, Cambridge, 1969.

\bibitem{Penrose} R. Penrose, {\em Applications of Negative Dimensional Tensors}, Combinatorial Mathematics and its Applications.  Edited by D.J.A. Welsh.  Academic Press, 1971.

\bibitem{T1} V. G. Turaev, Quantum invariants of knots and 3-manifolds, de Gruyter Studies in Mathematics, vol. 18. Walter de Gruyter \& Co., Berlin, 1994.

\bibitem{TV} V. G. Turaev and O. Ya. Viro, {\em State sum invariants of 3-manifolds and quantum 6j-symbols}, Topology {\bf 31} (1992), no. 4, 865-902.

\bibitem{Tutte} W. T. Tutte, {\em The Factorization of Linear Graphs}, J. London Math. Soc. {\bf 22} (1947), 107-111.

\end{thebibliography}
\end{document}